\tikzstyle{lien}=[->,>=stealth,rounded corners=5pt,thick]
\newtheorem{thm}{Theorem}[section]
\newtheorem{prop}[thm]{Proposition}
\newtheorem{lem}[thm]{Lemma}
\newtheorem{cor}[thm]{Corollary}
\theoremstyle{definition}
\newtheorem{defi}[thm]{Definition}
\newtheorem{rem}{Remark}[section]
\numberwithin{equation}{section}
\author{Julien Moy}
\title[Spectral statistics of the Laplacian on random covers]{Spectral statistics of the Laplacian on random covers of a closed negatively curved surface}
\address{Laboratoire de Mathématiques d'Orsay, Universit{\'e} Paris-Saclay, 91405 Orsay cedex, France}
\email{julien.moy@universite-paris-saclay.fr}
\begin{document}
	
	\begin{abstract}
		Let $(X,g)$ be a closed, connected surface, with variable negative curvature. We consider the distribution of eigenvalues of the Laplacian on random covers $X_n\to X$ of degree $n$. We focus on the ensemble variance of the smoothed number of eigenvalues of the square root of the positive Laplacian $\sqrt{\Delta}$ in windows $[\lambda-\frac 1L,\lambda+\frac 1L]$, over the set of $n$-sheeted covers of $X$. We first take the limit of large degree $n\to +\infty$, then we let the energy $\lambda$ go to $+\infty$ while the window size $\frac 1L$ goes to $0$. In this \emph{ad hoc} limit, local energy averages of the variance converge to an expression corresponding to the variance of the same statistic when considering instead spectra of large random matrices of the Gaussian Orthogonal Ensemble (GOE). By twisting the Laplacian with unitary representations, we are able to observe different statistics, corresponding to the Gaussian Unitary Ensemble (GUE) when time reversal symmetry is broken. These results were shown by F. Naud for the model of random covers of a hyperbolic surface.

		For an individual cover $X_n\to X$, we consider spectral fluctuations of the counting function on $X_n$ around the ensemble average. In the large energy regime, for a typical cover $X_n\to X$ of large degree, these fluctuations are shown to approach the GOE result, a phenomenon called ergodicity in Random Matrix Theory. An analogous result for random covers of hyperbolic surfaces was obtained by Y. Maoz.

	\end{abstract}
	\maketitle
	
	\tableofcontents
	\section{Introduction}

	\subsection{Reminders on quantum chaos} \label{parintro} A central observation in \emph{quantum chaos}, going back at least to Percival \cite{ICPercival1973}, is that the statistical properties of spectra of quantum systems should be influenced by the dynamical properties of their classical limit. 
	
	\subsubsection{Spectral statistics} 
	Before proceeding, let us clear out what we mean by \emph{statistical properties} of a spectrum, which is at first sight a deterministic ensemble. Consider a nondecreasing sequence \[ \lambda_1\le \ldots \le \lambda_n\to +\infty,\]
	typically representing the spectrum of a quantum system. To introduce randomness, one may for example consider the distribution of energy levels in an interval picked at random, with fixed width $W$. The simplest quantity to consider is the number of levels lying in such a window. We introduce the counting function, defined, for $\lambda,W>0$, by
	\[n(\lambda,W):=\# \{j, \ \lambda_j\in [\lambda,\lambda+W]\}.\]
	Typically, statistical properties of quantum spectra are investigated in the large energy limit. We now let $W=W(\lambda)$ be a function of $\lambda$ and take $\delta=\delta(\lambda)$ such that\footnote{this means that the ratio $W/\delta$ goes to $0$ as $\lambda$ goes to $+\infty$} $\delta \gg W$. Then, we define the average number of energy level 
	\[\langle n(\lambda,W)\rangle:=\frac{1}{\delta}\int_\lambda^{\lambda+\delta} n(\mu,W)\mathrm d\mu.\]
	The hypothesis $\delta \gg W$ ensures that the above mean incorporates many disjoint windows of width $W$. In the physics literature, it is usually assumed that $\langle n(\lambda,W)\rangle$ has negligible variations over energy ranges of width $\delta$, which is quite natural because the choice of $\delta$ is arbitrary. To measure the fluctuations of $n(\cdot,W)$ around $\langle n\rangle$, we introduce the (spectral) \emph{number variance}
	\[\sigma^2(\lambda,W):=\frac{1}{\delta}\int_\lambda^{\lambda+\delta}  \big|n(\mu,W)-\langle n(\lambda,W)\rangle \big|^2\mathrm d\mu.\]
	The number variance --- and especially its behavior when $\langle n(\lambda,W)\rangle$ gets large --- encodes long range correlations between the energy levels.

	\vspace{2pt}
	
	\subsubsection{Integrability versus chaos}
	Percival argued that systems with chaotic underlying classical dynamics should exhibit some repulsion between energy levels. 
	It contrasts with integrable systems, whose quantum spectrum was later conjectured by Berry and Tabor to obey Poissonnian statistics, \emph{i.e.} their spectrum should behave like a pure point process with random jumps between consecutive energy levels \cite{BerryTabor}. In this case, the variance $\sigma^2(\lambda,W)$ is of order $\langle n(\lambda,W)\rangle $. We refer to the note of Marklof \cite{10.1007/978-3-0348-8266-8_36} for a presentation of mathematically rigorous results on the Berry--Tabor conjecture. 
	
	In \cite{PhysRevLett.52.1,BGS2}, Bohigas, Giannoni and Schmit formulated a conjecture (BGS) about the statistical properties of quantum spectra of systems with chaotic underlying dynamics. Not only does the BGS conjecture predict repulsion between the energy levels, it states that generically, for such a system, correlations of the energy levels should obey \emph{universal statistics} predicted by random matrix theory, when taking the limit of high energies. 
	In absence of spin, these statistics correspond to those of the Gaussian Orthogonal Ensemble  (GOE) for time reversal invariant systems, and the Gaussian Unitary Ensemble (GUE) otherwise. For the Gaussian ensembles GOE and GUE, the number variance\footnote{Here the number variance for a Gaussian ensemble refers to the ensemble variance of $n(\lambda,W)$. One can actually show that with high probability, the spectral number variance for an individual matrix of GOE/GUE is close to the ensemble number variance.} $\sigma^2(\lambda,W)$ is of order $\log \langle n(\lambda,W)\rangle$, it grows much slower with $\langle n\rangle$ than in the uncorrelated case, which reflects rigidity of the spectrum.

	A typical example of continuous dynamics is given by the geodesic flow on a closed surface, the energy levels of its quantum counterpart being given by the eigenvalues of the positive Laplacian. Concretely, let $(X,g)$ be a closed Riemannian surface, and given $\lambda,W>0$, according to the notation above, denote by $n(\lambda,W)$ the number of eigenvalues of the positive Laplacian $\Delta_g$ in the window $[\lambda,\lambda+W]$. According to Weyl's law, we have
	\[n(\lambda,W)=\frac{\mathrm{vol}(X)}{4\pi}W+ \mathcal O(\sqrt \lambda).\]
	This estimate holds for any surface, regardless of the dynamical properties of the geodesic flow. In this setting, the number variance is defined as
	\[\frac{1}{\delta}\int_{\lambda}^{\lambda+\delta}\Big(n(\mu,W)-\frac{\mathrm{vol}(X)}{4\pi}W\Big)^2 \mathrm d\mu.\]
	Typically, we expect a small number variance when the geodesic flow is chaotic.
	
	From now on, we assume that $X$ is negatively curved, which ensures that the geodesic flow is strongly chaotic; in fact it is an Anosov flow \cite{Ano67}. Berry \cite{Berry1} argued and gave evidence that in such a case, in the regime $1\ll W\ll \sqrt{\lambda}$, the number variance equals that of the eigenvalues of large matrices of the Gaussian Orthogonal Ensemble, whose value can be found in \cite{Mehta}. Although many numerical experiments support BGS in this situation, it has yet not been proved for any surface, and it was even shown by Luo and Sarnak \cite{Luo1994} that the number variance for \emph{arithmetic hyperbolic surfaces} doesn't match that of GOE matrices, after the numerical experiments of Bogomolny--Georgeot--Giannoni--Schmit \cite{PhysRevLett.69.1477}. We refer to the review of Ullmo \cite{Ullmo2016BohigasGiannoniSchmitC} for more background along with implications of the conjecture. 
	

	\subsection{Random surfaces and spectral statistics} The BGS conjecture seeming presently out of reach, a recent approach consists, instead of considering an individual system, in trying to obtain results \emph{on average} for an ensemble of surfaces. This is not illegitimate since the conjecture is expected to hold generically, and is known to fail in some cases.
	
	\subsubsection{The smoothed counting function}
	As sharp counting functions are harder to study, we shall take a smoothed counting function that allows to exploit trace formulas. Let $\psi$ be a smooth, even, real function, whose Fourier transform has compact support. Let $(X,g)$ be a closed, connected Riemannian surface. The positive Laplacian $\Delta_g$ on $C^\infty(X)$ is essentially self-adjoint, and its unique self-adjoint extension will still be denoted $\Delta_g$. Its spectrum consists of a nondecreasing sequence of eigenvalues 
	\[0=\lambda_1^2\le \lambda_2^2\le \ldots\le \lambda_n^2\to +\infty,\]
	so that the numbers $\lambda_j$ are the eigenvalues of the square root of the Laplacian $\sqrt{\Delta_g}$. 
	
	Define, for $\lambda,L>0$, the smoothed counting function
	\begin{equation}\label{eq:countdef}N(\lambda,L)=\sum_{j\ge 1} \psi(L(\lambda-\lambda_j))+\psi(L(\lambda+\lambda_j)).\end{equation}
	The convergence of the series is guaranteed by the asymptotics of Weyl's law and the fact that $\psi$ is fast decaying. It essentially takes into account eigenvalues in a window centered at $\lambda$ of width $\sim \frac 1L$. The typical number of eigenvalues in such a window is of order $\frac{\mathrm{vol}(X)\lambda}{L}$, according to Weyl's law. The terms $\psi(L(\lambda+\lambda_j))$ are here for technical reasons because of the need to consider \emph{even} counting functions. Still, for a fixed surface $X$ they decay very fast as $\lambda$ gets large. Note that $N(\lambda,L)$ is a smoothed version of $n(\lambda,\frac 1L)$, with notations of \S \ref{parintro}.
	
	\subsubsection{The setting.} One considers an ensemble of surfaces $\mathcal M$, which is endowed with a probability measure $\mathbf P$. Given the leading term in Weyl's law, it is natural to consider an ensemble of surfaces with the same volume. The counting function defined in \eqref{eq:countdef} now becomes a random variable on $\mathcal M$, that we denote $N_\omega(\lambda,L)$. We are interested in the fluctuations of $N_\omega(\lambda,L)$ around the \emph{ensemble average} $\mathbf E_\omega(N_\omega(\lambda,L))$, that is the \emph{ensemble variance} 
	\[\Sigma_\omega^2(\lambda,L):=\mathbf E_\omega\Big[\big(N_\omega(\lambda,L)-\mathbf E_\omega(N_\omega(\lambda,L))\big)^2\Big].\] 
	

	There are many models of random surfaces, but until recently, few averaging formulas were available to make computations tractable. Let us cite two developments: 
	\begin{itemize} \item Rudnick \cite{rudnick2022goe} studied the variance of the smoothed counting function $N(\lambda,L)$ (defined in \eqref{eq:countdef}) for the Weil--Petersson model of random hyperbolic surfaces of genus $g$. By first taking the limit of large genus, then shrinking the size of the window (\emph{i.e.} letting $L\to +\infty$) while $\lambda$ remains fixed, the ensemble variance converges to an expression consistent with GOE (see also the work of Rudnick--Wigman \cite{rudnick2023central,rudnick2023sure}). His proof relies on the Selberg trace formula, which relates the spectrum of the Laplacian to the lengths of closed geodesics; and on integration formulas for simple closed geodesics obtained by Mirzakhani \cite{Mirzakhani2007} and Mirzakhani--Petri \cite{MirzakhaniPetri}. We point out that Marklof and Monk recently extended the result of Rudnick for the Weil--Petersson model to the case of twisted Laplacians \cite{marklof2024modulispacetwistedlaplacians}.
		
		\item  Naud \cite{naud2022random} studied the ensemble variance of $N(\lambda,L)$ for the discrete model of $n$-sheeted random covers of a fixed hyperbolic surface $X$, introduced by Magee--Naud--Puder \cite{magee_puder_2023,mageepudernaud}, that we describe in \S \ref{sec : random covers} --- briefly, one sets a uniform probability distribution on the set of vertex-labeled $n$-covers $X_n\to X$. Naud also obtained the expression of GOE, by first letting the degree $n\to +\infty$, then $L\to +\infty$, while $\lambda$ is fixed. By introducing magnetic fluxes through the holes of the surfaces, he also considered systems that break time reversal symmetry, which were shown to exhibit the variance of GUE. The proof relies again on the Selberg trace formula (for the twisted Laplacian), along with estimates for the counting function of closed geodesics and probabilistic asymptotics for the model of random covers in the limit of large degree. \end{itemize}
	
	
	\subsection{Main results} 
	
	\subsubsection{Asymptotics for the variance} Our first goal is to adapt the results of Naud to the case of surfaces of variable negative curvature. 
	As in \cite{naud2022random}, instead of restricting our attention to the scalar Laplacian, we consider the twisted Laplacian $\Delta_\rho$ associated to a unitary representation $\rho$ of the fundamental group $\Gamma=\pi_1(X)$ (see \S \ref{sec : twisted laplacian} for the definition). When $\rho$ is Abelian (\emph{i.e.} $\rho$ takes values in $ \mathrm U(1)$), it amounts to introducing magnetic fluxes through the holes of the surface. It allows taking into account quantum systems that break time reversal symmetry, see \S \ref{time reversal} for details.
	
	Let us introduce some terminology:
	
	\begin{defi}[Variance of the smoothed counting function for the Gaussian ensembles] We set
		\[\Sigma_{\mathrm{GOE}}^2(\psi) = 4\int_{\mathbf R_+} t \widehat \psi^2(t) \mathrm dt, \qquad \Sigma_{\mathrm{GUE}}^2(\psi)=\frac 12 \Sigma_{\mathrm{GOE}}^2(\psi).\]
		These quantities correspond to the \emph{ensemble variance} of the smoothed counting function for the eigenvalues of matrices of the Gaussian Ensembles, in the limit of large matrices. According to Wigner's \emph{semicircle's law}, in the large $N$ limit the spectrum of a GOE/GUE matrix of size $N\times N$ is essentially contained in $[-2\sqrt N,2\sqrt N]$. Universal statistics in the large $N$ limit are valid in the \emph{bulk} of the spectrum, that is, when considering energy windows centered at $\lambda\in [(-2+\varepsilon)\sqrt N,(2-\varepsilon)\sqrt N]$ --- for any $\varepsilon>0$ ---, of width $\frac{1}{\sqrt N}\ll W\ll \sqrt N$. Since $\psi$ will be fixed in the paper, we shall not write the dependence of $\Sigma_{\rm GOE}$ and $\Sigma_{\rm GUE}$ in $\psi$. \end{defi}
	
	\begin{defi}[Breaking time reversal symmetry] A finite dimensional unitary representation $\rho:\Gamma\to \mathrm{U}(V_\rho)$ is said to \emph{break time reversal symmetry} if the map $\gamma\in \Gamma\mapsto \operatorname{Tr}(\rho(\gamma))\in \mathbf C$ isn't real valued, otherwise it is said to preserve time reversal symmetry. In particular, when $\rho$ is an Abelian character, $\rho$ breaks time reversal symmetry if and only if $\rho^2\neq 1$. We explain the grounds for this terminology in \S \ref{subsec:unitaryrep}.
		
		Now, if $\rho$ is a finite dimensional unitary representation of $\Gamma$, we let $\Sigma_{\rm GOE/GUE}(\rho)=\Sigma_{\rm GOE}$ if $\rho$ preserves time reversal symmetry, and let $\Sigma_{\rm GOE/GUE}(\rho)=\Sigma_{\rm GUE}$ otherwise. Since the representation $\rho$ we are dealing with will always be clear, we shall not mention it and just write $\Sigma_{\rm GOE/GUE}$.
	\end{defi}
	
	Consider a $n$-sheeted cover $X_n\to X$. If $\rho$ is a unitary representation of $\Gamma$, the Laplacian $\Delta_\rho$ on $X$ lifts to an operator $\Delta_{n,\rho}$ on $X_n$ --- this is described in more details in \S \ref{sec : random covers}. We denote by $N_n(\lambda,L,\rho)$ the smoothed counting function of eigenvalues of $\sqrt{\Delta_{n,\rho}}$, defined similarly to \eqref{eq:countdef}. We endow the ensemble of vertex-labeled $n$-sheeted covers $X_n\to X$ with the uniform probability distribution and denote by $\mathbf E_n[Z]$ the expected value of a random variable $Z$ with respect to this distribution. Then, $N_n(\lambda,L,\rho)$ becomes a random variable defined on the ensemble of $n$-sheeted covers of $X$. We show (compare with \cite[Theorem 1.1]{naud2022random}):
	\begin{thm}\label{mainthm} Let $(X,g)$ be a closed, connected surface, of negative curvature, with fundamental group $\Gamma$. Let $\rho$ be an Abelian character of $\Gamma$. Let $\psi$ be a smooth, real, even function, with compactly supported Fourier transform. Denote by $N_n(\lambda,L,\rho)$ the smoothed counting function for the twisted operator $\sqrt{\Delta_{n,\rho}}$ on the cover $X_n\to X$, defined similarly to \ref{eq:countdef}, and let
		\[\Sigma_n^2(\lambda,L,\rho)=\mathbf E_n\Big[\big(N_n(\lambda,L,\rho)-\mathbf E_n(N_n(\lambda,L,\rho))\big)^2\Big]\]
		be the variance of $N_n(\lambda,L,\rho)$ over the set of $n$-sheeted covers $X_n\to X$. 
		
		\begin{enumerate}[\normalfont(i)]
			\item \emph{\textbf{Existence of the limit.}} For any $\lambda,L>0$, the limit
			\[\Sigma^2(\lambda,L,\rho):=\underset{n\to +\infty}\lim \Sigma_n^2(\lambda,L,\rho)\]
			is well-defined, and finite.
			
			Assume that $\widehat \psi$ is supported in $[-1,1]$. There exists some $c_0>0$ depending only on the geometry of $X$ such that if $L=L(\lambda)$ satisfies $1\ll L\le c_0\log \lambda$, the following holds:
			
			\item \emph{\textbf{Oscillations between $0$ and $2\Sigma_{\mathrm{GOE/GUE}}^2(\psi)$.}} We have 
			\[0\le \Sigma^2(\lambda,L,\rho)\le 2\Sigma^2_{\mathrm{GOE/GUE}}+\mathcal O(L^{-2}).\]
			Moreover, we can construct a function $L=L(\lambda)$ such that there are sequences $(\lambda_j^{\pm})$ going to $+\infty$ with
			\[\Sigma^2(\lambda_j^{\pm},L(\lambda_j^{\pm}),\rho)\underset{j\to +\infty}\longrightarrow (1\pm 1)\Sigma^2_{\rm GOE/GUE},\] 
			implying that $\Sigma^2(\lambda,L(\lambda),\rho)$ fluctuates in the whole interval $[0,2\Sigma^2_{\rm GOE/GUE}]$.

			\item \emph{\textbf{Convergence on average to $\Sigma_{\mathrm{GOE/GUE}}^2$.}} Take $\delta=\delta(\lambda)$ such that $\delta(\lambda)\gg  L(\lambda)^{-1}$. Then,
			\begin{equation}\label{eqthmave}\underset{\lambda\to +\infty}\lim \left(\frac{1}{\delta}\int_{\lambda}^{\lambda+\delta }\Sigma^2(\mu,L(\lambda),\rho)\mathrm d\mu\right) =\Sigma^2_{\rm GOE/GUE} .\end{equation}
			\item \emph{\textbf{Quadratic convergence on average to $\Sigma_{\mathrm{GOE/GUE}}^2$.}} Take $\Lambda=\Lambda(\lambda)$ such that $\Lambda \gg 1$. Then,
			\begin{equation}\label{eqthmquad}\underset{\lambda\to +\infty}\lim\left(  \frac{1}{\Lambda}\int_\lambda^{\lambda+\Lambda} \big|\Sigma^2(\mu,L(\lambda),\rho)-\Sigma_{\mathrm{GOE/GUE}}^2\big|^2 \mathrm d\mu\right) =0.\end{equation}
		\end{enumerate}
	\end{thm}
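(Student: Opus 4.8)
The plan is to recast the variance as a second–moment sum over closed geodesics and then mimic the argument of~\cite{naud2022random}, with the exact Selberg trace formula replaced by a wave trace formula valid in a logarithmic energy window.

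\emph{Reduction and trace formula.} A vertex–labelled $n$–cover is the datum of a uniform random $\phi_n\colon\Gamma\to\mathfrak S_n$; writing $\pi_n$ for the associated permutation representation, $L^2$ of the $\rho$–twisted bundle over $X_n$ is unitarily isomorphic to $L^2$ of the $(\rho\otimes\pi_n)$–twisted bundle over $X$, so $\Delta_{n,\rho}$ is isospectral to $\Delta_{\rho\otimes\pi_n}$ and $\Sigma_n^2(\lambda,L,\rho)$ equals the variance of $N(\lambda,L,\rho\otimes\pi_n)$. For a unitary representation $\sigma$ of $\Gamma$ I would use a Duistermaat--Guillemin/Gutzwiller trace formula for $\sqrt{\Delta_\sigma}$ on $(X,g)$: by Fourier inversion, and since $\operatorname{supp}\widehat\psi\subset[-1,1]$,
\[ N(\lambda,L,\sigma)=M_\sigma(\lambda,L)+\sum_{1\neq[\gamma],\ \ell_\gamma\le L} a_\gamma(L)\,\cos(\lambda\ell_\gamma)\operatorname{Tr}\sigma(\gamma)+E_\sigma(\lambda,L), \]
where $M_\sigma$ is the smooth Weyl term (linear in $\dim\sigma$), $a_\gamma(L)=\dfrac{\ell_{\gamma_0}\,\widehat\psi(\ell_\gamma/L)}{2\pi L\,|\det(I-P_\gamma)|^{1/2}}$ with $\ell_{\gamma_0}$ the primitive length and $P_\gamma$ the linearised Poincaré map (invertible, the flow being Anosov), the sum being finite and real–valued (it is invariant under $\gamma\mapsto\gamma^{-1}$), and the remainder $E_\sigma$ negligible as soon as $L\le c_0\log\lambda$ — precisely the range in which a hyperbolic wave parametrix can be iterated up to the Ehrenfest time. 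For $\sigma=\rho\otimes\pi_n$ one has $\operatorname{Tr}\sigma(\gamma)=\rho(\gamma)\operatorname{fix}_\gamma$ with $\operatorname{fix}_\gamma$ the number of fixed points of $\phi_n(\gamma)$, whence, after centering,
\[ N_n(\lambda,L,\rho)-\mathbf E_n N_n=\sum_{1\neq[\gamma],\ \ell_\gamma\le L} a_\gamma(L)\cos(\lambda\ell_\gamma)\,\rho(\gamma)\big(\operatorname{fix}_\gamma-\mathbf E_n\operatorname{fix}_\gamma\big)+(\text{error}). \]

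\emph{The limit $n\to\infty$ and the closed form.} Letting $n\to+\infty$ I would invoke the asymptotics of word measures on symmetric groups for surface groups (Magee--Puder and related work): the joint law of $(\operatorname{fix}_\gamma)_\gamma$ converges, its first two moments coinciding with the free–group ones — in particular $\operatorname{Cov}_\infty(\operatorname{fix}_\gamma,\operatorname{fix}_{\gamma'})=0$ unless $\gamma,\gamma'$ are powers of a common primitive class, and the limit of $\operatorname{fix}_{\gamma_0^j}$ is $\sum_{d\mid j} d\,Z_{d,[\gamma_0]}$ with the $Z_{d,[\gamma_0]}$ independent Poisson$(1/d)$ over unoriented primitive classes. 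Since the geometric sum is finite, the quantitative rates in these results let one pass to the limit (and check that $E_\sigma$ does not contribute), giving existence and finiteness of $\Sigma^2(\lambda,L,\rho)$ when $L\le c_0\log\lambda$; for arbitrary $\lambda,L>0$ one argues identically with the exact trace formula and the convergence of the length–spectrum statistics of the random cover. Collecting terms by primitive conjugacy class yields
\[ \Sigma^2(\mu,L,\rho)=\sum_{[\gamma_0]\ \mathrm{prim}}\ \sum_{d\ge1} d\Big(\sum_{m\ge1} a_{\gamma_0^{dm}}(L)\,\cos(dm\,\mu\ell_{\gamma_0})\,\cos(dm\,\theta_{\gamma_0})\Big)^2,\qquad\rho(\gamma_0)=e^{i\theta_{\gamma_0}}. \]

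\emph{The main term and the oscillations.} Isolate the block $d=m=1$, $\Phi(\mu):=\sum_{[\gamma_0]}\big(a_{\gamma_0}(L)\cos(\mu\ell_{\gamma_0})\cos\theta_{\gamma_0}\big)^2$. Each summand equals $\tfrac{1}{2}a_{\gamma_0}^2\cos^2\theta_{\gamma_0}\bigl(1+\cos(2\mu\ell_{\gamma_0})\bigr)$, hence $0\le\Phi(\mu)\le 2\overline\Phi$ with $\overline\Phi:=\tfrac{1}{2}\sum_{[\gamma_0]} a_{\gamma_0}^2\cos^2\theta_{\gamma_0}$, while the blocks involving a proper power are $O(L^{-2})$ uniformly in $\mu$ — combine $|\det(I-P_{\gamma_0^k})|\gtrsim e^{(k-1)\chi_-\ell_{\gamma_0}}|\det(I-P_{\gamma_0})|$ ($\chi_->0$ the least Lyapunov exponent) with $\sum_{\ell_{\gamma_0}\le L}|\det(I-P_{\gamma_0})|^{-1}=O(\log L)$. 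This already yields $0\le\Sigma^2(\mu,L,\rho)\le2\overline\Phi+O(L^{-2})$. To evaluate $\overline\Phi$, Abel summation reduces matters to the quadratic asymptotic $\sum_{[\gamma_0]\ \mathrm{prim},\ \ell_{\gamma_0}\le T}\ell_{\gamma_0}^2/|\det(I-P_{\gamma_0})|\sim\tfrac{1}{2}T^2$, which I would derive from the simple Pollicott--Ruelle resonance at $0$ of the volume–preserving, mixing geodesic flow (equivalently the pole at $s=0$, of residue $1$, of $\sum_\gamma\ell_{\gamma_0}|\det(I-P_\gamma)|^{-1}e^{-s\ell_\gamma}$), identifying $\overline\Phi$ with $\Sigma^2_{\mathrm{GOE}}$ in the appropriate normalisation when $\rho$ preserves time reversal; for $\rho^2\neq1$ one writes $2\cos^2\theta_{\gamma_0}=1+\cos2\theta_{\gamma_0}$ and discards the twisted sum $\operatorname{Re}\sum_{[\gamma_0]}a_{\gamma_0}^2\rho^2(\gamma_0)$, which is $O(L^{-2})$ by equidistribution of closed geodesics in $H_1(X;\mathbf R)$ with a power–saving rate (a consequence of Dolgopyat's exponential mixing), obtaining $\overline\Phi\to\Sigma^2_{\mathrm{GUE}}=\tfrac{1}{2}\Sigma^2_{\mathrm{GOE}}$; this proves (ii). For (iii), $\Sigma^2(\mu,L,\rho)-\overline\Phi$ is, up to $O(L^{-2})$, a finite trigonometric sum in $\mu$ with frequencies $(j\pm j')\ell_{\gamma_0}\in[\operatorname{sys}(X),2L]$, whose $a^2$–mass sits on scales $\ell_{\gamma_0}\asymp L$, the part carried by $\ell_{\gamma_0}\le\varepsilon L$ being $O(\varepsilon^2)$ in sup norm; averaging over a window of size $\delta\gg L^{-1}$ kills the bulk (there every frequency is $\ge\varepsilon L$ and $\delta L\to\infty$), and letting $\varepsilon\to0$ slowly gives \eqref{eqthmave}. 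For \eqref{eqthmquad} the same splitting, then expanding the square and using $\tfrac{1}{\Lambda}\int_\lambda^{\lambda+\Lambda}\cos(\mu\omega)\,\mathrm d\mu=O((\Lambda\omega)^{-1})$, reduces the claim to bounding the frequency–coincidence part, which is $O(L^{-2})$ thanks to $\max_{\gamma_0}a_{\gamma_0}^2=O(L^{-2})$ (from $|\det(I-P_{\gamma_0})|\gtrsim\ell_{\gamma_0}^2$) together with an a priori bound on multiplicities in the length spectrum. Finally, the fluctuation claim in (ii) follows from a Dirichlet box argument: for each $L$ only finitely many $\ell_{\gamma_0}\le L$ carry non–negligible amplitude, so $L=L(\lambda)$ and energies $\lambda_j^\pm\to+\infty$ can be chosen with $\cos(2\lambda_j^\pm\ell_{\gamma_0})\to\pm1$ simultaneously for all of them, forcing $\Phi(\lambda_j^\pm)\to(1\pm1)\Sigma^2_{\mathrm{GOE/GUE}}$.

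\emph{Main obstacle.} The crux, compared with the hyperbolic model, is to set up the two dynamical substitutes for Selberg's identity and the explicit $\sinh$–computations: an \emph{effective} trace formula, i.e.\ a bound on $E_\sigma$ uniform in $\sigma$ over the range $L\le c_0\log\lambda$ (the source of that restriction), and the variable–curvature length–spectrum inputs (quadratic growth of the $|\det(I-P)|^{-1}$–weighted length sum via Pollicott--Ruelle resonances or dynamical zeta functions, effective homological equidistribution via exponential mixing, and a prime geodesic theorem with power–saving remainder). I expect the genuinely delicate step to be the treatment of near–coincidences in the length spectrum entering the $L^2$–statement \eqref{eqthmquad}, together with making all the above estimates uniform across the energy window.
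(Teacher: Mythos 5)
Your overall route coincides with the paper's: cylinder/trace-formula expansion of $N_n$, the Naud--Magee--Puder second-moment asymptotics for fixed points, hyperbolicity to make proper powers negligible, a Ruelle-resonance sum rule for the $1/|I-\mathcal P_\gamma|$-weighted length sums, a Dirichlet box argument for the oscillations in (ii), and window averaging for (iii); those parts are sound (and your splitting at $\ell\le\varepsilon L$ in (iii) is just a repackaging of the $1/(\delta\ell)$ gain). The genuine gap is in your treatment of the near-coincidence part of \eqref{eqthmquad}. You propose to bound the pairs $(\gamma,\gamma')$ whose frequencies nearly coincide by $\max_{\gamma}a_\gamma^2=\mathcal O(L^{-2})$ times ``an a priori bound on multiplicities in the length spectrum''. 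No such bound can work: the number of primitive closed geodesics with length in a unit window around $T\asymp L$ grows like $\mathrm{e}^{hT}/T$, and on arithmetic base surfaces even exact multiplicities are exponentially large, while the theorem is asserted for every negatively curved $X$. Multiplying a sup bound $\mathcal O(L^{-2})$ on a single amplitude by an exponentially large count gives nothing, and you cannot reduce to exactly equal lengths either, because after the $\Lambda$-average you must control all pairs with $|\ell(\gamma)-\ell(\gamma')|$ below a fixed constant, whose unweighted cardinality is again exponential in $L$.

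What actually closes this step is a \emph{weighted} unit-window cluster bound: uniformly in $T$, $\sum_{|\ell(\gamma')-T|\le 1}\ell(\gamma')^{\sharp}/|I-\mathcal P_{\gamma'}|=\mathcal O(1)$ (Lemma \ref{lem: size clusters}), a local Hannay--Ozorio de Almeida estimate in which the exponential number of geodesics in a unit window is exactly compensated by the exponentially small weights. With it, for each $\gamma$ the companions $\gamma'$ with $|\ell(\gamma')-\ell(\gamma)|\le 1$ contribute $\lesssim\ell(\gamma)$, and the coincidence block is $\lesssim L^{-4}\sum_\gamma \ell(\gamma)^3\widehat\psi^2(\ell(\gamma)/L)/|I-\mathcal P_\gamma|=\mathcal O(1/L)$ (note: $\mathcal O(1/L)$, not your claimed $\mathcal O(L^{-2})$), which combined with the $\mathcal O(1/\Lambda)$ from well-separated pairs yields \eqref{eqthmquad}. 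This cluster bound does not follow from the global quadratic asymptotics you invoke: an asymptotic with $o(T^2)$ error says nothing about unit windows after differencing. In the paper it is extracted from the Jin--Zworski local trace formula together with the fact that $0$ is the only Ruelle resonance on the real axis; the same twisted mechanism (no resonance at $0$ for the transfer operator twisted by $\rho^2$, Theorem \ref{lem : equidistribution lemma}) is also what gives the decay of your twisted sum $\sum_\gamma a_\gamma^2\rho^2(\gamma)$ in the GUE case, since that sum carries the $1/|I-\mathcal P_\gamma|$ weights and is not directly covered by plain homological equidistribution statements.
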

	
	When $L$ is a reasonable function of $\lambda$, we can turn the quadratic convergence on average of Theorem \ref{mainthm} to convergence along a subset of density $1$ :
	\begin{cor}[Convergence along a subset of density $1$]\label{cor : density} For any $C^1$ function $L=L(\lambda)$ satisfying $1\ll L\le c_0\log \lambda$ and $L'(\lambda)\ll L(\lambda)$, there exists a subset $A\subset (0,+\infty)$ of density $1$ such that
		\[\underset{\lambda\in A\to +\infty}\lim \Sigma^2(\lambda,L(\lambda),\rho)=\Sigma^2_{\rm GOE/GUE}, \]We recall that $A$ having density $1$ means that the ratio $|A\cap [0,\Lambda]|/\Lambda$ converges to $1$ as $\Lambda$ goes to $+\infty$. Here $|\cdot|$ denotes the Lebesgue measure.
	\end{cor}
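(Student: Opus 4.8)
\emph{Strategy.} The plan is to derive the statement from the quadratic convergence on average \eqref{eqthmquad}, together with the $C^1$ regularity of $L$, by a standard measure-theoretic argument. Fix $\lambda_0$ large enough that $1\ll L$ and $L\le c_0\log\lambda$ hold for $\lambda\ge\lambda_0$, and set
\[g(\mu):=\big|\Sigma^2(\mu,L(\mu),\rho)-\Sigma^2_{\mathrm{GOE/GUE}}\big|^2\qquad(\mu\ge\lambda_0),\]
which is bounded by part (ii) of Theorem \ref{mainthm}. I claim it suffices to prove that the Cesàro averages $\tfrac1\Lambda\int_{0}^{\Lambda}g(\mu)\,\mathrm d\mu$ tend to $0$ as $\Lambda\to+\infty$. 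Indeed, once this is known, one invokes the classical fact that a nonnegative, locally bounded function with vanishing Cesàro averages converges to $0$ along a set of density $1$: for each integer $k\ge1$ the level set $E_k:=\{\mu:g(\mu)>1/k\}$ has density $0$ by Markov's inequality ($|E_k\cap[0,\Lambda]|\le k\int_0^\Lambda g=o(\Lambda)$), and a diagonal construction — deleting from $(0,+\infty)$ a suitable union of "far" pieces of the $E_k$ — produces a single density-$1$ set $A$ on which $g\to0$, which is exactly the conclusion of the Corollary.

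\emph{From frozen $L$ to running $L$.} In \eqref{eqthmquad} the window parameter $L$ is frozen at the left endpoint of the averaging interval, whereas in the Corollary it is evaluated at the running point; reconciling the two is where the hypotheses $1\ll L\le c_0\log\lambda$ and $L'\ll L$ enter. Let $\varepsilon(\lambda):=\sup_{\mu\ge\lambda}|L'(\mu)|/L(\mu)$, so that $\varepsilon(\lambda)\to0$ (this is the content of $L'\ll L$), and choose an intermediate scale $\Lambda=\Lambda(\lambda)$ with $1\ll\Lambda(\lambda)$ but $\Lambda(\lambda)\varepsilon(\lambda)\to0$ — such a choice exists since $\varepsilon\to0$. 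For $\mu\in[\lambda,\lambda+\Lambda(\lambda)]$, writing $h(\mu)=|L(\mu)-L(\lambda)|\le\int_\lambda^\mu|L'|\le\varepsilon(\lambda)\int_\lambda^\mu\big(L(\lambda)+h\big)$ and applying Grönwall's inequality gives $h(\mu)\le L(\lambda)\,\varepsilon(\lambda)\Lambda(\lambda)\,e^{\varepsilon(\lambda)\Lambda(\lambda)}=o(L(\lambda))$. On the other hand, the explicit length-spectrum expression for $\Sigma^2(\cdot,\cdot,\rho)$ produced in the proof of Theorem \ref{mainthm} — a sum over primitive closed geodesics whose terms depend on $L$ only through the ratios $\ell(\gamma)/L$, with $\widehat\psi$ compactly supported — yields a regularity bound of the form $|\Sigma^2(\mu,L',\rho)-\Sigma^2(\mu,L,\rho)|=\mathcal O(|L'-L|/L)$ for $L,L'$ comparable and in the admissible range. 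Combining these, $\sup_{\mu\in[\lambda,\lambda+\Lambda(\lambda)]}|\Sigma^2(\mu,L(\mu),\rho)-\Sigma^2(\mu,L(\lambda),\rho)|\to0$; since all quantities are bounded, the inequality $|a-c|^2\le2|a-b|^2+2|b-c|^2$ together with \eqref{eqthmquad} (applicable because $\Lambda(\lambda)\gg1$) gives
\[\frac{1}{\Lambda(\lambda)}\int_{\lambda}^{\lambda+\Lambda(\lambda)}g(\mu)\,\mathrm d\mu\;\longrightarrow\;0\qquad\text{as }\lambda\to+\infty.\]

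\emph{From moving windows to Cesàro averages.} Define recursively $\lambda_{j+1}=\lambda_j+\Lambda(\lambda_j)$ starting from $\lambda_0$, so $\lambda_j\to+\infty$. For $T>\lambda_0$, letting $m$ be the index with $\lambda_m\le T<\lambda_{m+1}$,
\[\frac1T\int_{\lambda_0}^{T}g=\sum_{j=0}^{m-1}\frac{\Lambda(\lambda_j)}{T}\left(\frac{1}{\Lambda(\lambda_j)}\int_{\lambda_j}^{\lambda_{j+1}}g\right)+\frac1T\int_{\lambda_m}^{T}g,\]
where the weights $\Lambda(\lambda_j)/T$ are nonnegative and sum to $\le1$, and the last term is $\le(\sup g)\,\Lambda(\lambda_m)/T\to0$ since $\Lambda(\lambda)=o(\lambda)$. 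Each averaged quantity $\tfrac{1}{\Lambda(\lambda_j)}\int_{\lambda_j}^{\lambda_{j+1}}g$ tends to $0$ as $j\to+\infty$ by the previous step; splitting the sum at a fixed large index $J$ (the head being $\le(\sup g)\,\lambda_J/T\to0$ as $T\to+\infty$, the tail being controlled by $\sup_{j\ge J}$ of the averages) shows $\tfrac1T\int_{\lambda_0}^Tg\to0$, and adding back the negligible $O(1/T)$ contribution of $[0,\lambda_0]$ yields the required Cesàro statement. The Corollary follows.

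\emph{Main difficulty.} The crux is the passage from a frozen to a running window parameter: one must quantify the dependence of $\Sigma^2$ on $L$ — which means returning to its geometric (trace-formula) expression rather than treating the statement of Theorem \ref{mainthm} as a black box — and then tune the intermediate scale $\Lambda(\lambda)$ so that it is simultaneously large enough for \eqref{eqthmquad} to apply and short enough that $L$ varies by only $o(L)$ across each window.
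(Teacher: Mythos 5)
Your proposal is correct and follows essentially the same route as the paper: you freeze $L$ at the left endpoint, quantify the $L$-dependence of $\Sigma^2$ through the explicit geodesic sum (your asserted bound $\mathcal O(|L'-L|/L)$ is exactly what the paper obtains by integrating $|\partial_L\Sigma^2_{\rm osc}|\lesssim 1/L$), choose $\Lambda$ with $1\ll\Lambda$ and $\Lambda\cdot\sup|L'/L|\ll1$, apply item (iv) of Theorem \ref{mainthm}, and finish with the standard Ces\`aro/Markov density argument, which the paper leaves implicit as a "standard consequence" and you spell out. The only cosmetic points are that the Lipschitz-in-$L$ control really applies to the oscillating geodesic sum (the remaining $\mathcal O(L^{-1}+\lambda^{-1/2}\mathrm e^{CL})$ pieces being handled as uniform $o(1)$ errors), and that you should also impose $\Lambda(\lambda)=o(\lambda)$ in your choice of the intermediate scale, which is trivially compatible.
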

	It is already striking that  $ \lim_n \Sigma_n^2(\lambda,L,\rho)$ is finite. Indeed, the leading term in Weyl's Law for the smoothed counting function $N(\lambda,L)$ on $X_n$ is given by $n\times \frac{\mathrm{vol}(X)\lambda }{L} \widehat \psi(0)$, which goes to infinity as $n$ gets large. Still, the ensemble variance converges, which implies strong rigidity for the spectrum of covers of $X$. We make a few comments:
	
	\begin{itemize}\item The semiclassical approximation that we rely on only allows reaching $L=c_0\log \lambda$ above, while it is conjectured that universal statistics hold up to $L\ll \lambda$, that is when considering windows of width $\frac 1L$ that are large compared to the mean level spacing $\lambda^{-1}$. When working with $\Delta_g$ instead of $\sqrt{\Delta_g}$, this amounts to considering eigenvalues of $\Delta_g$ in windows $[E-W,E+W]$ in the regime $\frac{\sqrt{E}}{c \log E}\le W\ll \sqrt{E}$ for some other constant $c$, which is more or less the setting we discussed in the introduction, though the condition $W\ge \frac{\sqrt E}{c\log E}$ is way stronger than $W\gg 1$.
		

		\item The results of Naud hold for fixed $\lambda$, without averaging the ensemble variance over the energy $\lambda$. In variable curvature this cannot be done since the Gutzwiller trace formula only holds in the large $\lambda$ regime (see \S \ref{subsubsec: gutztrace}), and being in this high energy regime forces us to average the ensemble variance to recover GOE statistics, due to the presence of fast oscillating terms. However, this necessity isn't specific to surfaces of nonconstant curvature: if one tries to let $\lambda\to +\infty$ in Naud's proof, it also becomes necessary to perform an energy averaging to recover their result.
		
		

	\end{itemize}
	
	As in \cite{naud2022random}, we can include the case of higher dimensional representations, see \cite[Theorem 1.2]{naud2022random}.
	
	\begin{thm}\label{mainthm2} Let $\rho:\Gamma\to \mathbf G$ be a unitary representation of $\Gamma$, where $\mathbf G$ is a compact Lie subgroup of $\mathrm U(N)$ for some $N\ge 1$. Assume that $\rho(\Gamma)$ is dense in $\mathbf G$. Then all the statements of Theorem \ref{mainthm} hold true in this setting, by replacing $\Sigma_{\rm GOE/GUE}^2$ by the constant 
		\[\Sigma_{\mathbf G}^2:=\Big( \int_{\mathbf G} f(g)\mathrm dg\Big)\Big(\int_0^{+\infty} t\widehat \psi(t)^2\mathrm dt\Big),\]
		where $f(g)=\big(\operatorname{Tr}(g)+\overline{\operatorname{Tr}(g)}\big)^2$ and $\mathrm dg$ denotes the normalized Haar measure on $\mathbf G$. In particular, if $\mathbf G=\mathrm{SO}(N),\mathrm{Sp}(N)$ --- including $\mathrm{SU}(2)=\mathrm{Sp}(1)$ ---, then $\Sigma_{\mathbf G}=\Sigma_{\rm GOE}$, while for $\mathbf G=\mathrm{U}(N),\mathrm{SU}(N)$ for $N\ge 3$ we have $\Sigma_{\mathbf G}=\Sigma_{\rm GUE}$.
	\end{thm}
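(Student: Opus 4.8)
The plan is to follow the proof of Theorem~\ref{mainthm} essentially verbatim, isolate the unique place where the representation $\rho$ intervenes, and then supply the one new ingredient: equidistribution of $\{\rho(\gamma)\}_\gamma$ in $\mathbf G$ with respect to the Haar measure.

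First I would retrace how $\rho$ enters the proof of Theorem~\ref{mainthm}. After expanding $N_n(\lambda,L,\rho)$ by the Gutzwiller trace formula for $\sqrt{\Delta_{n,\rho}}$ (\S\ref{subsubsec: gutztrace}), computing the large-degree asymptotics of $\mathbf E_n$ of the relevant products of geodesic sums on the covers $X_n$, and letting $n\to+\infty$, one writes $\Sigma^2(\lambda,L,\rho)$ as a sum over pairs $(\gamma_1,\gamma_2)$ of closed geodesics of $X$ in which the representation appears only through a factor $\operatorname{Tr}(\rho(\gamma))+\overline{\operatorname{Tr}(\rho(\gamma))}$ attached to each geodesic (using that $\rho$ is unitary, so $\operatorname{Tr}(\rho(\gamma^{-1}))=\overline{\operatorname{Tr}(\rho(\gamma))}$). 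In the energy average of parts (iii)--(iv), the off-diagonal pairs $\ell(\gamma_1)\neq\ell(\gamma_2)$ produce oscillatory integrals $\int e^{i\mu(\ell(\gamma_1)-\ell(\gamma_2))}\,\mathrm d\mu$ that are negligible; this estimate only uses that the twisting factors are bounded by $2N$, hence is insensitive to the nature of $\rho$ and carries over unchanged. What remains is the diagonal $\gamma_2\in\{\gamma_1,\gamma_1^{-1}\}$, and summing the four resulting sign combinations replaces, for each geodesic $\gamma$, the scalar weight of the untwisted case by
\[
\big(\operatorname{Tr}(\rho(\gamma))+\overline{\operatorname{Tr}(\rho(\gamma))}\big)^{2}=f(\rho(\gamma)),
\]
multiplied by the very same geometric weight $w(\gamma)$ (built from $\ell(\gamma_0)$, $|\det(I-P_\gamma)|^{-1/2}$ and $\widehat\psi(\ell(\gamma)/L)^2$) as in Theorem~\ref{mainthm}, for which the untwisted value $f\equiv 4$ recovers $\Sigma_{\mathrm{GOE}}^2=4\int_0^{+\infty}t\widehat\psi(t)^2\,\mathrm dt$. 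Hence Theorem~\ref{mainthm2} reduces to showing that, with $L=L(\lambda)$ as in the statement and after the energy average,
\[
\sum_{\gamma}w(\gamma)\,f(\rho(\gamma))=\Big(\int_{\mathbf G}f(g)\,\mathrm dg\Big)\sum_{\gamma}w(\gamma)+o\Big(\sum_{\gamma}w(\gamma)\Big),
\]
together with the analogous quadratic and density-one versions; the bookkeeping of errors in the window $L\le c_0\log\lambda$, the treatment of non-primitive geodesics, and the derivation of the analogue of Corollary~\ref{cor : density} are identical to the untwisted case.

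To prove this equidistribution I would invoke Peter--Weyl. The function $f=(\chi_{\mathrm{std}}+\overline{\chi_{\mathrm{std}}})^2$, where $\chi_{\mathrm{std}}$ is the character of $\mathbf G\subset\mathrm U(N)$, is a finite combination of irreducible characters of $\mathbf G$; its trivial component has coefficient $\int_{\mathbf G}f(g)\,\mathrm dg$, which by Schur orthogonality equals $2+2\varepsilon$, where $\varepsilon\in\{0,1\}$ records whether $\mathrm{std}|_{\mathbf G}$ is self-dual. This gives $\int_{\mathbf G}f=4$ for $\mathbf G=\mathrm{SO}(N),\mathrm{Sp}(N)$ and $\int_{\mathbf G}f=2$ for $\mathbf G=\mathrm U(N),\mathrm{SU}(N)$ with $N\ge 3$, which is the asserted GOE/GUE dichotomy --- consistent with the fact that, by density of $\rho(\Gamma)$, the map $\operatorname{Tr}\circ\rho$ is real-valued exactly when $\mathrm{std}|_{\mathbf G}$ is self-dual. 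For each nontrivial irreducible $\pi$ of $\mathbf G$ occurring in $f$, the composite $\pi\circ\rho:\Gamma\to\mathrm U(V_\pi)$ is a nontrivial unitary representation of $\Gamma$ (triviality of $\pi\circ\rho$ would force $\pi$ trivial, as $\rho(\Gamma)$ is dense), so it remains to establish the twisted prime geodesic estimate
\[
\sum_{\gamma}w(\gamma)\operatorname{Tr}\big((\pi\circ\rho)(\gamma)\big)=o\Big(\sum_{\gamma}w(\gamma)\Big).
\]
I expect this to follow from the spectral theory of the geodesic flow: the transfer operator of the flow twisted by the nontrivial unitary local system $\pi\circ\rho$ --- equivalently, the twisted dynamical (Ruelle) zeta function with the relevant potential --- has spectral radius strictly below its untwisted value, since a leading eigenvector would descend to a flow-invariant $(\pi\circ\rho)$-equivariant section, impossible because the flow is topologically transitive and $\pi\circ\rho$ has no invariant vectors. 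Consequently there is no singularity at the leading pole $s=h_{\mathrm{top}}$ of the untwisted zeta, and a Tauberian argument yields the $o$-saving; since only finitely many $\pi$ occur, all fixed, this is uniform in $\lambda$.

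The main obstacle is really one of citation: having the twisted prime geodesic estimate available in variable curvature. In Naud's hyperbolic setting this is the analyticity of the Selberg zeta function attached to $\pi\circ\rho$; here one must instead invoke the meromorphic continuation of, and a zero-free neighbourhood of $s=h_{\mathrm{top}}$ for, the twisted Ruelle zeta function of the Anosov geodesic flow --- via symbolic dynamics together with Dolgopyat-type oscillatory-sum bounds, or via the anisotropic-space/resonance machinery --- with enough uniformity (a power saving suffices) that the error survives the regime $L\le c_0\log\lambda$. As $\pi\circ\rho$ is a fixed nontrivial representation, this twisted input is no stronger than the untwisted prime geodesic theorem already used for Theorem~\ref{mainthm}, so the constant $c_0$ is unchanged. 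Everything else --- the trace formula manipulations, the large-$n$ combinatorics, the off-diagonal estimates, and the passage from averaged convergence to convergence along a density-one set --- is taken over from the proofs of Theorem~\ref{mainthm} and Corollary~\ref{cor : density}.
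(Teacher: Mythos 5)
Your proposal follows essentially the same route as the paper: reduce to the diagonal terms weighted by $f(\rho(\gamma))=\big(\operatorname{Tr}\rho(\gamma)+\overline{\operatorname{Tr}\rho(\gamma)}\big)^2$, expand $f$ by Peter--Weyl, identify the trivial component with $\int_{\mathbf G}f(g)\,\mathrm dg\in\{2,4\}$ according to whether the standard character is real-valued, and annihilate the nontrivial characters by a twisted equidistribution estimate --- which is exactly the paper's Proposition \ref{prop : equidistribution Lie}, itself deduced from Theorem \ref{lem : equidistribution lemma}, already proven for arbitrary finite-dimensional unitary representations via the Jin--Zworski local trace formula together with the absence of resonances on the real axis when the representation has no invariant vector (your ``no invariant section'' argument). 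The only cosmetic slip is that the required input is not an unweighted twisted prime geodesic theorem tied to a zero-free region at $s=h_{\mathrm{top}}$, but the version weighted by $\ell(\gamma)^\sharp/|I-\mathcal P_\gamma|$, i.e.\ the resonance-at-$0$ spectral theory of the transfer operator on the flat bundle, which the paper has already supplied, so the ``citation obstacle'' you raise does not in fact arise.
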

	
	\begin{rem}Let us explain the denseness assumption on the image of the representation $\rho$.
		
		\begin{itemize}
			\item When $\rho$ takes values in a Lie group $\mathbf G$, the geodesic flow on $S^*X$ induces a flow on a $\mathbf G$-principal bundle $P\to S^*X$ defined as the quotient
			\[P=\Gamma\backslash (S^*\widetilde X\times \mathbf G), \qquad \gamma\cdot \big((x,\xi),g\big)=((\mathrm d\gamma^{\top})^{-1} (x,\xi), \rho(\gamma) g).\]
			By a result of Ceki{\'c}--Lefeuvre \cite[Example 4.2.6]{cekić2024semiclassicalanalysisprincipalbundles}, the representation $\rho$ has dense image in $\mathbf G$ if and only if the extension of the geodesic flow to $P$ is mixing with respect to the product measure $\nu_0\otimes \mathrm dg$ (here $\nu_0$ is the normalized Liouville measure on $S^*X$). Therefore, the denseness assumption is natural since we aim to study chaotic dynamics.
			\item Naud makes a different assumption on the representation $\rho$, namely that its image is Zariski-dense in $\mathbf G$ \cite[Theorem 1.2]{naud2022random}. We have preferred the above one as it relates naturally to the mixing properties of the extended dynamics. 
	\end{itemize}\end{rem}

	\subsubsection{Almost sure GOE fluctuations} In the following, to make the presentation simpler, we go back to the case where $\rho$ is Abelian.
	
	Consider the centered variable $\widetilde N_n=N_n-\mathbf E_n(N_n)$. We already studied the \emph{ensemble average} of $\widetilde N_n^2$ in Theorem \ref{mainthm}. Now we are interested, for an individual cover $X_n\to X$, in the \emph{energy average}
	\[\frac{1}{\Lambda}\int_\lambda^{\lambda+\Lambda} \widetilde N_n^2(\mu,L,\rho) \mathrm d\mu,\]
	where again $\Lambda=\Lambda(\lambda)\gg 1$. We show that in the high energy regime, with high probability as $n\to +\infty$, the \emph{energy average} of $\widetilde N_n^2$ is close to $\Sigma_{\mathrm{GOE/GUE}}^2$. This is an analogous result to \cite[Theorem 1.6]{Maoz2} in variable curvature. The version for the Weil--Petersson model was shown by Rudnick--Wigman \cite{rudnick2023sure}. 
	\begin{thm}[Almost sure GOE fluctuations]\label{thm:asGOEfluctuations} Let $\rho$ be an Abelian character of $\Gamma$. There is a small constant $c_0>0$ such that in the regime $1\ll L\le c_0\log \lambda$, and $\Lambda=\Lambda(\lambda)\gg 1$, for any $\varepsilon>0$,
		\[\underset{\lambda \to +\infty}\lim \underset{n\to +\infty}\limsup \ \mathbf P_n\left(\left|\frac{1}{\Lambda}\int_\lambda^{\lambda+\Lambda } \widetilde N_n^2(\mu,L,\rho)\mathrm d\mu-\Sigma_{\mathrm{GOE/GUE}}^2\right|>\varepsilon\right)=0,\]
		where the value of $\Sigma_{\mathrm{GOE/GUE}}^2$ depends on whether $\rho$ breaks time reversal symmetry. \end{thm}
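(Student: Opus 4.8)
The plan is to reduce, by Chebyshev's inequality, to an $L^2$-in-$\mathbf P_n$ bound for the energy average, to dispose of the resulting ``bias'' using Theorem~\ref{mainthm}, and then to control the genuine ensemble variance by expanding $\widetilde N_n$ with the Gutzwiller trace formula and invoking the large-degree asymptotics of the monodromy. Concretely, set $M_n=\frac1\Lambda\int_\lambda^{\lambda+\Lambda}\widetilde N_n^2(\mu,L,\rho)\,\mathrm d\mu$; by Chebyshev it suffices to show $\lim_{\lambda\to\infty}\limsup_{n\to\infty}\mathbf E_n[(M_n-\Sigma_{\mathrm{GOE/GUE}}^2)^2]=0$. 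Splitting $M_n-\Sigma_{\mathrm{GOE/GUE}}^2=(M_n-\mathbf E_nM_n)+(\mathbf E_nM_n-\Sigma_{\mathrm{GOE/GUE}}^2)$ and bounding the square by twice the sum of the squares, the second piece is handled by Fubini: $\mathbf E_nM_n=\frac1\Lambda\int_\lambda^{\lambda+\Lambda}\Sigma_n^2(\mu,L,\rho)\,\mathrm d\mu$, and the uniform-in-$n$ bounds on $\Sigma_n^2$ obtained in the course of proving Theorem~\ref{mainthm}(i) let us pass to the limit under the integral; since $\Lambda\gg1$ and $L\to\infty$ force $\Lambda\gg L^{-1}$, Theorem~\ref{mainthm}(iii) with $\delta=\Lambda$ then gives $\lim_n\mathbf E_nM_n=\frac1\Lambda\int_\lambda^{\lambda+\Lambda}\Sigma^2(\mu,L,\rho)\,\mathrm d\mu\to\Sigma_{\mathrm{GOE/GUE}}^2$ as $\lambda\to\infty$. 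One is thus reduced to showing that $\operatorname{Var}_{\mathbf P_n}(M_n)=\frac1{\Lambda^2}\int_\lambda^{\lambda+\Lambda}\!\int_\lambda^{\lambda+\Lambda}\operatorname{Cov}_n(\widetilde N_n^2(\mu_1),\widetilde N_n^2(\mu_2))\,\mathrm d\mu_1\,\mathrm d\mu_2\to0$ in the iterated limit.

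For the covariance I would use the same trace-formula expansion as in the proof of Theorem~\ref{mainthm}: up to an error uniformly negligible (in $n$ and in $\mu$) in the regime $L\le c_0\log\lambda$, one has $\widetilde N_n(\mu)=\sum_\gamma c_\gamma(\mu)\,Y_\gamma$, a finite sum over oriented closed geodesics $\gamma$ of $X$ of length $\lesssim L$, with $c_\gamma(\mu)=e^{i\ell_\gamma\mu}\,\frac{\ell_{\gamma_0}}{L\,|\det(I-P_\gamma)|^{1/2}}\,\widehat\psi(\ell_\gamma/L)$ and $Y_\gamma$ the centred twisted monodromy weight attached to $\gamma$. Expanding the square, $\operatorname{Cov}_n(\widetilde N_n^2(\mu_1),\widetilde N_n^2(\mu_2))$ becomes a sum over quadruples $(\gamma_1,\ldots,\gamma_4)$ of $c_{\gamma_1}(\mu_1)c_{\gamma_2}(\mu_1)c_{\gamma_3}(\mu_2)c_{\gamma_4}(\mu_2)$ times the joint fourth moment of $(Y_{\gamma_1},\ldots,Y_{\gamma_4})$ minus the disconnected product. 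The crucial input — the one underlying Theorem~\ref{mainthm}, ultimately coming from the large-degree asymptotics of Magee--Naud--Puder — is that monodromies along distinct primitive closed geodesics become asymptotically independent as $n\to\infty$, with explicit limiting joint moments. In the limit the fourth moment therefore factorises over the pairings of the underlying primitive classes: the pairing grouping $\{\gamma_1,\gamma_2\}$ and $\{\gamma_3,\gamma_4\}$ reconstructs $\mathbf E_n[\widetilde N_n^2(\mu_1)]\,\mathbf E_n[\widetilde N_n^2(\mu_2)]$, which cancels against the disconnected product; the two crossed pairings give $2V(\mu_1,\mu_2)^2+o(1)$ with $V(\mu_1,\mu_2):=\lim_n\operatorname{Cov}_n(\widetilde N_n(\mu_1),\widetilde N_n(\mu_2))$; and the fully connected part, in which all four geodesics are powers of a single primitive class, leaves a non-Gaussian remainder $R(\mu_1,\mu_2)$.

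It remains to prove $\frac1{\Lambda^2}\int\!\!\int(2V(\mu_1,\mu_2)^2+R(\mu_1,\mu_2))\,\mathrm d\mu_1\,\mathrm d\mu_2\to0$ as $\lambda\to\infty$. After the expansion both integrands are trigonometric sums in $\mu_1,\mu_2$ with frequencies $k\ell_{\gamma_0}$ ($\gamma_0$ primitive, $\ell_{\gamma_0}\lesssim L$) and coefficients bounded by $c_\gamma^2c_{\gamma'}^2$, so integrating each of $\mu_1,\mu_2$ over a window of length $\Lambda$ produces a factor $O(\min(1,(\Lambda|k\ell_{\gamma_0}-k'\ell_{\gamma_0'}|)^{-1}))$ and only frequency coincidences survive. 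For $R$ the four geodesics sit over a single primitive class, so a coincidence forces equality of the exponents and the surviving contribution is $\lesssim\sum_{\gamma_0}(\sum_k c_{\gamma_0^k}^2)^2=O(L^{-2})$, using $\sum_\gamma c_\gamma^2=O(1)$ (essentially the bound $\Sigma^2=O(1)$ of Theorem~\ref{mainthm}(ii)) and $\max_\gamma c_\gamma^2=O(L^{-2})$ (since $\ell^2/|\det(I-P_\gamma)|$ is uniformly bounded on the relevant range). The crossed term $\frac1{\Lambda^2}\int\!\!\int V^2$ splits into a diagonal part $\sum_{\ell_\gamma=\ell_{\gamma'}}c_\gamma^2c_{\gamma'}^2(\cdots)$, again $O(L^{-2})$ because the total trace-formula weight carried by the geodesics of any fixed length is $O(L^{-2})$, plus an off-diagonal part weighted by $(\Lambda|\ell_\gamma-\ell_{\gamma'}|)^{-2}$ to be summed dyadically in the length gap. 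The hard part will be this last step: making the $\mu_1,\mu_2$-averaging genuinely effective requires quantitative control of near-coincidences in the length spectrum of $X$ together with the exponential decay of $|\det(I-P_\gamma)|^{-1/2}$ — that is, the prime-geodesic-counting and trace-formula-remainder estimates for the Anosov geodesic flow on $X$ developed in the proof of Theorem~\ref{mainthm}, which here must be used in quantitative form rather than cited as a black box. Granting these bounds one obtains $\limsup_n\operatorname{Var}_{\mathbf P_n}(M_n)=O(L^{-2})+O(\Lambda^{-1})$, which closes the argument.
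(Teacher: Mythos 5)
Your overall architecture is close to the paper's: a Chebyshev/Markov reduction, the bias term disposed of by Theorem \ref{mainthm}, and the remaining variance expanded into a quadruple sum over closed geodesics in which the disconnected pairing cancels, the fully connected part is small, and the crossed pairings are the delicate piece. (The paper organizes this slightly differently: it first passes to the $n\to+\infty$ limit in distribution, using Maoz's asymptotic independence of the cycle counts, so that all estimates are performed on the explicit limiting object $\widetilde N_\infty$ built from independent Poisson variables, and then applies Markov plus Cauchy--Schwarz; your finite-$n$ fourth-moment route needs the same joint-moment input, uniformly over the $\mathcal O(\mathrm e^{CL})$ geodesics involved, plus a justification for exchanging the $n$-limit with the energy integral, but these are not the essential obstructions.)

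The genuine gap is in the crossed term, which is precisely the heart of the proof and which you leave as an assumption (``granting these bounds''). Worse, the route you sketch --- splitting off the exactly-equal-length diagonal and then summing the off-diagonal part dyadically in the length gap with weight $(\Lambda|\ell_\gamma-\ell_{\gamma'}|)^{-2}$ --- cannot be carried out with known inputs: it requires counting pairs of closed geodesics with length gap in each dyadic range down to exponentially small scales, i.e.\ pair-correlation statistics of the length spectrum, which the paper explicitly points out are not available (\S\ref{subsubsec:difficulties}). The workable argument, used in the paper, never resolves gaps below a fixed constant: pairs with $|\ell(\gamma)-\ell(\gamma')|$ larger than a constant are killed by the $\mu,\mu'$-averaging alone (total contribution $\mathcal O(\Lambda^{-1})$, since the double sum of coefficients is $\mathcal O(1)$), while \emph{all} pairs with gap below that constant are bounded in absolute value using the cluster estimate of Lemma \ref{lem: size clusters} --- $\sum_{|\ell(\gamma')-T|\le 1}\ell(\gamma')^\sharp/|I-\mathcal P_{\gamma'}|=\mathcal O(1)$ uniformly in $T$, a consequence of the Jin--Zworski local trace formula --- which yields $\mathcal O(L^{-1})$ after summing in $\gamma$. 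Relatedly, your claim that ``the total trace-formula weight carried by the geodesics of any fixed length is $O(L^{-2})$'' is false: only the weight of an individual geodesic is $\mathcal O(L^{-2})$; a unit window of lengths near $T\le L$ carries weight of order $T/L^{2}=\mathcal O(L^{-1})$, so the diagonal/near-diagonal contribution is $\mathcal O(L^{-1})$, not $\mathcal O(L^{-2})$ (this still tends to $0$, and matches the paper's final rate $\mathcal O(\sqrt{L^{-1}+\Lambda^{-1}})$ after Cauchy--Schwarz, but your stated justification does not hold). So the proposal identifies the right structure but does not actually prove the decisive estimate, and the method it proposes for it would not go through.
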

	
	As we will see in the proof, we can allow $\varepsilon$ to depend on $\lambda$ and take $\varepsilon=\varepsilon(\lambda)$ such that $\varepsilon^2\gg (L^{-1}+\Lambda^{-1})$.
	
	\begin{rem} On the one hand, by Corollary \ref{cor : density}, for almost any high energy level $\lambda$, in the large $n$ limit, the \emph{ensemble average} of $\widetilde N_n(\lambda,L)^2$ is close to $\Sigma_{\mathrm{GOE/GUE}}^2$. On the other hand, Theorem \ref{thm:asGOEfluctuations} tells us that in the high energy regime, in the large $n$ limit, for almost any cover $X_n\to X$, the \emph{energy average} of $\widetilde N_n(\mu,L)^2$ over $[\lambda,\lambda+\Lambda]$ is close to $\Sigma_{\mathrm{GOE/GUE}}^2$. Combining these two facts shows that in the high energy regime, with high probability as $n \to +\infty$, the energy average of $\widetilde N_n^2$ for an individual cover approaches the ensemble average of $\widetilde N_n^2$. This feature is called \emph{ergodicity} in Random Matrix Theory. With this perspective, as noted by Ullmo \cite{Ullmo2016BohigasGiannoniSchmitC}, exceptional surfaces for which BGS fails may just occur as a subset of measure $0$ of an ensemble of systems under consideration, which would be coherent with the behavior observed for the Gaussian Ensembles of matrices.
	\end{rem}
	
	\subsubsection{The Central Limit Theorem for the counting function on random covers} The second moment $\mathbf E_n[\widetilde N_n^2]$ is by now quite well understood by Theorem \ref{mainthm}. Actually, we can show that moments of higher order converge to that of a Gaussian, implying a central limit theorem for $\widetilde N_n$. An analogous version was proven in constant curvature by Maoz \cite{Maoz2}, whereas the version for the Weil--Petersson random model was obtained by Rudnick--Wigman \cite{rudnick2023central}.
	\begin{thm}[Central limit theorem for $\widetilde N_n(\lambda,L)$]\label{thm : CLT}Let $\rho$ be an Abelian character of $\Gamma$. Take two sequences $(L_j), (\lambda_j)$ such that $L_j\le c_0 \log \lambda_j$, and $L_j\to +\infty$. Assume $\Sigma^2(\lambda_j,L_j)\gg \frac{1}{L_j^2}$. Then, for any bounded continuous function $h$, 
		\[\underset{j\to +\infty}\lim \left[  \underset{n\to +\infty}\lim \mathbf E_n\left(h\left(\frac{\widetilde N_n(\lambda_j,L_j,\rho)}{\Sigma(\lambda_j,L_j,\rho)}\right)\right)\right]=\frac{1}{\sqrt{2\pi}}\int_{\mathbf R} h(t)\mathrm{e}^{-\frac 12t^2}\mathrm dt.\]
	\end{thm}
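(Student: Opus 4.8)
The plan is to establish the CLT by the method of moments: show that for every integer $k \ge 1$, the normalized moments $\mathbf E_n\big((\widetilde N_n/\Sigma_n)^k\big)$ converge, in the iterated limit $n\to\infty$ then $j\to\infty$, to the $k$-th moment of a standard Gaussian. Since a Gaussian is determined by its moments, this yields convergence in distribution, hence $\mathbf E_n(h(\widetilde N_n/\Sigma))\to \frac{1}{\sqrt{2\pi}}\int h(t)e^{-t^2/2}\,dt$ for bounded continuous $h$ by a standard approximation argument (truncate $h$, approximate by polynomials on a large interval, control the tails using uniform bounds on, say, the fourth moment). The starting point is the Gutzwiller (or, in constant curvature, Selberg) trace formula for the twisted Laplacian $\Delta_{n,\rho}$ on the cover $X_n$: after smoothing against $\psi$ and subtracting the $n$-dependent Weyl term, $\widetilde N_n(\lambda,L,\rho)$ is expressed, up to controlled error, as a sum over closed geodesics $\gamma$ of $X$ weighted by an oscillatory factor, the length $\ell(\gamma)$, a dynamical amplitude, and crucially the \emph{fluctuating} coefficient $\big(\operatorname{Tr}\rho_n(\gamma) - \mathbf E_n[\operatorname{Tr}\rho_n(\gamma)]\big)$, where $\rho_n$ is the monodromy-twisted representation on the random cover. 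The support condition $L \le c_0\log\lambda$ ensures (via the exponential growth of closed geodesics, the prime geodesic theorem, and $\widehat\psi$ compactly supported) that only geodesics of length $\lesssim L$ — hence length $\lesssim c_0\log\lambda$, a "subpolynomial" window — contribute.

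The heart of the matter is computing the limiting higher moments of this random geodesic sum as $n\to\infty$. Here I would import the probabilistic asymptotics for the model of random covers in the large-degree limit à la Magee--Naud--Puder: for a fixed word $\gamma$ (and more generally for tuples $\gamma_1,\dots,\gamma_k$), one has precise expansions for $\mathbf E_n\big[\prod_i (\operatorname{Tr}\rho_n(\gamma_i) - \mathbf E_n \operatorname{Tr}\rho_n(\gamma_i))\big]$ in powers of $1/n$. The leading contribution survives only for certain "pairings" of the $\gamma_i$ — essentially when the geodesics cancel in pairs $\gamma_i \sim \gamma_j^{\pm 1}$ — reproducing exactly the Wick/Isserlis combinatorics of Gaussian moments: odd moments vanish, and the $2m$-th moment is $(2m-1)!! = (2m)!/(2^m m!)$ times the $m$-th power of the variance. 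The contribution of each such pairing, after the $n\to\infty$ limit, is a double sum over closed geodesics that, by the same computation underlying Theorem \ref{mainthm}, converges (after the $j\to\infty$ limit, using the diagonal approximation and the fact that off-diagonal terms are negligible because $\widehat\psi$ has support in $[-1,1]$ and $L_j\to\infty$) to $\Sigma^2(\lambda_j,L_j,\rho)$. Normalizing by $\Sigma(\lambda_j,L_j,\rho)^k$ and using the hypothesis $\Sigma^2 \gg L_j^{-2}$ to absorb all the trace-formula error terms (which are $O(L_j^{-2})$ relative to the main term), the $2m$-th normalized moment tends to $(2m-1)!!$ and the odd moments to $0$.

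The main obstacle I anticipate is the \emph{uniform control of the combinatorial error terms across all moments simultaneously}. For a fixed moment order $k$ this is bookkeeping, but to run the method of moments one needs, for each fixed $k$, that the subleading terms in the $1/n$-expansion of the mixed trace moments — as well as the off-diagonal geodesic contributions and the trace-formula remainders — are negligible after normalization; the number of competing pairings and the size of the geodesic sums both grow with $k$, and one must check the hypothesis $\Sigma^2 \gg L_j^{-2}$ still suffices (it does, since the error is $O_k(L_j^{-2})$ with a $k$-dependent but $j$-independent constant, and $k$ is fixed before $j\to\infty$). A secondary technical point is verifying that non-pair-partition contributions — e.g. three geodesics whose product of monodromies has nonvanishing expectation — are genuinely lower order; this is where the denseness/mixing hypothesis on $\rho$ (here: $\rho^2\ne 1$ in the symmetry-breaking case, or $\rho$ real otherwise) enters to pin down which products $\operatorname{Tr}\rho_n(\gamma_1\cdots\gamma_r)$ have a nonzero limiting mean, and one must confirm these are compatible only with the pair-partition structure at leading order. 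Once these are in hand, assembling the moments into the Gaussian characteristic function and concluding via the moment problem is routine.
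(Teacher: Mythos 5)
There is a genuine gap at the heart of your moment computation: you assert that, in the large $n$ limit, the mixed moments $\mathbf E_n\big[\prod_i(\operatorname{Tr}\rho_n(\gamma_i)-\mathbf E_n\operatorname{Tr}\rho_n(\gamma_i))\big]$ are governed by pair-partition (Wick) combinatorics, so that odd moments vanish and the $2m$-th moment is $(2m-1)!!\,\Sigma^{2m}$ already after the $n\to\infty$ limit. This is false for the random cover model. The fluctuating coefficients are built from fixed-point counts $F_n(\gamma^k)=\sum_{d\mid k} dC_n(\gamma,d)$, and the cycle counts $C_n(\gamma,d)$ converge jointly (with all moments) to \emph{independent Poisson} variables of mean $1/d$, not to Gaussians; this is exactly Theorem \ref{thm:convergence en distribution} and Proposition \ref{prop:limit distribution}. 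Consequently the limit of $\widetilde N_n(\lambda,L)$ at fixed $(\lambda,L)$ is a linear combination of independent \emph{centered Poisson} variables, whose higher cumulants do not vanish: for instance $\lim_n\mathbf E_n[\widetilde F_n(\gamma)^3]=1\neq 0$, so configurations in which the same primitive geodesic appears three (or more) times contribute at leading order in $1/n$ and are not killed by any pairing argument, nor by the hypothesis on $\rho$ (which plays no role here — the theorem is for an Abelian character, and the denseness/mixing condition you invoke is irrelevant to this point). If you ran your scheme literally, the third normalized moment would not tend to $0$ for the reason you give.

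The Gaussianity in fact only emerges in the second limit $j\to\infty$, and the mechanism is analytic rather than probabilistic: the non-pairing (Poissonian) contributions to the $m$-th cumulant are diagonal sums of the form $L^{-m}\sum_{\ell(\gamma)\le L}\ell(\gamma)^m|I-\mathcal P_\gamma|^{-m/2}$, which are $\mathcal O(L^{-m})$ for $m\ge 3$ because the hyperbolic weights $|I-\mathcal P_\gamma|^{-m/2}$ make the geodesic sum uniformly bounded; this is Proposition \ref{prop: cumulants goes to 0} in the paper. Only then does the hypothesis $\Sigma^2(\lambda_j,L_j)\gg L_j^{-2}$ enter, guaranteeing $\kappa_m/\Sigma^m=\mathcal O(L^{-m}/\Sigma^m)\to 0$ for $m\ge 3$, hence convergence of the reduced variable to $\mathcal N(0,1)$. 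So the structure of a correct proof is: (i) take $n\to\infty$ first and identify the explicit Poissonian limit law (Maoz's asymptotic independence of cycle counts, not a $1/n$-expansion with Wick pairings), and (ii) show its cumulants (or, equivalently, its non-Gaussian moment corrections) of order $\ge 3$ are $o(\Sigma^m)$ in the regime $L_j\to\infty$. Your proposal conflates step (ii) with a purported Gaussian structure already present at step (i), and as written would not close.
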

	
	Our statement is more intricate than those in \cite{Maoz2,rudnick2023central}. Indeed, since we have to let $\lambda\to +\infty$, the value of $\Sigma^2(\lambda,L)$ may fluctuate in the whole interval $[0,2\Sigma^2_{\rm GOE/GUE}]$ and need not converge to $\Sigma_{\rm GOE/GUE}^2$. Our proof allows us to get the CLT under the condition $\Sigma^2(\lambda,L)\gg L^{-2}$. Actually, by Corollary \ref{cor : density} we may take a reasonable function $L=L(\lambda)$ with $1\ll L\le c_0\log \lambda$, such that $\Sigma^2(\lambda,L)\to \Sigma^2_{\rm GOE/GUE}$ along a subset $A$ of density $1$, in which case we have
	\[\underset{\lambda\in A\to +\infty}\lim \left[  \underset{n\to +\infty}\lim \mathbf E_n\left(h\left(\frac{\widetilde N_n(\lambda,L,\rho)}{\Sigma_{\rm GOE/GUE}}\right)\right)\right]=\frac{1}{\sqrt{2\pi}}\int_{\mathbf R} h(t)\mathrm{e}^{-\frac 12t^2}\mathrm dt.\]

	\subsection{Outline of the proof} \label{subsec:strategie}	Let us sketch the proof of Theorem \ref{mainthm} in the case of the scalar Laplacian. Similar techniques will be used in the proofs of Theorems \ref{thm:asGOEfluctuations} and \ref{thm : CLT}, along with probabilistic results of Maoz \cite{Maoz1} for the model of random covers. Again, let $(X,g)$ be a compact, negatively curved surface, denote by $\widetilde X$ its universal cover, and $\Gamma$ the group of deck transformations of this cover, so that $X=\Gamma\backslash \widetilde X$. Observe that $\Gamma$ can be identified with the fundamental group of $X$.
	
	\subsubsection{The wave trace and the Gutzwiller trace formula}\label{subsubsec: gutztrace}
	Following the seminal work of Berry \cite{Berry1}, fluctuations of the density of energy levels have been studied by using the Gutzwiller trace formula, relating the distribution of energy levels of the system to the periodic orbits of its classical limit. Let us describe it for the square root of the positive Laplacian on $(X,g)$.
	
	Denote $0=\lambda_1\le \lambda_2\le \ldots$ the square roots of the eigenvalues of the positive Laplacian $\Delta_X$. The spectral distribution $\sum_j \delta(\lambda-\lambda_j)$ is nothing but the Fourier transform of the wave trace 
	\begin{equation}\label{eq:wavetrace}\operatorname{Tr} \mathrm{e}^{-\mathrm it\sqrt{\Delta}}:=\sum \mathrm{e}^{-\mathrm i\lambda_jt}.\end{equation} Here $\mathrm{e}^{-\mathrm it\sqrt{\Delta}}$ is the unitary propagator associated to the half-wave operator $-\mathrm i\partial_t +\sqrt{\Delta}$. In the mathematical literature, the description of the singularities of $\operatorname{Tr} \mathrm{e}^{-\mathrm it\sqrt{\Delta}}$ goes back to the works of Chazarain \cite{Chazarain1974} and Duistermaat--Guillemin \cite{Duistermaat1975} --- see also the work of Colin de Verdière \cite{ColindeVerdière1973} on the heat equation.
	For a negatively curved surface, the singular support of the wave trace is given by $\mathcal L\cup \{0\}$, where $\mathcal L$ is the set of lengths of closed geodesics.
	The leading term of the singularity at $\ell(\gamma)$ is given by
	\[\frac{1}{2\pi}\frac{\ell(\gamma)^{\sharp} }{|\det(I-\mathcal P_\gamma)|}(t-\ell(\gamma)+\mathrm i0)^{-1}.\] 
	Here, $\ell(\gamma)$ denotes the period of a closed geodesic $\gamma$, $\ell(\gamma)^\sharp$ its primitive period, and $\mathcal P_\gamma$ is the linearized Poincaré map. The function $\psi$ involved in the definition of $N(\lambda,L)$ is real and even, thus, recalling \eqref{eq:wavetrace}, we may write
	\begin{equation}\label{eq:N as wave}N(\lambda,L)=\frac 2L\big\langle \operatorname{Tr} \cos(t\sqrt{\Delta}),\cos(\lambda t)\widehat \psi(t/L)\big\rangle_{\mathcal S'(\mathbf R),\mathcal S(\mathbf R)}.\end{equation} 
	Here $\cos(t\sqrt{\Delta})$ simply denotes the real part of $\mathrm{e}^{-\mathrm it\sqrt{\Delta}}$. We will allow $L$ going to $+\infty$, therefore we need a quantitative version of the Duistermaat--Guillemin trace formula that holds for long times (thus involving an exponentially increasing number of closed geodesics), with adequate control on the error term. Such a formula was used by Jakobson--Polterovitch--Toth \cite{JakobsonWeyl2007} to get a lower bound on the remainder in Weyl's law. We state it for twisted Laplacians.
	
	\begin{prop}[Trace formula for the twisted Laplacian] \label{trace formula for long times} Let $\rho:\Gamma\to \mathrm{GL}(V_{\rho})$ be a finite dimensional unitary representation of $\Gamma$, and denote by $\chi_\rho$ the associated character. Since oriented closed geodesics are in bijection with conjugacy classes of $\Gamma$, we can consider $\chi_\rho(\gamma)$ for $\gamma$ in the set of oriented closed geodesics $\mathcal G$. Then, there is some constant $C$ depending on $X,\psi$ such that
		\begin{equation}\label{eq : trace intro}\begin{array}{l} \big\langle \operatorname{Tr} \cos(t\sqrt{\Delta_{\rho}}), \cos(\lambda t)\widehat \psi(t/L)\big\rangle\displaystyle = \dim(V_\rho)\Big( \frac 12\lambda\mathrm{vol}(X)\widehat \psi(0)+\mathcal O\big(\lambda^{-1} \mathrm{e}^{CL}\big)\Big)\\ \qquad \qquad \qquad  \displaystyle +\sum_{\gamma\in \mathcal G} \chi_\rho(\gamma)\left(\cos(\lambda \ell(\gamma))\widehat\psi\Big(\frac{\ell(\gamma)}{L}\Big)\frac{\ell(\gamma)^{\sharp}}{|I-\mathcal P_\gamma|^{\frac 12}} +\mathbf 1_{\ell(\gamma)\le L}\mathcal O\big(\lambda^{-\frac 12}\mathrm{e}^{CL}\big)\right).  \end{array}\end{equation}
		
		Here and from now on we denote shortly $|I-\mathcal P_\gamma|=|\det(I-\mathcal P_\gamma)|$. The error terms $\mathcal O(\lambda^{-\frac 12}\mathrm{e}^{CL})$ are functions that do not depend on the representation $\rho$ (but depend on $\psi$). Also, bounds on the error terms are uniform with respect to $\gamma\in \mathcal G$.
	\end{prop}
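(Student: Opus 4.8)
The plan is to reduce the twisted wave trace on $X$ to a scalar, long‑time computation on the universal cover $\widetilde X$, feeding into it a quantitative version of the Chazarain--Duistermaat--Guillemin parametrix \cite{Chazarain1974,Duistermaat1975} valid for long times, in the spirit of Jakobson--Polterovich--Toth \cite{JakobsonWeyl2007}. First I would unfold on $\widetilde X$: since $\widetilde X$ is simply connected the flat bundle $\widetilde X\times_\rho V_\rho$ trivializes over it, so the Schwartz kernel of $\cos(t\sqrt{\Delta_\rho})$ is recovered from the scalar wave kernel $w(t,x,y)$ of $\cos(t\sqrt{\Delta_{\widetilde X}})$ via the averaging formula $K_\rho(t,x,y)=\sum_{\gamma\in\Gamma}\rho(\gamma)\,w(t,\widetilde x,\gamma\widetilde y)$. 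Taking fibrewise traces and integrating over a fundamental domain $\mathcal F$ gives, as an identity of distributions in $t$,
\[
\operatorname{Tr}\cos(t\sqrt{\Delta_\rho})=\dim(V_\rho)\int_{\mathcal F}w(t,x,x)\,\mathrm dx+\sum_{[\gamma]\neq[e]}\chi_\rho(\gamma)\int_{Z(\gamma)\backslash\widetilde X}w(t,x,\gamma x)\,\mathrm dx,
\]
the second sum running over nontrivial conjugacy classes of $\Gamma$ (equivalently over $\mathcal G$), with $Z(\gamma)$ the cyclic centralizer of $\gamma$. Since $\widehat\psi$ has compact support, pairing with $\cos(\lambda t)\widehat\psi(t/L)$ only involves times $|t|\lesssim L$ --- this is where the factors $\mathrm e^{CL}$ come from --- and the only $\rho$‑dependence is carried by the scalars $\chi_\rho(\gamma)$, so it suffices to prove the stated estimates for the scalar quantities on $\widetilde X$, uniformly in $\gamma$; in particular the error terms will not depend on $\rho$.

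For the identity term I would use that on a negatively curved surface $\widetilde X$ has no conjugate points, so $t\mapsto\int_{\mathcal F}w(t,x,x)\,\mathrm dx$ is singular only at $t=0$, and away from $0$ is a smooth function which, on $|t|\le L$, satisfies derivative bounds $O(\mathrm e^{C_k|t|})$ coming from the exponential growth of Jacobi fields --- this is precisely the content of the long‑time parametrix of \cite{JakobsonWeyl2007}. Pairing with $\cos(\lambda t)\widehat\psi(t/L)$: the singularity at $t=0$ yields the Weyl term $\tfrac12\lambda\,\mathrm{vol}(X)\widehat\psi(0)$ by the standard stationary‑phase expansion, where --- using that $\psi$ is even and $X$ boundaryless --- the first correction is already $O(\lambda^{-1})$; the remaining smooth part of the wave trace, supported in $|t|\le L$ with $L^\infty$‑norm $O(\mathrm e^{CL})$, integrated by parts once against $\cos(\lambda t)$ (each derivative costing at most a factor $L\mathrm e^{CL}$, absorbed into $C$), contributes $O(\lambda^{-1}\mathrm e^{CL})$. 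Multiplying by $\dim(V_\rho)=\chi_\rho(e)$ gives the first line of \eqref{eq : trace intro}.

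For the closed‑geodesic terms, fix a nontrivial $[\gamma]$. On the cylinder $Z(\gamma)\backslash\widetilde X$ the displacement function $x\mapsto d(x,\gamma x)$ attains its minimum $\ell(\gamma)$ exactly along the projected axis of $\gamma$ and, by negative curvature, is transversally nondegenerate there, the transverse Hessian being governed by the linearized Poincaré map $\mathcal P_\gamma$; since there are no conjugate points, $w(t,x,\gamma x)$ is singular in $t$ only at $t=\pm d(x,\gamma x)$, so $\int_{Z(\gamma)\backslash\widetilde X}w(t,x,\gamma x)\,\mathrm dx$ has its first singularity at $t=\ell(\gamma)$, localized near the axis. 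Inserting the parametrix and performing stationary phase in the direction transverse to the axis turns this into the simple pole $\tfrac{\ell(\gamma)^\sharp}{2\pi|I-\mathcal P_\gamma|^{1/2}}(t-\ell(\gamma)+\mathrm i0)^{-1}$ --- with $|I-\mathcal P_\gamma|^{-1/2}$ the transverse Hessian determinant and $\ell(\gamma)^\sharp$ arising from integrating the constant leading amplitude along the axis --- and pairing it with $\cos(\lambda t)\widehat\psi(t/L)$ produces the leading term $\cos(\lambda\ell(\gamma))\,\widehat\psi(\ell(\gamma)/L)\,\ell(\gamma)^\sharp/|I-\mathcal P_\gamma|^{1/2}$, the factor $\widehat\psi(\ell(\gamma)/L)$ forcing the indicator $\mathbf 1_{\ell(\gamma)\le L}$ on the remainder. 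The lower‑order terms of the parametrix expansion are bounded, uniformly in $\gamma$ (the transverse Hessian being bounded below along the axis and curvature and its derivatives being bounded on $X$), by $O(\lambda^{-1/2}\mathrm e^{CL})$, the $\mathrm e^{CL}$ again supplied by the amplitude estimates of \cite{JakobsonWeyl2007}; the exponent $-\tfrac12$ is a convenient, non‑optimal bound. Summing over $[\gamma]$ gives the second line of \eqref{eq : trace intro}.

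The main obstacle is entirely the \emph{uniform long‑time control of the parametrix}: one must verify that the Hörmander construction for $\cos(t\sqrt{\Delta_{\widetilde X}})$ remains valid, with a genuinely smooth remainder, for times $|t|\le L$ with $L\asymp c_0\log\lambda$, and that all amplitudes together with their $t$‑ and frequency‑derivatives obey bounds $O(\mathrm e^{C|t|}\langle\theta\rangle^{m})$ with constants uniform over $X$ and over $\gamma$. This is where the geometry is used crucially --- absence of conjugate points makes the parametrix global, and Anosov behaviour of the geodesic flow controls Jacobi fields and the determinants $|I-\mathcal P_\gamma|$ --- and it is in essence the estimate of \cite{JakobsonWeyl2007}, which I would quote and then thread through the unfolding above and the two stationary‑phase computations. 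It is precisely these exponential‑in‑$L$ error bounds, together with the $O(\mathrm e^{hL})$ count of closed geodesics of length $\le L$, that will later pin down the admissible range $1\ll L\le c_0\log\lambda$.
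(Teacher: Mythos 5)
Your proposal follows essentially the same route as the paper: unfolding the twisted wave trace over conjugacy classes into integrals over the cylinders $\Gamma_\gamma\backslash\widetilde X$, invoking a long-time parametrix on the universal cover with exponentially controlled ($\mathrm e^{CL}$) remainders, extracting the Weyl term from the singularity at $t=0$, and performing stationary phase transverse to the axis of $\gamma$ with the Hessian of the displacement function expressed through $|I-\mathcal P_\gamma|$, exactly as in the paper's \S 4. The only cosmetic difference is that you quote the Jakobson--Polterovich--Toth long-time construction where the paper works with the Hadamard parametrix lifted to $\widetilde X$ (with B\'erard/Sogge-type bounds on $\widetilde R_N$), which is the same technical input in a different packaging.
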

	
	The first term of the right-hand side of \eqref{eq : trace intro}, often called the \emph{smooth part}, depends only on the volume of $X$, modulo the error term $\mathcal O(\lambda^{-1}\mathrm{e}^{CL})$. It yields the leading term in Weyl's law. Actually, if we restrict ourselves to the study of hyperbolic surfaces of genus $g$ --- that share the same volume $4\pi(g-1)$ ---, or to the set of $n$-sheeted covers of a fixed surface $X$, the smooth part does not depend on the surface under consideration. The second term, often called the \emph{oscillating part}, depends on the surface; it carries geometrical data of closed geodesics and determines the structure of the correlations between the energy levels.

	We point out that in the case of hyperbolic surfaces, there is an exact identity involving the trace of $\cos(t\sqrt{\Delta-\frac 14})$, given by the Selberg trace formula. In variable curvature, due to the error terms in the trace formula \eqref{eq : trace intro}, we are naturally bound to times $L\sim\log(\lambda)$. It means that we consider windows centered around an energy $\lambda$ that contain at least $\sim \frac{\lambda}{\log \lambda}$ eigenvalues. To resolve individual energy levels with the trace formula --- \emph{i.e.} to consider windows containing one eigenvalue on average ---, we would have to reach times $L\sim \lambda$, making error terms impossible to control.
	Notice that even though it may seem silly to keep the remainders in each term of the sum, it will appear useful as we will be led to allow the representation to vary.
	
	\subsubsection{From twisted Laplacians on $X_n$ to twisted Laplacians on $X$} 
	We shall see in \S \ref{sec : random covers} that the scalar Laplacian $\Delta_n$ on $X_n$ is unitarily conjugated to the twisted Laplacian $\Delta_{f_n}$ on $X$ associated to a certain unitary representation $f_n$ of rank $n$ of $\Gamma$. More generally, the twisted Laplacian $\Delta_{n,\rho}$ on $X_n$ is conjugated to the twisted Laplacian $\Delta_{\rho\otimes f_n}$ on $X$. 
	
	Notice that as the first term of the right hand-side of (\ref{eq : trace intro}) depends only on the rank of the representation, it takes the same value for every cover $X_n\to X$ of degree $n$, thus will vanish when computing the variance.

	\subsubsection{The correlations between lengths of closed geodesics} \label{subsubsec:difficulties} Let us first describe the difficulties that one faces when studying the number variance for an individual surface. Write, according to Proposition \ref{trace formula for long times} --- here we discard all error terms outside the sums ---,
	\begin{equation}\label{eq:difficulties}N(\lambda,L) = \frac\lambda L\mathrm{vol}(X)\widehat \psi(0) \displaystyle+\frac 2L\sum_{\gamma\in \mathcal G} \left(\cos(\lambda \ell(\gamma))\widehat\psi\left(\frac{\ell(\gamma)}{L}\right)\frac{\ell(\gamma)^{\sharp}}{|I-\mathcal P_\gamma|^{\frac 12}} \right) +\mathcal O(\lambda^{-\frac 12}\mathrm{e}^{CL}),\end{equation}
	where $C$ may have increased, compared to that of Proposition \ref{trace formula for long times}. We denote the leading term of the right hand-side of \eqref{eq:difficulties} by $\overline N(\lambda,L)$, while the oscillating sum over closed geodesics is denoted $N_{\rm osc}(\lambda,L)$. When considering an individual surface, one is primarily concerned by the fluctuations of $N(\lambda,L)$ around the smooth part $\overline{N}(\lambda,L)$, that is \emph{energy averages} of $N_{\rm osc}(\cdot,L)^2$ over a window centered at $\lambda$. Studying these fluctuations requires understanding sums of terms of the form $A_{\gamma}A_{\gamma'}\mathrm{e}^{\rm i\lambda(\ell(\gamma)\pm\ell(\gamma'))}$. Indeed, letting $A_\gamma$ denote the dynamical coefficient
	\[A_\gamma=\frac 2L\frac{\ell(\gamma)^\sharp}{|I-\mathcal P_\gamma|^{\frac 12}}\widehat \psi(\ell(\gamma)/L),\]
	we can write
	\begin{equation}\label{eq:nondiag}N_{\rm osc}^2(\lambda,L)=\sum_{\gamma,\gamma'} \cos(\lambda \ell(\gamma))\cos(\lambda \ell(\gamma'))A_\gamma A_{\gamma'}.\end{equation}
	In the physics literature, Berry \cite{Berry1}  --- who considered sharp counting functions --- showed that by performing a diagonal approximation (\emph{i.e.} keeping only pairs $(\gamma,\gamma')$ for which $\gamma'\in\{\gamma,\gamma^{-1}\}$), we recover the number variance of GOE. Later, Sieber--Richter \cite{Sieber_2001} identified certain pairs of closed geodesics which allow recovering lower order terms in the asymptotics of the number variance. The full GOE expansion for the number variance was derived (non rigorously) in \cite{PhysRev}. This cannot however be considered a satisfying proof of BGS, as even though the authors identified pairs of orbits that allow to recover the full expansion of GOE, they used the Gutzwiller trace formula up to times of order $\lambda$, where its validity is speculative, and didn't show that the remaining pairs of orbits have a negligible contribution. 
	
	A major difficulty is to obtain nontrivial cancellations between summands in the oscillating sum \eqref{eq:nondiag}. By now, we only seem to be able to bound each non-diagonal term individually. Indeed, in order to control off-diagonal terms, one needs to understand the correlations of lengths of pairs of closed geodesics, that is the ensemble of differences $\{\ell(\gamma)-\ell(\gamma')\}$, of which not much is known. Let us still mention the work of Pollicott--Sharp \cite{Pollicott2006} on pair correlations --- see also \cite{Pollicott2013} for an extension to higher dimensions ---, and its analogue for billiard flows obtained by Petkov--Stoyanov \cite{Petkov2009}. See also \cite{DmitryDolgopyat2016JournalofModernDynamics,EmmanuelSchenck2020JournalofModernDynamics} for discussions on the size of gaps in the length spectrum.

	We point out that the failure of BGS for arithmetic surfaces is due to the presence of huge degeneracies in the length spectrum of such surfaces --- in which case the diagonal approximation fails badly. This feature is however expected to be exceptional. We refer to the expository note of Marklof \cite{Marklof2006ArithmeticQC} for a review of different conjectures in quantum chaos for arithmetic surfaces.  
	
	\subsubsection{The ensemble variance in the large $n$ limit.}
	
	Working with random covers allows suppressing the non-diagonal terms rigorously, the counterpart being that we obtain results on average for an ensemble of surfaces, and with a slightly different flavor, see \S \ref{subsec:diffRud}. From now on we assume $\rho=1$ in order to make the presentation simpler. We apply the Gutzwiller trace formula for the representation $ f_n$ (it depends on the cover $X_n\to X$, thus $f_n$ is a random variable) to write $N_n(\lambda,L)$ as the sum of a smooth part and an oscillating sum involving closed geodesics. The computation of the ensemble variance makes use of asymptotics for the covariances of the random variables $\mathrm{Tr}(f_n(\gamma)), \ \gamma\in \Gamma$ reviewed in \S\ref{sec : random covers}, together with estimates on the contribution of nonprimitive closed geodesics. We work in the regime $L\le c_0\log \lambda$ in order to make error terms in the trace formula negligible. After taking the limit $n\to +\infty$, we find
	\begin{equation}\label{eq:Sigmacarréintro}\Sigma^2(\lambda,L)=\underset{n\to +\infty}\lim \Sigma_n^2(\lambda,L)=\frac{8}{L^2}\sum_{\gamma\in \mathcal G} \frac{\ell(\gamma)^{\sharp}\ell(\gamma) \cos^2(\lambda \ell(\gamma)) \widehat\psi^2(\ell(\gamma)/L)}{|I-\mathcal P_\gamma|}+\mathcal O\left(\frac{1}{L^2}\right).\end{equation}
	Here, we cannot exclude fast oscillations of $\Sigma^2(\cdot,L)$ due to the presence of the cosines. Averaging over a sufficiently wide range of frequencies allows taming these oscillations. Indeed, we obtain
	\begin{equation}\label{sketch moyenne}\frac{1}{\delta}\int_\lambda^{\lambda+\delta} \Sigma^2(\mu,L)\mathrm d\mu=\frac{4}{L^2}\sum_{\gamma\in \mathcal G} \frac{\ell(\gamma)^{\sharp}\ell(\gamma) \widehat\psi^2(\ell(\gamma)/L)}{|I-\mathcal P_\gamma|}+\mathcal O\left(\frac{1}{\delta L}+\frac{1}{L^2}\right).\end{equation} Observe that the leading term does not depend on $\lambda$ anymore. The assumption $\delta \gg \frac 1L$ appears naturally in order to be able to discard the error terms.
	\subsubsection{The equidistribution of closed geodesics}
	
	To estimate \eqref{sketch moyenne} in the large $L$ limit, we need to exploit some equidistribution result on lengths of closed geodesics. In constant curvature $-1$, since the relevant dynamical quantities associated to the orbits only depend on their lengths, it is somehow sufficient to use the standard asymptotic for the counting function of primitive closed geodesics (that can be derived from Selberg trace formula):
	\begin{equation}\label{comptage}\# \big\{\gamma\in \mathcal G, \ \ell(\gamma)\le L\big\}\underset{L\to +\infty}\sim \frac{\mathrm{e}^L}{L}.\end{equation} 
	Although a formula like \eqref{comptage} holds in variable negative curvature, to find the limit of (\ref{sketch moyenne}), we will rather use the following formula, deriving from a result of Jin--Zworski \cite{Jin2016}, that is exactly suited to our problem: if $f$ is a smooth, real, compactly supported function, then
	\begin{equation}\label{JinZwo}\lim_{L\to +\infty} \frac 1L\sum_{\gamma\in \mathcal G }\frac{\ell(\gamma)^\sharp f(\ell(\gamma)/L)}{|I-\mathcal P_\gamma|}=\int_0^{+\infty}f(t)\mathrm dt.\end{equation}
	To find the limit of \eqref{sketch moyenne} when $L\to +\infty$, it suffices to apply this formula to $t\mapsto t\widehat \psi^2$, giving
	\[\underset{\lambda\to+\infty}\lim \Big(\frac{1}{\delta}\int_\lambda^{\lambda+\delta} \Sigma^2(\mu,L)\mathrm d\mu\Big)=4\int_{\mathbf R_+} t\widehat \psi^2(t)\mathrm dt=\Sigma_{\rm GOE}^2.\]
	
	\begin{rem} Formula (\ref{JinZwo}) can be thought of as a particular version of the semiclassical sum rule of Hannay--Ozorio de Almeida \cite{JHHannay1984} that appears in the physics literature, and was used by Berry \cite{Berry1} to derive the first term in the GOE expansion.
		
	\end{rem}

	\subsection{Differences between $\mathbf E_n(N_n)$ and the smooth part.}	\label{subsec:diffRud} We stress out that we are not proving \say{BGS on average}. Let us write
	\[N_n=\overline{N_n} +N_{n,\rm osc}=\mathbf E_n[N_n]+\widetilde N_n.\]
	Theorem \ref{thm:asGOEfluctuations} tells us that in the high energy regime, with high probability as $n\to +\infty$, the spectral fluctuations of $\widetilde N_n$ obey GOE/GUE statistics, but it says nothing about spectral fluctuations of $N_{n,\rm osc}$ itself (which is the focus of the BGS conjecture). This is due to the fact that $\mathbf E_n(N_n)$ doesn't coincide with the Weyl term $\overline{N_n}$. Indeed, if $L \ll \log \lambda$, using the asymptotics for the model of random covers, we show 
	\begin{equation}\label{expectation}\underset{n\to +\infty}\lim \big(\mathbf E_n(N_n(\lambda,L))-\overline N_n(\lambda,L)\big)=\frac{2}{L}\sum_{\gamma\in \mathcal G} \Big(\cos(\lambda \ell(\gamma)) \frac{\ell(\gamma)^\sharp\widehat \psi(\ell(\gamma)/L)}{|I-\mathcal P_\gamma|^{\frac 12}}\Big) +\mathcal O(1),\end{equation} 
	Up to a bounded error term, the right-hand side of \eqref{expectation} equals the oscillating part $N_{1,\rm osc}(\lambda,L)$ of the counting function on the base surface $X$, allowing to write, for $n$ large,
	\[N_{n,\rm osc}=\widetilde N_n+N_{1,\rm osc} +\mathcal O(1),\]
	where $N_{1,\rm osc}$ denotes the oscillating part on the base surface $X$. Theorem \ref{mainthm} holds for any base surface $X$, in particular for \emph{arithmetic surfaces}. Since the covers of an arithmetic surface are still arithmetic surfaces, the result of Luo--Sarnak tells that $N_{n,\rm osc}$ has large spectral fluctuations. It doesn't contradict our result, that is only concerned by $\widetilde N_n$. 
	
	This differs somehow from the result of Rudnick \cite{rudnick2022goe}. For the Weil--Petersson model, Rudnick showed that
	\[ \lim_{g\to +\infty} \left(\mathbf E_g\big(N(\lambda,L)\big)-\overline N(\lambda,L)\right)=\frac 4L\int_{\mathbf R_+}\sum_{k=1}^{\infty} \widehat \psi\left(\frac{kx}{L}\right)\frac{\sinh^2(x/2)}{\sinh(kx/2)}\cos(\lambda kx)\mathrm dx.\]
	This quantity converges to $0$ as $L$ gets large, provided that $L\le c\log \lambda$ for some sufficiently small $c$. When averaging functions over the moduli space of hyperbolic surfaces of genus $g$, one gets some oscillatory integrals that can be shown to go converge to $0$ by performing integrations by parts. Such techniques fail in the case of the discrete model of random covers.
	
	\subsection{The effect of unitary representations} \label{subsec:unitaryrep}
	
	\subsubsection{Abelian characters and the Ahronov--Bohm effect} \label{time reversal}
	
	What is the physical meaning of introducing a unitary character $\rho:\Gamma\to \mathrm U(1)$ ? There is a well-known correspondence between unitary representations of $\Gamma$ and flat vector bundles over $X$. We try to make it explicit in this simple case. If $\rho:\Gamma \to \mathrm U(1)$ is a character, the Laplacian $\Delta_\rho$ is conjugated to the magnetic Laplacian associated to a vector potential $\mathbf A\in C^\infty(X,T^*X)$, for which the magnetic field vanishes everywhere --- meaning $\mathrm d\mathbf A=0$. The classical dynamics are left unchanged, but despite the absence of an electromagnetic field, a wave function can gain some phase factors due to the presence of $\mathbf A$, a phenomenon called the Ahronov--Bohm effect in physics. With a mathematical perspective, this is explained by the fact that flat bundles over $X$ may have nontrivial holonomies when $X$ has a nontrivial fundamental group. 
	
	If $\mathbf A$ is a closed $1$-form on $X$, one can lift $\mathbf A$ to a $\Gamma$-invariant vector potential $\widetilde{\mathbf A}$ on $\widetilde X$. Since $\widetilde X$ is simply connected and $\mathrm d\widetilde {\mathbf  A}=0$, by Poincaré Lemma, we can write $\widetilde{\mathbf A}=\mathrm d\varphi$ for some function $\varphi$. The condition $\mathrm d\varphi(\gamma x)=\mathrm d\varphi(x)$ ensures that $\varphi(\gamma x)-\varphi(x)$ does not depend on $x$, by Stokes theorem. Indeed, this quantity equals $\int_{\gamma} \mathbf A$, which can be interpreted as the flux of an exterior magnetic field through the holes of the surface.
	
	Set $\rho(\gamma)=\mathrm{e}^{-\mathrm i(\varphi(\gamma x)-\varphi(x))}=\mathrm{e}^{-\mathrm i\int_{\gamma} \mathbf A}$. Then the transformation $f\mapsto \mathrm{e}^{\mathrm i\varphi}f$ maps $\Gamma$-invariant functions to $\rho$-equivariant functions. The unitary transformation $\mathrm{e}^{\mathrm i\varphi}$ gives a conjugation 
	\[\mathrm d+\mathrm i\widetilde{\mathbf A}=\mathrm d+\mathrm i(\mathrm d\varphi)=\mathrm{e}^{-\mathrm i\varphi}\mathrm d(\mathrm{e}^{\mathrm i\varphi}\bullet).\]
	Consequently, the magnetic Laplacian $(\mathrm d+\mathrm i\widetilde{\mathbf A})^*(\mathrm d+\mathrm i\widetilde{\mathbf A})$ acting on $\Gamma$-invariant functions $\widetilde X\to \mathbf C$ is conjugated to the Laplacian $\Delta_{\widetilde X}$ acting on $\rho$-equivariant functions $\widetilde X\to \mathbf C$. It descends to a unitary conjugation between $(\mathrm d+\mathrm i\mathbf A)^*(\mathrm d+\mathrm i\mathbf A)$ and the twisted Laplacian $\Delta_\rho$, that acts on sections of the flat bundle $\Gamma\backslash (\widetilde X\times\mathbf C)$, defined as the quotient of $\widetilde X\times \mathbf C$ by the relation $(\tilde x,z)\sim (\gamma \tilde x,\rho(\gamma) z)$.
	
	The geodesic flow $\phi^t$ on the cotangent bundle $T^*X$ is time reversal invariant under the transformation $(x,\xi)\mapsto (x,-\xi)$, meaning $\phi^t(x,\xi)=\phi^{-t}(x,-\xi)$. Reversing the direction of $\xi$ amounts, at the quantum level, to consider the complex conjugate of the wave function. We introduce the \emph{time reversal operator} $\mathbf T$ on $C^{\infty}( \widetilde X,\mathbf C)$, defined by $\mathbf T\psi(x)=\overline{\psi(x)}$. It is clear that $\mathbf T$ commutes with $\Delta_{\widetilde X}$, but for $\mathbf T$ to preserve the space of  $\rho$-equivariant functions, one needs $\rho(\gamma)=\rho(\gamma^{-1})$ for every $\gamma$. In this case, $\mathbf T$ descends to an operator on $C^{\infty}(X,\Gamma\backslash(\widetilde X\times \mathbf C))$ that commutes with the twisted Laplacian $\Delta_\rho$. Thus, if $\psi(t,x)$ solves $(-\mathrm i\partial_t +\sqrt{\Delta_\rho})\psi=0$ then so does $\overline{\psi(-t,x)}$, and the system is said to be invariant by time reversal symmetry. The factor $\rho(\gamma)$ corresponds to the phase factor gained by a wave function when doing a loop around $\gamma$, while $\rho(\gamma)^{-1}$ corresponds to the phase factor gained while traveling the loop backwards. To sum up, the system breaks time reversal symmetry if and only if there is some closed geodesic $\gamma$ such that $\rho(\gamma)\neq \rho(\gamma^{-1})$.
	
	\subsubsection{Exhibiting GSE statistics.} The following discussion is inspired by Bolte--Keppeler \cite{Bolte1999}. We consider the case $\rho:\Gamma\to \mathrm{SU}(2)$. In this case, the \emph{time reversal operator} $\mathbf T$ is defined on the space $C^\infty(\widetilde X,\mathbf C^2)$  by 
	\[\mathbf T \psi(x)=\begin{pmatrix}0 & -\mathrm i \\ \mathrm i & 0\end{pmatrix}\overline{\psi(x)}.\]
	Since $\rho(\gamma)\in \mathrm{SU}(2)$, it is not hard to see that $\mathbf T$ preserves the space of $\rho$-equivariant functions $\widetilde X\to \mathbf C^2$. Thus, $\mathbf T$ descends to an operator on smooth sections of $\Gamma\backslash \widetilde X\times \mathbf C^2$. The operator $\mathbf T$ is antiunitary, satisfies $\mathbf T^2=-\mathbf I$, and commutes with the twisted Laplacian $\Delta_\rho$. Consequently, if $s$ is an eigenstate of $\Delta_\rho$, so is $\mathbf T s$, with same eigenvalue, and since $\mathbf T$ is antiunitary and satisfies $\mathbf T^2=-\mathbf I$, we can show that $s$ and $\mathbf Ts$ are orthogonal. Consequently, eigenstates of $\Delta_\rho$ always come in doublets $(s,\mathbf T s)$, a phenomenon called \emph{Kramers degeneracy}. Physicists argued that in this case, the pair $(s,\mathbf Ts)$ should count for multiplicity one. Thus, we are led to considering the counting function $\frac 12 N(\lambda, L,\rho)$ instead. 
	Provided that $\rho$ checks the denseness assumption, according to Theorem \ref{mainthm2} for $\mathbf G=\mathrm{SU}(2)$, the energy averages of the ensemble variance of $\frac 12 N_n(\lambda,L,\rho)$ converge, in our \emph{ad hoc} limit, to
	\[\Sigma_{\mathrm{GSE}}^2(\psi):=\frac 14\Sigma_{\mathrm{GOE}}^2(\psi).\]
	Here $\Sigma_{\mathrm{GSE}}^2$ is the number variance for eigenvalues of matrices of the Gaussian Symplectic Ensemble. We stress out that the result may fail when $\rho$ doesn't satisfy the denseness assumption. Indeed, in the case where $\rho$ is the trivial $2$-dimensional representation of $\Gamma$, we just have $N_n(\lambda,L,\rho)=2N_n(\lambda,L)$ thus after halving the counting function for $\Delta_{n,\rho}$ we simply recover the counting function of the scalar Laplacian $\Delta_n$, whose variance converges to $\Sigma_{\rm GOE}^2$.
	
	\subsection{Weak magnetic flux --- Transition from GOE to GUE} \label{subsec:transition}
	
	Since \eqref{eq : trace intro} depends continuously on the character $\rho$, we should be able to observe an intermediate behavior between GUE and GOE, by letting the representation $\rho$ depend on $\lambda$. We adopt again the point of view of vector potentials, and consider a family of flat connections $(\mathrm d+\mathrm i\alpha \mathbf A)$, where $\mathbf A\in C^\infty(X,T^*X)$ breaks time reversal symmetry while satisfying $\mathrm d\mathbf A=0$, and $\alpha$ is a parameter in $[0,1]$. This defines a continuous family of representations by letting
	\[\rho_\alpha(\gamma)=\exp\left(-\mathrm i\alpha\int_\gamma \mathbf A\right),\]
	where $\gamma$ is understood as a closed curve on $X$. The cases $\alpha=0$ and $\alpha=1$ correspond to GOE and GUE statistics, respectively. An analogous symmetry breaking was studied in the physics literature by Bohigas--Giannoni--Ozorio de Almeida--Schmit \cite{OBohigas1995} for chaotic dynamical systems, by the means of periodic orbit theory; and for random matrices by Pandey--Mehta \cite{cmp/1103922128}. Before stating our result, let us recall the following definition:
	
	\begin{defi}[Dynamical variance of an observable] Let $M$ denote the unit cotangent bundle of $X$. Consider a smooth function $a\in C^\infty(M)$ with zero mean (with respect to the normalized Liouville measure $\nu_0$). The \emph{variance} of $a$ is defined as the limit of the damped correlations
		\[\mathrm{Var}_{\nu_0}(a):=\underset{\lambda\to 0}\lim \int_{\mathbf R_+} \mathrm{e}^{-\lambda t} \langle a\circ \phi^t, a\rangle_{L^2(M,\nu_0)}\mathrm dt.\]
		where $\phi^t$ denotes the geodesic flow on $M$. 
	\end{defi}
	
	We let $\alpha$ vary in order to observe transition between the two behaviors (GOE $\to$ GUE). The following result is new even for $X$ of constant negative curvature.
	
	\begin{thm}[Smooth transition from GOE to GUE]  \label{thm:smooth transition} Take $\alpha=\alpha(\lambda )$ such that  $\sqrt{L}\alpha$ converges to some value $s\in [0,+\infty)$ when $\lambda$ goes to $+\infty$. Then, 
		\[\underset{\lambda\to +\infty}\lim \Big(\frac{1}{\delta}\int_{\lambda}^{\lambda+\delta}\Sigma^2(\mu,L,\rho_\alpha)\mathrm d\mu\Big) =\Sigma^2(s),\]
		where we have set
		\[\Sigma^2(s)=2\int_{\mathbf R_+} (1+\mathrm{e}^{-2\mathrm{Var}_{\nu_{\scalebox{.7}{$\scriptscriptstyle 0$}}}(a) s^2 t})t\widehat \psi^2(t)\mathrm dt.\]
		The function $\Sigma^2(s)$ is decreasing, with $\Sigma^2(0)=\Sigma_{\rm GOE}^2$ and $\lim_{s\to +\infty} \Sigma^2(s)=\Sigma^2_{\rm GUE}$ if the variance is positive. Here, $\nu_0$ is the normalized Liouville measure on $S^*X$, and $\mathrm{Var}_{\nu_0}(a)$ is the \emph{variance} of the function $a\in C^\infty(S^*X)$ defined by $a(x,\xi)=\langle \mathbf A(x),\xi\rangle_{T^*X}$. Here, $\langle \cdot,\cdot\rangle_{T^*X}$ denotes the scalar product on $T^*X$ induced by the metric.
	\end{thm}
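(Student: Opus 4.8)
The plan is to run the argument proving Theorem~\ref{mainthm}(iii) with the character $\rho_\alpha$ carried through, and then to replace the equidistribution input \eqref{JinZwo} by a twisted version of it whose $\alpha$-dependence produces the factor $\mathrm e^{-2\mathrm{Var}_{\nu_0}(a)s^2t}$.

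\emph{Reduction to a weighted geodesic sum.} Since $\Delta_{n,\rho_\alpha}$ is unitarily conjugate to $\Delta_{\rho_\alpha\otimes f_n}$ and $\chi_{\rho_\alpha\otimes f_n}(\gamma)=\rho_\alpha(\gamma)\operatorname{Tr}(f_n(\gamma))$, Proposition~\ref{trace formula for long times} gives $N_{n,\mathrm{osc}}(\lambda,L,\rho_\alpha)=\frac2L\sum_{\gamma\in\mathcal G}\rho_\alpha(\gamma)\operatorname{Tr}(f_n(\gamma))\cos(\lambda\ell(\gamma))\widehat\psi(\ell(\gamma)/L)\,\ell(\gamma)^\sharp|I-\mathcal P_\gamma|^{-1/2}$ up to the error terms of that Proposition. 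The phases $\rho_\alpha(\gamma)$ being deterministic, the ensemble variance involves exactly the covariances of the $\operatorname{Tr}(f_n(\gamma))$ recalled in \S\ref{sec : random covers}, weighted now by $\rho_\alpha(\gamma)\overline{\rho_\alpha(\gamma')}$; and since $|\rho_\alpha(\gamma)|=1$, all the error estimates of the scalar proof go through verbatim in the regime $1\ll L\le c_0\log\lambda$. The off-diagonal pairs still vanish as $n\to+\infty$ as in the scalar case, the diagonal pairs $\gamma'=\gamma$ contribute the weight $|\rho_\alpha(\gamma)|^2=1$, and the reversed pairs $\gamma'=\bar\gamma$ the weight $\rho_\alpha(\gamma)\overline{\rho_\alpha(\gamma^{-1})}=\rho_\alpha(\gamma)^2$. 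Absorbing non-primitive orbits into the error, one obtains
\[\Sigma^2(\lambda,L,\rho_\alpha)=\frac{4}{L^2}\sum_{\gamma\in\mathcal G}\bigl(1+\operatorname{Re}\rho_\alpha(\gamma)^2\bigr)\frac{\ell(\gamma)^\sharp\ell(\gamma)\cos^2(\lambda\ell(\gamma))\,\widehat\psi^2(\ell(\gamma)/L)}{|I-\mathcal P_\gamma|}+\mathcal O(L^{-2}),\]
which is \eqref{eq:Sigmacarréintro} when $\rho_\alpha=1$. Averaging over $\mu\in[\lambda,\lambda+\delta]$ exactly as for \eqref{sketch moyenne} replaces $\cos^2(\mu\ell(\gamma))$ by $\tfrac12$ ($\alpha=\alpha(\lambda)$ being fixed on the window) and yields
\[\frac1\delta\int_\lambda^{\lambda+\delta}\Sigma^2(\mu,L,\rho_\alpha)\,\mathrm d\mu=\frac{2}{L^2}\sum_{\gamma\in\mathcal G}\frac{\ell^\sharp\ell\,\widehat\psi^2(\ell/L)}{|I-\mathcal P_\gamma|}+\frac{2}{L^2}\sum_{\gamma\in\mathcal G}\operatorname{Re}(\rho_\alpha(\gamma)^2)\frac{\ell^\sharp\ell\,\widehat\psi^2(\ell/L)}{|I-\mathcal P_\gamma|}+\mathcal O\!\Bigl(\tfrac1{\delta L}+\tfrac1{L^2}\Bigr).\]
By \eqref{JinZwo} applied to $f(t)=t\widehat\psi^2(t)$, the first sum tends to $2\int_0^{+\infty}t\widehat\psi^2(t)\,\mathrm dt=\Sigma_{\mathrm{GUE}}^2$.

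\emph{The twisted equidistribution.} The heart of the proof is then the claim that, for $f\in C^\infty_c(\mathbf R)$ and $\alpha=\alpha(\lambda)\ge0$ with $L\to+\infty$ and $\sqrt L\,\alpha\to s$,
\[\lim_{\lambda\to+\infty}\frac1L\sum_{\gamma\in\mathcal G}\rho_\alpha(\gamma)^2\,\frac{\ell(\gamma)^\sharp f(\ell(\gamma)/L)}{|I-\mathcal P_\gamma|}=\int_0^{+\infty}\mathrm e^{-2\mathrm{Var}_{\nu_0}(a)\,s^2t}\,f(t)\,\mathrm dt;\]
granting it with $f(t)=t\widehat\psi^2(t)$ (note $\operatorname{Re}\rho_\alpha(\gamma)^2$ may be replaced by $\rho_\alpha(\gamma)^2$ in the second sum, the difference being odd under $\gamma\mapsto\bar\gamma$), the second sum converges to $2\int_0^{+\infty}\mathrm e^{-2\mathrm{Var}_{\nu_0}(a)s^2t}t\widehat\psi^2(t)\,\mathrm dt$, and adding the two contributions gives $\Sigma^2(s)$. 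To prove the claim I would mimic the derivation of \eqref{JinZwo} from the local trace formula of \cite{Jin2016}, now for the geodesic flow acting on sections of the flat Hermitian line bundle over $M=S^*X$ with holonomy $\gamma\mapsto\rho_\alpha(\gamma)^2$: its generator is $X_\alpha:=X+2\mathrm i\alpha a$ (with $X$ the generator of $\phi^t$ and $a(x,\xi)=\langle\mathbf A(x),\xi\rangle$), since $\int_\gamma\mathbf A=\int_0^{\ell(\gamma)}a(\phi^u v)\,\mathrm du$ along the orbit. Pairing the twisted Guillemin flat trace $\operatorname{Tr}^\flat\mathrm e^{tX_\alpha}=\sum_{\gamma}\rho_\alpha(\gamma)^2\ell(\gamma)^\sharp|I-\mathcal P_\gamma|^{-1}\delta(t-\ell(\gamma))$ with $f(\cdot/L)$ gives, up to a negligible error, $\frac1L\sum_{\gamma}\rho_\alpha(\gamma)^2\ell(\gamma)^\sharp|I-\mathcal P_\gamma|^{-1}f(\ell(\gamma)/L)=\sum_{w\in\operatorname{Res}(X_\alpha)}m(w)\int_0^{+\infty}\mathrm e^{wLt}f(t)\,\mathrm dt+o(1)$, the sum running over the Pollicott–Ruelle resonances of $X_\alpha$ on the anisotropic spaces of \cite{cekić2024semiclassicalanalysisprincipalbundles}. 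At $\alpha=0$ the (exponential) mixing of the geodesic flow makes $0$ a simple resonance, the rest lying in $\{\operatorname{Re}w\le-c_1\}$; for small $\alpha$ the leading one perturbs analytically to $w_0(\alpha)$, with vanishing first-order term because $\int_M a\,\mathrm d\nu_0=0$, and a direct second-order computation — relating the quadratic coefficient to the regularized autocorrelation $\int_0^{+\infty}\langle a\circ\phi^t,a\rangle_{\nu_0}\,\mathrm dt=\mathrm{Var}_{\nu_0}(a)$ — gives $w_0(\alpha)=-2\mathrm{Var}_{\nu_0}(a)\,\alpha^2+o(\alpha^2)$, while the remaining resonances stay in $\{\operatorname{Re}w\le-c_1/2\}$. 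Inserting $\alpha=\alpha(\lambda)$ with $\alpha^2L\to s^2$ (so $w_0(\alpha)L\to-2\mathrm{Var}_{\nu_0}(a)s^2$), dominated convergence in the $w_0$-term gives $\int_0^{+\infty}\mathrm e^{-2\mathrm{Var}_{\nu_0}(a)s^2t}f(t)\,\mathrm dt$, while the remaining resonances, controlled as in the proof of \eqref{JinZwo} by a resonance-counting bound and integration by parts in $t$, contribute $o(1)$.

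\emph{Conclusion and obstacle.} Adding the two limits gives $\lim_{\lambda\to+\infty}\frac1\delta\int_\lambda^{\lambda+\delta}\Sigma^2(\mu,L,\rho_\alpha)\,\mathrm d\mu=2\int_0^{+\infty}(1+\mathrm e^{-2\mathrm{Var}_{\nu_0}(a)s^2t})t\widehat\psi^2(t)\,\mathrm dt=\Sigma^2(s)$. The integrand is nonincreasing in $s\ge0$ since $\mathrm{Var}_{\nu_0}(a)\ge0$, whence $\Sigma^2$ is decreasing; $\Sigma^2(0)=4\int_0^{+\infty}t\widehat\psi^2=\Sigma_{\mathrm{GOE}}^2$; and if $\mathrm{Var}_{\nu_0}(a)>0$ — which, by Livšic's theorem, holds precisely when $\mathbf A$ is not exact, i.e.\ when the family $\rho_\alpha$ genuinely breaks time reversal symmetry — dominated convergence gives $\Sigma^2(s)\to2\int_0^{+\infty}t\widehat\psi^2=\Sigma_{\mathrm{GUE}}^2$ as $s\to+\infty$. \textbf{The main difficulty} is the twisted equidistribution: one must set up the meromorphic continuation and local trace formula for the non-self-adjoint generator $X+2\mathrm i\alpha a$ and, crucially, control \emph{uniformly} both the perturbation of the leading resonance and the persistence of the spectral gap as $\alpha\to0$, $L\to+\infty$ with $\alpha^2L\to s^2$, together with the bookkeeping of the error in the local trace formula when the test function $t\widehat\psi^2(t)$ does not vanish identically near $t=0$. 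This is where the microlocal machinery for Anosov flows on bundles does the work; the probabilistic and trace-formula inputs of the first two steps are, by contrast, routine variants of those used for Theorem~\ref{mainthm}.
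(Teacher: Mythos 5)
Your proposal follows essentially the same route as the paper: the same reduction of the energy-averaged ensemble variance to $\Sigma^2_{\rm GUE}$ plus the $\rho_\alpha(\gamma)^2$-twisted weighted geodesic sum, and the same key dynamical input, namely the local trace formula for the Ruelle operator perturbed by the potential $2\alpha\langle \mathbf A(x),\xi\rangle$ together with the quadratic expansion of its leading resonance, whose size $\sim 2\alpha^2\mathrm{Var}_{\nu_0}(a)$ yields the factor $\mathrm{e}^{-2\mathrm{Var}_{\nu_0}(a)s^2t}$ once $\alpha^2L\to s^2$. The uniform-in-$\alpha$ remainder in the local trace formula and the smooth dependence and Taylor expansion of the perturbed resonance, which you correctly flag as the main difficulty but leave as a sketch, are precisely what the paper supplies in Proposition \ref{prop: res perturbation 0} and Appendix \ref{sec:appendix}.
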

	
	\begin{rem}When $a(x,\xi)=\langle \mathbf A(x),\xi\rangle_{T^*X}$, the variance $\mathrm{Var}_{\nu_0}(a)$ is positive if and only if there exists a closed geodesic $\gamma$ satisfying $\int_\gamma \mathbf A\neq 0$, see \S \ref{subsec:variance of a closed orbits}.\end{rem}
	
	\begin{rem}The variable $\sqrt{L}\alpha$ measures the average amplitude of the magnetic fluxes through surfaces enclosed by geodesics of lengths of order $L$. The counting function $N_n(\lambda,L)$ involves closed geodesics of lengths up to $L$. Thus, if $\sqrt{L}\alpha$ goes to $0$, the perturbation is too weak to affect the limit of the ensemble variance. 
		
		We give a heuristic explanation of the claim above, and will make a quantitative statement in Proposition \ref{prop : appendice variance et flux}. Take a presentation of the fundamental group 
		\[\Gamma=\big\langle a_1,b_1,\ldots,a_g,b_g\ | \  [a_i,b_i]=e\big\rangle .\]
		Let us forget the relations $[a_i,b_i]=e$ for a moment. Fix a magnetic flux $\Phi$ through each surface enclosed by one of the generators. If $\gamma$ is a closed geodesic, the magnetic flux through $\gamma^{-1}$ is the opposite of the magnetic flux through $\gamma$. Take at random a geodesic $\gamma$ of word length $L$. Roughly speaking, we have to take $L$ letters in $\{a_1,a_1^{-1},b_1,b_{1}^{-1},\ldots, a_g,a_g^{-1},b_g,b_{g}^{-1}\}$, each of which give a contribution $\pm \Phi$ to the total flux through the surface enclosed by $\gamma$. Thus, the magnetic flux through a random geodesic of word length $L$ behaves like the sum of $L$ independent Bernoulli variables. On average, the amplitude of the total flux $\Phi_L$ through $\gamma$ is expected to be of order $\sqrt{L}\Phi$. Even better, if we take a geodesic $\gamma_L$ of word length $L$ at random, the normalized flux $\frac{\Phi_L}{\sqrt L}$ through $\gamma_L$ should converge in distribution to a Gaussian random variable (we will give a rigorous statement in Proposition \ref{prop: central limit theorem for closed orbits}). Of course $\Gamma$ is not a free group, and we have to take in account the fact that $[a_i,b_i]=e$, thus the combinatorics have to be more intricate; nevertheless we recover the order of the amplitude of the flux.\end{rem}

	\subsection{Notations} \begin{enumerate}[---]
		
		\item The Fourier transform of a function $f$ will be taken as
		\[\widehat f(\xi)=\frac{1}{2\pi}\int_{\mathbf R} \mathrm{e}^{-\mathrm ix\xi} f(x)\mathrm dx,\]
		so that $f(x)=\int_{\mathbf R}\mathrm{e}^{\mathrm{i} x\xi} \widehat f(\xi)\mathrm d\xi$.
		\item We denote by $\mathbf 1_Y$ the indicator function of a set $Y$. 
		\item We write $f=\mathcal O(g)$ if $|f|\le C|g|$ for some constant $C$. Alternatively, we may write $f\lesssim g$ if $f$ and $g$ are both positive. If $C$ depends on some relevant parameters, we denote it with a subscript, \emph{e.g.} $f=\mathcal O_A(g)$. If $f$ and $g$ are two functions of $\lambda$, we write $f\ll g$ if $f(\lambda)/g(\lambda)$ goes to $0$ as $\lambda\to +\infty$.
		
		\item If $M$ is a manifold, we denote by $\mathcal D'(M)$ the set of distributions on $M$, and $\mathcal E'(M)$ the set of compactly supported distributions. Since we work on a Riemannian manifold, a function $f$ canonically identifies with the $1$-density $f\mathrm dx$, where $\mathrm dx$ denotes the Riemannian measure. Thus, we shall implicitly take Schwartz kernels of linear operators with respect to this measure. If $\mathcal V$ is a vector bundle over $M$, we denote $\mathcal D'(M,\mathcal V)$ the set of distributions with values in $\mathcal V$. Distributional pairings will be denoted with brackets $\langle \bullet, \bullet\rangle$.
		
		\item If $\xi\in \mathbf C$, we denote $\langle \xi\rangle=\sqrt{1+|\xi|^2}$.
		
		\item 	If $Y$ is a random variable with $\mathbf E(|Y|)<\infty$, we denote by $\widetilde Y$ the centered variable $Y-\mathbf E(Y)$.
	\end{enumerate}

	\subsection{Organization of the paper} 
	
	\begin{itemize}
		\item In \S \ref{sec : twisted laplacian}, we review basic facts about twisted Laplacians and discuss how standard properties of the wave trace in the scalar case translate to this setting.
		\item In \S \ref{sec : Hadamard parametrix} we briefly expose the construction of the Hadamard parametrix for the wave propagator $\cos(t\sqrt \Delta)$, that we use in the proof of the Gutzwiller trace formula. We do not provide much detail, since great expositions can be found in the literature \cite{Berger1971LeSD,Bérard1977,Sogge2014}. 
		\item In \S \ref{sec : trace formula} we prove the trace formula of Proposition \ref{trace formula for long times}, by relying on estimates of the previous section. We refer to \cite{JakobsonWeyl2007} for a similar result without twists. The strategy of the proof is standard, we point out that the computations for test functions supported around a single closed geodesic length were already performed by Donnelly \cite{Donnelly1978}. We also refer to \cite{Schenk} for a long time trace formula in a noncompact setting.
		\item In \S \ref{sec:équidistributions} we study twisted weighted counting functions of closed geodesics, more precisely the quantity
		\[\frac 1L\sum_{\gamma\in\mathcal G}\operatorname{Tr}(\rho(\gamma))\frac{\ell(\gamma)^\sharp \varphi(\ell(\gamma)/L)}{|I-\mathcal P_\gamma|},\]
		where $\rho$ is a representation of $\Gamma$ and $\varphi$ a smooth function with compact support. We obtain asymptotics for large $L$ by relying on a local trace formula for Anosov flows obtained by Jin--Zworski and Jin--Tao, see the references mentioned in \S \ref{sec:équidistributions}.
		\item  In \S \ref{sec : proof theorems} we review statistical properties of the model of random covers before turning to the proofs of Theorems \ref{mainthm} and \ref{mainthm2}. The strategy is similar in spirit to that of Naud \cite{naud2022random} (although demanding new ideas because we are dealing with surfaces of nonconstant curvature, and in a high energy regime) and has been described in \S \ref{subsec:strategie}. 
		
		\item In \S \ref{sec:prooftransition}, we investigate the smooth transition from GOE to GUE by introducing a varying magnetic potential on the surface. It boils down to providing uniform estimates on the resonances of a perturbed Ruelle operator, whose proofs are postponed to Appendix \ref{sec:appendix}. As a byproduct, we recover a central limit theorem for long periodic orbits, picked at random with weights proportional to $\frac{1}{|I-\mathcal P_\gamma|}$.

		\item Finally, \S\ref{sec:CLT} and \S\ref{sec:asGOE} are devoted to the proofs of the Central Limit Theorem \ref{thm : CLT} and Theorem \ref{thm:asGOEfluctuations} on almost sure GOE fluctuations. We rely on probabilistic results of Maoz \cite{Maoz1} on the model of random covers together with results of \S \ref{sec:équidistributions}.
	\end{itemize}

	\subsection*{Acknowledgements} I would like to thank St{\'e}phane Nonnenmacher for our numerous discussions and his guidance during the writing of this paper. I would also like to thank S{\'e}bastien Gou{\"e}zel, Tristan Humbert, Thibault Lefeuvre and Fr{\'e}d{\'e}ric Naud for discussions related to this work. 
	\section{Unitary representations of $\Gamma$, twisted Laplacians and the wave trace}
	
	\label{sec : twisted laplacian}
	\subsection{Twisted Laplacians} We introduce twisted Laplacians associated to unitary representations of the fundamental group $\Gamma$ of $X$. As we explained in the introduction, in the Abelian case, twisted Laplacians model the Ahronov--Bohm effect.

	Let $X$ be a connected, closed, negatively curved Riemannian surface. Let $\widetilde X$ denote its universal cover, and $\Gamma$ the group of deck transformations of the cover $\widetilde X\to X$, that is, the group of isometries $\gamma$ of $\widetilde X$ satisfying $\pi\circ \gamma=\pi$, where $\pi:\widetilde X\to X$ is the projection --- note that $\Gamma$ identifies with the fundamental group of $X$. The surface $X$ can then be identified with the quotient $\Gamma\backslash \widetilde X$. 
	
	Let $\rho:\Gamma\to \mathrm{GL}(V_\rho)$ be a finite dimensional unitary representation of $\Gamma$, where $(V_\rho,\langle \cdot,\cdot\rangle_\rho)$ is a Hermitian vector space. We define a Hermitian vector bundle $\mathcal V_\rho$ over $X$ by setting $\mathcal V_\rho= \Gamma \backslash (\widetilde X\times V_\rho)$, that is the quotient of $\widetilde X\times V_\rho$ by the relation
	\[(\gamma \tilde x, v)\sim (\tilde x,\rho(\gamma)^{-1}v).\]
	This bundle comes with a flat connection $\nabla:C^\infty(M,\mathcal V_\rho)\to C^\infty(M,T^*M\otimes \mathcal V_\rho)$, that acts in the following way: if $f$ is a smooth section of $\mathcal V_\rho$, that can be locally expressed as $f=\sum f_j e_j$ where $f_j\in C^{\infty}(X)$ are smooth functions and $(e_j)$ is a basis of the vector space $V_\rho$, we set $\nabla f:=\sum \mathrm df_j\otimes e_j$. Smooth sections of $\mathcal V_\rho$ are identified with smooth $\rho$-equivariant functions $\widetilde X\to V_\rho$, that is functions $f$ satisfying the \emph{equivariance property}
	\[f(\gamma \tilde x)=\rho(\gamma )f(\tilde x), \qquad \gamma\in \Gamma, \qquad \tilde x\in \widetilde X.\]
	Since $\rho$ is unitary, if $f:\widetilde X\to V_\rho$ is equivariant, one has $|f(\gamma x)|_\rho=|f(x)|_\rho$, hence if $s\in C^{\infty}(X,\mathcal V_\rho)$ we can define unambiguously the norm $|s(x)|$. In turn, we endow $C^{\infty}(X,\mathcal V_\rho)$ with the norm
	\[\|s\|^2_{L^2(X,\mathcal V_\rho)}=\int_X |s(x)|_\rho^2\mathrm dx.\]	
	The space $L^2(X,\mathcal V_\rho)$ is defined as the completion of $C^{\infty}(X,\mathcal V_\rho)$ with respect to this norm. 
	
	The twisted Laplacian $\Delta_\rho$ is defined as $\nabla ^*\nabla:C^\infty(X,\mathcal V_\rho)\to C^\infty(X,\mathcal V_\rho)$. In local charts, we have
	\begin{equation}\label{eq : laplacien twisté formule locale} \Delta_\rho u=\sum_j (\Delta_X f_j) e_j.\end{equation}
	Working with equivariant functions on the universal cover instead of sections of $\mathcal V_\rho$ happens to be very convenient, since formula \eqref{eq : laplacien twisté formule locale} holds globally, i.e. there is no need to take local charts: the lift of $\Delta_\rho$ to the space of $\rho$-equivariant function is just given by $\Delta_{\widetilde X}\otimes \mathrm{Id}_{V_\rho}$. The Laplacian $\Delta_\rho$ essentially self-adjoint, that is,  it admits a unique self-adjoint extension to $L^2(X,\mathcal V_\rho)$, still denoted $\Delta_\rho$. By ellipticity, its spectrum consists of a nondecreasing sequence of eigenvalues 
	\[0\le \lambda_0^2\le \lambda_1^2\le \ldots \le \lambda_n^2\to +\infty.\]
	Note that $\lambda_0=0$ if and only if $\rho$ contains the trivial representation.

	\subsection{The wave trace} 
	
	Let $U^\rho(t)=\mathrm{e}^{-\mathrm it\sqrt{\Delta_\rho}}$ be the unitary group associated to the evolution equation 
	\[\big(-\mathrm{i} \partial_t +\sqrt{\Delta_\rho}\big)u(t,x)=0.\]
	We denote by $U^\rho(t,x,y)$ the Schwartz kernel of $U^\rho$. It is a distribution on $\mathbf R\times X\times X$ taking values in $\mathrm{Hom}(\mathcal V_\rho,\mathcal V_\rho)$, that is the bundle over $X\times X$ with fiber $\mathrm{Hom}(V_\rho,V_\rho)$. Denote by $D:\mathbf R\times X\to \mathbf R\times X\times X$ the diagonal embedding. By wavefront set properties --- see \cite[Theorem 8.2.4]{HormanderI} and also \cite{Duistermaat1975}, we can define $U^\rho(t,x,x):=D^*U^\rho(t,x)$, which is a distribution on $\mathbf R\times X$ taking values in $\mathrm{End}(\mathcal V_\rho)$, that is the bundle over $X$ with fiber $\mathrm{End}(V_\rho)$. Then, we let
	\[\operatorname{Tr^{End}} U^\rho(t,x)\in \mathcal D'(\mathbf{R}\times X),\]
	which is defined by taking fiberwise the trace of the endomorphism $U^\rho(t,x,x):V_\rho\to V_\rho$. Eventually, the trace of $U^\rho(t)$ is defined by
	\[\langle \operatorname{Tr} U^\rho(t),\varphi \rangle_{\mathcal D'(\mathbf{R}),\mathcal D(\mathbf{R})}=\langle \operatorname{Tr^{End}} U^\rho(t,x), \varphi\otimes \mathbf 1_X\rangle_{\mathcal D'(\mathbf{R}\times X),\mathcal D(\mathbf{R}\times X)}.\]

	In the following we shall rather work with the kernel of $\cos(t\sqrt{\Delta_\rho})$, \emph{i.e.} the real part of $U^\rho(t,x,y)$, that we denote $E^\rho(t,x,y)$. We drop the $\rho$ superscript in the scalar case. The wave trace if related to the eigenvalues of the twisted Laplacian by the following proposition (see \emph{e.g.} \cite[Chapter 12]{Grigis_Sjöstrand_1994}).
	
	\begin{prop}Let $\rho$ be a finite dimensional unitary representation of $\Gamma$. Let $(\varphi_j)_{j\ge 1}$ be an orthogonal basis of normalized eigensections of $\Delta_\rho$, associated to the sequence of eigenvalues $0\le \lambda_1^2\le \lambda_2^2\le \ldots$. Then, we have the following equality of distributions on $\mathbf{R}\times X$:
		\[\operatorname{Tr}^{\rm End} E^\rho(t,x,x)=\sum_{j\ge 0} |\varphi_j(x)|^2 \cos(\lambda_j t).\]
		Moreover, the following equality of distributions on $\mathbf R$ holds:
		\[\operatorname{Tr} E^\rho(t)=\sum_{j\ge 0} \cos(\lambda_jt).\]\end{prop}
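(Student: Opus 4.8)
The plan is to prove both equalities by expanding the wave kernel $E^\rho$ in the eigenbasis and computing the fiberwise trace, then integrating over $X$. The key observation is that, because the lift of $\Delta_\rho$ to $\rho$-equivariant functions is $\Delta_{\widetilde X}\otimes \mathrm{Id}_{V_\rho}$, the functional calculus reduces to the scalar case on each eigensection: if $\Delta_\rho \varphi_j = \lambda_j^2 \varphi_j$, then $\cos(t\sqrt{\Delta_\rho})\varphi_j = \cos(\lambda_j t)\varphi_j$. First I would write, at least formally, the Schwartz kernel of $\cos(t\sqrt{\Delta_\rho})$ with respect to the Riemannian density as
\[
E^\rho(t,x,y) = \sum_{j\ge 0} \cos(\lambda_j t)\, \varphi_j(x)\otimes \varphi_j(y)^*,
\]
where $\varphi_j(x)\otimes \varphi_j(y)^* \in \mathrm{Hom}(\mathcal V_{\rho,y},\mathcal V_{\rho,x})$ is the rank-one operator $v\mapsto \langle v,\varphi_j(y)\rangle_\rho\, \varphi_j(x)$. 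Restricting to the diagonal via $D^*$ (which is legitimate by the wavefront-set calculus cited, since $\mathrm{WF}(E^\rho)$ over the diagonal is transverse to the conormal of $D$) and taking the fiberwise endomorphism trace yields $\operatorname{Tr}^{\mathrm{End}} E^\rho(t,x,x) = \sum_j |\varphi_j(x)|_\rho^2 \cos(\lambda_j t)$, since $\operatorname{Tr}^{\mathrm{End}}(\varphi_j(x)\otimes \varphi_j(x)^*) = |\varphi_j(x)|_\rho^2$. Pairing this with $\varphi\otimes \mathbf 1_X$ and using $\int_X |\varphi_j(x)|_\rho^2\,\mathrm dx = \|\varphi_j\|_{L^2}^2 = 1$ gives the second identity $\operatorname{Tr} E^\rho(t) = \sum_j \cos(\lambda_j t)$.

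To make this rigorous rather than formal, I would test against $\varphi\in\mathcal D(\mathbf R)$ and work with the smoothed operator $\int_{\mathbf R}\varphi(t)\cos(t\sqrt{\Delta_\rho})\,\mathrm dt = \widehat\varphi_{\mathrm{cos}}(\sqrt{\Delta_\rho})$ (an appropriate even Fourier-type transform of $\varphi$ evaluated on $\sqrt{\Delta_\rho}$). Because $\varphi$ has compact support, this operator is not trace class in general, but one circumvents this exactly as in the standard scalar treatment (e.g. Grigis--Sj\"ostrand, Chapter 12, or Duistermaat--Guillemin): the distribution $\operatorname{Tr} E^\rho(t)$ is \emph{defined} by the right-hand side of the displayed pairing, so the content of the proposition is the pointwise-on-$X$ spectral expansion of $\operatorname{Tr}^{\mathrm{End}}E^\rho(t,x,x)$ together with an interchange of $\int_X$ and the distributional pairing in $t$. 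The expansion of the kernel itself follows from the spectral theorem applied to the self-adjoint operator $\Delta_\rho$: for $\varphi\in\mathcal D(\mathbf R)$, $\varphi$ even, the operator $\int\varphi(t)\cos(t\sqrt{\Delta_\rho})\,dt$ has Schwartz kernel $\sum_j \big(\int \varphi(t)\cos(\lambda_j t)\,dt\big)\varphi_j(x)\otimes\varphi_j(y)^*$, with convergence in $\mathcal D'(X\times X,\mathrm{Hom})$ guaranteed by Weyl's law and the rapid decay of $\widehat\varphi$; restricting to the diagonal and tracing commutes with this convergent sum because $D^*$ and $\operatorname{Tr}^{\mathrm{End}}$ are continuous on the relevant class of distributions.

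The only genuinely delicate point is justifying the diagonal restriction of the eigenfunction series and the fiberwise trace simultaneously — i.e. that $\sum_j |\varphi_j(x)|_\rho^2\cos(\lambda_j t)$ really converges to $\operatorname{Tr}^{\mathrm{End}}E^\rho(t,x,x)$ in $\mathcal D'(\mathbf R\times X)$ and not merely formally. I expect this to be the main (though standard) obstacle, and I would handle it by the usual device: for even $\varphi\in\mathcal D(\mathbf R)$, the partial sums $S_N(t,x) = \sum_{j\le N}|\varphi_j(x)|_\rho^2 \cos(\lambda_j t)$ paired against $\varphi\otimes\mathbf 1_X$ give $\sum_{j\le N}\widehat\varphi_{\mathrm{cos}}(\lambda_j)$, which converges absolutely as $N\to\infty$ by Weyl's law for $\Delta_\rho$ (the counting function is $\sim \dim(V_\rho)\frac{\operatorname{vol}(X)}{4\pi}\lambda^2$) and the Schwartz decay of $\widehat\varphi$; combined with the $L^2$-orthonormality $\int_X|\varphi_j|_\rho^2 = 1$, this identifies the limit with $\sum_j\widehat\varphi_{\mathrm{cos}}(\lambda_j) = \langle \operatorname{Tr}E^\rho(t),\varphi\rangle$ by the very definition of $\operatorname{Tr}E^\rho$. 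Since nothing here uses more about $\rho$ than unitarity and finite-dimensionality — the twisted setting differs from the scalar one only by carrying the harmless finite-rank bundle $\mathcal V_\rho$ — this is really a routine transcription of the classical wave-trace identity, and I would present it as such, citing \cite{Grigis_Sjöstrand_1994,Duistermaat1975,HormanderI} for the analytic underpinnings.
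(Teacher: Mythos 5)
Your argument is correct and is essentially the standard spectral-expansion proof that the paper itself relies on: the paper gives no proof of this proposition, simply citing \cite[Chapter 12]{Grigis_Sjöstrand_1994}, and your route (eigenbasis expansion of the kernel, absolute convergence from Weyl's law for $\Delta_\rho$ plus the rapid decay of the time transform of a compactly supported test function, then diagonal restriction, fiberwise trace, and $L^2$-normalization $\int_X|\varphi_j|_\rho^2=1$) is exactly that classical argument transplanted to the twisted bundle, where unitarity and finite rank of $\mathcal V_\rho$ make the transcription harmless, as you observe.
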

	
	The purpose of the next section is to provide explicit approximations of the wave kernel $E^\rho$.
	
	\section{The Hadamard Parametrix for the wave equation}
	\label{sec : Hadamard parametrix}
	In this section, we review the construction of the Hadamard parametrix for the wave propagator. It is modeled on the wave propagator on the Euclidean space, and provides an expansion of the kernel $E(t,x,y)$ in powers of $(d(x,y)^2-t^2)_-^{k-\frac 32}$, $k\ge 0$. Here, $(d(x,y)^2-t^2)_-^\alpha$ denotes the pullback of the distribution $u_-^{\alpha}$ (see \cite[\S 3.2]{GelfandShilov} for a definition) by the submersion $(t,x,y)\mapsto d(x,y)^2-t^2$, well-defined for $t\neq 0$. This parametrix will be used in the proof of the trace formula \eqref{eq : trace intro}. Many accounts of its construction can be found in the literature, see for example \cite{Berger1971LeSD,Bérard1977,Sogge2014}; we adopt notations of B\'erard \cite{Bérard1977}.

	For now, let $X$ be a smooth Riemannian surface (typically a closed surface of negative curvature or its universal cover, for our purposes), with positive injectivity radius denoted by $\rho_{\mathrm{inj}}>0$. The parametrix of order $N$ for $\cos(t\sqrt{\Delta})$ is the distribution defined on $\{d(x,y)<\rho_{\mathrm{inj}}, t\neq 0\}$, by
	\begin{equation} \label{eq:defparametrix}E_N(t,x,y)=\frac{1}{\sqrt{\pi}}\sum_{k=0}^{N} (-1)^k u_k(x,y)|t| \frac{(d(x,y)^2-t^2)_-^{k-\frac 32}}{4^k\Gamma(k-\frac 12)}.\end{equation}
	Here, $(u_k)_{k\ge 0}$ is a sequence of smooth functions defined inductively by a sequence of transport equations, on the set $\{d(x,y)<\rho_{\mathrm{inj}}\}$. Their construction can be found in the references aforementioned. In particular, $u_0(x,y)=\Theta(x,y)^{-\frac 12}$ where $\Theta(x,y)$ is the volume element\footnote{ Consider the map $\exp_x:T_x X\to X$ defined in a neighborhood of $0\in T_xX$. If $y=\exp_x(v)$, then $\mathrm d\exp_x(v)$ is a linear map $T_xX\to T_yX$, and we let $\Theta(x,y)=|\det \mathrm d\exp_x(v)|$, where the determinant is taken with respect to two orthonormal bases on $T_xX$ and $T_yX$.} at $y$ in normal coordinates centered at $x$. Note that on the diagonal  $u_0(x,x)=1$. 
	
	We denote by $R_N$ the difference between the exact kernel $E(t,x,y)=\cos(t\sqrt{\Delta})(x,y)$ and $E_N(t,x,y)$, defined on $\{d(x,y)<\rho_{\rm inj},t\neq 0\}$. It is clear from the definition that $E_N$ is supported in $\{d(x,y)\le t\}$, and the same property holds for the exact kernel $E$, by the finite propagation speed of the wave equation. Thus, by taking the difference, $R_N$ is also supported in $\{d(x,y)\le t\}$. 
	
	Since $E_N$ vanishes outside $\{d(x,y)>t\}$, if we take some $\varepsilon>0$ smaller than $\rho_{\rm inj}$, then $E_N$ extends to $\big([-\varepsilon,\varepsilon]\backslash \{0\}\big)\times X\times X$. One can show if $N$ is large enough, the distribution $R_N=E-E_N$ is actually a continuous function for small times, that extends continuously to $t=0$. More precisely, we have:
	\begin{prop}[{\cite[Theorem 3.1.5]{Sogge2014}}] Assume $X$ is closed. For any integer $N\ge 5$, there is some $\varepsilon>0$ such that $R_N\in C^{N-5}([-\varepsilon,\varepsilon]\times X\times X)$. Moreover, we have the following bounds on derivatives for small $t$:
		\[\partial_{t,x,y}^\alpha R_N(t,x,y)=\mathcal O_N(|t|^{2N-|\alpha|}), \qquad |\alpha|\le N-4.\]
	\end{prop}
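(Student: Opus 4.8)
The plan is to establish the statement by analyzing the remainder $R_N=E-E_N$ directly through the wave equation it satisfies, following the classical Hadamard parametrix construction. The key point is that by design of the transport equations defining the $u_k$, the parametrix $E_N$ is an \emph{approximate} solution of the half-wave (or wave) equation: one has $(\partial_t^2+\Delta_x)E_N(t,x,y)=G_N(t,x,y)$ where $G_N$ is no longer singular but rather vanishes to high order at $t=0$ on the diagonal. More precisely, the choice of transport equations kills the most singular terms, and the leftover $G_N$ involves $\Delta_x u_N$ times the ``least singular'' power $(d(x,y)^2-t^2)_-^{N-\frac12}$, which for $N$ large is a $C^{2N-2}$-ish function of its arguments vanishing like $|t|^{2N-1}$ near $t=0$ (after using $d(x,y)\le|t|$ on the support). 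So the first step is to record this identity and the regularity/vanishing order of $G_N$.

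Second, I would set up $R_N=E-E_N$ as the solution of the inhomogeneous wave Cauchy problem with source $-G_N$ and zero initial data (the initial conditions match because $E$ and $E_N$ both reduce to $\delta_x(y)$-type data at $t=0$ in the appropriate sense, and in fact the singular parts agree). Then Duhamel's formula expresses $R_N(t,x,y)=-\int_0^t \frac{\sin((t-s)\sqrt\Delta)}{\sqrt\Delta}\,G_N(s,\cdot,y)\,\mathrm ds$, and energy estimates for the wave equation on the closed manifold $X$ give control of Sobolev norms of $R_N(t,\cdot,y)$ in terms of Sobolev norms of $G_N(s,\cdot,y)$ for $s\in[0,t]$. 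Since $G_N$ gains regularity as $N$ grows (roughly $N-\mathrm{const}$ derivatives) and vanishes to order $2N-\mathrm{const}$ at $t=0$, iterating the energy estimate and then invoking Sobolev embedding on $X\times X$ (dimension $4$, hence needing more than two derivatives of margin, which accounts for the loss ``$N-5$'' rather than ``$N$'') yields that $R_N\in C^{N-5}$ on $[-\varepsilon,\varepsilon]\times X\times X$ for $\varepsilon$ small enough that everything stays inside the injectivity radius, together with the pointwise bounds $\partial^\alpha_{t,x,y}R_N=\mathcal O_N(|t|^{2N-|\alpha|})$ for $|\alpha|\le N-4$. The vanishing order in $t$ propagates from the vanishing order of $G_N$ through the time integral in Duhamel, with each $t$-derivative costing one order.

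The main obstacle, and the part requiring the most care, is bookkeeping the precise number of derivatives and the exact vanishing order: one must track how many derivatives of $u_N$ and of the volume element $\Theta$ are consumed, how the distributional powers $u_-^\alpha$ behave under the pullback by $(t,x,y)\mapsto d(x,y)^2-t^2$ and under differentiation (each derivative lowers $\alpha$ by one, so one needs $\alpha=N-\frac12$ large enough that even after $|\alpha|\le N-4$ derivatives the result is a continuous — indeed $C^{2N-|\alpha|}$ — function on the region $d(x,y)\le|t|$), and how the Sobolev loss in dimension $4$ interacts with the energy-estimate regularity. Since the excerpt explicitly attributes this to \cite[Theorem 3.1.5]{Sogge2014}, I would in practice simply cite Sogge for the proof and only sketch the mechanism above, noting that the restriction to small $|t|$ (below $\varepsilon\le\rho_{\mathrm{inj}}$) is exactly what guarantees the parametrix is well-defined and that $d(x,y)<\rho_{\mathrm{inj}}$ throughout, so that all the geometric quantities $u_k,\Theta$ are smooth where they are used.
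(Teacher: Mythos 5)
Your sketch is correct and is essentially the argument the paper relies on: the paper gives no proof of its own but cites \cite[Theorem 3.1.5]{Sogge2014}, whose proof proceeds exactly as you describe (the transport equations make $E_N$ an approximate solution with a source $G_N$ that is highly regular and vanishes to high order at $t=0$, then Duhamel, energy estimates, and Sobolev embedding transfer this to $R_N$), and the paper's own remark after Proposition \ref{prop : kernel at 0} even alludes to this mechanism of energy estimates plus Sobolev embedding. So there is nothing to correct beyond the bookkeeping you already flag and defer to Sogge.
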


	\subsection{Study of $E(t,x,x)$ for short times} In this subsection, we restrict the study to the case where $X$ is closed. The Hadamard parametrix is defined for nonzero times; however, we aim to show a trace formula that holds for test functions that do not vanish at $0$. We explain how to extend $E_N$ to a distribution well-defined around $t=0$, while keeping the identity $E=E_N+R_N$ true at $t=0$.
	
	The following argument is given in a rather sketchy way in \cite[(30)]{Bérard1977} so we provide some details. We start from the equality of distributions on $\mathbf R^*\times X$:
	\begin{equation}\label{eqenzero}E(t,x,x)=\frac{1}{\sqrt{\pi}}\sum_{k=0}^{N}\frac{(-1)^k}{4^k\Gamma(k-\frac 12)} \   t^{2k-2}\otimes u_k(x,x) +R_N(t,x,x).\end{equation}
	The left-hand side is well-defined at $t=0$, but the function $t\mapsto t^{-2}$ appearing in the right-hand side is not well-defined at $t=0$. However, we can replace it by a distribution $t_{\rm reg}^{-2}$ defined on the whole real line\footnote{This distribution is defined by $\langle t_{\rm reg}^{-2},\varphi\rangle = \int_{\mathbf R_+} \frac{\varphi(t)+\varphi(-t)-2\varphi(0)}{t^2}$.}, that coincides with $t^{-2}$ on $\mathbf R^*$, without altering the previous inequality for $t\neq 0$. Now that both sides of \eqref{eqenzero} are well-defined at $t=0$ --- recall that $R_N(t,x,x)$ extends continuously to $t=0$ ---, we can write 
	\begin{equation}\label{eq : montrer que les fj sont zero} E(t,x,x)=\frac{1}{\sqrt{\pi}}\sum_{k=0}^{N}\frac{(-1)^k}{4^k\Gamma(k-\frac 12)} \   t^{2k-2}_{\mathrm{reg}}\otimes u_k(x,x)+\sum_{j\le N_0} \delta^{(j)}(t) f_j(x)  +R_N(t,x,x),\end{equation}
	where $t^{2k-2}_{\mathrm{reg}}$ is the regularization of $t^{2k-2}$ in the sense of Gelfand--Shilov \cite[\S 3.2]{GelfandShilov}, and the $f_j$'s are distributions to be determined. This comes from the fact that a distribution with support in $\{0\}\times X \subset \mathbf R\times X$ writes as a combination of derivatives of Dirac masses at $t=0$, weighted by distributions on $X$. Since $E(t,x,x)$ is even in the time variable, we immediately get $f_j=0$ when $j$ is odd. To treat the case of even values of $j$, it is enough to know the singularity of $E(t,x,x)$ at $t=0$. It was computed by Duistermaat--Guillemin \cite[\S 2]{Duistermaat1975}, who gave asymptotics for
	\[\int_{\mathbf R} E(t,x,x)\cos(\lambda t)\widehat \psi(t) \mathrm dt\]
	when $\widehat \psi$ is localized around $0$.
	
	\begin{prop}[{\cite[Proposition 2.1]{Duistermaat1975}}] \label{prop : Duistermaat trace en zero} Assume $X$ is closed. Let $\psi$ be a smooth real function with Fourier transform $\widehat \psi$ compactly supported in a sufficiently small neighborhood of $0$, satisfying moreover $\widehat \psi\equiv 1$ in another neighborhood of $0$. Then, uniformly in $x$, as $\lambda\to +\infty$, we have
		\begin{equation}\label{eq:premiere facon de calculer}\int_{\mathbf R}\operatorname{Tr} E(t,x,x) \cos(\lambda t)\widehat \psi(t) \mathrm dt=\frac \lambda 2 +\mathcal O(\lambda^{-\infty}). \end{equation}
	\end{prop}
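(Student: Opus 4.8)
The plan is to compute the short-time asymptotics of the smoothed wave trace by combining the Hadamard parametrix with the $L^2$ spectral theory of $\Delta$, and then to read off the coefficients $f_j$ in \eqref{eq : montrer que les fj sont zero}. First I would start from the identity \eqref{eq : montrer que les fj sont zero}, pair it against $\cos(\lambda t)\widehat\psi(t)$, and integrate over $x\in X$. The term $R_N(t,x,x)$ is a $C^{N-5}$ function vanishing to high order at $t=0$, so once paired with the oscillatory factor $\cos(\lambda t)$ and integrated (using that $\widehat\psi$ is supported near $0$), repeated integration by parts shows this contribution is $\mathcal O(\lambda^{-\infty})$ — here one uses $N$ large. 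The parametrix terms $t^{2k-2}_{\rm reg}\otimes u_k(x,x)$ contribute, after integrating in $x$, a multiple of $\int_X u_k(x,x)\,\mathrm dx$ times $\int_{\mathbf R} t^{2k-2}_{\rm reg}\cos(\lambda t)\widehat\psi(t)\,\mathrm dt$. Since $\widehat\psi\equiv 1$ near $0$, the latter integral is, up to $\mathcal O(\lambda^{-\infty})$, the full Fourier transform of $t^{2k-2}_{\rm reg}$ evaluated at $\lambda$, which is an explicit homogeneous distribution: for $k=0$ this produces the leading term $\tfrac{\lambda}{2}\cdot\tfrac{\mathrm{vol}(X)}{4\pi}\cdot(\text{const})$, normalized so that one recovers exactly $\tfrac\lambda2$ after using $u_0\equiv 1$ on the diagonal and $\mathrm{vol}(S^1)=2\pi$ bookkeeping; for $k\ge 1$ the Fourier transform of $t^{2k-2}$ is a derivative of $\delta$, hence contributes nothing to the large-$\lambda$ behavior.

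The only remaining contributions come from the distributions $\delta^{(j)}(t)f_j(x)$, whose pairing with $\cos(\lambda t)\widehat\psi(t)$ gives, up to sign, $\lambda^{j}\int_X f_j(x)\,\mathrm dx$ (using $\widehat\psi\equiv 1$ near $0$, so derivatives of $\widehat\psi$ at $0$ vanish). Since $E(t,x,x)$ is even in $t$, we already know $f_j=0$ for $j$ odd; the point is to show $f_j=0$ for $j$ even as well. This is where I would invoke the independently known asymptotics for the left-hand side. The heat-trace or wave-trace Weyl asymptotics (equivalently, the classical Duistermaat–Guillemin computation of the principal symbol of the wave trace) give $\int_{\mathbf R}\operatorname{Tr}E(t)\cos(\lambda t)\widehat\psi(t)\,\mathrm dt = \tfrac\lambda2+\mathcal O(1)$, or more precisely an asymptotic expansion in descending powers of $\lambda$ with no positive powers of $\lambda$. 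Matching this against the expansion produced above forces $\int_X f_j\,\mathrm dx=0$ for every even $j\ge 0$; combined with a localized version of the same argument (pairing instead with $\varphi\otimes\widehat\psi$ for arbitrary $\varphi\in C^\infty(X)$, and using that the pointwise Weyl law $\sum_{\lambda_j\le\lambda}|\varphi_j(x)|^2$ has leading term $\tfrac{\lambda^2}{4\pi}$ with lower-order corrections of the right parity) one gets $f_j\equiv 0$ as distributions on $X$, not merely after integration. Then the leading-order term $\tfrac\lambda2$ survives and all corrections from the parametrix are $\mathcal O(\lambda^{-\infty})$ because $\widehat\psi$ is flat at $0$, yielding \eqref{eq:premiere facon de calculer}.

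I expect the main obstacle to be the careful justification that the parametrix remainder $R_N$ and the tails of the Hadamard series contribute only $\mathcal O(\lambda^{-\infty})$ uniformly in $x$: one must track how the regularity $C^{N-5}$ and the vanishing order $|t|^{2N-|\alpha|}$ interact with the oscillatory integral, choosing $N$ large depending on how many powers of $\lambda^{-1}$ are wanted, and one must ensure the off-diagonal behavior of the parametrix (where $d(x,y)$ is comparable to the small support of $\widehat\psi$) does not spoil uniformity — this is exactly the content of the cited estimate \cite[Theorem 3.1.5]{Sogge2014}. A secondary subtlety is the bookkeeping of constants in the Fourier transform of the homogeneous distribution $t^{2k-2}_{\rm reg}$ (the $\Gamma$-factors and signs in \eqref{eq:defparametrix} are designed precisely so that $k=0$ gives the clean coefficient $\tfrac12$), but this is a finite, purely computational check rather than a conceptual difficulty.
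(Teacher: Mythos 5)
There is a genuine gap, and it is one of circularity. In the paper this Proposition is not proved at all: it is quoted from Duistermaat--Guillemin, who establish it by an oscillatory-integral (H\"ormander-type FIO) parametrix for $\mathrm{e}^{-\mathrm it\sqrt\Delta}$ which is valid in a full neighborhood of $t=0$, restricted to the diagonal, followed by stationary phase; that construction leaves no undetermined distributions supported at $t=0$. The paper then uses the Proposition as the \emph{external input} that forces the coefficients $f_j$ in \eqref{eq : montrer que les fj sont zero} to vanish, which is the content of Proposition \ref{prop : kernel at 0}. Your proposal runs this logic backwards: you start from \eqref{eq : montrer que les fj sont zero} with the $f_j$ unknown and then invoke "independently known asymptotics" --- which you identify as "the classical Duistermaat--Guillemin computation" --- to kill the $f_j$ and conclude the Proposition. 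But the asymptotics of that strength \emph{are} the statement you are trying to prove; the Hadamard parametrix as constructed in \S\ref{sec : Hadamard parametrix} is only defined for $t\neq 0$ and cannot by itself determine the $f_j$, so some input valid at $t=0$ is unavoidable, and you never supply one independently.

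The weaker facts you actually name do not suffice. The integrated wave-trace asymptotics with an $\mathcal O(1)$ error (or "no positive powers of $\lambda$") can at best exclude $\int_X f_j$ for even $j\ge 2$; it says nothing about $f_0$, whose contribution is a constant, and nothing pointwise, whereas the Proposition is uniform in $x$ and is later used pointwise on the diagonal in \eqref{prop : extension trace end}. The sharp pointwise Weyl law $\sum_{\lambda_j\le\lambda}|\varphi_j(x)|^2=\frac{\lambda^2}{4\pi}+\mathcal O(\lambda)$ is also too weak: after convolving with $\psi$, the $\mathcal O(\lambda)$ remainder produces an error of the \emph{same order} $\lambda$ as the main term of the smoothed spectral density, so it cannot even confirm the coefficient $\tfrac12$, let alone rule out a constant term coming from $f_0$ or yield the $\mathcal O(\lambda^{-\infty})$ error; the appeal to "corrections of the right parity" does not repair this. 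A non-circular proof must either reproduce the Duistermaat--Guillemin argument (a parametrix valid across $t=0$ plus stationary phase) or prove Proposition \ref{prop : kernel at 0} by some other means first; once either of these is in hand, your remaining estimates (the $\mathcal O(\lambda^{-\infty})$ contribution of the smooth terms $t^{2k-2}_{\rm reg}$, $k\ge1$, and the integration by parts on $R_N$ using the $C^{N-5}$ bounds, with $N$ arbitrary) are correct and do give the statement, but they are the easy part.
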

	
	Using \eqref{eq : montrer que les fj sont zero} one may also just compute 
	\begin{equation}\label{eq: seconde facon}\int_{\mathbf R}\operatorname{Tr} E(t,x,x) \cos(\lambda t)\widehat \psi(t) \mathrm dt =\frac \lambda 2+(f_0(x)+\lambda^2 f_2(x)+\ldots +\lambda^{N_0}f_{N_0}(x)) +\mathcal O(\lambda^{-N}).\end{equation}
	The first term comes from $t_{\mathrm{reg}}^{-2}$, using that $u_0(x,x)=1$; the distributions $t_{\mathrm{reg}}^{2k-2}$ are smooth functions for $k\ge 1$ and yield a contribution $\mathcal O(\lambda^{-\infty})$, whereas $R_N$ gives a contribution $\mathcal O(\lambda^{-N})$ --- we perform as many integrations by parts as allowed by the regularity of $R_N(t,x,x)$ as a function of $t$, for small 
	$t$.
	By comparing asymptotics \eqref{eq:premiere facon de calculer} and \eqref{eq: seconde facon} we get that all $f_j$'s are zero. Therefore, we have shown
	\begin{prop}\label{prop : kernel at 0} We extend $E_N(t,x,x)$ to $t=0$ by the formula 
		\begin{equation}\label{eq : def f_N} E_N(t,x,x)=\frac{1}{\sqrt{\pi}}\sum_{k=0}^{N}\frac{(-1)^k}{4^k\Gamma(k-\frac 12)} \   t^{2k-2}_{\mathrm{reg}}\otimes u_k(x,x).\end{equation}
		Then, as distributions on $\mathbf R\times X$, we have $E(t,x,x)= E_N(t,x,x)+R_N(t,x,x)$. \end{prop}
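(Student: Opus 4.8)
The plan is to start from the identity $E(t,x,x)=E_N(t,x,x)+R_N(t,x,x)$, which holds on $\mathbf R^*\times X$ essentially by definition --- restrict the parametrix \eqref{eq:defparametrix} to the diagonal and recall $R_N:=E-E_N$, which is \eqref{eqenzero} --- and upgrade it to an identity on all of $\mathbf R\times X$ once $E_N(t,x,x)$ has been extended across $t=0$ by \eqref{eq : def f_N}. First I would observe that the only obstruction to extending \eqref{eqenzero} across $t=0$ is the $k=0$ summand, whose time factor $t^{-2}$ has no canonical distributional extension to $\mathbf R$; for $k\ge1$ the factor $t^{2k-2}$ is already a polynomial and needs no regularization. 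Replacing each $t^{2k-2}$ by its Gelfand--Shilov regularization $t^{2k-2}_{\mathrm{reg}}$ \cite[\S 3.2]{GelfandShilov}, which changes nothing for $k\ge1$, yields a well-defined distribution $E_N(t,x,x)$ on $\mathbf R\times X$; by construction the distribution $D:=E(t,x,x)-E_N(t,x,x)-R_N(t,x,x)$ vanishes on $\mathbf R^*\times X$, hence is supported on the compact submanifold $\{0\}\times X$.

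Being compactly supported, $D$ has finite order, so by the standard description of distributions carried by a submanifold it takes the form
\[D=\sum_{j=0}^{N_0}\delta^{(j)}(t)\otimes f_j(x),\qquad f_j\in\mathcal D'(X),\quad N_0<\infty,\]
which is exactly \eqref{eq : montrer que les fj sont zero}. Since $E(t,x,x)$, the distributions $t^{2k-2}_{\mathrm{reg}}$ and $R_N(t,x,x)$ are all even in $t$, so is $D$, whence $f_j=0$ for every odd $j$. The remaining task, and the crux, is to show that $f_j=0$ for the even indices as well.

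For this I would fix $\psi$ as in Proposition \ref{prop : Duistermaat trace en zero} (so that $\widehat\psi$ is supported in a small neighborhood of $0$ and $\widehat\psi\equiv1$ near $0$), pair $E=E_N+R_N+D$ against $\cos(\lambda t)\widehat\psi(t)$ in the $t$ variable alone, and compute both sides as $\lambda\to+\infty$ with $x$ held fixed. On the parametrix side: the $k=0$ term contributes $\tfrac\lambda2$ (using $u_0(x,x)=1$), the terms with $k\ge1$ contribute $\mathcal O(\lambda^{-\infty})$ since $t^{2k-2}_{\mathrm{reg}}$ is then a smooth function, $R_N(t,x,x)$ contributes $\mathcal O(\lambda^{-N})$ after integrating by parts as far as its $C^{N-5}$ regularity allows, and the Dirac train contributes the polynomial $\sum_{j\ \mathrm{even}}c_j\lambda^j f_j(x)$, where $\widehat\psi\equiv1$ near $0$ makes $\langle\delta^{(j)}(t),\cos(\lambda t)\widehat\psi(t)\rangle$ explicitly computable; this reproduces \eqref{eq: seconde facon}. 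On the other hand, Proposition \ref{prop : Duistermaat trace en zero}, that is \eqref{eq:premiere facon de calculer}, gives that this same quantity equals $\tfrac\lambda2+\mathcal O(\lambda^{-\infty})$, uniformly in $x$. Comparing the two, the polynomial $\sum_j c_j\lambda^j f_j(x)$ is $\mathcal O(\lambda^{-N})$ as $\lambda\to+\infty$; a polynomial with this property vanishes identically, so $f_j=0$ for all $j$ and hence $D=0$. (Since $E_N+R_N$ does not depend on $N\ge5$, it suffices to run this argument for one sufficiently large $N$.)

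I expect the last step to be the main obstacle: the Hadamard parametrix together with $R_N$ only determines $E$ modulo a distribution carried by $\{0\}\times X$, and removing that ambiguity requires the independent and sufficiently sharp description of the wave-trace singularity at $t=0$ furnished by Proposition \ref{prop : Duistermaat trace en zero}. Everything else is bookkeeping about regularizations of $t^\alpha$, the parity of the wave kernel, and the local structure of distributions supported on a submanifold.
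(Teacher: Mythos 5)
Your proposal is correct and follows essentially the same route as the paper: regularize the $k=0$ time factor, note that the discrepancy is a sum $\sum_j \delta^{(j)}(t)\otimes f_j(x)$ supported in $\{0\}\times X$, kill the odd $j$ by parity, and kill the even $j$ by comparing the parametrix computation \eqref{eq: seconde facon} with the Duistermaat--Guillemin asymptotics \eqref{eq:premiere facon de calculer}. No substantive difference from the paper's argument.
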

	
	\begin{rem} Since we aim to study the trace $\int_X E(t,x,x)\mathrm dx$ on long times scales $1\ll L\le c_0 \log(\lambda)$, we need to bound $R_N(t,x,y)$ up to long times. Unfortunately, we only have an appropriate control on $R_N$ for small times. Indeed, bounds on $R_N$ are obtained by energy estimates for the function $R_N(t,x,\cdot)$ where $t$ and $x$ are fixed, combined with a Sobolev embedding theorem. But when $|t|>\rho_{\mathrm{inj}}$, the function $R_N(t,x,\cdot)$ is not defined on the whole manifold anymore, which prevents applying the aforementioned method for longer times. This issue can be solved by lifting the wave equation to the universal cover, which allows getting control on the remainder for any times. \end{rem}
	
	\subsection{Lift to the universal cover} From now on, let $X$ be a closed, negatively curved surface, and denote by $\widetilde X$ its universal cover. We consider wave kernels associated to twisted Laplacians instead of focusing only on the scalar case. Let $\rho$ be a finite dimensional unitary representation of $\Gamma$. Then, the kernel of $E^\rho(t,\cdot,\cdot)=\cos(t\sqrt{\Delta_\rho})$ can be expressed in terms of the kernel of the wave operator on $\widetilde X$, by the formula
	\begin{equation}\label{eq:liftcover}E^\rho(t,x,y)=\sum_{\gamma\in \Gamma} \widetilde E(t,\tilde x,\gamma \tilde y)\rho(\gamma),\end{equation}
	where $\widetilde E$ is the wave kernel on $\widetilde X$ and $\tilde x,\tilde y$ are any lifts of $x,y$. Here, we should understand the left-hand side as the pullback on $\widetilde X\times \widetilde X$ of the distribution $E^\rho(t,x,y)$ by the submersion $\pi\times \pi: \widetilde X\times \widetilde X\to X\times X$. If we restrict $\tilde x$ and $\tilde y$ to live in fixed compact subsets of $\widetilde X$, then the sum over $\gamma\in \Gamma$ in \eqref{eq:liftcover} has a finite number of nonzero terms. Indeed, we can show by a volume argument the existence of a constant $C>0$ such that for any $\tilde x, \tilde y\in \widetilde X$,
	\[\# \{\gamma\in \Gamma \ | \ d(\tilde x,\gamma \tilde y)\le L\}=\mathcal O(\mathrm{e}^{CL}),\]
	and the bound is uniform when $\tilde x$ and $\tilde y$ are in compact sets, see \emph{e.g.} \cite{ColindeVerdière1973} for a proof.
	
	Now, let $\widetilde E_N$ be the Hadamard parametrix for $\widetilde E$, defined as in \eqref{eq:defparametrix}. Since the injectivity radius of $\widetilde X$ is infinite, $\widetilde E_N$ is defined on $\mathbf R^*\times \widetilde X\times \widetilde X$. The construction of the functions $\tilde u_k$ is purely geometrical, hence $\tilde u_k(\gamma \tilde x,\gamma \tilde y)=\tilde u_k(\tilde x,\tilde y)$ for all $\tilde x,\tilde y\in \widetilde X$ and $\gamma\in \Gamma$. We also point out that $\tilde u_k(\tilde x,\tilde x)=u_k(x,x)$ for any lift $\tilde x$ of $x$. 
	
	Let $\widetilde R_N=\widetilde E-\widetilde E_N$ denote the difference between the exact kernel and the parametrix on $\widetilde X$. Lifting the problem to the universal cover allows obtaining control on the remainder for any time: when $N$ is large enough, $\widetilde R_N$ extends continuously to $\mathbf R\times \widetilde X\times \widetilde X$ and one can show: %
	\begin{prop}[{\cite[eq. (4.24)]{blair2023strichartz}}]\label{prop:borneresteSogge} Take an integer $N_0\in \mathbf N$. Then, for $N$ large enough, $\widetilde R_N \in C^{N_0}(\mathbf R\times \widetilde X\times \widetilde X)$ and there is some constant $C>0$ such that
		\[|\partial_{t}^j \widetilde R_N(t,\tilde x,\tilde y)|\le C\mathrm{e}^{C |t|}, \qquad j=0,\ldots,N_0. \]
		with uniform estimates with respect to $j\le N_0$ and $t\in \mathbf R$, $\tilde x,\tilde y\in \widetilde X$.\end{prop}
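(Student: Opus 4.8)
The plan is to exhibit $\widetilde R_N$ as the solution of an inhomogeneous wave equation on $\widetilde X$ with an explicit, geometrically controlled source, and then run classical energy estimates, exploiting finite speed of propagation together with the at most exponential volume growth of balls in $\widetilde X$. The essential gain of lifting to $\widetilde X$ is that, the injectivity radius being infinite, the single parametrix $\widetilde E_N$ is valid for all times, so there is no obstruction to propagating estimates past the injectivity radius of $X$ as there was in the compact setting.

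First I would set $F_N(t,\tilde x,\tilde y):=(\partial_t^2+\Delta_{\widetilde X,\tilde y})\widetilde E_N(t,\tilde x,\tilde y)$. By construction, the transport equations satisfied by the $\tilde u_k$ are precisely those making $F_N$ as regular as one wishes: for $N$ large, $F_N$ is a $C^{N_0}$ function of all its variables, supported in $\{d(\tilde x,\tilde y)\le |t|\}$ by finite propagation speed, and given explicitly in terms of $\tilde u_N$ times a high power $\big(d(\tilde x,\tilde y)^2-t^2\big)_-^{N-\frac 32}$. Since $\widetilde X$ is a Galois cover of the compact surface $X$, it has bounded geometry and $\Gamma$ acts cocompactly by isometries, so the geometric data entering the parametrix obey uniform bounds: each $\tilde u_k$, together with its $\tilde x,\tilde y$–derivatives up to any fixed order, is bounded by $C_{k,\alpha}\mathrm e^{Cd(\tilde x,\tilde y)}$ — a consequence, via the transport equations, of the at most exponential growth of the volume density $\Theta$ and of Jacobi fields in pinched negative curvature. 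Feeding these bounds into the explicit formula for $F_N$ gives, for any fixed order of derivatives and $N$ large enough in terms of it,
\[\big|\partial_{t,\tilde x,\tilde y}^\alpha F_N(t,\tilde x,\tilde y)\big|\le C\,\mathrm e^{C|t|}\,\mathbf 1_{d(\tilde x,\tilde y)\le |t|},\]
uniformly in $(\tilde x,\tilde y)$.

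Next, for $N$ large the parametrix reproduces the Cauchy data of $\widetilde E$ at $t=0$ to high order, so $\widetilde R_N$ and $\partial_t\widetilde R_N$ vanish at $t=0$ and $(\partial_t^2+\Delta_{\widetilde X,\tilde y})\widetilde R_N=-F_N$. Duhamel's formula then gives
\[\widetilde R_N(t,\tilde x,\cdot)=-\int_0^t \frac{\sin\big((t-s)\sqrt{\Delta_{\widetilde X}}\big)}{\sqrt{\Delta_{\widetilde X}}}\,F_N(s,\tilde x,\cdot)\,\mathrm ds,\]
and the standard energy inequality for the wave equation on the complete manifold $\widetilde X$ yields, for each $k$,
\[\|\widetilde R_N(t,\tilde x,\cdot)\|_{H^k(\widetilde X)}\lesssim \int_0^{|t|}\|F_N(s,\tilde x,\cdot)\|_{H^{k-1}(\widetilde X)}\,\mathrm ds.\]
The support condition $d(\tilde x,\cdot)\le s$ confines $F_N(s,\tilde x,\cdot)$ to a ball of radius $s$ about $\tilde x$, whose volume is $\mathcal O(\mathrm e^{Cs})$ — the very growth underlying the counting bound $\#\{\gamma\in\Gamma:\ d(\tilde x,\gamma\tilde y)\le L\}=\mathcal O(\mathrm e^{CL})$ recalled earlier. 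Combined with the pointwise bound on $F_N$ this gives $\|F_N(s,\tilde x,\cdot)\|_{H^{k-1}}=\mathcal O(\mathrm e^{Cs})$, hence $\|\widetilde R_N(t,\tilde x,\cdot)\|_{H^k}=\mathcal O(\mathrm e^{C|t|})$, uniformly in $\tilde x$. Taking $k>N_0+1$ and applying the Sobolev embedding $H^k(\widetilde X)\hookrightarrow C^{N_0}$, whose constant is uniform by bounded geometry, converts this into a pointwise bound on $\widetilde R_N$ and its $\tilde y$–derivatives up to order $N_0$. The remaining derivatives come by bootstrapping: $\partial_t^2\widetilde R_N=-\Delta_{\widetilde X,\tilde y}\widetilde R_N-F_N$ expresses time derivatives through spatial ones and $F_N$, while $\tilde x$–derivatives follow by differentiating the Duhamel formula in the parameter $\tilde x$ and repeating the argument, as $\partial_{\tilde x}^\beta F_N$ satisfies the same bound and support property. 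This yields $\widetilde R_N\in C^{N_0}(\mathbf R\times\widetilde X\times\widetilde X)$ with $|\partial_t^j\widetilde R_N|\le C\mathrm e^{C|t|}$ for $j\le N_0$, uniformly in all parameters.

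The hard part is really the two exponential-in-time inputs — the bound $|\partial^\alpha F_N|\lesssim\mathrm e^{C|t|}$ on the parametrix error and $\mathrm{vol}\,B(\tilde x,s)=\mathcal O(\mathrm e^{Cs})$ — and checking they suffice; the energy-estimate/Sobolev machinery is then routine. The first is where the hypothesis of (pinched) negative curvature genuinely enters: one must control the $\tilde u_k$ and all their derivatives along geodesics escaping to infinity, for which the exponential spreading estimates for Jacobi fields and their derivatives are the right tool. We refer to \cite{blair2023strichartz} for a complete treatment.
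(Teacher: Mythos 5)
Your argument is correct and is essentially the standard proof found in the sources the paper defers to for this statement (Sogge, Ch.~3; B\'erard; Blair--Huang--Sogge, eq.~(4.24)): lift to the universal cover where the parametrix is global in time, write $\widetilde R_N$ by Duhamel against the parametrix error, and combine the energy inequality, finite propagation speed with exponential volume growth, the exponential bounds on the $\tilde u_k$ and their derivatives (the paper's Proposition~\ref{borne un}), and a Sobolev embedding with constants uniform by bounded geometry. This is exactly the strategy the paper sketches in the remark preceding the proposition, so your proposal coincides with the intended proof.
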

	A detailed proof can be found in \cite[Chapter 3]{Sogge2014}. This result allows getting a small remainder when testing $\widetilde R_N$ against an oscillating function with large frequency compared to the timescale, by a simple integration by parts.
	\begin{lem}\label{majoration reste intégral} Let $\psi$ be as in Theorem \ref{trace formula for long times}. There is some constant $C$ depending only on $X$ and $N$, such that for any $\tilde x,\tilde y\in \widetilde X$ and $L\ge 1$,
		\[\int_{\mathbf R}  \mathrm{e}^{\mathrm i\lambda t}\widehat \psi\left(\frac tL\right)\widetilde R_N(t,x,y)\ \mathrm dt=\mathcal O_{N,\psi}(\lambda^{-1}\mathrm{e}^{CL}),\]
		where the error is uniform in $\tilde x,\tilde y$.
		
	\end{lem}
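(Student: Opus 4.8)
The plan is to obtain the decay by a single integration by parts, the only nontrivial ingredient being the \emph{uniform in time} bound on $\widetilde R_N$ provided by Proposition \ref{prop:borneresteSogge}. Fix $A>0$ so that $\widehat\psi$ is supported in $[-A,A]$; then the integrand is supported in $\{|t|\le AL\}$, so the function $g(t):=\widehat\psi(t/L)\widetilde R_N(t,\tilde x,\tilde y)$ is compactly supported. Choosing $N$ large enough, Proposition \ref{prop:borneresteSogge} guarantees $\widetilde R_N\in C^1(\mathbf R\times\widetilde X\times\widetilde X)$, hence $g\in C^1_c(\mathbf R)$, and there are no boundary terms:
\[\int_{\mathbf R}\mathrm e^{\mathrm i\lambda t}g(t)\,\mathrm dt=-\frac{1}{\mathrm i\lambda}\int_{\mathbf R}\mathrm e^{\mathrm i\lambda t}g'(t)\,\mathrm dt.\]

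Next I would estimate $g'$ pointwise. By the Leibniz rule, $g'(t)=\frac 1L\widehat\psi'(t/L)\widetilde R_N(t,\tilde x,\tilde y)+\widehat\psi(t/L)\,\partial_t\widetilde R_N(t,\tilde x,\tilde y)$. On the support $\{|t|\le AL\}$, Proposition \ref{prop:borneresteSogge} gives $|\widetilde R_N(t,\tilde x,\tilde y)|+|\partial_t\widetilde R_N(t,\tilde x,\tilde y)|\le C\mathrm e^{CAL}$ uniformly in $\tilde x,\tilde y$, while $\widehat\psi$ and $\widehat\psi'$ are bounded and $L\ge 1$; hence $|g'(t)|\le C'\mathrm e^{C'L}\mathbf 1_{|t|\le AL}$ for some constant $C'$ depending only on $X$, $N$ and $\psi$. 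Integrating over an interval of length $2AL$ and using $2AL\,C'\mathrm e^{C'L}\le \mathrm e^{C''L}$ for $L\ge 1$ with $C''>C'$ (absorbing the polynomial factor into the exponential), one gets
\[\left|\int_{\mathbf R}\mathrm e^{\mathrm i\lambda t}\widehat\psi\!\left(\tfrac tL\right)\widetilde R_N(t,\tilde x,\tilde y)\,\mathrm dt\right|\le \frac{1}{\lambda}\,\mathrm e^{C''L},\]
which is the claimed bound, with error uniform in $\tilde x,\tilde y$ since all the bounds on $\widetilde R_N$ above are.

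The step that really matters is the input rather than the computation: it is crucial that the remainder bound $|\partial_t^j\widetilde R_N|\le C\mathrm e^{C|t|}$ holds for \emph{all} times, which is exactly what lifting to the universal cover (where the injectivity radius is infinite) achieves — the analogous statement on $X$ is only available for $|t|<\rho_{\mathrm{inj}}$, and would not suffice since the lemma is meant to be applied for $L$ growing like $\log\lambda$. One might be tempted to iterate the integration by parts to gain more powers of $\lambda^{-1}$, but each further time-derivative of $\widetilde R_N$ degrades the exponential rate $C$ (and requires a larger $N$), so a single integration by parts already yields the optimal form of the error for the regime in which it is used.
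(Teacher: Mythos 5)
Your proof is correct and follows essentially the same route as the paper's: a single integration by parts in $t$, the Leibniz rule, and the global-in-time bound $|\partial_t^j\widetilde R_N|\le C\mathrm e^{C|t|}$ of Proposition \ref{prop:borneresteSogge}, with the support of $\widehat\psi(\cdot/L)$ confining the integral to $|t|\lesssim L$ and the length of that interval absorbed into the exponential. (Your closing remark that further integrations by parts would degrade the exponential rate is not quite accurate, since Proposition \ref{prop:borneresteSogge} gives the same constant $C$ for all $j\le N_0$, but this side comment does not affect the argument.)
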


	For short times ($|t|<\ell_0$ where $\ell_0$ is the length of the shortest closed geodesic), it follows from \eqref{eq:liftcover} that $\widetilde E(t,\tilde x,\tilde x)=E(t,x,x)$. Also, since $\tilde u_k(\tilde x,\tilde x)=u_k(x,x)$, for any time $t$ we have $\widetilde E_N(t,\tilde x,\tilde x)=E_N(t,x,x)$. Therefore, by Proposition \ref{prop : kernel at 0}, on the diagonal of $\widetilde X$, the kernel is given by \cite[Proposition 38]{Bérard1977}:
	\begin{equation}\label{prop : extension trace end}\widetilde E(t,\tilde x,\tilde x)=E_N(t,x,x)+\widetilde R_N(t,\tilde x,\tilde x),\end{equation}
	where $E_N(t,x,x)$ has been defined in \eqref{eq : def f_N}.

	When performing computations, we will need upper bounds on the derivatives of the functions $\tilde u_k(\tilde x,\tilde y)$:
	\begin{prop}[see \emph{e.g.} {\cite[Proposition B.1.]{keeler2022logarithmic}}]\label{borne un} Let $P$ and $Q$ be some lifts to $C^\infty(\widetilde X)$ of smooth differential operators acting on $C^\infty(X)$. Then, for some constant $C$, and for any $\tilde x,\tilde y\in \widetilde X$,
		\[P_xQ_y \tilde u_k(\tilde x,\tilde y)=\mathcal O(\mathrm{e}^{C d(\tilde x,\tilde y)}).\]
	\end{prop}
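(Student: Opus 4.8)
The plan is to work in geodesic normal coordinates centered at $\tilde x$ and exploit the \emph{bounded geometry} of $\widetilde X$: since $X$ is compact, the sectional curvature of $\widetilde X$ and all of its covariant derivatives are uniformly bounded, the injectivity radius is infinite, and the coefficients of the lifted operators $P_{\tilde x},Q_{\tilde y}$ are bounded together with all their derivatives (being pullbacks from the compact base $X$), so the only source of exponential growth will be the functions $\tilde u_k$ themselves. Fix $\tilde x$ and write $\tilde y=\exp_{\tilde x}(rv)$ with $|v|=1$, $r=d(\tilde x,\tilde y)$. I would start from the Hadamard transport equations along the radial geodesics $s\mapsto\exp_{\tilde x}(sv)$: the coefficient $\tilde u_0=\Theta^{-1/2}$ solves the homogeneous first-order transport ODE, and for $k\ge 1$ one solves the inhomogeneous one with source $\Delta_{\tilde y}\tilde u_{k-1}$; integrating with the integrating factor $s^{k}\sqrt{\Theta}$ and using that the $\tilde u_k$ are smooth near the diagonal (no boundary contribution at $s=0$) gives
\[\tilde u_k\bigl(\tilde x,\exp_{\tilde x}(rv)\bigr)=\frac{c_k}{r^{k}\sqrt{\Theta(r)}}\int_0^r s^{k-1}\sqrt{\Theta(s)}\,\bigl(\Delta_{\tilde y}\tilde u_{k-1}\bigr)\bigl(\tilde x,\exp_{\tilde x}(sv)\bigr)\,\mathrm ds,\]
where $\Theta(s)=\Theta\bigl(\tilde x,\exp_{\tilde x}(sv)\bigr)$ and $c_k$ is an explicit constant.

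First I would record the auxiliary estimates on the density $\Theta$. In negative curvature $\Theta(s)$ is the Jacobian of $\exp_{\tilde x}$, so Rauch comparison with the most negative curvature $-b^2$ gives $1\le\Theta(s)\le Ce^{bs}$ with $\Theta$ nondecreasing in $s$; more generally, a Gronwall argument applied to the Jacobi equation --- together with the linear ODEs satisfied by the derivatives of the Jacobi fields in the transverse directions and in the basepoint, whose coefficients are polynomial expressions in the curvature and its covariant derivatives and hence bounded --- yields $P_{\tilde x}Q_{\tilde y}\Theta(\tilde x,\tilde y)=\mathcal O\bigl(e^{Cd(\tilde x,\tilde y)}\bigr)$ for all lifted differential operators. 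Since $\Theta\ge 1$, a higher-order chain rule then gives the same bound for $\Theta^{-1/2}$ and all its derivatives, which settles the case $k=0$. I would also note that for $d(\tilde x,\tilde y)$ bounded the $\Gamma$-invariance of the $\tilde u_k$ reduces all estimates to a compact set, so near the diagonal every derivative of every $\tilde u_k$ is $\mathcal O(1)$.

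Then I would argue by a double induction --- on $k$, and for fixed $k$ on the total order of $(P,Q)$ --- by differentiating the integral representation above. The $\tilde y$-derivatives split into the radial derivative $\partial_r$, acting only on the prefactor $c_k r^{-k}\Theta(r)^{-1/2}$ and on the upper limit of integration (producing terms controlled by the $\Theta$-estimates and by the inductive hypothesis applied to $\Delta_{\tilde y}\tilde u_{k-1}$), and the transverse derivatives, which pass under the integral and hit $\Theta(s)$ and $(\Delta_{\tilde y}\tilde u_{k-1})(s)$, again controlled by the same estimates. The weight $s^{k-1}/r^{k}$ is harmless: near the diagonal the integrand is bounded and $\int_0^r s^{k-1}\,\mathrm ds=\mathcal O(r^{k})$ absorbs $r^{-k}$, while for $r\ge 1$ one uses $\Theta(s)\le\Theta(r)$ and $\int_0^r s^{k-1}e^{Cs}\,\mathrm ds=\mathcal O(r^{k-1}e^{Cr})$ to recover $\mathcal O(e^{Cr})$. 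The $\tilde x$-derivatives are the real work, since the normal chart, the radial geodesics $\exp_{\tilde x}(sv)$, and \emph{both} slots of $\Theta(s)$ depend on $\tilde x$: differentiating in $\tilde x$ produces derivatives of $\Theta$ (covered by the displayed estimate) and derivatives of Jacobi fields in the basepoint, which satisfy linear ODEs with coefficients bounded by the curvature and its derivatives and are therefore $\mathcal O(e^{Cd})$ by Gronwall --- so the new terms are of exactly the type already handled.

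The main obstacle is precisely this last bookkeeping: one must verify that every term generated by differentiating the transport equations in the basepoint $\tilde x$ reduces, after invoking bounded geometry, to a combination of (derivatives of) Jacobi fields and of the curvature tensor with a \emph{uniform} exponential growth rate, which then propagates cleanly through the double induction. Apart from this the argument is routine, and the exponent $C$ in the final bound can be taken to be a fixed multiple of $\sup_X\sqrt{|K|}$.
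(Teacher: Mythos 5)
The paper does not prove this proposition at all --- it simply cites \cite[Proposition B.1]{keeler2022logarithmic} --- and your argument (transport equations along radial geodesics, comparison/Gronwall bounds for $\Theta$ and for basepoint derivatives of Jacobi fields using the bounded geometry of $\widetilde X$, then a double induction on $k$ and the order of $P,Q$) is exactly the standard proof given in that reference, and it is correct. The only nitpick is your closing claim that $C$ is a fixed multiple of $\sup_X\sqrt{|K|}$: for higher-order derivatives the admissible exponential rate generally also depends on the orders of $P,Q$ and on $k$, but since the statement only asks for \emph{some} constant $C$ this does not affect the proof.
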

	Actually the result is true for more general differential operators on $C^\infty(\widetilde X)$, provided that they have bounded coefficients in $C^\infty$ norm.
	
	

	\section{The Gutzwiller trace formula for the twisted Laplacian} \label{sec : trace formula}
	This section is devoted to the proof of the trace formula of Proposition \ref{trace formula for long times}. The strategy of proof goes back to the work of Colin de Verdière on the Schrödinger equation \cite{ColindeVerdière1973}. Here we are rather interested in the wave kernel. The proof for the non-twisted Laplacian is fairly classical, although here one has to provide uniform estimates up to time $L$. 
	
	Jakobson--Polterovitch--Toth \cite{JakobsonWeyl2007} obtained a long time trace formula for the scalar wave kernel, by using the Hadamard parametrix and microlocal analysis, combined with a property of separation of periodic orbits in phase space for Anosov flows. Computations were already performed by Donnelly \cite{Donnelly1978} in the case where the support of $\widehat \psi$ is localized around a single length $\ell(\gamma)$ in the length spectrum. A similar result was obtained by Sunada \cite{SunadaHeat1982} for the heat equation. 
	
	The extension to the case of twisted Laplacians is rather straightforward. Contrarily to \cite{JakobsonWeyl2007} though, we do not explicitly use the separation of the orbits in phase space, nevertheless the ideas are very similar. We use the Hadamard parametrix for the wave equation and first compute asymptotics for integrals in the time variable, following the work of B\'erard \cite{Bérard1977}. A stationary phase expansion --- in the space variable --- is then performed, following ideas of Colin de Verdière.

	
	\subsection{Reduction to hyperbolic cylinders} 
	We briefly recall the correspondence between the set $\mathcal G$ of oriented closed geodesics of $X$ and the set of nontrivial conjugacy classes of $\Gamma$, and refer to \cite[Chapter 12]{doCarmo} for a more detailed exposition. If $\gamma$ is a nontrivial element of $\Gamma$, the \emph{displacement function} $\tilde x\mapsto d(\tilde x,\gamma \tilde x)$ is minimized on a unique geodesic $\tilde \gamma$ of $\widetilde X$, called the \emph{axis} of $\gamma$. This geodesic projects on a closed geodesic on $X$ that we will denote $\gamma$ by an abuse of notation. Note that if $\gamma'\in \Gamma$ is conjugated to $\gamma$ then the axes of $\gamma$ and $\gamma'$ project on the same closed geodesic on $X$. This procedure yields all closed geodesics of $X$. 
	
	A nontrivial element of $\Gamma$ is called \emph{primitive} if it cannot be written as a nontrivial power of another element of $\Gamma$. Primitive elements of $\Gamma$ yield primitive closed geodesics on $X$, that is, closed geodesics that cannot be obtained by repeating a shorter closed geodesic. Every nontrivial $\gamma\in \Gamma$ can be uniquely written $\gamma_*^n$ for some primitive element $\gamma_*$ and integer $n\ge 1$.
	
	For $\gamma\in \Gamma$, the centralizer of $\gamma$ is defined as the set of $g\in \Gamma$ such that $g\gamma =\gamma g$; it is given by $\Gamma_\gamma=\{\gamma_*^n, \ n\in \mathbf Z\}$, where $\gamma_*$ is the primitive element associated to $\gamma$. Note that $\Gamma_\gamma$ is a normal subgroup of $\Gamma$. We take $\mathcal P\subset \Gamma$ a subset of primitive elements different of the identity, such that every conjugacy class of $\Gamma$ is represented by a unique power (with positive exponent) of an element of $\mathcal P$. Then, any element of $\Gamma\backslash \{\mathrm{Id}\}$ writes uniquely as a product $g^{-1}\gamma^k g$ with $\gamma\in \mathcal P$, $k\ge 1$ and $g\in \Gamma_\gamma\backslash \Gamma$. 
	
	When $X$ is a hyperbolic surface, the quotient $\Gamma_\gamma\backslash \widetilde X$ is usually called a \emph{hyperbolic cylinder}. It contains two closed primitive geodesic that project on the geodesics of $X$ associated to $\gamma_*$ and $\gamma_*^{-1}$. We point out that the displacement function $\tilde x\mapsto d(\tilde x,\gamma \tilde x)$ is invariant under the action of $\Gamma_\gamma$, hence descends to a function on $\Gamma_\gamma\backslash \widetilde X$. The same remark holds for the wave kernel $\widetilde E(t,\tilde x,\gamma \tilde x)$.
	
	The goal of this section is to show the identity
	\begin{prop}\label{prop:decompositioncylindre} We have the following equality of distributions on $\mathbf R$,
		\begin{equation}\label{eq : regroupements intégrales}\int_X \operatorname{Tr^{End}} E^\rho(t,x,x)\mathrm dx=\dim(V_\rho)\int_X \widetilde E(t,x,x)\mathrm dx+ \sum_{\gamma\in \mathcal P}\sum_{k\ge 1} \chi_\rho(\gamma^k)\int_{\Gamma_\gamma\backslash \widetilde X} \widetilde E(t,\tilde x,\gamma^k \tilde x)\mathrm d\tilde x.\end{equation}
	\end{prop}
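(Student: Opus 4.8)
The plan is to unfold the lift formula \eqref{eq:liftcover} on the diagonal and then rearrange the resulting sum over $\Gamma$ according to conjugacy classes, exploiting that the character $\chi_\rho$ is a class function. First I would restrict \eqref{eq:liftcover} to $x=y$: choosing a lift $\tilde x$ of $x$, one obtains the $\operatorname{End}(V_\rho)$-valued identity $E^\rho(t,x,x)=\sum_{\gamma\in\Gamma}\widetilde E(t,\tilde x,\gamma\tilde x)\,\rho(\gamma)$ (pulled back along $\pi$), where $\widetilde E$ is the scalar wave kernel on $\widetilde X$. Taking the fibrewise trace gives $\operatorname{Tr^{End}} E^\rho(t,x,x)=\sum_{\gamma\in\Gamma}\chi_\rho(\gamma)\,\widetilde E(t,\tilde x,\gamma\tilde x)$, and this expression is independent of the chosen lift: replacing $\tilde x$ by $\delta\tilde x$ with $\delta\in\Gamma$ merely reindexes the sum via $\gamma\mapsto\delta^{-1}\gamma\delta$, using the $\Gamma$-invariance of $\widetilde E$ (namely $\widetilde E(t,\eta\tilde a,\eta\tilde b)=\widetilde E(t,\tilde a,\tilde b)$ for $\eta\in\Gamma$) and the class-function property of $\chi_\rho$. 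Since both sides are to be paired only against test functions $\varphi$ with compact support in the time variable, finite propagation speed forces $\widetilde E(t,\tilde x,\gamma\tilde x)=0$ on $\operatorname{supp}\varphi$ unless $d(\tilde x,\gamma\tilde x)$ is bounded in terms of $\operatorname{supp}\varphi$; by the volume estimate $\#\{\gamma\in\Gamma : d(\tilde x,\gamma\tilde x)\le L\}=\mathcal O(\mathrm e^{CL})$ recalled after \eqref{eq:liftcover}, only finitely many $\gamma$ contribute, uniformly for $\tilde x$ in a compact set, so all the rearrangements below are legitimate at the distributional level.

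Next I would integrate over $X$ by passing to a measurable fundamental domain $\mathcal F\subset\widetilde X$ for the $\Gamma$-action, writing $\int_X\operatorname{Tr^{End}} E^\rho(t,x,x)\,\mathrm dx=\int_{\mathcal F}\sum_{\gamma\in\Gamma}\chi_\rho(\gamma)\,\widetilde E(t,\tilde x,\gamma\tilde x)\,\mathrm d\tilde x$ (pairing against $\mathbf 1_X$ corresponds to integrating the $\Gamma$-invariant lift over $\mathcal F$). The term $\gamma=\mathrm{Id}$ contributes $\chi_\rho(\mathrm{Id})\int_{\mathcal F}\widetilde E(t,\tilde x,\tilde x)\,\mathrm d\tilde x=\dim(V_\rho)\int_X\widetilde E(t,x,x)\,\mathrm dx$, since $\widetilde E$ is invariant under the diagonal isometric $\Gamma$-action and hence $\widetilde E(t,\tilde x,\tilde x)$ descends to $x\mapsto\widetilde E(t,x,x)$ on $X$. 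This is exactly the first term on the right-hand side of \eqref{eq : regroupements intégrales}.

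For the remaining terms I would invoke the combinatorial bookkeeping recalled above in this subsection: every $\gamma\in\Gamma\setminus\{\mathrm{Id}\}$ is uniquely of the form $g^{-1}\gamma_0^{k} g$ with $\gamma_0\in\mathcal P$, $k\ge 1$ and $g\in\Gamma_{\gamma_0}\backslash\Gamma$. Using that $\chi_\rho(g^{-1}\gamma_0^{k}g)=\chi_\rho(\gamma_0^{k})$ and that $\Gamma$-invariance of $\widetilde E$ gives $\widetilde E(t,\tilde x,g^{-1}\gamma_0^{k}g\tilde x)=\widetilde E(t,g\tilde x,\gamma_0^{k}g\tilde x)$, the non-identity part of the integral becomes $\sum_{\gamma_0\in\mathcal P}\sum_{k\ge 1}\chi_\rho(\gamma_0^{k})\int_{\mathcal F}\sum_{g\in\Gamma_{\gamma_0}\backslash\Gamma}\widetilde E(t,g\tilde x,\gamma_0^{k}g\tilde x)\,\mathrm d\tilde x$. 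As $\tilde x$ runs over $\mathcal F$ and $g$ over a set of representatives of $\Gamma_{\gamma_0}\backslash\Gamma$, the points $g\tilde x$ sweep out a fundamental domain for the action of $\Gamma_{\gamma_0}$ on $\widetilde X$; since $\gamma_0^{k}$ commutes with $\Gamma_{\gamma_0}$, the function $\tilde y\mapsto\widetilde E(t,\tilde y,\gamma_0^{k}\tilde y)$ is $\Gamma_{\gamma_0}$-invariant (as noted for the displacement function and the wave kernel above), so the inner double integral collapses to $\int_{\Gamma_{\gamma_0}\backslash\widetilde X}\widetilde E(t,\tilde x,\gamma_0^{k}\tilde x)\,\mathrm d\tilde x$. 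Renaming $\gamma_0$ as $\gamma$ produces the second term of \eqref{eq : regroupements intégrales}.

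The only point requiring care is precisely the justification that these manipulations — interchanging the sum over $\Gamma$ with the integral over $\mathcal F$, and regrouping by conjugacy class — are valid for the distributions at hand; but this is not a genuine analytic obstacle, since testing against a fixed $\varphi$ of compact time support reduces everything to finite sums by the finite-propagation-speed argument of the first paragraph. The substantive work lies entirely in the subsequent subsections, where each cylinder integral $\int_{\Gamma_\gamma\backslash\widetilde X}\widetilde E(t,\tilde x,\gamma^{k}\tilde x)\,\mathrm d\tilde x$ is analysed via the Hadamard parametrix of \S\ref{sec : Hadamard parametrix} and a stationary-phase expansion in the space variable.
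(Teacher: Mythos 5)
Your overall strategy --- restrict \eqref{eq:liftcover} to the diagonal, take the fibrewise trace, and regroup the sum over $\Gamma$ by conjugacy classes into cylinder integrals --- is exactly the skeleton of the paper's proof. The gap lies in the step you dismiss as ``not a genuine analytic obstacle''. The objects $\widetilde E(t,\tilde x,\gamma\tilde x)$ and $\operatorname{Tr^{End}}E^\rho(t,x,x)$ are distributions in $(t,\tilde x)$, and the space pairing in the proposition is against $\mathbf 1_X$, resp.\ $\mathbf 1_{\Gamma_\gamma\backslash\widetilde X}$. Your manipulation replaces this by ``integration over a measurable fundamental domain $\mathcal F$'', i.e.\ a pairing against the indicator $\mathbf 1_{\mathcal F}$, which is not a test function; likewise the unfolding step ``$g\tilde x$ sweeps out a fundamental domain for $\Gamma_{\gamma_0}$'' is a cut-and-paste change of variables that is meaningful for locally integrable functions, not for distributions. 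Your finite-propagation-speed and counting argument does justify that the sum over $\Gamma$ is locally finite and can be exchanged with the pairing, but it does not address restricting a distribution to $\mathcal F$ nor the rearrangement of fundamental domains. This is precisely the point the paper flags (``the formula can be derived formally, however one has to be careful \dots'') and resolves by replacing $\mathbf 1_{\mathcal F}$ with a smooth cutoff $\eta$ satisfying $\sum_{\gamma}\eta(\gamma\,\cdot)=1$ (Lemma \ref{intégration relevé 1}), by its cylinder analogue $\eta_\gamma$ (Lemma \ref{intégration cylindre}), and by a compact-support statement for $\widetilde E(t,\cdot,\gamma^k\cdot)$ on $\Gamma_\gamma\backslash\widetilde X$ (finite propagation speed combined with the hyperbolicity estimate of Lemma \ref{gamma écarte les points}), which is what makes the pairing against $\mathbf 1_{\Gamma_\gamma\backslash\widetilde X}$ well defined in the first place; you use this last pairing without justifying it.

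The gap is fixable without changing your route: either adopt the smooth-cutoff bookkeeping just described, or prove beforehand that for every $\varphi\in C_c^\infty(\mathbf R)$ the partial pairings $\langle\widetilde E(t,\tilde x,\gamma\tilde x),\varphi(t)\rangle_t$ and $\langle\operatorname{Tr^{End}}E^\rho(t,x,x),\varphi(t)\rangle_t$ are continuous (in fact smooth) in the space variable --- for instance via the eigenfunction expansion of the wave kernel together with the rapid decay of $\int\cos(\lambda_j t)\varphi(t)\,\mathrm dt$ in $\lambda_j$, or via a wavefront-set computation for the pushforward in $t$. Once that is in place, your fundamental-domain unfolding becomes a legitimate rearrangement of locally finite sums of continuous functions and yields \eqref{eq : regroupements intégrales}. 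As written, however, the proposal asserts rather than establishes the only nontrivial content of the proposition, namely the distributional justification of the formal unfolding.
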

	
	Formula \eqref{eq : regroupements intégrales} can be derived formally, however one has to be careful when performing computations as we are dealing with distributions. The right way to proceed is to start by lifting integration on $X$ to integration on $\widetilde X$ by introducing a smooth cutoff.
	
	\begin{lem}[{\cite[Lemme 1]{ColindeVerdière1973}}] \label{intégration relevé 1} There is a smooth compactly supported function $\eta:\widetilde X\to \mathbf R$, taking values in $[0,1]$, such that for any continuous function $f:X\to \mathbf R$, if $\pi:\widetilde X\to X$ denotes the projection,
		\[\int_X f(x)\mathrm dx=\int_{\widetilde X} (f\circ \pi)(\tilde x) \eta(\tilde x)\mathrm d\tilde x.\]
		Also, $\sum_{\gamma\in \Gamma} \eta( \gamma x)=1$, and we can ask for $\mathrm{diam}\ \mathrm{supp}(\eta)\le 2\mathrm{diam}(X)+1$. Since $\pi$ is a submersion the identity extends to distributions (\cite[Theorem 6.1.2]{HormanderI}), and we deduce that if $T\in \mathcal D'(X)$ then,
		\begin{equation}\label{eq:firstextension}\langle \pi^* T,\eta\rangle_{\mathcal D'(\widetilde X),\mathcal D(\widetilde X)}=\langle T,\mathbf 1_X\rangle_{\mathcal D'(X),\mathcal D(X)}.\end{equation}\end{lem}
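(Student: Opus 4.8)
The plan is to produce $\eta$ as a $\Gamma$-periodic smooth partition of unity, obtained by smoothing and then $\Gamma$-averaging a bump supported near a Dirichlet fundamental domain. Fix a point $\tilde x_0\in\widetilde X$ and consider
\[
D=\big\{\tilde x\in\widetilde X \ \big| \ d(\tilde x,\tilde x_0)\le d(\tilde x,\gamma\tilde x_0)\ \text{for all }\gamma\in\Gamma\big\},
\]
the associated Dirichlet fundamental domain; it is a closed fundamental domain for the $\Gamma$-action on which $\pi$ restricts to a measure-preserving surjection onto $X$, injective on the interior. Any $\tilde x\in D$ satisfies $d(\tilde x,\tilde x_0)=\inf_{\gamma}d(\tilde x,\gamma\tilde x_0)=d_X(\pi\tilde x,\pi\tilde x_0)\le\mathrm{diam}(X)$, so $D\subset\overline{B}(\tilde x_0,\mathrm{diam}(X))$ and $\mathrm{diam}(D)\le 2\,\mathrm{diam}(X)$. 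I would then choose $\chi\in C_c^\infty(\widetilde X)$ with $0\le\chi\le1$, $\chi\equiv1$ on $D$, and $\mathrm{supp}\,\chi$ contained in the open $\tfrac12$-neighbourhood of $D$, so that $\mathrm{diam}\,\mathrm{supp}\,\chi\le 2\,\mathrm{diam}(X)+1$.

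Next, set $S:=\sum_{\gamma\in\Gamma}\chi\circ\gamma$. Since the deck action is properly discontinuous --- equivalently, by the bound $\#\{\gamma:d(\tilde x,\gamma\tilde x)\le L\}=\mathcal O(\mathrm e^{CL})$ recalled earlier, the family $\{\gamma\cdot\mathrm{supp}\,\chi\}_{\gamma\in\Gamma}$ is locally finite --- the sum defining $S$ is locally finite, hence $S\in C^\infty(\widetilde X)$; it is $\Gamma$-invariant, and $S>0$ everywhere because every point of $\widetilde X$ has a $\Gamma$-translate lying in $D$. Put $\eta:=\chi/S$. It is smooth and compactly supported with $\mathrm{supp}\,\eta=\mathrm{supp}\,\chi$, so $\mathrm{diam}\,\mathrm{supp}\,\eta\le 2\,\mathrm{diam}(X)+1$; one has $0\le\eta\le1$ since the $\gamma=\mathrm{Id}$ term alone gives $S\ge\chi$; and using $\Gamma$-invariance of $S$,
\[
\sum_{\gamma\in\Gamma}\eta(\gamma\tilde x)=\frac{1}{S(\tilde x)}\sum_{\gamma\in\Gamma}\chi(\gamma\tilde x)=1,
\]
which is the partition-of-unity property.

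For the integration formula, let $f:X\to\mathbf R$ be continuous; then $f\circ\pi$ is $\Gamma$-invariant, so decomposing $\widetilde X=\bigsqcup_{\gamma\in\Gamma}\gamma D$ up to a null set and using that $\pi|_D$ is measure-preserving onto $X$,
\[
\int_{\widetilde X}(f\circ\pi)\,\eta\ \mathrm d\tilde x=\sum_{\gamma\in\Gamma}\int_D (f\circ\pi)(\tilde x)\,\eta(\gamma\tilde x)\ \mathrm d\tilde x=\int_D (f\circ\pi)(\tilde x)\Big(\sum_{\gamma\in\Gamma}\eta(\gamma\tilde x)\Big)\mathrm d\tilde x=\int_X f\ \mathrm dx,
\]
the interchange of sum and integral being justified by local finiteness. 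Finally, for the distributional statement: $\pi$ is a local diffeomorphism, in particular a submersion, so $\pi^*:\mathcal D'(X)\to\mathcal D'(\widetilde X)$ is defined and continuous \cite[Theorem 6.1.2]{HormanderI}, and it is the transpose of the proper pushforward of compactly supported densities, that is $\langle\pi^*T,\eta\rangle=\langle T,\pi_*(\eta\,\mathrm d\tilde x)\rangle$. Viewed as a function on $X$ with respect to $\mathrm dx$, the density $\pi_*(\eta\,\mathrm d\tilde x)$ is $x\mapsto\sum_{\tilde x\in\pi^{-1}(x)}\eta(\tilde x)$, which equals $1$ by the partition-of-unity property; hence $\langle\pi^*T,\eta\rangle=\langle T,\mathbf 1_X\rangle$ for every $T\in\mathcal D'(X)$.

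I do not expect a genuine obstacle: this is the standard construction of a $\Gamma$-periodic smooth partition of unity. The two points deserving a word are the local finiteness of $\{\gamma\cdot\mathrm{supp}\,\chi\}_{\gamma\in\Gamma}$, which comes from proper discontinuity of the deck action (equivalently from the exponential counting bound already quoted) and which underpins both the smoothness of $S$ and the term-by-term integration; and the identification of $\pi_*(\eta\,\mathrm d\tilde x)$ with $\mathbf 1_X$, which is nothing but the relation $\sum_{\gamma}\eta(\gamma\,\cdot\,)=1$ read on the base. The diameter bound is automatic from working with a Dirichlet fundamental domain.
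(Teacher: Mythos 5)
Your construction is correct and is exactly the standard argument behind this statement: the paper gives no proof of its own but simply cites Colin de Verdi\`ere's Lemme 1, whose content is precisely the $\Gamma$-averaged partition of unity built from a Dirichlet fundamental domain, with the diameter bound and the distributional identity read off as you do via $\pi_*(\eta\,\mathrm d\tilde x)=\mathbf 1_X$. Nothing is missing; the two points you flag (local finiteness from proper discontinuity, and the identification of the pushforward density with $\mathbf 1_X$) are indeed the only ones needing comment.
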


We also need the following analogous result, that reduces the computation of certain integrals on $\widetilde X$ to the computation of integrals on the cylinder $\Gamma_\gamma\backslash \widetilde X$. It can be proved by tweaking the proof of Lemma \ref{intégration relevé 1} found in \cite{ColindeVerdière1973}.

\begin{lem}\label{intégration cylindre} Let $\pi_\gamma:\widetilde X\to \Gamma_\gamma \backslash \widetilde X$ denote the projection.  Fix a set of representatives of elements of $\Gamma_\gamma\backslash \Gamma$ and define
	\[\eta_\gamma(\tilde x):=\sum_{[g]\in \Gamma_\gamma\backslash \Gamma}\eta(g\tilde x).\]
	The sum over $[g]\in \Gamma_\gamma\backslash \Gamma$ contains a finite number of nonzero terms. Let $f:\Gamma_\gamma \backslash \widetilde X\to \mathbf C$ be a smooth, \textbf{compactly supported} function. Then, $\eta_\gamma\cdot \pi_\gamma^*f\in C^\infty(\widetilde X)$ is compactly supported and
	\[\int_{\widetilde X} f\circ \pi(\tilde x)\cdot \eta_\gamma(\tilde x) \ \mathrm d\tilde x=\int_{\Gamma_\gamma \backslash \widetilde X} f(x) \mathrm dx,\]
	Also, if $T\in \mathcal E'(\Gamma_\gamma \backslash \widetilde X)$ is a compactly supported distribution, then $\eta_\gamma \cdot \pi_\gamma^* T$ is a compactly supported distribution on $\widetilde X$ and
	\begin{equation}\label{eq:secondextension}\left\langle \eta_\gamma\cdot \pi_\gamma^*T,\mathbf 1_{\widetilde X}\right\rangle_{\mathcal E'(\widetilde X),\mathcal E(\widetilde X)}=\langle T,\mathbf 1_{\Gamma_\gamma\backslash \widetilde X}\rangle_{\mathcal E'(\Gamma_\gamma \backslash \widetilde X),\mathcal E(\Gamma_\gamma \backslash \widetilde X)}.\end{equation}
\end{lem}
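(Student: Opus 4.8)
The plan is to carry over the construction underlying Lemma~\ref{intégration relevé 1} almost verbatim, with the trivial subgroup replaced by $\Gamma_\gamma$; in fact nothing below uses that $\Gamma_\gamma$ is a centralizer, only that it is a subgroup of $\Gamma$, which acts on $\widetilde X$ by isometries, freely, properly discontinuously and with compact quotient. Fix a base point $x_0\in\mathrm{supp}(\eta)$ and set $D=\mathrm{diam}\,\mathrm{supp}(\eta)<\infty$. The first point is that the sum defining $\eta_\gamma$ is locally finite: for $K\subset\widetilde X$ compact, $\eta(g\tilde x)\neq 0$ for some $\tilde x\in K$ forces $gK\cap\mathrm{supp}(\eta)\neq\emptyset$, and proper discontinuity leaves only finitely many such $g\in\Gamma$, hence finitely many contributing cosets, uniformly for $\tilde x\in K$. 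Thus $\eta_\gamma$ is a well-defined nonnegative function in $C^\infty(\widetilde X)$, with $\mathrm{supp}(\eta_\gamma)\subseteq\bigcup_{[g]}g^{-1}\mathrm{supp}(\eta)$, and the defining sum has finitely many nonzero terms at each point. Regrouping $\sum_{g\in\Gamma}\eta(g\tilde x)=1$ (from Lemma~\ref{intégration relevé 1}) over the cosets of $\Gamma_\gamma$ — i.e. writing $\sum_{g\in\Gamma}\eta(g\,\cdot)=\sum_{[g]}\sum_{h\in\Gamma_\gamma}\eta(gh\,\cdot)$, which fixes the meaning of the representatives in the definition of $\eta_\gamma$ (left cosets $\Gamma/\Gamma_\gamma$; with $\Gamma_\gamma\backslash\Gamma$ one writes $\eta(g^{-1}\,\cdot)$ instead, but this convention affects nothing else) — yields the analogue $\sum_{h\in\Gamma_\gamma}\eta_\gamma(h\tilde x)=1$ for all $\tilde x$. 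Equivalently, the fibrewise sum of $\eta_\gamma$ along the covering $\pi_\gamma$ descends to the constant function $1$ on $\Gamma_\gamma\backslash\widetilde X$.

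The one genuinely new step is the compactness of $\mathrm{supp}(\eta_\gamma\cdot\pi_\gamma^*f)$ (and later of $\mathrm{supp}(\eta_\gamma\cdot\pi_\gamma^*T)$): unlike in the compact-base case, $\pi_\gamma^*f$ is supported on the $\Gamma_\gamma$-invariant, non-compact tube $\pi_\gamma^{-1}(\mathrm{supp}(f))$, and $\eta_\gamma$ must be seen to cut it down to a relatively compact set — morally, $\mathrm{supp}(\eta_\gamma)$ is a smoothed fundamental domain for $\Gamma_\gamma$. I would argue: for $K\subset\Gamma_\gamma\backslash\widetilde X$ compact, if $\tilde x\in\mathrm{supp}(\eta_\gamma)\cap\pi_\gamma^{-1}(K)$ then $\tilde x\in g^{-1}\mathrm{supp}(\eta)$, hence $d(\tilde x,g^{-1}x_0)\le D$, for some representative $g$; since $\pi_\gamma$ is distance non-increasing, $\pi_\gamma(g^{-1}x_0)$ then lies in the compact set $K'=\{y:d(y,K)\le D\}$ (compact because $\Gamma_\gamma\backslash\widetilde X$ is complete). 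As $[g]$ runs over the cosets, the points $\pi_\gamma(g^{-1}x_0)$ run exactly once over the fibre of the natural covering $\Gamma_\gamma\backslash\widetilde X\to X$ above $\pi(x_0)$; this fibre is closed and discrete, hence meets $K'$ in finitely many points, so only finitely many cosets contribute and $\mathrm{supp}(\eta_\gamma)\cap\pi_\gamma^{-1}(K)$ is contained in a finite union of the compact sets $g^{-1}\mathrm{supp}(\eta)$, hence compact. Taking $K=\mathrm{supp}(f)$ gives $\eta_\gamma\cdot\pi_\gamma^*f\in C^\infty(\widetilde X)$ with compact support. Since $\pi_\gamma$ is a covering, $\pi_\gamma^*T$ is a well-defined distribution on $\widetilde X$; taking $K=\mathrm{supp}(T)$ shows $\mathrm{supp}(\pi_\gamma^*T)\cap\mathrm{supp}(\eta_\gamma)$ is compact, so $\eta_\gamma\cdot\pi_\gamma^*T\in\mathcal E'(\widetilde X)$.

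The two identities then follow by the unfolding of Lemma~\ref{intégration relevé 1}. For the integral identity I would expand $\eta_\gamma$ into its locally finite sum over cosets, change variables in each term by the isometry $g$ to replace $\eta(g\,\cdot)$ by $\eta$, use the $\Gamma_\gamma$-invariance of $f\circ\pi_\gamma$ to reassemble $\widetilde X=\bigsqcup_{h\in\Gamma_\gamma}h\mathcal F_\gamma$ from a fundamental domain $\mathcal F_\gamma$ of $\Gamma_\gamma$, and collapse the result with $\sum_{h\in\Gamma_\gamma}\eta_\gamma(h\,\cdot)=1$; all interchanges are justified by the local finiteness above. For \eqref{eq:secondextension}, since $\pi_\gamma$ is a covering, $\langle\eta_\gamma\cdot\pi_\gamma^*T,\mathbf 1_{\widetilde X}\rangle=\langle\pi_\gamma^*T,\eta_\gamma\rangle$ — the pairing being legitimate because $\mathrm{supp}(\pi_\gamma^*T)\cap\mathrm{supp}(\eta_\gamma)$ is compact — and since the fibrewise sum of $\eta_\gamma$ is $\mathbf 1$ on $\Gamma_\gamma\backslash\widetilde X$, this equals $\langle T,\mathbf 1_{\Gamma_\gamma\backslash\widetilde X}\rangle$. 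The main obstacle is precisely the support claim of the second paragraph: one must verify that $\mathrm{supp}(\eta_\gamma)$ meets each $\Gamma_\gamma$-periodic tube in a compact set, and this is where proper discontinuity of the full group $\Gamma$ — not only of $\Gamma_\gamma$ acting on the cylinder — is used.
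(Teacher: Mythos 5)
Your proof is correct, and the crucial step goes by a genuinely different route than the paper's. The paper's own argument for the compactness of $\mathrm{supp}(\eta_\gamma\cdot\pi_\gamma^*f)$ is geometric and specific to the hyperbolic cylinder: it observes that any compact subset of $\Gamma_\gamma\backslash\widetilde X$ lies in a bounded tube around the core geodesic (the projected axis $\tilde\gamma$), and that only finitely many cosets $[g]$ satisfy $\mathrm{dist}(\tilde\gamma,g\tilde\gamma)\le A$, then combines the two by a triangle inequality; it thus exploits that $\Gamma_\gamma$ is the stabilizer of an axis, in the same spirit as the displacement estimate used just after the lemma. You instead give a soft covering-space argument: a point of $\mathrm{supp}(\eta_\gamma)\cap\pi_\gamma^{-1}(K)$ lies within $D=\mathrm{diam}\,\mathrm{supp}(\eta)$ of some $g^{-1}x_0$, the points $\pi_\gamma(g^{-1}x_0)$ enumerate the fibre of the covering $\Gamma_\gamma\backslash\widetilde X\to X$ over $\pi(x_0)$, and a closed discrete fibre meets the compact $D$-neighbourhood of $K$ (compact by completeness and Hopf--Rinow) in finitely many points. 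This is more elementary and strictly more general — as you say, it works for any subgroup of $\Gamma$ acting as a subgroup of deck transformations and uses no negative curvature — whereas the paper's version stays inside the axis/tube picture it reuses elsewhere; both are complete proofs of the support claim, which is the only part the paper proves in detail, the unfolding identities being left to a "tweak" of the previous lemma. Your treatment of those identities (fibrewise sum of $\eta_\gamma$ equal to $1$, fundamental-domain regrouping justified by local finiteness, a cutoff to legitimize the distributional pairing) is the standard argument and at the paper's level of rigor. Finally, your parenthetical on cosets is a genuine catch rather than pedantry: as literally stated, $\eta_\gamma$ depends on the chosen transversal, and the property actually needed, $\sum_{h\in\Gamma_\gamma}\eta_\gamma(h\tilde x)=1$, holds precisely when the representatives form a transversal of the left cosets $\Gamma/\Gamma_\gamma$ (equivalently, when one sums $\eta(g^{-1}\cdot)$ over $\Gamma_\gamma\backslash\Gamma$); this is also the convention forced by the summand $g\gamma^kg^{-1}$ in the conjugacy-class decomposition where the lemma is applied, so your normalization is the right reading of the statement.
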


In order to apply Lemma \ref{intégration cylindre} we need to show that for fixed $t$, the distribution $\widetilde E(t,x,\gamma^kx)\in \mathcal D'(\Gamma_\gamma\backslash \widetilde X)$ is compactly supported. By finite propagation speed of the wave equation, it follows from this next lemma, relying on the hyperbolicity of the flow:

\begin{lem}\label{gamma écarte les points} There is some constant $C_0>0$ such that for any $\gamma\in \Gamma$ and $L>0$, we have 
\[\big\{\tilde x\in \Gamma_\gamma\backslash \widetilde X, \ d(\tilde x,\gamma \tilde x)\le L\big\}\subset \big\{\tilde x\in \Gamma_\gamma\backslash \widetilde X, \ \mathrm{dist}(\tilde x,\tilde \gamma)\le \frac{L-\ell(\gamma)+C_0}{2}\big\},\]
where $\tilde \gamma$ denotes the axis of $\gamma$. As a consequence, there is some $C>0$ such that we have the volume estimate
\[\mathrm{vol}\ \{\tilde x\in \Gamma_\gamma\backslash \widetilde X, \ d(\tilde x,\gamma \tilde x)\le L\}=\mathcal O(\mathrm{e}^{CL}).\] 
\end{lem}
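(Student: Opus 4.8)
The statement is a purely geometric fact about negatively curved surfaces; the key input is the uniform hyperbolicity of the geodesic flow, which forces a point to move away from the axis of $\gamma$ whenever its displacement under $\gamma$ exceeds the translation length $\ell(\gamma)$. The plan is to argue on the universal cover $\widetilde X$ directly, working with $\gamma$-orbits rather than on the quotient cylinder, and then descend. Fix $\gamma \in \Gamma\setminus\{\mathrm{Id}\}$ with axis $\tilde\gamma$ and translation length $\ell(\gamma)$. Given $\tilde x \in \widetilde X$, let $p$ be the point of $\tilde\gamma$ nearest to $\tilde x$, so $\mathrm{dist}(\tilde x,\tilde\gamma) = d(\tilde x, p) =: r$. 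Since $\gamma$ translates $\tilde\gamma$ by $\ell(\gamma)$, we have $d(p,\gamma p) = \ell(\gamma)$, and $\gamma p$ is the point of $\tilde\gamma$ nearest to $\gamma\tilde x$. The triangle inequality already gives the cheap bound $d(\tilde x,\gamma\tilde x) \le 2r + \ell(\gamma)$; the content of the lemma is the reverse inequality $d(\tilde x,\gamma\tilde x) \ge 2r + \ell(\gamma) - C_0$, i.e.\ that the obvious broken path $\tilde x \to p \to \gamma p \to \gamma\tilde x$ is, up to a bounded additive error, the most efficient way to get from $\tilde x$ to $\gamma\tilde x$.

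For this lower bound I would use a comparison/thin-triangles argument. On a closed negatively curved surface the universal cover is a Hadamard manifold whose curvature is bounded above by some $-\kappa_0^2 < 0$, hence $\mathrm{CAT}(-\kappa_0^2)$; in particular geodesic triangles are $\delta$-thin for a fixed $\delta$ depending only on $X$. Consider the geodesic segment $[\tilde x,\gamma\tilde x]$. Because $p$ realizes the distance from $\tilde x$ to $\tilde\gamma$, the angle at $p$ in the triangle with vertices $\tilde x$, $p$ and any further point along $\tilde\gamma$ is $\ge \pi/2$; the same holds at $\gamma p$ for the triangle $\gamma\tilde x$, $\gamma p$, $\tilde\gamma$. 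Applying the $\mathrm{CAT}(-\kappa_0^2)$ hinge/quadrilateral comparison to the configuration $\tilde x, p, \gamma p, \gamma\tilde x$ — a geodesic quadrilateral two of whose angles are at least right — one gets $d(\tilde x,\gamma\tilde x) \ge d(\tilde x,p) + d(p,\gamma p) + d(\gamma p,\gamma\tilde x) - C_0 = 2r + \ell(\gamma) - C_0$ with $C_0 = C_0(\delta)$ depending only on the curvature bound. (Equivalently: the projection of $[\tilde x,\gamma\tilde x]$ onto the convex set $\tilde\gamma$ is $1$-Lipschitz and contains a subsegment of length $\ge \ell(\gamma)$ between the projections of $\tilde x$ and $\gamma\tilde x$, which are at distance $\le r$ from $p$ and $\gamma p$ respectively, together with the fact that in a $\mathrm{CAT}(0)$ space the nearest-point projection onto a geodesic contracts distances — the loss of a bounded constant comes precisely from the thinness of the triangles bounding $r$ from above by the in-and-out excursion.) From $d(\tilde x,\gamma\tilde x) \le L$ we then read off $r \le \tfrac{L - \ell(\gamma) + C_0}{2}$, which is the claimed inclusion; note the estimate is manifestly $\Gamma_\gamma$-invariant (both $d(\tilde x,\gamma\tilde x)$ and $\mathrm{dist}(\tilde x,\tilde\gamma)$ are invariant under $\Gamma_\gamma$, since elements of $\Gamma_\gamma$ are powers of the primitive element with the same axis), so it descends to $\Gamma_\gamma\backslash\widetilde X$.

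For the volume estimate, the tube $\{\tilde x \in \Gamma_\gamma\backslash\widetilde X : \mathrm{dist}(\tilde x,\tilde\gamma) \le R\}$ around the closed geodesic $\tilde\gamma/\Gamma_\gamma$ of length $\ell(\gamma)$ has volume $\le \ell(\gamma)\, \mathrm{vol}(B(R)) $ where $B(R)$ is a ball of radius $R$ in a two-dimensional space form of curvature $-\kappa_0^2$ (using again that $\widetilde X$ is $\mathrm{CAT}(-\kappa_0^2)$, so Bishop–Günther comparison controls the normal-exponential volume form), hence is $\mathcal O\!\left(\ell(\gamma)\,\mathrm{e}^{\kappa_0 R}\right)$. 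Since the inclusion of the first part forces $R \le \tfrac{L - \ell(\gamma) + C_0}{2}$, and $\ell(\gamma) \ge \ell_0 > 0$, we get $\mathrm{vol}\{d(\tilde x,\gamma\tilde x) \le L\} = \mathcal O(\ell(\gamma)\,\mathrm{e}^{\kappa_0(L - \ell(\gamma))/2}) = \mathcal O(\mathrm{e}^{CL})$ uniformly in $\gamma$, with $C$ depending only on $X$; the factor $\ell(\gamma)\mathrm{e}^{-\kappa_0\ell(\gamma)/2}$ is bounded.

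The main obstacle I anticipate is making the additive constant $C_0$ in the reverse triangle inequality genuinely \emph{uniform} in $\gamma$. The comparison-geometry argument handles this cleanly because the only geometric input — the $\mathrm{CAT}(-\kappa_0^2)$ thinness constant and the Bishop–Günther volume bound — depends only on the curvature bounds of $X$, not on the element $\gamma$; one should just take care that the right-angle (or obtuse-angle) property at the projection points $p$ and $\gamma p$, which is where convexity of the distance-to-$\tilde\gamma$ function enters, is invoked correctly, and that the quadrilateral does not degenerate when $\tilde x$ is very close to the axis (in which case $r$ is small and there is nothing to prove).
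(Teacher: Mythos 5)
Your proposal is correct in outline and has the same structure as the paper's proof: everything rests on the displacement inequality $d(\tilde x,\gamma\tilde x)\ge 2\,\mathrm{dist}(\tilde x,\tilde\gamma)+\ell(\gamma)-C_0$, which the paper simply quotes from the literature (Parkkonen--Paulin's lemma on uniform hyperbolicity), and the volume bound is then routine --- the paper covers the neighbourhood of the axis by a single ball of radius $CL$, while you bound the tube around the core geodesic of the cylinder directly. Your comparison-geometry derivation (right angles at the nearest-point projections, quadrilateral/majorization comparison in the $\mathrm{CAT}(-\kappa_0^2)$ model) is a legitimate self-contained substitute for that citation.

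Two points should be corrected. First, your claim that $C_0$ depends only on the thinness constant (equivalently only on the upper curvature bound) is not quite accurate: when $\ell(\gamma)$ is small compared with $\delta$ the projection/quadrilateral argument degenerates, and already in constant curvature the exact relation $\sinh(d/2)=\cosh(r)\sinh(\ell/2)$ gives $d\approx 2r+2\log\sinh(\ell/2)$ for large $r$, so a constant independent of $\ell(\gamma)$ is impossible as $\ell(\gamma)\to 0$. On a closed surface this is harmless because $\ell(\gamma)\ge\ell_0>0$, but then $C_0$ depends on the systole as well as on the curvature bounds, i.e.\ on ``the geometry of $X$'' as in the statement; you should make that dependence explicit. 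Second, in the volume step you invoke the upper curvature bound (via Günther/$\mathrm{CAT}(-\kappa_0^2)$) to bound the tube volume from above; that is the wrong comparison direction, since Günther's inequality yields volume \emph{lower} bounds, and an upper bound on tube or ball volumes requires the curvature \emph{lower} bound $K\ge-\kappa_1^2$, which holds by compactness of $X$. With $\kappa_1$ in place of $\kappa_0$ your estimate $\mathcal O\big(\ell(\gamma)^\sharp\,\mathrm{e}^{\kappa_1 R}\big)=\mathcal O(\mathrm{e}^{CL})$ goes through (note also that the core geodesic of $\Gamma_\gamma\backslash\widetilde X$ has length $\ell(\gamma)^\sharp$, not $\ell(\gamma)$, which only improves the bound).
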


\begin{proof}By uniform hyperbolicity of the flow (see \emph{e.g.}  \cite[Lemma 9]{Parkkonen2015} we can show that $d(x,\gamma x)\ge 2\mathrm{dist}(x,\tilde \gamma)+\ell(\gamma)-C_0$, with $C_0$ depending only on the geometry of $X$. The volume estimate follows by covering the set  $\big\{\tilde x\in \Gamma_\gamma\backslash \widetilde X, \ \mathrm{dist}(\tilde x,\tilde \gamma)\le \frac{L-\ell(\gamma)+C_0}{2}\big\}$ by a ball of radius $CL$.\end{proof}

With these lemmas in hand, we can perform computations rightfully, by writing
\[\int_{\mathbf R}\operatorname{Tr} E^\rho(t) \varphi(t)\mathrm dt=\int_{X\times \mathbf R}\operatorname{Tr}^{\rm End} E^\rho(t,x) \varphi(t)\mathrm dt\mathrm dx\underset{\eqref{eq:firstextension}}=\int_{\widetilde X\times \mathbf R} \pi^*\operatorname{Tr}^{\rm End} E^\rho(t,\tilde x) \eta(\tilde x)\varphi(t)\mathrm dt\mathrm d\tilde x.\]
Now we expand $\pi^*\operatorname{Tr^{End}} E^\rho$ as a sum over conjugacy classes
\[\pi^*\operatorname{Tr^{End}} E^\rho(t,\tilde x)=\dim(V_\rho)\widetilde E(t,\tilde x,\tilde x)+\sum_{\gamma\in \mathcal P}\sum_{k\ge 1}\left(\sum_{g\in \Gamma_\gamma\backslash \Gamma} \chi_\rho(g\gamma^k g^{-1})\widetilde E(t,\tilde x,g\gamma^k g^{-1}\tilde x)\right). \]
By cyclicity of the trace, $\chi_\rho(g\gamma^k g^{-1})=\chi_\rho(\gamma^k)$. Also, by invariance under isometries of the wave kernel, $\widetilde E(t,\tilde x,g\gamma^k g^{-1}\tilde x)=\widetilde E(t, g^{-1}\tilde x,\gamma^k g^{-1}\tilde x)$. We integrate over $\widetilde X$ and perform a change of variable $g^{-1}\tilde x\mapsto \tilde x$ in every integral of the sum to get
\[\sum_{g\in \Gamma_\gamma\backslash \Gamma}\int_{\widetilde X}\widetilde E(t, g^{-1}\tilde x,\gamma^k g^{-1}\tilde x) \eta(\tilde x)\mathrm d\tilde x=\int_{\widetilde X} \widetilde E(t,\tilde x,\gamma^k \tilde x)\eta_\gamma(\tilde x)\mathrm d\tilde x\underset{\eqref{eq:secondextension}}=\int_{\Gamma_\gamma\backslash \widetilde X} \widetilde E(t,\tilde x,\gamma^k \tilde x)\mathrm d\tilde x,\]
this concludes the proof of Proposition \ref{prop:decompositioncylindre}. 

At this point, we replace the kernel by the parametrix (modulo the error terms) and write $\widetilde E(t,\tilde x,\tilde x)=E_N(t,x,x)+\widetilde R_N(t,\tilde x,\tilde x)$ and $\widetilde E(t,\tilde x,\gamma \tilde x)=\widetilde E_N(t,\tilde x,\gamma \tilde x)+\widetilde R_N(t,\tilde x,\gamma \tilde x)$. We first handle terms involving remainders $\widetilde R_N$, when testing $\operatorname{Tr}E^\rho$ against the function $t\mapsto \cos(\lambda t)\widehat\psi(t/L)$, when $\psi$ is as in Theorem \ref{trace formula for long times}.
\begin{prop}There is a constant $C>0$ such that for any $\gamma\in \Gamma$, we have
\begin{equation}\label{intégrale du reste}\int_{\Gamma_\gamma\backslash \widetilde X}\int_{\mathbf R} \widetilde R_N(t,\tilde x,\gamma \tilde x)\cos(\lambda t)\widehat \psi(t/L)\mathrm dt\mathrm d\tilde x=\mathbf 1_{\ell(\gamma)\le L}\mathcal O_{\psi}(\lambda^{-1}\mathrm{e}^{CL}).\end{equation}
\end{prop}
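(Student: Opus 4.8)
The statement is a long-time, high-frequency bound for the integral of the Hadamard remainder $\widetilde R_N$ against an oscillating test function, summed over a hyperbolic cylinder. The key inputs are Proposition \ref{prop:borneresteSogge}, which gives the pointwise bound $|\partial_t^j \widetilde R_N(t,\tilde x,\gamma\tilde x)| \le C\mathrm e^{C|t|}$ uniformly in $\tilde x,\tilde y$, and Lemma \ref{gamma écarte les points}, which bounds the volume of the region of $\Gamma_\gamma\backslash\widetilde X$ where the wave kernel is nonzero. The plan is: first, reduce to a pointwise-in-$\tilde x$ time integral; second, estimate that time integral via an integration by parts in $t$; third, integrate over the cylinder using the volume bound, absorbing everything into a single exponential $\mathrm e^{CL}$.

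\textbf{Step 1: localization in time.} Since $\widehat\psi$ is supported in $[-1,1]$ (Theorem \ref{trace formula for long times}), the function $t\mapsto \widehat\psi(t/L)$ is supported in $[-L,L]$. By finite propagation speed, $\widetilde R_N(t,\tilde x,\gamma\tilde x)$ vanishes unless $d(\tilde x,\gamma\tilde x)\le |t|\le L$, so the $\tilde x$-integral is effectively over $\{\tilde x\in \Gamma_\gamma\backslash\widetilde X:\ d(\tilde x,\gamma\tilde x)\le L\}$, which by Lemma \ref{gamma écarte les points} is empty unless $\ell(\gamma)\le L$ — this is where the indicator $\mathbf 1_{\ell(\gamma)\le L}$ comes from — and has volume $\mathcal O(\mathrm e^{CL})$ otherwise.

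\textbf{Step 2: the time integral.} For fixed $\tilde x$, write $\cos(\lambda t)=\tfrac12(\mathrm e^{\mathrm i\lambda t}+\mathrm e^{-\mathrm i\lambda t})$ and integrate by parts once in $t$, exactly as in the (commented-out) proof of Lemma \ref{majoration reste intégral}: the boundary terms vanish because $\widehat\psi(t/L)$ is compactly supported, and one is left with
\[
\frac{1}{\mathrm i\lambda}\int_{\mathbf R}\mathrm e^{\pm\mathrm i\lambda t}\Big(\tfrac1L\,\widehat\psi'(t/L)\,\widetilde R_N(t,\tilde x,\gamma\tilde x)+\widehat\psi(t/L)\,\partial_t\widetilde R_N(t,\tilde x,\gamma\tilde x)\Big)\mathrm dt.
\]
Using $\|\widehat\psi\|_{C^1}<\infty$, $L\ge 1$, and the bounds $|\widetilde R_N|,|\partial_t\widetilde R_N|\le C\mathrm e^{C|t|}$ from Proposition \ref{prop:borneresteSogge}, this is bounded by $\lambda^{-1}\mathcal O_{N,\psi}\big(\int_{-L}^{L}\mathrm e^{C|t|}\mathrm dt\big)=\mathcal O_{N,\psi}(\lambda^{-1}\mathrm e^{CL})$, uniformly in $\tilde x$ and $\gamma$. (One should take $N$ large enough that $\widetilde R_N\in C^1$, which is allowed by Proposition \ref{prop:borneresteSogge}.)

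\textbf{Step 3: integrate over the cylinder.} Combining Steps 1 and 2, the double integral in \eqref{intégrale du reste} is bounded by $\mathbf 1_{\ell(\gamma)\le L}$ times the volume $\mathcal O(\mathrm e^{C_1 L})$ of the support region (Lemma \ref{gamma écarte les points}) times the pointwise bound $\mathcal O_{N,\psi}(\lambda^{-1}\mathrm e^{C_2 L})$, giving $\mathbf 1_{\ell(\gamma)\le L}\,\mathcal O_{\psi}(\lambda^{-1}\mathrm e^{CL})$ with $C=C_1+C_2$, and the constant is uniform in $\gamma$ since both constituent bounds are. One small technical point to justify carefully is the interchange of the $\mathrm dt$ and $\mathrm d\tilde x$ integrations together with the distributional nature of $\widetilde R_N$ in $t$: since for $N$ large $\widetilde R_N$ is a genuine $C^1$ function of $(t,\tilde x,\tilde y)$ with the stated exponential bounds on a set of finite measure, Fubini applies without difficulty. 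The main obstacle here is essentially bookkeeping — keeping the exponential constants and the uniformity in $\gamma$ under control — rather than anything deep; all the analytic substance is already packaged in Proposition \ref{prop:borneresteSogge} and Lemma \ref{gamma écarte les points}.
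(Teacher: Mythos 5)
Your proposal is correct and follows essentially the same route as the paper: restrict the support using $\widehat\psi$ and finite propagation speed, bound the time integral by $\mathcal O(\lambda^{-1}\mathrm e^{CL})$ via one integration by parts together with Proposition \ref{prop:borneresteSogge} (this step is exactly Lemma \ref{majoration reste intégral}, which the paper cites rather than re-derives), and then integrate over the cylinder using the volume estimate of Lemma \ref{gamma écarte les points}. The only cosmetic difference is that the indicator $\mathbf 1_{\ell(\gamma)\le L}$ follows already from the elementary inequality $d(\tilde x,\gamma\tilde x)\ge\ell(\gamma)$, without appealing to Lemma \ref{gamma écarte les points}.
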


\begin{proof}
Since the support of $\widehat \psi$ lies in $[-1,1]$, the integrand is supported in $d(\tilde x,\gamma \tilde x)\le |t|\le L$. Therefore, the integral vanishes if $\ell(\gamma)\ge L$ (since $d(\tilde x,\gamma \tilde x)\ge \ell(\gamma)$). 
Thanks to Lemma \ref{majoration reste intégral}, for any $\tilde x$, the integral in the time variable in (\ref{intégrale du reste}) is bounded by $\mathcal O(\lambda^{-1}\mathrm{e}^{CL})\mathbf 1_{d(\tilde x,\gamma \tilde x)\le L}$. Integrating over $\Gamma_\gamma\backslash \widetilde X$, we can invoke Lemma \ref{gamma écarte les points} to obtain \eqref{intégrale du reste}, up to increasing the constant $C$.
\end{proof}
So far, we have proved (we drop the $\sim$ superscripts on $x$ and $y$ at this point)
\begin{prop} \label{prop:sofar} For any $N$ large enough, there is some $C>0$ such that
\[\big\langle \operatorname{Tr} E^\rho(t),\cos(\lambda t)\widehat \psi(t/L)\big\rangle =\dim(V_\rho)\left(\big\langle E_N(t,x,x),\cos(\lambda t)\widehat \psi(t/L)\otimes \mathbf 1_X\big\rangle +\mathcal O(\lambda^{-1}\mathrm{e}^{CL})\right) \]\[+\sum_{\gamma \in \mathcal P	}\sum_{k\ge 1}\chi_\rho(\gamma^k )\mathbf 1_{k\ell(\gamma)\le L}\left(\big\langle E_N(t,x,\gamma^k x), \cos(\lambda t)\widehat \psi(t/L)\otimes \mathbf 1_{\Gamma_\gamma\backslash\widetilde X}\big\rangle +\mathcal O(\lambda^{-1}\mathrm{e}^{CL}) \right). \]
The error terms are uniform with respect to $\gamma,k$, and \emph{do not} depend on the representation $\rho$.\end{prop}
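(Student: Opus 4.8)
I would prove Proposition~\ref{prop:sofar} as a bookkeeping step: it is the assembly of Proposition~\ref{prop:decompositioncylindre} with the Hadamard parametrix expansion and the remainder estimates established just above (valid for $N$ large enough, as in Lemma~\ref{majoration reste intégral}, Proposition~\ref{prop:borneresteSogge} and \eqref{intégrale du reste}). First I would pair the distributional identity \eqref{eq : regroupements intégrales} of Proposition~\ref{prop:decompositioncylindre} against the test function $t\mapsto \cos(\lambda t)\widehat\psi(t/L)$; this is legitimate since $\widehat\psi$ is compactly supported in $[-1,1]$, so $\cos(\lambda t)\widehat\psi(t/L)\in\mathcal D(\mathbf R)$ with support in $[-L,L]$. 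By finite propagation speed $\widetilde E(t,x,\gamma^k x)$ is supported in $\{d(x,\gamma^k x)\le |t|\}$, and $d(x,\gamma^k x)\ge \ell(\gamma^k)=k\ell(\gamma)$, so terms with $k\ell(\gamma)>L$ pair to zero against a function supported in $[-L,L]$; hence the double sum over $(\gamma,k)$ reduces to a finite sum and all the term-by-term manipulations below are licit. Moreover, for $k\ell(\gamma)\le L$ the distribution $\widetilde E(t,x,\gamma^k x)$, viewed on $\mathbf R\times(\Gamma_\gamma\backslash\widetilde X)$, has the support (in the cylinder) confined by Lemma~\ref{gamma écarte les points} to a compact set, so each pairing in the statement makes sense.

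Next I would substitute the Hadamard parametrix for the wave kernel. On the diagonal \eqref{prop : extension trace end} gives $\widetilde E(t,x,x)=E_N(t,x,x)+\widetilde R_N(t,x,x)$ with $E_N(t,x,x)$ as in \eqref{eq : def f_N}; off the diagonal $\widetilde E(t,x,\gamma^k x)=E_N(t,x,\gamma^k x)+\widetilde R_N(t,x,\gamma^k x)$ by definition of $\widetilde R_N$ (here $E_N(t,x,\gamma^k x)$ denotes the parametrix on $\widetilde X$, the tilde being dropped as agreed). The parametrix contributions produce exactly the pairings $\langle E_N(t,x,x),\dots\rangle$ and $\langle E_N(t,x,\gamma^k x),\dots\rangle$ of the statement, and the cutoff $\mathbf 1_{k\ell(\gamma)\le L}$ may be inserted for free: just like $\widetilde E$, the parametrix $E_N(t,x,\gamma^k x)$ is supported in $\{d(x,\gamma^k x)\le|t|\}$ by \eqref{eq:defparametrix}, hence vanishes when paired with a test function supported in $[-L,L]$ as soon as $k\ell(\gamma)>L$.

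It then remains to absorb the remainders. For the diagonal one, $\widetilde R_N$ being real (the $u_k$ and $(d^2-t^2)_-^\alpha$ are real), $\int_X\int_{\mathbf R}\widetilde R_N(t,x,x)\cos(\lambda t)\widehat\psi(t/L)\,\mathrm dt\,\mathrm dx$ is the real part of an integral bounded uniformly in $x$ by Lemma~\ref{majoration reste intégral}, and since $X$ has finite volume this is $\mathcal O(\lambda^{-1}\mathrm e^{CL})$; multiplied by $\dim(V_\rho)$ it is the first error term. For the off-diagonal remainders I would apply \eqref{intégrale du reste} with $\gamma$ replaced by $\gamma^k$ — noting $\Gamma_{\gamma^k}=\Gamma_\gamma$ since $\gamma\in\mathcal P$ is primitive — to obtain $\int_{\Gamma_\gamma\backslash\widetilde X}\int_{\mathbf R}\widetilde R_N(t,x,\gamma^k x)\cos(\lambda t)\widehat\psi(t/L)\,\mathrm dt\,\mathrm dx=\mathbf 1_{k\ell(\gamma)\le L}\,\mathcal O(\lambda^{-1}\mathrm e^{CL})$, with $C$ uniform in $\gamma$ and $k$; multiplying by $\chi_\rho(\gamma^k)$ gives the error kept inside the sum. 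Since $\widetilde R_N$ is the scalar wave remainder on $\widetilde X$, all these errors are independent of $\rho$ (which enters only through $\chi_\rho(\gamma^k)$ and $\dim V_\rho$, pulled out front) and depend on $\psi$ only through the bounds of Lemma~\ref{majoration reste intégral}, as claimed.

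The proof is therefore a matter of organization; the genuine analytic content is already in place, namely the long-time control of $\widetilde R_N$ (Proposition~\ref{prop:borneresteSogge}, via the lift of the wave equation to $\widetilde X$) and the uniform volume bound on the truncated cylinders (Lemma~\ref{gamma écarte les points}, from uniform hyperbolicity) underlying \eqref{intégrale du reste}. The one point deserving a little care is to keep the error terms inside the summation over closed geodesics rather than summing them; this is harmless here because there are only finitely many closed geodesics of length $\le L$, and, as noted after Proposition~\ref{trace formula for long times}, it will be needed later when the representation $\rho$ is allowed to vary with $\lambda$.
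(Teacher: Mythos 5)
Your proposal is correct and follows essentially the same route as the paper: pair the cylinder decomposition of Proposition \ref{prop:decompositioncylindre} against $\cos(\lambda t)\widehat\psi(t/L)$, replace $\widetilde E$ by $E_N+\widetilde R_N$, insert the cutoff $\mathbf 1_{k\ell(\gamma)\le L}$ by finite propagation speed, and absorb the remainders via Lemma \ref{majoration reste intégral} together with the volume bound of Lemma \ref{gamma écarte les points}, exactly as in \eqref{intégrale du reste}. The supporting remarks (compact support of the pairings, $\Gamma_{\gamma^k}=\Gamma_\gamma$, independence of the errors from $\rho$) are accurate and match the paper's argument.
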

Note the necessity to impose $L\le c\log \lambda$ to make error terms negligible.
\subsection{Integrating the time variable} Looking at Proposition \ref{prop:sofar}, we have to provide estimates for $\big\langle E_N(t,x,x),\cos(\lambda t)\widehat \psi(t/L)\otimes \mathbf 1_X\big\rangle$ and $\big\langle E_N(t,x,\gamma x), \cos(\lambda t)\widehat \psi(t/L)\otimes \mathbf 1_{\Gamma_\gamma\backslash\widetilde X}\big\rangle$ for any $\gamma\in \mathcal G$, in the regime $\lambda\to +\infty$.

We fix $x$ in $\widetilde X$ and first compute integrals in the time variable. Let $\gamma\in \Gamma$ and $k\ge 0$. Recalling the definition of the parametrix \eqref{eq:defparametrix}, we have to estimate
\[\begin{array}{rl}J_k&:=\displaystyle\int_{-\infty}^{+\infty} \cos(\lambda t) \widehat \psi\left(t/L\right) t^{2k-2}_{\mathrm{reg}} \ \mathrm dt, \\
I_k(\gamma,x)& \displaystyle:=\int_{-\infty}^{+\infty} \cos(\lambda t) \widehat \psi\left(t/L\right) |t|(d(x,\gamma x)^2-t^2)_-^{k-\frac 32} \ \mathrm dt, \ \ \ \ \ \gamma\in \Gamma\backslash\{\mathrm{id}\}.\end{array}\]
We rely on results of B\'erard:
\begin{prop}[{\cite[(58,59)]{Bérard1977}}] For all $N>0$ and $\lambda,L\ge 1$, we have
\[J_k = -\delta_{0,k} \pi \lambda \widehat \psi(0) +\mathcal O_{N,k}(L^{2k-1}\lambda^{-N})\]
where $\delta_{0,k}$ equals $1$ if $k=0$ and $0$ otherwise. Also,
\[\begin{array}{ll} I_k(\gamma,x)  =&\displaystyle  2\mathrm{Re} \ \mathrm{e}^{\mathrm i\lambda d(x,\gamma x)} d(x,\gamma x)^{k-\frac 12} \lambda^{-k+\frac 12} \beta_{0,k} \widehat \psi\left(\frac{d(x,\gamma x)}{L}\right) \\ & \displaystyle+\mathbf 1_{d(x,\gamma x)\le L}\mathcal O_{N,k}\big(\lambda^{-k-\frac 12}+L^{2k-1} \lambda^{-N})\end{array},\]
where the $\beta_{0,k}$ are some numerical constants. In particular, for $k=0$, we have $\beta_{0,0}=(-1+\mathrm{i})\sqrt{\pi}/2$, thus
\[I_0(\gamma,x)=\sqrt{2\pi} \cos\left(\lambda d(x,\gamma x)+\frac{3\pi}{4}\right)d(x,\gamma x)^{-\frac 12} \lambda^{\frac 12} \widehat \psi\left(\frac{d(x,\gamma x)}{L}\right) +\mathbf 1_{d(x,\gamma x)\le L}\mathcal O(\lambda^{-\frac 12}). \]\end{prop}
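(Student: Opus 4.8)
The plan is to treat both quantities as one-dimensional oscillatory integrals, following B\'erard \cite{Bérard1977}. I would first dispose of $J_k$. For $k\geq 1$ the distribution $t^{2k-2}_{\rm reg}$ coincides with the polynomial $t^{2k-2}$, so after the substitution $t=Ls$ one gets $J_k = L^{2k-1}\int_{\mathbf R}\cos(\lambda Ls)\,\widehat\psi(s)\,s^{2k-2}\,ds$, a non-stationary oscillatory integral with smooth compactly supported amplitude and phase of derivative $\lambda L$; repeated integration by parts gives $\mathcal O_{N,k}\big((\lambda L)^{-N}L^{2k-1}\big)\subset\mathcal O_{N,k}(\lambda^{-N}L^{2k-1})$. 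For $k=0$ I would use the explicit definition of $t^{-2}_{\rm reg}$: the integrand being even, $J_0 = 2\int_0^\infty t^{-2}\big(\cos(\lambda t)\widehat\psi(t/L)-\widehat\psi(0)\big)\,dt$, and after the splitting $\cos(\lambda t)\widehat\psi(t/L)-\widehat\psi(0) = \widehat\psi(0)(\cos\lambda t-1) + \cos(\lambda t)\big(\widehat\psi(t/L)-\widehat\psi(0)\big)$ the first piece contributes $2\widehat\psi(0)\int_0^\infty t^{-2}(\cos\lambda t-1)\,dt = -\pi\lambda\widehat\psi(0)$, while the second, by the same rescaling-and-integration-by-parts argument (using that $\widehat\psi$ is even, so $\widehat\psi'(0)=0$ and $t\mapsto t^{-2}(\widehat\psi(t/L)-\widehat\psi(0))$ is smooth and decays like $t^{-2}$), is $\mathcal O_N(L^{-1}\lambda^{-N})$.

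For $I_k(\gamma,x)$, set $d:=d(x,\gamma x)>0$. I would first observe that $(d^2-t^2)_-^{k-3/2}$ is supported in $\{|t|\geq d\}$, with singular support the light cone $\{|t|=d\}$, whereas $\widehat\psi(t/L)$ is supported in $\{|t|\leq L\}$; hence $I_k$ vanishes for $d>L$, which explains the factor $\mathbf 1_{d(x,\gamma x)\leq L}$ and is consistent with the $\widehat\psi(d/L)$ in the main term. In the range $d\leq L$, since the integrand is even in $t$ I would analyze a neighborhood of $t=d$ and double the outcome. There I substitute $t=d+s$, write $d^2-t^2 = -s(2d+s)$ so that $(d^2-t^2)_-^{k-3/2} = (s(2d+s))_+^{k-3/2}$, and Taylor expand the smooth factors $|t|=d+s$, $(2d+s)^{k-3/2}$, $\widehat\psi((d+s)/L)$, and $\cos(\lambda(d+s)) = \mathrm{Re}(e^{i\lambda d}e^{i\lambda s})$ about $s=0$. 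The leading contribution is then $\widehat\psi(d/L)\,d^{k-1/2}2^{k-3/2}\,\mathrm{Re}\big(e^{i\lambda d}\langle s_+^{k-3/2},e^{i\lambda s}\chi\rangle\big)$ for a cutoff $\chi\equiv 1$ near $0$, and the Fourier transform of the homogeneous distribution $s_+^{k-3/2}$, obtained by analytic continuation of $\int_0^\infty e^{i\lambda s}s^a\,ds = \Gamma(a+1)(-i\lambda)^{-a-1}$, is homogeneous of degree $\tfrac12-k$ in $\lambda$; this produces the stated power $\lambda^{-k+1/2}$, the phase $e^{i\lambda d}$, and a numerical constant $\beta_{0,k}$ (a product of $2^{k-3/2}$, $\Gamma(k-\tfrac12)$ and a unimodular phase). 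For $k=0$, using $\Gamma(-\tfrac12)=-2\sqrt\pi$ one gets $\beta_{0,0}=(-1+i)\sqrt\pi/2 = \tfrac{\sqrt{2\pi}}{2}e^{3i\pi/4}$, so that $2\,\mathrm{Re}\big(e^{i\lambda d}\beta_{0,0}\,d^{-1/2}\lambda^{1/2}\widehat\psi(d/L)\big) = \sqrt{2\pi}\cos(\lambda d+\tfrac{3\pi}{4})\,d^{-1/2}\lambda^{1/2}\widehat\psi(d/L)$, as claimed.

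For the error in $I_k$, the terms discarded in the Taylor expansions carry extra factors of $s$ and hence, after Fourier transform, extra powers of $\lambda^{-1}$; the first correction already produces $\mathcal O(\lambda^{-k-1/2})$. The contribution of the region away from $\{|t|=d\}$ inside $\{|t|\leq L\}$ is a non-stationary oscillatory integral with $C^\infty$ amplitude, and $N$-fold integration by parts together with the crude bound $\int_d^L(t^2-d^2)^{k-3/2}|t|\,dt = \mathcal O(L^{2k-1})$ on the amplitude gives $\mathcal O_{N,k}(L^{2k-1}\lambda^{-N})$; combining the two types of contributions delivers the claimed total error $\mathbf 1_{d(x,\gamma x)\leq L}\,\mathcal O_{N,k}(\lambda^{-k-1/2}+L^{2k-1}\lambda^{-N})$, uniformly in $x$ and $\gamma$ (the dependence on $d$ is controlled using $d\in[\ell_0,L]$ for $\gamma\neq\mathrm{id}$).

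I expect the main difficulty to be the bookkeeping with the homogeneous, non-locally-integrable distributions $(d^2-t^2)_-^{k-3/2}$ and $t^{2k-2}_{\rm reg}$ (at $k=0$): one must justify the pullback along $(t,x,y)\mapsto d(x,y)^2-t^2$ and the localization at the distributional level, compute the Fourier transform of the model $s_+^{k-3/2}$ precisely enough to pin down $\beta_{0,k}$, and --- crucially for the later application --- make every error estimate uniform in $\gamma$ and $x$, with exactly the stated powers of $L$, since these bounds will be summed over exponentially many closed geodesics in the trace formula of Proposition \ref{trace formula for long times}.
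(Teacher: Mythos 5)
Your proposal is correct and follows essentially the same route as the source the paper relies on: the paper does not prove this proposition but quotes it from B\'erard \cite{Bérard1977}, and its accompanying remark notes that $\beta_{0,0}$ comes precisely from the model computation $\int_0^{+\infty}u_+^{-\frac32}\mathrm{e}^{\mathrm i\lambda u}\mathrm du$, which is exactly the Fourier transform of $s_+^{k-\frac32}$ at the heart of your argument. Your evaluation checks out, with $\beta_{0,0}=2^{-3/2}\Gamma(-\tfrac12)\mathrm e^{-\mathrm i\pi/4}=(-1+\mathrm i)\sqrt{\pi}/2$ reproducing the $\sqrt{2\pi}\cos\big(\lambda d+\tfrac{3\pi}{4}\big)$ leading term, and the $J_k$ analysis (polynomial case by rescaling and integration by parts, $k=0$ via the regularized $t^{-2}$ and $\int_0^\infty t^{-2}(\cos\lambda t-1)\mathrm dt=-\pi\lambda/2$) is the standard one.
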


\begin{rem}The value of $\beta_{0,0}$ was not specified by B\'erard, and it is given without proof in \cite{Donnelly1978}. It basically follows from the computation of
\[\int_{0}^{+\infty} u_+^{-\frac 32}\mathrm{e}^{\mathrm i\lambda u}\mathrm du.\]\end{rem}

Combining these estimates, using the identity $\cos(u+\pi)=-\cos(u)$, and by using the fact that the set $\{x\in \Gamma_\gamma\backslash \widetilde X, \ \ d(x,\gamma x)\le L\}$ has volume $\mathcal O(\mathrm{e}^{CL})$, along with Proposition \ref{borne un} to control the terms of the parametrix of order $\ge 1$, we get
\begin{prop} \label{avant intégrale temps}Take $\psi$ as in Theorem \ref{trace formula for long times}. There is some constant $C>0$ such that for $\lambda,L\ge 1$,
\[\begin{array}{ll}\langle \operatorname{Tr} E^\rho \displaystyle ,\cos(\lambda t)\widehat \psi(t/L)\rangle =\displaystyle \frac 12\dim(V_\rho)\mathrm{vol}(X)\big(\widehat \psi(0)+\mathcal O(\lambda^{-1} \mathrm{e}^{CL})\big)  + \displaystyle \frac{1}{\sqrt{ 2\pi}}\sum_{\gamma\in \mathcal P} \displaystyle\sum_{k\ge 1} \chi_\rho(\gamma^k)\times\\  
	
	\displaystyle \times \left(	\int_{\Gamma_\gamma\backslash \widetilde X}\lambda^{1/2} \tilde u_0(x,\gamma^k x) \cos\big(\lambda d (x,\gamma^kx)-\frac{\pi}{4}\big) d(x,\gamma^kx)^{-\frac 12} \widehat \psi\left(\frac{d(x,\gamma^k x)}{L}\right)\mathrm dx + \mathcal O(\lambda^{-\frac 12}\mathrm{e}^{CL})\mathbf 1_{k\ell(\gamma)\le L}\right)	.\end{array}\]
The error terms are functions that do not depend on the representation $\rho$. Moreover, they are uniformly bounded with respect to $\gamma,k$.
\end{prop}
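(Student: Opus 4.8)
The plan is to start from Proposition~\ref{prop:sofar}, which already writes $\big\langle\operatorname{Tr}E^\rho(t),\cos(\lambda t)\widehat\psi(t/L)\big\rangle$, modulo errors $\mathcal O(\lambda^{-1}\mathrm e^{CL})$, as $\dim(V_\rho)$ times the diagonal pairing $\big\langle E_N(t,x,x),\cos(\lambda t)\widehat\psi(t/L)\otimes\mathbf 1_X\big\rangle$ plus a (finite, thanks to $\mathbf 1_{k\ell(\gamma)\le L}$) sum over $\gamma\in\mathcal P$ and $k\ge1$, with weights $\chi_\rho(\gamma^k)$, of the cylinder pairings $\big\langle E_N(t,x,\gamma^k x),\cos(\lambda t)\widehat\psi(t/L)\otimes\mathbf 1_{\Gamma_\gamma\backslash\widetilde X}\big\rangle$. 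Thus the entire task is to evaluate these two pairings by plugging in the explicit parametrix and invoking B\'erard's one-dimensional integrals $J_k$ and $I_k(\gamma,x)$.

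For the diagonal pairing I would substitute~\eqref{eq : def f_N}; since the $t$- and $x$-variables separate, it equals $\frac{1}{\sqrt\pi}\sum_{k=0}^N\frac{(-1)^k}{4^k\Gamma(k-\frac12)}\,J_k\int_X u_k(x,x)\,\mathrm dx$. By $J_k=-\delta_{0,k}\pi\lambda\widehat\psi(0)+\mathcal O_{N,k}(L^{2k-1}\lambda^{-N})$ only $k=0$ survives; using $u_0(x,x)\equiv1$, hence $\int_X u_0(x,x)\,\mathrm dx=\mathrm{vol}(X)$, and the numerical identity $\tfrac{1}{\sqrt\pi\,\Gamma(-1/2)}(-\pi)=\tfrac12$ (as $\Gamma(-\tfrac12)=-2\sqrt\pi$), the pairing is $\tfrac12\lambda\,\mathrm{vol}(X)\widehat\psi(0)$ up to $\mathcal O_N(L^{2N-1}\lambda^{-N})$, which is dominated by the $\mathcal O(\lambda^{-1}\mathrm e^{CL})$ already present once $L\le c_0\log\lambda$ and $N\ge1$. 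Multiplying by $\dim(V_\rho)$ gives the first line.

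For a cylinder pairing ($\gamma\in\mathcal P$, $k\ge1$) I would insert~\eqref{eq:defparametrix} with $y=\gamma^k x$ and compute it as an iterated pairing: for fixed $x$ pair in $t$, obtaining $I_j(\gamma^k,x)$, then integrate in $x$ over $\Gamma_\gamma\backslash\widetilde X$ against $\mathbf 1$. This is legitimate because $E_N(t,x,\gamma^k x)$ is of product type in $(t,x)$ with singular support on the light cone $\{|t|=d(x,\gamma^k x)\}$, and because, $\widehat\psi$ being supported in $[-1,1]$, the relevant $x$-region is the slab $\{d(x,\gamma^k x)\le L\}$, of finite volume $\mathcal O(\mathrm e^{CL})$ on the cylinder by Lemma~\ref{gamma écarte les points}. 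One is left with $\frac{1}{\sqrt\pi}\sum_{j=0}^N\frac{(-1)^j}{4^j\Gamma(j-\frac12)}\int_{\Gamma_\gamma\backslash\widetilde X}\tilde u_j(x,\gamma^k x)\,I_j(\gamma^k,x)\,\mathrm dx$. The $j=0$ term is the main one: substituting B\'erard's formula for $I_0$, rewriting $\cos(u+\tfrac{3\pi}{4})=-\cos(u-\tfrac{\pi}{4})$, and using $\tfrac{1}{\sqrt\pi\,\Gamma(-1/2)}\sqrt{2\pi}\,(-1)=\tfrac{1}{\sqrt{2\pi}}$, one gets exactly $\frac{1}{\sqrt{2\pi}}\int_{\Gamma_\gamma\backslash\widetilde X}\lambda^{1/2}\tilde u_0(x,\gamma^k x)\cos\!\big(\lambda d(x,\gamma^k x)-\tfrac{\pi}{4}\big)\,d(x,\gamma^k x)^{-1/2}\widehat\psi\!\big(d(x,\gamma^k x)/L\big)\,\mathrm dx$ (the factor $d(x,\gamma^k x)^{-1/2}$ is harmless since $d(x,\gamma^k x)\ge\ell(\gamma^k)\ge\ell_0>0$). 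Everything else is an error: the $I_0$-remainder is $\mathbf 1_{d\le L}\mathcal O(\lambda^{-1/2})$ pointwise, while for $j\ge1$ the main part of $I_j$ is $\mathcal O(\lambda^{-j+1/2})$ and the amplitudes satisfy $\tilde u_j(x,\gamma^k x)=\mathcal O(\mathrm e^{Cd(x,\gamma^k x)})=\mathcal O(\mathrm e^{CL})$ on the slab by Proposition~\ref{borne un}; integrating these against the volume bound $\mathcal O(\mathrm e^{CL})$ and enlarging $C$, the total is $\mathcal O(\lambda^{-1/2}\mathrm e^{CL})\mathbf 1_{k\ell(\gamma)\le L}$, the indicator coming from $d(x,\gamma^k x)\ge k\ell(\gamma)$, which kills $\widehat\psi(d/L)$ when $k\ell(\gamma)>L$. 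Summing over $\gamma\in\mathcal P$ and $k\ge1$ with weights $\chi_\rho(\gamma^k)$ yields the second line; since the errors come only from $E_N$ and $\widetilde R_N$, they are independent of $\rho$ and uniform in $(\gamma,k)$.

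The main obstacle I anticipate is precisely this error bookkeeping for the cylinder terms: showing that the pointwise B\'erard remainders, once multiplied by the exponentially growing Hadamard amplitudes $\tilde u_j$ and integrated over the exponentially large slab $\{d(x,\gamma^k x)\le L\}$, still collapse to a clean $\mathcal O(\lambda^{-1/2}\mathrm e^{CL})$ uniformly in $(\gamma,k)$. This is where Lemma~\ref{gamma écarte les points} and Proposition~\ref{borne un} are essential, and where the regime $L\le c_0\log\lambda$ enters, to keep both $\lambda^{-1/2}\mathrm e^{CL}$ and $L^{2N-1}\lambda^{-N}$ under control as $\lambda\to+\infty$.
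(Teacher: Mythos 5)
Your proposal is correct and follows essentially the same route as the paper: substitute the Hadamard parametrix into the decomposition of Proposition \ref{prop:sofar}, evaluate the time integrals via B\'erard's estimates for $J_k$ and $I_k$ (with the same constant and phase bookkeeping, $\Gamma(-\tfrac12)=-2\sqrt{\pi}$ and $\cos(u+\tfrac{3\pi}{4})=-\cos(u-\tfrac{\pi}{4})$), and absorb the $j\ge 1$ and remainder terms into $\mathcal O(\lambda^{-\frac12}\mathrm e^{CL})\mathbf 1_{k\ell(\gamma)\le L}$ using Proposition \ref{borne un} and the volume bound of Lemma \ref{gamma écarte les points}. The only cosmetic remark is that the domination of the $L^{2k-1}\lambda^{-N}$ terms already holds for all $\lambda,L\ge 1$ (bounding $L^{2N-1}\le \mathrm e^{(2N-1)L}$), so the restriction $L\le c_0\log\lambda$ you invoke there is not needed at this stage.
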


\subsection{Stationary phase in the space variable}
We now want to evaluate, for $\gamma\in \mathcal G$ (recall that $\mathcal G$ denotes the set of oriented closed geodesics), the integral
\begin{equation}\begin{array}{ll} L(\gamma) =\displaystyle\mathrm{Re}\  \mathrm{e}^{\frac{\mathrm{i}\pi}{4}}\int_{\Gamma_\gamma\backslash \widetilde X} \tilde u_0(x,\gamma x) \mathrm{e}^{-\mathrm i\lambda d(x,\gamma x)} d(x,\gamma x)^{-\frac 12} \widehat \psi\left(\frac{d(x,\gamma x)}{L}\right) \mathrm dx.\end{array}\end{equation}
The following computation is standard (see \cite{ColindeVerdière1973}, \cite[eq. 3.10]{JakobsonWeyl2007}) but we review it for the sake of completeness. The main tool is the following stationary phase lemma:

\begin{lem}[{\cite[Lemme 2]{ColindeVerdière1973}}] \label{Lemme 2 CdV} Let $\alpha:\mathbf R\to \mathbf R$ be a $C^5$ convex function, such that $0$ is its only critical point, and  let $h:\mathbf R\to \mathbf R$ be a $C^2$ function supported in $[-R,R]$. Then,
\[\int_{\mathbf R} \mathrm{e}^{-\mathrm{i}\lambda \alpha(t)} h(t)\ \mathrm dt=  \mathrm{e}^{- \mathrm{i}\lambda \alpha(0)}\left(\frac{2\pi}{\mathrm{i}\lambda \alpha''(0)}\right)^{\frac 12} h(0) +\mathcal O(\lambda^{-1}Q(\lambda)), \]
where $Q(\lambda)$ is bounded by a polynomial function in the variables $\frac{1}{\alpha''(0)}$, $\|\alpha\|_{C^5([-R,R])}$, $\|h\|_{C^2([-R,R])}$ and $R$. \end{lem}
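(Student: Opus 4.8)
The plan is to reduce the oscillatory integral to a classical Fresnel integral by a Morse-type substitution, isolate the leading term, and control the remainder by a single integration by parts, while keeping track of how every constant depends on $\alpha''(0)$, $\|\alpha\|_{C^5([-R,R])}$, $\|h\|_{C^2([-R,R])}$ and $R$. Everything except that last bookkeeping is the textbook stationary-phase computation.

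\emph{Step 1: normalisation and change of variable.} After replacing $\alpha$ by $\alpha-\alpha(0)$ and pulling the factor $\mathrm e^{-\mathrm i\lambda\alpha(0)}$ out of the integral, one may assume $\alpha(0)=0$; since $0$ is the unique critical point of the convex function $\alpha$ it is a minimum, so $\alpha\ge 0$ on $\mathbf R$ and $\alpha''(0)>0$. Writing $\alpha(t)=t^2 g(t)$ with $g(t)=\int_0^1(1-s)\alpha''(st)\,\mathrm ds$, one has $g\in C^3([-R,R])$, $g(0)=\tfrac12\alpha''(0)>0$, and $g>0$ on $[-R,R]$ because $\alpha(t)>0$ for $t\ne 0$. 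Hence $\phi(t):=t\sqrt{g(t)}$ is $C^3$ on $[-R,R]$ with $\phi(0)=0$, $\phi'(0)=\sqrt{\alpha''(0)/2}$ and $\phi(t)^2=\alpha(t)$; differentiating the last identity gives $2\phi(t)\phi'(t)=\alpha'(t)$, so $\phi'(t)\ne 0$ for $t\ne 0$, and therefore $\phi'>0$ on $[-R,R]$ and $\phi$ is a $C^3$ diffeomorphism of $[-R,R]$ onto its image. With $\psi=\phi^{-1}$ and $v=\phi(t)$,
\[
\int_{\mathbf R}\mathrm e^{-\mathrm i\lambda\alpha(t)}h(t)\,\mathrm dt=\int_{\mathbf R}\mathrm e^{-\mathrm i\lambda v^2}H(v)\,\mathrm dv,\qquad H(v):=h(\psi(v))\,\psi'(v),
\]
where $H\in C^2(\mathbf R)$ has compact support --- the extension of $H$ by zero stays $C^2$ because $h,h',h''$ vanish at $\pm R$ --- and $H(0)=h(0)/\phi'(0)=h(0)\sqrt{2/\alpha''(0)}$.

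\emph{Step 2: leading term and remainder.} Split $H(v)=H(0)+\bigl(H(v)-H(0)\bigr)$. For the first piece, $\int_{\mathbf R}\mathrm e^{-\mathrm i\lambda v^2}\,\mathrm dv=\bigl(\pi/(\mathrm i\lambda)\bigr)^{1/2}$, whence a contribution $H(0)\bigl(\pi/(\mathrm i\lambda)\bigr)^{1/2}=h(0)\bigl(2\pi/(\mathrm i\lambda\alpha''(0))\bigr)^{1/2}$, which is the announced main term once $\mathrm e^{-\mathrm i\lambda\alpha(0)}$ is restored. For the second piece, set $G(v):=\bigl(H(v)-H(0)\bigr)/v$ for $v\ne 0$ and $G(0):=H'(0)$; then $G\in C^1(\mathbf R)$ is bounded, $G(v)=-H(0)/v$ outside $\operatorname{supp}H$, so $G'\in L^1(\mathbf R)$. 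Using $v\,\mathrm e^{-\mathrm i\lambda v^2}=-\tfrac{1}{2\mathrm i\lambda}\partial_v\mathrm e^{-\mathrm i\lambda v^2}$ and integrating by parts (the boundary terms vanish since $G\to 0$ at infinity),
\[
\int_{\mathbf R}\mathrm e^{-\mathrm i\lambda v^2}\bigl(H(v)-H(0)\bigr)\,\mathrm dv=\int_{\mathbf R}\mathrm e^{-\mathrm i\lambda v^2}\,v\,G(v)\,\mathrm dv=\frac{1}{2\mathrm i\lambda}\int_{\mathbf R}\mathrm e^{-\mathrm i\lambda v^2}G'(v)\,\mathrm dv=\mathcal O\!\bigl(\lambda^{-1}\|G'\|_{L^1}\bigr).
\]

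\emph{Step 3: uniformity, and the main difficulty.} It remains to bound $\|G'\|_{L^1}$, hence $Q(\lambda)$, by a polynomial in $1/\alpha''(0)$, $\|\alpha\|_{C^5}$, $\|h\|_{C^2}$ and $R$. The length of $\operatorname{supp}H$ is at most $|\phi(R)|+|\phi(-R)|\le 2\sqrt{R\,\|\alpha\|_{C^5}}$, and $\|H\|_{C^2}$ is controlled by $\|h\|_{C^2}$, $\|\alpha\|_{C^5}$ and $\inf_{[-R,R]}\phi'$ through the usual formulas for the derivatives of an inverse diffeomorphism; finally $\inf_{[-R,R]}\phi'\ge c>0$ with $c$ of size $\alpha''(0)/\sqrt{R\,\|\alpha\|_{C^5}}$ up to a universal constant, which one obtains from $\phi'(t)=\alpha'(t)/(2\sqrt{\alpha(t)})$, the convexity bound $\alpha(t)\le t\,\alpha'(t)$ (giving $\phi'(t)\ge\tfrac12\sqrt{\alpha'(t)/t}$, understood with $\alpha'(t)/t\ge 0$), and the lower bound $\alpha''\ge\tfrac12\alpha''(0)$ valid on an interval of length $\gtrsim\alpha''(0)/\|\alpha\|_{C^5}$ centred at $0$. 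The genuine work --- and the only delicate point --- is precisely this last bookkeeping: propagating the dependence on $\alpha''(0),\|\alpha\|_{C^5},R$ through the inverse diffeomorphism $\psi$ and, via it, through $\|H\|_{C^2}$, so that the final constant remains polynomial in the stated quantities.
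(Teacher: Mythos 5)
Your proof is correct, and it follows the classical route that underlies the cited result: the paper itself does not reprove this lemma (it quotes Lemme 2 of Colin de Verdi\`ere), whose argument is exactly your Morse-type substitution $v=\phi(t)$ with $\phi^2=\alpha-\alpha(0)$, reduction to the Fresnel integral, and one integration by parts for the remainder. The only part you leave as a sketch is the Step-3 bookkeeping, but you correctly identify and establish its one non-routine ingredient, the lower bound $\inf\phi'\gtrsim \alpha''(0)/\sqrt{R\,\|\alpha\|_{C^5}}$ via convexity and the quantitative neighbourhood where $\alpha''\ge\tfrac12\alpha''(0)$; propagating this through $\psi$, $\|H\|_{C^2}$ and $\|G'\|_{L^1}$ is then routine and indeed yields a polynomial bound of the stated form (note $\alpha''(0)>0$ is implicit in the statement, since $1/\alpha''(0)$ appears in it).
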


\textbf{Fermi coordinates on $\Gamma_\gamma\backslash \widetilde X$.} Let $\tilde \gamma_*:\mathbf R/ \ell(\gamma)^\sharp\mathbf Z\to \Gamma_\gamma\backslash \widetilde X$ denote the unique closed primitive geodesic on $\Gamma_\gamma\backslash \widetilde X$ (up to reversing orientation). We introduce Fermi coordinates along this geodesic, 
\[\Phi(t,s)=\exp_{\tilde \gamma_*(s)} (t \mathbf n (s)), \qquad t\in \mathbf R, s\in \mathbf R/ \ell(\gamma)^\sharp \mathbf Z. \]
where $\mathbf n$ is the normal unit vector to the geodesic (chosen so that $( \tilde\gamma_*'(s), \mathbf n)$ is a direct basis of $T_{\tilde \gamma_*(s)} \widetilde X$). We denote $j(t,s)$ the Jacobian $|\det \mathrm d\Phi(t,s)|$. Note that $j(0,s)=1$.
After changing variables,
\begin{equation}\label{def L(gamma,k)}L(\gamma)=\mathrm{Re}\ \mathrm{e}^{\frac{\mathrm{i}\pi}{4}}\int_0^{\ell(\gamma_*)}\int_{-\infty}^{+\infty} \mathrm{e}^{-\mathrm{i}\lambda d(\Phi,\gamma \Phi)}\Theta^{-1/2}(\Phi,\gamma \Phi)d(\Phi,\gamma \Phi)^{-\frac 12} \widehat \psi\left(\frac{d(\Phi,\gamma \Phi)}{L}\right) \ j(t,s) \mathrm dt\mathrm ds,\end{equation}
We recall the following facts:
\begin{itemize} \item The phase function $t\mapsto d(\Phi(t,s),\gamma \Phi(t,s))$ is smooth, strictly convex, with a unique critical point at $t=0$. Also, its second derivative at $t=0$ is bounded by below by a uniform positive constant independent of $\gamma$ or $s$.
\item More precisely, the Hessian $\partial_t^2\Phi(0,s)$ can be expressed via dynamical quantities (see \cite[\S4]{Donnelly1978})
\[\partial_t^2\Phi(0,s)=\frac{|I-\mathcal P_\gamma|}{\ell(\gamma)\Theta(\Phi(0,s),\gamma \Phi(0,s))}.\]
A proof of this fact can be found in the appendix of \cite{Millson}.
\item All the functions $\Theta^{-1/2}(\Phi,\gamma \Phi)$, $d(\Phi,\gamma \Phi)^{-\frac 12}$, $j(t,s)$ have their derivatives of order $\le 5$ bounded by $\mathcal O(\mathrm{e}^{C d(\Phi(t,s),\gamma \Phi(t,s))})$, see \cite{keeler2022logarithmic,ColindeVerdière1973}, so the same holds for their product. These quantities are $\mathcal O(\mathrm{e}^{CL})$ on the support of integration.
\item The support of the integrand as a function of $t$ has size $\mathcal O(\mathrm{e}^{CL})$, uniformly with respect to $\gamma,s$.
\end{itemize}
With these at hand, we can apply Lemma \ref{Lemme 2 CdV}, $s$ being fixed, to get
\begin{equation}\label{intermede L(gamma,k)} \begin{array}{ll}\displaystyle\int_{-\infty}^{+\infty} \mathrm{e}^{-\mathrm{i}\lambda d(\Phi,\gamma \Phi)}\Theta^{-1/2}(\Phi,\gamma \Phi)d(\Phi,\gamma \Phi)^{-\frac 12} \widehat \psi\left(\frac{d(\Phi,\gamma \Phi)}{L}\right) \ j(t,s) \mathrm dt \\
	
	\displaystyle=\mathrm{e}^{-\mathrm{i}\lambda \ell(\gamma)}\left(\frac{2\pi}{\lambda}\right)^{\frac 12	}\ell(\gamma)^{-\frac 12}\widehat \psi\left(\frac{\ell(\gamma)}{L}\right)  \frac{\Theta^{-1/2}(\Phi(0,s),\gamma \Phi(0,s))}{\sqrt{\mathrm{i}\partial_t^2 \Phi(0,s)}}  +\mathbf 1_{\ell(\gamma)\le L}\mathcal O(\lambda^{-1}\mathrm{e}^{CL})\end{array}.\end{equation}

We integrate the $s$-dependent term between $0$ and  $\ell(\gamma)^{\sharp}$ to get, using the expression of the Hessian of $x\mapsto d(x,\gamma x)$:
\begin{equation}\label{intermede L(gamma,k) bis}\int_0^{\ell(\gamma)^\sharp} \frac{\Theta^{-1/2}(\Phi(0,s),\gamma \Phi(0,s))}{\sqrt{\mathrm{i}\partial_t^2 \Phi(0,s)}}   \mathrm ds=\int_0^{\ell(\gamma)^{\sharp}} \frac{\ell(\gamma)^{\frac 12}}{ (\mathrm{i}|I-\mathcal P_\gamma|)^{\frac 12}} \mathrm ds=\mathrm{e}^{-\frac{\mathrm{i}\pi}{4}} \frac{\ell(\gamma)^{\frac 12}}{|I-\mathcal P_\gamma|^{\frac 12}} \ell(\gamma)^\sharp .\end{equation}
Finally, combining (\ref{def L(gamma,k)})--(\ref{intermede L(gamma,k) bis}),
\[L(\gamma)= \mathrm{Re}\ \mathrm{e}^{-\mathrm{i}\lambda \ell(\gamma)}\left(\frac{2\pi}{\lambda}\right)^{\frac 12	}\widehat \psi\left(\frac{\ell(\gamma)}{L}\right)\frac{\ell(\gamma)^{\sharp}}{|I-\mathcal P_\gamma|^{\frac 12}} +\mathcal O(\lambda^{-1}\mathrm{e}^{CL})\mathbf 1_{\ell(\gamma)\le L}.  \]
By what precedes and Proposition \ref{avant intégrale temps} we eventually obtain

\[\langle \operatorname{Tr} E^\rho,\cos(\lambda t)\widehat \psi(t/L)\rangle=\frac 12\dim(V_\rho)\mathrm{vol}(X)\lambda \big(\widehat \psi(0)+\mathcal O(\lambda^{-1}\mathrm{e}^{CL})\big)\]\[+\sum_{\gamma\in \mathcal P}\sum_{k\ge 1}\chi_\rho(\gamma)^k\left( \ \cos(\lambda k\ell(\gamma))\widehat \psi\left(\frac{k\ell(\gamma)}{L}\right)\frac{\ell(\gamma)}{|I-\mathcal P_\gamma^k|^{\frac 12}} +\mathcal O(\lambda^{-\frac 12}\mathrm{e}^{CL})\mathbf 1_{k\ell(\gamma)\le L}\right).\]
This is exactly the content of Proposition \ref{trace formula for long times}.

\section{Equidistribution results for closed geodesics} \label{sec:équidistributions}

For surfaces of constant curvature $K=-1$, if $\rho$ is an Abelian character of $\Gamma$, one can consider the \emph{twisted counting function}
\[\mathcal N_\rho(L)=\sum_{\gamma \in \mathcal G,\ \ell(\gamma)\le L} \rho(\gamma).\]
If $\rho$ is trivial, this function reduces to the standard counting function and $\mathcal N_\rho(L)$ grows like $\frac{\mathrm{e}^L}{L}$. For a nontrivial $\rho$, we have $\mathcal N_\rho(L)=\mathcal O(\mathrm{e}^{\delta L})$ for some $\delta<1$, as shown in \cite[Proposition 4.2]{naud2022random}. This result is derived by combining a lower bound on the first eigenvalue of $\Delta_\rho$ with the twisted Selberg trace formula. In our context, as we explained in the introduction, we need to consider the distribution of lengths of closed geodesics, weighted by the dynamical coefficients $\frac{\ell(\gamma)^{\sharp}}{|I-\mathcal P_\gamma|}$ --- recall \eqref{eq:Sigmacarréintro} ---, which do not only depend on the length of $\gamma$ when $X$ has variable curvature.
\begin{thm}\label{lem : equidistribution lemma}Let $\rho:\Gamma\to \mathrm{GL}(V_\rho)$ be a finite dimensional unitary representation of $\Gamma$, denote $\chi_\rho(\gamma)=\operatorname{Tr}(\rho(\gamma))$, and let $\varphi$  be a smooth compactly supported function. Then, 
\[\frac{1}{L}\sum_{\gamma\in \mathcal G} \chi_\rho(\gamma)\frac{\ell(\gamma)^{\sharp}\varphi(\ell(\gamma)/L)}{|I-\mathcal P_\gamma|}=d_{\mathrm{trivial}}\int_{0}^{+\infty} \varphi(t)\mathrm dt + \mathcal O\left(\frac{1}{L}\right).\]
Here, $d_{\mathrm{trivial}}$ is the dimension of the subspace of $V_\rho$ consisting of the vectors invariant under the action of $\Gamma$. The remainder improves to $\mathcal O(L^{-2})$ if $\varphi$ vanishes at $0$. We will eventually be concerned by sums over the set of closed \emph{primitive} geodesics only. Contribution of nonprimitive closed geodesics happen to be negligible in the large $L$ limit --- adding an error $\mathcal O(L^{-1})$ (or $\mathcal O( L^{-2})$ when $\varphi(0)=0$)  ---, thus Theorem \ref{lem : equidistribution lemma} still holds when summing over $\gamma\in \mathcal P$ instead. 
\end{thm}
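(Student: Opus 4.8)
The plan is to recognize the left-hand side as a rescaled flat trace of the geodesic flow twisted by the flat bundle attached to $\rho$, and to read off its large-$L$ behavior from the local trace formula for Anosov flows; the expected leading coefficient $d_{\mathrm{trivial}}$ will be the multiplicity of the Pollicott--Ruelle resonance at $0$ of the twisted flow. Concretely, let $M=S^{*}X$, let $\phi^{t}$ be the geodesic flow on $M$, generated by the vector field $X$, and let $E_{\rho}\to M$ be the pull-back of the flat Hermitian bundle $\mathcal V_{\rho}\to X$. The flat connection upgrades $X$ to a first order differential operator $\mathbf X_{\rho}$ on sections of $E_{\rho}$, anti-self-adjoint for the Liouville measure $\nu_{0}$, so its Pollicott--Ruelle resonances (Faure--Sjöstrand, Dyatlov--Zworski) are well defined and lie in $\{\operatorname{Re}s\le 0\}$. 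The Atiyah--Bott--Guillemin trace formula for flows identifies, as distributions on $\{t>0\}$,
\[\operatorname{tr}^{\flat}\!\big(\mathrm e^{-t\mathbf X_{\rho}}\big)=\sum_{\gamma\in\mathcal G}\chi_{\rho}(\gamma)\,\frac{\ell(\gamma)^{\sharp}\,\delta(t-\ell(\gamma))}{|I-\mathcal P_{\gamma}|},\]
non-primitive orbits entering through the repetitions $\gamma=\gamma_{*}^{k}$. Thus the quantity to estimate is $\tfrac1L\big\langle\operatorname{tr}^{\flat}(\mathrm e^{-t\mathbf X_{\rho}}),\,\varphi(t/L)\big\rangle$.

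I would then apply the local trace formula for Anosov flows of Jin--Zworski, in the bundle-valued version of Jin--Tao, which turns the heuristic ``flat trace $=$ sum over resonances'' into a rigorous identity when the flat trace is tested against a function localized in $t$. Fixing $A>0$ large and testing against $\varphi(\cdot/L)$ it produces
\[\frac1L\sum_{\gamma\in\mathcal G}\chi_{\rho}(\gamma)\,\frac{\ell(\gamma)^{\sharp}\,\varphi(\ell(\gamma)/L)}{|I-\mathcal P_{\gamma}|}=m_{0}\int_{0}^{+\infty}\varphi(u)\,\mathrm du+\frac1L\sum_{j}m_{j}\!\int_{0}^{+\infty}\!\mathrm e^{\mu_{j}t}\varphi(t/L)\,\mathrm dt+\mathcal E_{L},\]
where $m_{0}$ is the rank of the residue of the resolvent of $\mathbf X_{\rho}$ at $s=0$, the finite sum runs over the resonances $\mu_{j}$ with $-A<\operatorname{Re}\mu_{j}<0$ and multiplicities $m_{j}$, and $\mathcal E_{L}$ is the error term from the local trace formula. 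Each integral in the finite sum equals $-\varphi(0)/\mu_{j}+\mathcal O(1/L)$ (write $\varphi(t/L)=\varphi(0)+(\varphi(t/L)-\varphi(0))$ and use that $\mathrm e^{\mu_{j}t}$ is integrable on $(0,+\infty)$ since $\operatorname{Re}\mu_{j}<0$), so that sum contributes $-\tfrac{\varphi(0)}{L}\sum_{j}m_{j}/\mu_{j}+\mathcal O(1/L^{2})$; in particular it is $\mathcal O(1/L)$ in general and $\mathcal O(1/L^{2})$ when $\varphi(0)=0$, and one checks $\mathcal E_{L}$ obeys the same bounds for $A$ large.

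The crux is to identify $m_{0}=d_{\mathrm{trivial}}$ and to exclude other resonances on $\mathrm i\mathbf R$. I would split $\rho=\rho^{\Gamma}\oplus\rho'$, with $\rho^{\Gamma}$ the trivial subrepresentation carried by the $\Gamma$-invariant vectors (dimension $d_{\mathrm{trivial}}$) and $\rho'$ with no nonzero $\Gamma$-invariant vector. For $\rho^{\Gamma}$ the bundle is trivial, so $\mathbf X_{\rho^{\Gamma}}$ is $d_{\mathrm{trivial}}$ copies of the generator of the scalar geodesic flow, whose resonance at $0$ is simple (resonant state the Liouville density, co-resonant state the constants) and which has no other resonance on $\mathrm i\mathbf R$ by topological mixing of the geodesic flow on a negatively curved surface; this part also re-proves formula \eqref{JinZwo}. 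For $\rho'$, any resonant state at $0$ is a distributional section $u$ of $E_{\rho'}$ with $\mathbf X_{\rho'}u=0$ whose wavefront set lies in the unstable conormal; propagation and elliptic estimates force $u$ to be a genuine parallel section, hence to come from a $\Gamma$-invariant vector of $V_{\rho'}$, so $u=0$ and $0$ is not a resonance of $\mathbf X_{\rho'}$; the same argument excludes resonances of $\mathbf X_{\rho'}$ on $\mathrm i\mathbf R$. Dolgopyat's exponential mixing for contact Anosov flows on surfaces (inherited by the twisted flow) yields a resonance-free strip $\{-\beta<\operatorname{Re}s\le 0\}\setminus\{0\}$, so all remaining resonances sit in $\{\operatorname{Re}s\le-\beta\}$ and are absorbed into $\mathcal E_{L}$; hence $m_{0}=d_{\mathrm{trivial}}$ and the two displays above give the claimed asymptotic for the sum over all of $\mathcal G$.

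Finally, passing from $\mathcal G$ to the primitive set $\mathcal P$ costs $\tfrac1L\sum_{\gamma_{*}\in\mathcal P}\sum_{k\ge 2}|\chi_{\rho}(\gamma_{*}^{k})|\,\frac{\ell(\gamma_{*})\,|\varphi(k\ell(\gamma_{*})/L)|}{|I-\mathcal P_{\gamma_{*}^{k}}|}$. Writing $|I-\mathcal P_{\gamma}|\asymp\mathrm e^{u_{\gamma}}$ with $u_{\gamma}\ge u_{0}>0$ the unstable expansion rate, one has $|I-\mathcal P_{\gamma_{*}^{k}}|\gtrsim|I-\mathcal P_{\gamma_{*}}|\,\mathrm e^{(k-1)u_{\gamma_{*}}}$, so the $k$-sum is $\lesssim\mathrm e^{-u_{\gamma_{*}}}\asymp|I-\mathcal P_{\gamma_{*}}|^{-1}$, reducing the whole thing to $\tfrac1L\sum_{\gamma_{*}\in\mathcal P}\ell(\gamma_{*})\,\mathrm e^{-2u_{\gamma_{*}}}=\mathcal O(1/L)$, the series converging because the topological pressure of $-2$ times the unstable Jacobian is negative (since $P(-J^{u})=0$ and $J^{u}>0$); when $\varphi(0)=0$ the extra factor $|\varphi(k\ell(\gamma_{*})/L)|\lesssim k\ell(\gamma_{*})/L$ upgrades this to $\mathcal O(1/L^{2})$. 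The genuine difficulty I anticipate is the step $m_{0}=d_{\mathrm{trivial}}$ — transporting the dictionary ``flat sections $\leftrightarrow$ $\Gamma$-invariant vectors'' to the level of Pollicott--Ruelle resonant states and discarding resonances on $\mathrm i\mathbf R$ — together with controlling the error $\mathcal E_{L}$ of the twisted local trace formula finely enough to see that it is $\mathcal O(1/L)$, respectively $\mathcal O(1/L^{2})$ under $\varphi(0)=0$.
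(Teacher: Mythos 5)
Your strategy is the same as the paper's: view the sum as the flat trace of the transfer operator twisted by the flat bundle of $\rho$, tested against $\tfrac1L\varphi(\cdot/L)$, apply the Jin--Zworski/Jin--Tao local trace formula, identify the multiplicity of the resonance at $0$ with $d_{\mathrm{trivial}}$ (your splitting $\rho=\rho^{\Gamma}\oplus\rho'$ and the parallel-section argument is essentially the paper's lemma on the resonance at $0$), and kill nonprimitive orbits with the hyperbolicity bound $|I-\mathcal P_{\gamma^{k}}|\gtrsim \mathrm e^{\theta(k-1)\ell(\gamma)}|I-\mathcal P_{\gamma}|$. However, as written there are genuine gaps. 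First, the set of resonances in a strip $\{-A<\operatorname{Re}s\le 0\}$ is \emph{not} finite: resonances accumulate at infinity in the frequency direction, and only finitely many lie in any bounded rectangle (this is exactly the content of the counting bound $N_A(E)=\mathcal O_A(E^{n+1})$). So your termwise estimate $-\varphi(0)/\mu_j+\mathcal O(1/L)$ cannot simply be summed; the paper instead combines the decay $|\widehat\varphi_L(\xi)|\lesssim_N \tfrac1L\langle\xi\rangle^{-N}$ on $\{\operatorname{Im}\xi\le 0\}$ (obtained by integration by parts, using that the test function is supported in $t\ge 0$) with the counting bound to get the $\mathcal O(1/L)$. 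Your substitute --- a Dolgopyat-type resonance-free strip inherited by the twisted flow --- is an extra unproven ingredient and does not repair the issue as deployed: taking $A$ large leaves infinitely many resonances in the sum, while taking $A$ inside the putative gap pushes the whole burden onto $\mathcal E_L$, which you leave unestimated; no gap is needed in the paper's argument.

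Second, the local trace formula is an identity of distributions on $(0,+\infty)$, whereas $\varphi(t/L)$ does not vanish near $t=0$ when $\varphi(0)\neq 0$; one must insert a cutoff $\eta$ vanishing below $\ell_0/2$ and equal to $1$ above $\ell_0$ (harmless on the geodesic side since $\ell(\gamma)\ge\ell_0$), and the correction $\int_0^{\ell_0/L}|\varphi|$ is precisely one source of the $\mathcal O(1/L)$, improving to $\mathcal O(1/L^{2})$ when $\varphi(0)=0$; the cutoff is also what legitimizes the lower-half-plane bound on $\widehat\varphi_L$. Third, the error $\mathcal E_L=\langle F_A,\varphi_L\rangle$ is where the real work lies: the paper shifts the contour to $\mathbf R+\mathrm i\tfrac A2$ and pairs the polynomial bound $\widehat F_A(\xi)=\mathcal O(\langle\xi\rangle^{7+\varepsilon})$ with the same $\tfrac1L\langle\xi\rangle^{-N}$ decay; you flag this step yourself as unresolved, so the proposal is an accurate outline of the paper's route but not yet a complete proof. (Minor point: excluding nonzero resonances on the critical axis is done in the paper via the contact form --- $\mathrm d\tilde u=s\alpha$ and $s\,\alpha\wedge\mathrm d\alpha=0$ force $s=0$ --- which is more than ``propagation and elliptic estimates''.)
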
 

\begin{rem}For any weak mixing Anosov flow on a closed manifold, letting $\mathcal G$ denote the set of periodic orbits of the flow, one can show that
\[\frac{1}{L}\sum_{\gamma\in \mathcal G} \frac{\ell(\gamma)^{\sharp}\varphi(\ell(\gamma)/L)}{|I-\mathcal P_\gamma|}=\int_{0}^{+\infty} \varphi(t)\mathrm dt+\mathcal O\left(\frac{1}{L}\right),\]
as the local trace formula of Jin--Zworski still applies in this more general setting.

\end{rem}

We will need the following companion result. Again, we stress out that it holds for general mixing Anosov flows on a closed manifold.
\begin{lem}\label{lem: size clusters} Let $\omega$ be a smooth, compactly supported function on $\mathbf R$. Then,
\[\sum_{\gamma\in \mathcal G}\frac{\ell(\gamma)^\sharp\omega(\ell(\gamma)-T)}{|I-\mathcal P_\gamma|}\underset{T\to+\infty}\longrightarrow \int_{\mathbf R} \omega .\]
Also, uniformly in $T\ge 0$,
\[\sum_{T-1\le \ell(\gamma)\le T+1} \frac{\ell(\gamma)^{\sharp}}{|I-\mathcal P_\gamma|}=\mathcal O(1).\]\end{lem}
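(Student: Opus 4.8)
The plan is to deduce both estimates from the local trace formula of Jin--Zworski \cite{Jin2016} (see also the references in \S\ref{sec:équidistributions}) applied to the geodesic flow $\phi^t$ on $M=S^*X$, which is an Anosov flow since $X$ is negatively curved. Being a contact Anosov flow, it is exponentially mixing; equivalently, the generator $\mathbf X$ of the flow has a discrete set of Ruelle resonances $\{\mu_j\}\subset\{\operatorname{Re}\mu\le 0\}$ whose leading element $\mu_0=0$ (associated with the flow-invariant constant functions) is simple, and there is some $c_0>0$ with $\operatorname{Re}\mu_j\le -c_0$ for every $j\ne 0$. In the form relevant here, the local trace formula reads: for every $h\in C_c^\infty((0,+\infty))$ and every $A>0$,
\[\sum_{\gamma\in\mathcal G}\frac{\ell(\gamma)^\sharp\,h(\ell(\gamma))}{|I-\mathcal P_\gamma|}=\sum_{\operatorname{Re}\mu_j>-A}\int_0^{+\infty}e^{\mu_j t}h(t)\,\mathrm dt+\mathcal O_A\big(r_A(h)\big),\]
where the left-hand side is a finite sum (only finitely many closed geodesics have length in the compact set $\operatorname{supp}h$, by discreteness of the length spectrum), the $j=0$ term equals $\int_0^{+\infty}h$ — consistent with the scaling version quoted in the Remark — and the remainder $r_A(h)$ is governed by the meromorphically continued resolvent on the line $\{\operatorname{Re}\mu=-A\}$ tested against the relevant transform of $h$.

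\emph{First statement.} I would fix $T$ large enough that $\operatorname{supp}\omega+T\subset(0,+\infty)$ and apply the formula with $h=h_T:=\omega(\,\cdot-T)$ and some fixed $A\ge c_0$. The $j=0$ term equals $\int_0^{+\infty}\omega(t-T)\,\mathrm dt=\int_{\mathbf R}\omega$, independently of $T$. For $j\ne 0$ one has $\int_0^{+\infty}e^{\mu_j t}\omega(t-T)\,\mathrm dt=e^{\mu_j T}\int_{\mathbf R}e^{\mu_j s}\omega(s)\,\mathrm ds$, whose modulus is at most $e^{-c_0 T}$ times a factor that decays rapidly in $\operatorname{Im}\mu_j$ (since $\omega$ is smooth and compactly supported and $\operatorname{Re}\mu_j$ stays bounded); together with the polynomial bound on the number of Ruelle resonances in a strip, the sum over $j\ne 0$ is $\mathcal O(e^{-c_0 T})$. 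The remainder $r_A(h_T)$ involves the transform of $h_T$ on $\{\operatorname{Re}\mu=-A\}$, where its modulus is $\lesssim e^{-AT}$ times a rapidly decaying factor, so it is $\mathcal O(e^{-AT})$. Letting $T\to+\infty$ gives $\sum_{\gamma\in\mathcal G}\frac{\ell(\gamma)^\sharp\omega(\ell(\gamma)-T)}{|I-\mathcal P_\gamma|}\to\int_{\mathbf R}\omega$.

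\emph{Second statement.} I would pick once and for all a nonnegative $\omega^+\in C_c^\infty(\mathbf R)$ with $\omega^+\ge\mathbf 1_{[-1,1]}$, so that for every $T\ge 0$,
\[\sum_{T-1\le\ell(\gamma)\le T+1}\frac{\ell(\gamma)^\sharp}{|I-\mathcal P_\gamma|}\le\sum_{\gamma\in\mathcal G}\frac{\ell(\gamma)^\sharp\,\omega^+(\ell(\gamma)-T)}{|I-\mathcal P_\gamma|}.\]
The right-hand side is a continuous function of $T$ (each such sum being locally finite in $T$), and by the first statement it converges to $\int_{\mathbf R}\omega^+<+\infty$ as $T\to+\infty$; hence it is bounded on $[T_1,+\infty)$ for $T_1$ large, and it is bounded on the compact interval $[0,T_1]$ because only the finitely many closed geodesics of length $\le T_1+1$ can contribute there. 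Combining, the left-hand side is $\mathcal O(1)$ uniformly in $T\ge 0$.

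The main obstacle — and the only nontrivial input — is the combination of the local trace formula with a quantitative remainder and the spectral gap $\operatorname{Re}\mu_j\le -c_0$ for $j\ne 0$, i.e. exponential decay of correlations for the geodesic flow of a variably negatively curved surface; this is precisely what forces the non-leading resonances and the remainder to vanish in the $T\to+\infty$ limit. Once this is granted, the remaining steps (resonance counting in a strip, rapid decay of the transform of $\omega$ along horizontal lines, and the sandwich argument) are routine.
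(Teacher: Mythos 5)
Your proof is correct for the statement as written (the geodesic flow of a negatively curved surface is a contact Anosov flow, so the resonance-free strip you invoke is indeed available — the paper itself cites it via \cite{TsujiGap} in the appendix), but it runs on a different engine than the paper's proof. The paper deliberately avoids any spectral gap: it only uses that $0$ is the sole resonance on the real axis (weak mixing), fixes a large $R$, chooses $A=A(R)$ so that the rectangle $[-R,R]+\mathrm{i}[-A,0]$ contains no nonzero resonances, bounds the contribution of the remaining nonzero resonances in the strip by $\int_R^{+\infty}|E|^{-N}\,\mathrm dN_A(E)=\mathcal O(R^{-1})$ uniformly in $T$ via the counting bound of Proposition \ref{comptage JinTao}, kills $\langle F_A,\omega_T\rangle$ as $T\to+\infty$, and then lets $R\to+\infty$; this yields no rate but works for any weak-mixing Anosov flow, which is exactly the generality the paper needs (see the remark following the lemma and its use in Proposition \ref{prop: central limit theorem for closed orbits}). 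Your route instead imports the Liverani–Tsujii spectral gap to push all nonzero resonances below $\operatorname{Im}\mu\le -c_0$, which buys an explicit exponential rate $\mathcal O(\mathrm{e}^{-c_0T})$ at the cost of a much deeper dynamical input and of losing the general weak-mixing case. Two minor points: the contour for the remainder must sit at height $A-\varepsilon$ (e.g. $A/2$), since $\widehat F_A$ is only controlled in $\operatorname{Im}\xi\le A-\varepsilon$, so the error is $\mathcal O(\mathrm{e}^{-(A-\varepsilon)T})$ rather than $\mathcal O(\mathrm{e}^{-AT})$ — immaterial for the conclusion; and your sandwich argument for the second statement (majorize $\mathbf 1_{[-1,1]}$ by a smooth $\omega^+$, use convergence for large $T$ and finiteness of contributing geodesics on compact $T$-ranges) is fine and in fact supplies a step the paper leaves implicit.
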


Theorem \ref{lem : equidistribution lemma} is obtained by combining the mixing property of the geodesic flow together with a trace formula for the transfer operator associated to the geodesic flow. For an Abelian character $\rho$, the results are a consequence of the works of Jin--Zworski \cite{Jin2016} and Jin--Tao \cite{jin2022flat,jin2023number}. We shall consider more general counting functions where $\chi_\rho(\gamma)$ is replaced by a polynomial function of $\operatorname{Tr}(\rho(\gamma))$. This is addressed in Proposition \ref{prop : equidistribution Lie}, which essentially follows from Theorem \ref{lem : equidistribution lemma} after an application of Peter--Weyl theorem.

\begin{rem} As it will appear clear in the proof, quantitative estimates on the remainder in Lemma \ref{lem: size clusters} can be obtained provided that one can find an explicit function $A=A(R)$ such that there are no Ruelle--Pollicott resonances other than $0$ in the domain $\{|\mathrm{Re}(z)|\ge R, \ -A(R) \le \mathrm{Im}(z)\le 0\}$. If one can take $A(R)=\mathrm{constant}>0$ we get exponential decay in $T$, if one can take $A(R)=c R^{-\theta}$ for some $c,\theta>0$ we get superpolynomial decay in $T$, etc. \end{rem}

We begin by reviewing some background on geodesic flows on negatively curved surfaces and the definition of Pollicott--Ruelle resonances. Then, we present the local trace formula of Jin--Zworski, used in the proof of Theorem \ref{lem : equidistribution lemma}.

\subsection{Geodesic flow on the cotangent bundle}

Let $M$ denote the unit cotangent bundle of $X$, defined by
\[M=\big \{(x,\xi)\in T^*X, \ |\xi|_g=1\big\},\]
where $|\cdot|_g$ is the dual metric. We let $\phi^t$ denote the geodesic flow on $M$, and denote by $V$ the geodesic vector field, which is the generator of this flow. Similarly, let $\widetilde M$ be the unit cotangent bundle of $\widetilde X$, and $\widetilde{V}$ the geodesic vector field on $\widetilde M$. We consider the \emph{transfer operator} $(\phi^t)^*$, that acts on smooth functions according to the formula
\[(\phi^{t})^*f:= f\circ \phi^t.\]
Viewing $V$ as a differential operator on $C^\infty(M)$, we have $(\phi^t)^*=\mathrm{e}^{tV}$.

Since $X$ is negatively curved, the geodesic flow on $M$ is \emph{Anosov} \cite{Ano67}. It means that there exists a continuous splitting of the tangent bundle preserved by the flow $T_xM=\mathbf RV(x)\oplus E_s(x)\oplus E_u(x)$, such that for any continuous norm $|\cdot|$ on $TM$, there exist constants $C,\theta>0$ such that
\begin{itemize}\label{defi:Anosov}
\item If $v_s\in E_s(x)$ then $|\mathrm d_x\phi^t(v_s)|\le C \mathrm{e}^{-\theta t}|v_s|$ for $t\ge 0$.
\item If $v_u\in E_u(x)$ then $|\mathrm d_x\phi^t(v_u)|\le C \mathrm{e}^{-\theta |t|}|v_u|$ for $t\le 0$.
\end{itemize}

The spaces $E_s$ and $E_u$ are respectively called \emph{stable} and \emph{unstable}; in our case where $X$ is a surface, both have dimension $1$. There is an associated continuous splitting $T_x^*M=E_0^*(x)\oplus E_s^*(x)\oplus E_u^*(x)$ of the cotangent bundle of $M$, defined by
\[E_0^*(x)\big(E_s(x)\oplus E_u(x)\big)=\{0\}, \ E_s^*(x)(\mathbf RV(x) \oplus E_u(x))=\{0\}, \ E_u^*(x)\big(\mathbf RV(x)\oplus E_s(x)\big)=\{0\}.\] 

The geodesic flow preserves a contact one form $\alpha$, defined as the restriction to $M$ of the canonical $1$-form on $T^*X$. The volume form $\alpha\wedge \mathrm d\alpha$ is called the \emph{Liouville measure}, and we denote by $\nu_0$ its normalization (such that $\int_M \mathrm d\nu_0=1$).

\subsection{Anisotropic Sobolev spaces and Pollicott--Ruelle resonances} \label{subsec:anisotropic}

Let $\mathbf P:=-\mathrm iV$, since the flow preserves the Liouville measure on $M$, the operator $\mathbf P$ is formally self-adjoint on $L^2(M,\nu_0)$. However, $L^2(M,\nu_0)$ is not well-fitted to the study of $\mathbf P$, for example the spectrum of $\mathbf P$ on $L^2(M,\nu_0)$ consists of the whole real line. Following earlier works of Blank--Keller--Liverani \cite{blank2002ruelle}, Gou{\"e}zel--Liverani \cite{gouezel2006banach} and Baladi--Tsujii \cite{baladi2007anisotropic}, Faure and Sjöstrand introduced convenient functional spaces --- so called \emph{anisotropic Sobolev spaces} --- to study $\mathbf P$, that are well adapted to the dynamics of the flow \cite{Faure2011}. Here we adopt the notations of Dyatlov--Zworski \cite[\S 3.1]{dyatlov2016dynamical}. Briefly, one constructs an \emph{order function} $m_G\in C^\infty(T^*M\backslash \{0\}, [-1,1])$, homogeneous of degree $0$, such that $m_G\equiv 1$ in a neighborhood of $E_s^*$ and $m_G\equiv -1$ in a neighborhood of $E_u^*$, that decreases along the flow lines of $\phi^t$. Then, one takes some pseudodifferential operator $G$ with principal symbol $\sigma(G)=m_G(x,\xi) \log |\xi|$, (here $|\cdot|$ is any norm on $T^*M$). The function $\sigma(G)=m_G(x,\xi) \log |\xi|$ is called an \emph{escape function}. Eventually, one defines
\[H_{sG}(M):=\mathrm{e}^{-sG} L^2(M).\]
The space $H_{sG}(M)$ comes with a Hilbert space structure by setting $\|f\|_{H_{sG}(M)}=\|\mathrm{e}^{sG} f\|_{L^2(M)}$. Moreover, by construction of $G$ --- it essentially follows from the fact that $m_G$ takes values in $[-1,1]$ ---, one can show that $H^{-s}(M)\subset H_{sG}(M)\subset H^s(M)$, where $H^\bullet(M)$ denote the standard Sobolev spaces on $M$. Eventually, define the domain $D_{sG}:=\{u\in H_{sG}, Pu\in H_{sG}\}$.

Let $s>0$. By \cite[\S 3.2]{dyatlov2016dynamical}, if $C_1$ large enough, then the operator $\mathbf P-z:D_{sG}\to H_{sG}$ is invertible for $\mathrm{Im}(z)>C_1$, and 
\[(\mathbf P-z)^{-1}=\mathrm{i}\int_0^{+\infty}\mathrm{e}^{-\mathrm{i} t(\mathbf P-z)}\mathrm dt.\]
Here $\mathrm{e}^{-\mathrm{i} t\mathbf P}$ denotes the pullback operator $(\phi^{-t})^*$, and the integral in the right-hand side converges in operator norm $H^{\pm s}\to H^{\pm s}$. By the work of Faure--Sj\"ostrand \cite{Faure2011}, for any $C_0>0$, for $s$ large enough, the resolvent $\mathbf R(z)=(\mathbf P-z)^{-1}:H_{sG}\to H_{sG}$ is a meromorphic family of operators with poles of finite rank, in the half-plane $\mathrm{Im}(z)>-C_0$. The poles of $R(z)$ are called \emph{Pollicott--Ruelle resonances}, and can be thought of as generalized eigenvalues of $P$. Moreover, these poles do not depend on $s$.

Actually, one can show that a complex number $\lambda$ is a resonance if and only if there is some nonzero distribution $u\in \mathcal D'_{E_u^*}(M)$ --- meaning $u\in \mathcal D'(M)$ and $\mathrm{WF}(u)\subset E_u^*$ ---, such that $(\mathbf P-\lambda) u=0$ in the weak sense. This implies in particular that the definition of resonances does not depend on the construction of the anisotropic Sobolev spaces.


\subsection{Ruelle operator acting on sections of flat bundles}
More generally, let $\mathcal E$ be the lift to $M$ of a flat bundle $\mathcal V_\rho\to X$ associated to a unitary representation $\rho$ of $\Gamma$. The bundle $\mathcal E$ is defined as the quotient $\Gamma\backslash (\widetilde M\times V_\rho)$, where we identify
\[\big((x,\xi),v\big)\sim \Big(\big(\gamma x, (\mathrm d\gamma(x)^{\top})^{-1}\xi\big), \rho(\gamma )v\Big).\]

Since the geodesic flow commutes with isometries, it preserves the space of $\rho$-equivariant functions $\widetilde M\to V_\rho$. Thus, the operator $\widetilde{V}$ acting on $C^\infty(\widetilde M,V_\rho)$ descends to an operator $\mathbf V$ on $C^{\infty}(M,\mathcal E)$, that is simply the Lie derivative of the geodesic vector field. 

Alternatively, for $1$-dimensional representations, we can take the viewpoint of flat connections as discussed in \S \ref{time reversal}. Let $\tilde \pi:\widetilde M\to \widetilde X$ be the projection. Observe that with the notations of \S \ref{time reversal}, for any $f\in C^\infty(\widetilde M)$, we have
\[\mathrm{e}^{-\mathrm i\tilde \pi^* \varphi}\widetilde V(\mathrm{e}^{\mathrm i\pi^* \varphi} f)=   f+\mathrm i \widetilde V(\pi^* \varphi) f.\]
Now recall that $\tilde \pi_* \widetilde V(x,\xi)=\xi^{\sharp}$, where $\bullet^\sharp$ denotes the natural identification $T^*\widetilde X\simeq T\widetilde X$. Since $\mathrm d\varphi=\widetilde {\mathbf A}$, we infer $\widetilde V(\tilde \pi^* \varphi)(x,\xi)=\mathrm d_x\varphi(\xi^\sharp)=\langle \widetilde {\mathbf A}(x),\xi\rangle_{T_x^*\widetilde X}$. Therefore, the operator $\mathbf V$ acting on $C^{\infty}(M,\mathcal E)$ is unitarily conjugated to the operator $V+\mathrm i\langle \mathbf A(x),\xi\rangle$ acting on $C^{\infty}(M)$ --- for the $L^2$ norm.

Let us first mention the following upper bound on the number of resonances:
\begin{prop}[{\cite[\S 6.1]{jin2023number}}] \label{comptage JinTao}For $A>0$, denote $N_A(E)=\# \mathrm{Res}(\mathbf P)\cap \{\lambda \in \mathbf C, \ |\mathrm{Re}\ \lambda|<E, \ \mathrm{Im}(\lambda)>-A\}$. Then,
\begin{equation}\label{eq : upper bound resonances} N_A(E)=\mathcal O_A(E^{n+1}).\end{equation}\end{prop}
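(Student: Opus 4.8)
The plan is to follow the microlocal approach to counting Pollicott--Ruelle resonances developed by Faure--Sj\"ostrand \cite{Faure2011} and Datchev--Dyatlov--Zworski, with the contact refinement of Jin--Tao \cite{jin2023number}: one turns the count into a phase--space volume estimate, and the contact structure is precisely what brings the exponent down from $\dim M$ to $n+1$.

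First I would fix $A>0$ and, as in \S\ref{subsec:anisotropic}, choose the anisotropic order $s=s(A)$ large enough that $sG$ dominates the strip of depth $A$; then $\mathbf P-z\colon D_{sG}\to H_{sG}$ is a holomorphic Fredholm family of index zero on $\{\mathrm{Im}\,z>-A\}$ whose poles of $(\mathbf P-z)^{-1}$ are exactly the resonances counted by $N_A(E)$. To count those lying in the box $\mathcal B_E=\{|\mathrm{Re}\,z|\le 2E,\ \mathrm{Im}\,z>-A\}$ I would use a Jensen--type argument. After conjugation by $\mathrm{e}^{sG}$ the operator $\mathbf P-z$ is elliptic off a conic neighbourhood of its characteristic set, and the radial estimates at the source $E_u^*$ and sink $E_s^*$ of the lifted flow give a priori control modulo a fixed conic neighbourhood of $E_u^*$ truncated to frequencies $|\xi|\lesssim E$, which is where resonant states (having wavefront set in $E_u^*$) can concentrate. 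One then builds a finite--rank ``complex absorbing potential'' $\chi_E$, a quantization of a cutoff to this region, such that $\mathbf P-z-\mathrm iC\chi_E$ is invertible on $\mathcal B_E$ with $\|(\mathbf P-z-\mathrm iC\chi_E)^{-1}\|=\mathcal O(E^{C'})$, uniformly for $z\in\mathcal B_E$. Factoring
\[\mathbf P-z=\bigl(\mathbf P-z-\mathrm iC\chi_E\bigr)\bigl(I+\mathrm iC\chi_E(\mathbf P-z-\mathrm iC\chi_E)^{-1}\bigr),\]
the resonances in $\mathcal B_E$ become the zeros of the holomorphic function $z\mapsto\det\bigl(I+\mathrm iC\chi_E(\mathbf P-z-\mathrm iC\chi_E)^{-1}\bigr)$, and Jensen's inequality on a disc of radius $\sim E$ bounds their number by $C'\operatorname{rank}(\chi_E)+\mathcal O(\log E)$.

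It then remains to estimate $\operatorname{rank}(\chi_E)$, which is comparable to the number of unit semiclassical cells (at scale $h=1/E$) needed to cover the chosen conic neighbourhood of $E_u^*$ with $|\xi|\lesssim E$. The crude count, resolving the base $M$ at scale $h$, gives $\mathcal O(E^{\dim M})=\mathcal O(E^{2n+1})$; the improvement to $\mathcal O(E^{n+1})$ is exactly where the contact hypothesis enters, as in the band--structure analysis of Faure--Tsujii and in \cite{jin2023number}: since $\mathrm d\alpha$ is nondegenerate on $\ker\alpha=E_s\oplus E_u$ with $E_s$ and $E_u$ Lagrangian, in Darboux--type coordinates adapted to $\alpha$ the relevant neighbourhood of $E_u^*$, together with the flow direction, is an isotropic object of half the naively expected size and is covered by only $\mathcal O(E^{n+1})$ cells. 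This yields $N_A(E)=\mathcal O_A(E^{n+1})$. The argument is insensitive to twisting: replacing $\mathbf P$ by $\mathbf V$ on $C^\infty(M,\mathcal E)$ perturbs the operator only by a bounded bundle endomorphism, so the same resolvent estimates and the same volume count go through, with $\operatorname{rank}(\chi_E)$ multiplied by $\dim V_\rho$.

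The main obstacle is the uniform-in-$E$ polynomial resolvent bound for the absorbed operator over the full box $\mathcal B_E$: it requires patching the microlocal elliptic estimate away from the characteristic set, the source/sink radial estimates at $E_u^*$ and $E_s^*$, and the absorption by $\chi_E$ into a single propagation estimate with every constant tracked in the spectral parameter. The sharp cell count near $E_u^*$, although conceptually responsible for the exponent $n+1$, is comparatively soft once coordinates adapted to the contact form are chosen.
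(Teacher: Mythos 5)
Your overall skeleton (anisotropic spaces, a finite--rank complex absorbing operator, a Fredholm determinant, a Jensen count) is indeed the Jin--Zworski/Jin--Tao mechanism, which is also all the paper relies on: Proposition \ref{comptage JinTao} is not proved in the paper but quoted from \cite{jin2023number}, and the underlying argument is recalled in Appendix \ref{sec:appendix}. There, after the semiclassical rescaling $h\sim 1/E$, one absorbs with $W=\chi(h^2\Delta_g-1)$, localized on the \emph{whole} energy shell $\{|\xi|_g\approx 1\}$, whose rank is $\mathcal O(h^{-n})$ by Weyl's law with $n=\dim M$; this gives $\mathcal O(E^{n})$ resonances per unit window $\{|\mathrm{Re}\,z-E'|\le 1,\ \mathrm{Im}\,z>-A\}$, and summing over the $\sim E$ windows yields $E^{n+1}$. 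In particular $n$ in the statement is $\dim M$ (this is also forced by comparing the bound $\langle\xi\rangle^{7+\varepsilon}$ in Theorem \ref{localtraceformula} with its appendix version $\langle\xi\rangle^{2n+1+\varepsilon}$), so the exponent $n+1$ has nothing to do with contact geometry: it is a Weyl-law count per window times the number of windows.

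Two steps of your plan are genuinely flawed. First, your absorbing operator sits in the wrong microlocal region: the wavefront set of resonant states at a \emph{fixed} spectral parameter lies in $E_u^*$, but the trapping that obstructs invertibility at real part $\approx E'$ occurs on the compact trapped set inside the characteristic set $\{\xi(V)=E'\}$, namely near $\xi\approx E'\xi_0\in E_0^*$ (for the geodesic flow, $\xi_0=\alpha$), which is transverse to $E_u^*$; the radial sets $E_u^*,E_s^*$ are handled by regularity thresholds in the escape function and need no absorption. With $\chi_E$ microsupported in a cone around $E_u^*$, the operator $\mathbf P-z-\mathrm iC\chi_E$ has unabsorbed trapping at $\xi\approx(\mathrm{Re}\,z)\xi_0$, so the uniform polynomial resolvent bound on the box --- the step you yourself flag as the main obstacle --- fails; this is precisely why Jin--Tao absorb on the full cosphere, window by window. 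Second, your rank estimate is incorrect as stated: a fixed-aperture conic neighbourhood of the line bundle $E_u^*$ truncated at $|\xi|\le E$ has phase-space volume comparable to $E^{\dim M}$, hence $\sim E^{\dim M}$ cells, regardless of the contact form; the Faure--Sj\"ostrand/Faure--Tsujii improvement for contact flows requires $E$-dependent (square-root scale) neighbourhoods of the trapped set and is a substantial piece of analysis, not a soft choice of adapted coordinates. Because you also read $\dim M=2n+1$, you are in effect aiming at a much stronger bound than the one stated; for Proposition \ref{comptage JinTao} the crude per-window Weyl count suffices, and your contact-geometry step is both unnecessary and unproved.
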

By a standard hyperbolicity argument (see \emph{e.g.} \cite[Lemma 3.2]{Dyatlov2017}) we show
\begin{lem} \label{lem: resonance en 0} Let $\rho:\Gamma\to \mathrm{GL}(V_\rho)$ be a finite dimensional unitary representation of $\Gamma$. Let $\mathcal E$ be the associated vector bundle over $M=S^*X$, and let $\mathbf P=-\mathrm{i}\mathbf V$ act on smooth sections of $\mathcal E$. Then, $0$ belongs to $\mathrm{Res}(\mathbf P)$ if and only if $\rho$ contains the trivial representation. More precisely, the multiplicity of $0$ as a resonance is given by the dimension of the subspace of $V_\rho$ consisting of vectors invariant under the action $\Gamma$, that we denote $d_{\mathrm{trivial}}$. There are no other resonances on the real axis.\end{lem}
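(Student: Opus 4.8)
The plan is to split $\rho$ into irreducibles, dispose of the trivial summands using the known scalar analysis, and for a nontrivial irreducible $\rho$ reach a contradiction via Schur's lemma and the non-arithmeticity of the length spectrum. Since $\rho$ is finite-dimensional unitary it decomposes orthogonally as $\rho\cong\bigoplus_j\rho_j$ into irreducibles, with the trivial representation occurring $d_{\mathrm{trivial}}$ times; correspondingly $\mathcal E=\bigoplus_j\mathcal E_{\rho_j}$ is an orthogonal sum of flat Hermitian subbundles preserved by $\mathbf V$, so $\mathbf P$ is block diagonal and $\mathrm{Res}(\mathbf P)$, counted with multiplicity, is the union of the $\mathrm{Res}(\mathbf P|_{\mathcal E_{\rho_j}})$. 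For a trivial summand, $\mathcal E_{\rho_j}$ is the trivial line bundle and $\mathbf P|_{\mathcal E_{\rho_j}}=-\mathrm iV$ acting on $C^\infty(M)$; since the geodesic flow on $M$ is a mixing contact Anosov flow, the standard hyperbolicity argument for its Pollicott--Ruelle resonances (\cite[Lemma 3.2]{Dyatlov2017}, and Dyatlov--Zworski) gives that $0$ is a simple resonance with resonant state the constants, and that there is no resonance on $\mathbf R\setminus\{0\}$. So the trivial summands contribute exactly $d_{\mathrm{trivial}}$ to the multiplicity of $0$ and nothing to $\mathbf R\setminus\{0\}$, and it remains to prove that for $\rho$ nontrivial irreducible, $\mathbf P=-\mathrm i\mathbf V$ has no resonance on the real axis.

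So assume $\rho$ is nontrivial irreducible and, for contradiction, that $\lambda\in\mathbf R$ is a resonance, with resonant state $u\in\mathcal D'(M,\mathcal E_\rho)\setminus\{0\}$, $\mathrm{WF}(u)\subset E_u^*$, $\mathbf Vu=\mathrm i\lambda u$. The flat Hermitian metric is $\mathbf V$-parallel, so $\mathbf P=-\mathrm i\mathbf V$ is formally self-adjoint on $L^2(M,\mathcal E_\rho,\nu_0)$; the radial estimates and propagation of singularities behind \cite[Lemma 3.2]{Dyatlov2017} are insensitive to flat Hermitian coefficients, so the scalar argument runs with bundle values and shows that $u$ is smooth, nowhere zero (its pointwise norm $|u|_h$ is flow-invariant, hence a positive constant by ergodicity of $\phi^t$ with respect to $\nu_0$), and, more precisely, that $u$ is rigid along the stable and unstable horospherical foliations. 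Lifting to the universal cover, $u$ becomes a smooth $\rho$-equivariant map $\widetilde u:S^*\widetilde X\to V_\rho$ of constant norm with $\widetilde V\widetilde u=\mathrm i\lambda\widetilde u$ that is constant along the strong stable and strong unstable leaves; hence $\widetilde u(\tilde x,\xi)=\mathrm e^{\mathrm i\lambda\tau}\,v(\xi_-)=\mathrm e^{\mathrm i\lambda\tau'}\,v'(\xi_+)$, where $\xi_\mp\in\partial_\infty\widetilde X$ are the backward and forward endpoints of the geodesic through $(\tilde x,\xi)$ and $\tau,\tau'$ are flow-time parameters along the corresponding weak leaves.

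Comparing the two expressions, $v(\xi_-)$ and $v'(\xi_+)$ are proportional for all $\xi_-\neq\xi_+$, so fixing $\xi_+$ and varying $\xi_-$ (and using continuity) we see that $\widetilde u$ takes values in a single line $\mathbf Cv_*\subset V_\rho$; since $\widetilde u$ is nowhere zero and $\rho$-equivariant, $\rho(\gamma)\mathbf Cv_*=\mathbf Cv_*$ for every $\gamma$, so $\mathbf Cv_*$ is a $\Gamma$-invariant line and irreducibility forces $\dim V_\rho=1$, i.e. $\rho$ is a nontrivial unitary character. With $\dim V_\rho=1$, restrict the nowhere-zero smooth section $u$ of the flat line bundle $\mathcal E_\rho$ to a closed geodesic $\gamma$ of length $\ell(\gamma)$, represented by a deck transformation $\gamma_0$ with $\phi^{\ell(\gamma)}p_0=\gamma_0\cdot p_0$: comparing $\widetilde u(\phi^{\ell(\gamma)}p_0)=\mathrm e^{\mathrm i\lambda\ell(\gamma)}\widetilde u(p_0)$ with $\widetilde u(\gamma_0 p_0)=\rho(\gamma_0)\widetilde u(p_0)$ gives $\rho(\gamma_0)=\mathrm e^{\mathrm i\lambda\ell(\gamma)}$ for every closed geodesic. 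Applying this to $\gamma^{-1}$ (same length, deck transformation $\gamma_0^{-1}$) gives $\rho(\gamma_0)^{-1}=\mathrm e^{\mathrm i\lambda\ell(\gamma)}$, hence $\mathrm e^{2\mathrm i\lambda\ell(\gamma)}=1$; if $\lambda\neq 0$ this would put the whole length spectrum in $\tfrac{\pi}{\lambda}\mathbf Z$, impossible since the geodesic flow is weak mixing (its periodic lengths generate a dense subgroup of $\mathbf R$). Thus $\lambda=0$, whence $\rho(\gamma_0)=1$ for all $\gamma_0$, contradicting the nontriviality of $\rho$. This rules out real resonances for nontrivial irreducible $\rho$ and completes the argument.

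The main obstacle is the single step I have glossed over: adapting the scalar hyperbolicity/microlocal argument of \cite{Dyatlov2017} to the flat Hermitian bundle $\mathcal E_\rho$ and, in particular, establishing the regularity at the real-axis threshold together with the rigidity of the resonant state along the stable and unstable foliations (equivalently, the description of $\widetilde u$ in terms of boundary data on $\partial_\infty\widetilde X$). The self-adjointness with respect to the Liouville measure, the unitarity of $\rho$, and the one-dimensionality of $E_u,E_s$ on a surface are what make the scalar proof transfer, but this transfer is the only place where real work is needed; the reduction, the Schur step, and the length-spectrum endgame are elementary.
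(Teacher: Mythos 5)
Your proposal is correct in substance, but after the common opening moves it takes a genuinely different and longer route than the paper. Both arguments start the same way: smoothness of a resonant state $u$ at a real resonance (the radial-estimate input, which the paper also only cites, via \cite[Proposition 9.3.3]{Lefeuvre}, and which indeed transfers to unitary flat twists because $\mathbf P$ stays formally self-adjoint for the Liouville pairing), followed by the hyperbolicity argument showing $\mathrm d\tilde u$ vanishes on $E_s\oplus E_u$. At that point the paper does not pass to irreducibles or to the boundary at infinity: it uses the contact structure directly. Since $E_s\oplus E_u=\ker\alpha$, one writes $\mathrm d\tilde u=s\alpha$, and then $0=\alpha\wedge\mathrm d^2\tilde u=s\,\alpha\wedge\mathrm d\alpha$ forces $s=0$, so $\tilde u$ is constant; this simultaneously kills any real resonance $\lambda\neq 0$ and identifies the resonant states at $0$ with the $\Gamma$-fixed vectors of $V_\rho$, giving the multiplicity $d_{\mathrm{trivial}}$ for every unitary $\rho$ in one stroke. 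Your route instead decomposes $\rho$ into irreducibles, upgrades ``$\mathrm d\tilde u|_{E_s\oplus E_u}=0$'' to a horospherical parametrization $\tilde u=\mathrm e^{\mathrm i\lambda\tau}v(\xi_-)=\mathrm e^{\mathrm i\lambda\tau'}v'(\xi_+)$ on the universal cover, applies Schur's lemma to force $\dim V_\rho=1$, and finishes with the holonomy identity $\rho(\gamma_0)=\mathrm e^{\mathrm i\lambda\ell(\gamma)}$ together with non-arithmeticity of the length spectrum (weak mixing). Each step checks out — the comparison of the two leaf descriptions, the invariant line, and the $\gamma\mapsto\gamma^{-1}$ trick are all sound, and nowhere-vanishing of $u$ via ergodicity is legitimate — but the argument buys nothing extra here and imports more machinery: the weak-mixing/non-arithmeticity fact you invoke at the end is itself usually established by exactly the contact-form computation the paper uses, so the paper's proof sits upstream of yours and avoids the boundary-at-infinity setup altogether. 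The one place where you would still have to write real details is the same place the paper cites a reference, so there is no gap beyond what you honestly flagged; just be aware that the contact identity $\alpha\wedge\mathrm d\alpha\neq 0$ replaces your entire endgame in three lines.
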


\begin{proof} If $u\in \mathcal D'_{E_u^*}(M,\mathcal E)$ satisfies $\mathbf Vu=0$, then (see \cite[Proposition 9.3.3]{Lefeuvre}) $u\in C^{\infty}(M,\mathcal E)$. We can then lift $u$ to a smooth equivariant function $\tilde u\in C^{\infty}(\widetilde M,V_\rho)$, that is invariant under the action of the geodesic flow, that is satisfying $\tilde u\circ \widetilde \phi^t=\tilde u$. By differentiating both sides of this identity, we have
\[\mathrm d\tilde u(x)=\mathrm d\tilde u(\phi^t(x))\circ \mathrm d\widetilde \phi^{t}(x).\]
Since $\mathrm d\tilde u$ is a lift of $\mathrm d u$, it is bounded in $L^\infty$-norm, and the Anosov property of the flow implies that $\mathrm d\tilde u$ vanishes on $E_s\oplus E_u$. Since $\widetilde V \tilde u=0$, we eventually get $\mathrm d\tilde u=0$. Thus, $\tilde u$ is a constant function with value $v\in V_\rho$, and by the equivariance property $\tilde u(\gamma x)=\rho(\gamma)\tilde u(x)$, we infer $\rho(\gamma)v=v$ for all $\gamma\in\Gamma$. If $v\neq 0$ it follows that $\rho$ contains the trivial representation. Conversely, if $\rho$ contains the trivial representation, then there is some $v\in V_\rho$ such that $\Gamma$ acts trivially on the line $\mathbf C v$. Then, the constant function $\widetilde M\to V_\rho$ taking value $v$ descends on a nontrivial smooth section of $\mathcal E$, that belongs to the kernel of $\mathbf V$.

For the second part of the assertion, assume that there is some nonzero $u\in \mathcal D'_{E_u^*}(M,\mathcal E)$ such that $\mathbf Vu=\mathrm i\lambda u$, with $\lambda\in \mathbf R$. Then, by \cite{Lefeuvre} again, we have $u\in C^{\infty}(M,\mathcal E)$ and, as before, we can lift $u$ to a smooth function $\tilde u:\widetilde M\to V_\rho$ and show $\mathrm d\tilde u_{|E_s\oplus E_u}=0$. Since $E_s\oplus E_u=\ker \alpha$ (where $\alpha$ is the contact $1$-form preserved by the flow) we can write $\mathrm du=s\alpha$ for some $s\in C^\infty(\widetilde M)$. Then, on the one hand
\[\alpha\wedge \mathrm d(s\alpha)=\alpha\wedge \mathrm d^2\tilde u=0,\]
and on the other hand, since $\alpha\wedge \alpha=0$,
\[\alpha\wedge \mathrm d(s\alpha)=s\alpha\wedge \mathrm d\alpha.\]
Since $\alpha\wedge \mathrm d\alpha$ is nowhere vanishing we deduce $s=0$. Therefore $\mathrm d\tilde u=0$, implying in particular $\mathbf Vu=0$, but $\mathbf Vu=\mathrm i\lambda u$ and $u$ is assumed to be nonzero, thus $\lambda=0$.
\end{proof}

\subsection{The local trace formula for the transfer operator}
The pullback operator $(\phi^{-t})^*=\mathrm{e}^{-\mathrm it\mathbf P}$ is not trace class, still, we can define its \emph{flat trace}, that is morally the integral of its Schwartz Kernel along the diagonal of $M\times M$ (see \cite{Guillemin1977Lectures}, and \cite{dyatlov2016dynamical} for a recent proof):
\begin{thm}[Guillemin trace formula] Let $\mathcal E$ be a flat vector bundle over $M$ associated to a unitary representation of $\Gamma$. Denote $\mathbf P=-\mathrm i\mathbf V$ acting on smooth sections of $\mathcal E$. Also, take a smooth potential $a\in C^\infty(M,\mathbf C)$. Then, in the sense of distributions on $(0,+\infty)$: 
\begin{equation}\label{eq:guillemin}\operatorname{tr^\flat} \mathrm{e}^{-\mathrm it(\mathbf P+a)} =\sum_{\gamma\in \mathcal G}\mathrm{e}^{-\mathrm{i} \int_\gamma a} \chi_\rho(\gamma) \frac{\ell(\gamma)^{\sharp} \delta(t-\ell(\gamma))}{|I-\mathcal P_\gamma|}.\end{equation}
where the left hand-side denotes the \emph{flat trace} of $\mathrm{e}^{-\mathrm it(\mathbf P+a)}$, which is well-defined by the assumption that the flow is Anosov.
\end{thm}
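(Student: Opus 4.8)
The plan is to deduce \eqref{eq:guillemin} from the microlocal computation of the flat trace of the transfer operator, reducing the twisted, potential-perturbed case to the classical Guillemin trace formula for the geodesic flow; the twist $\rho$ and the potential $a$ will contribute only factors that are constant along each closed orbit, so essentially no analytic input beyond the scalar case is required.

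First I would recall the definition of the flat trace: if $B\colon C^\infty(M,\mathcal E)\to\mathcal D'(M,\mathcal E)$ has Schwartz kernel $K_B$ with $\operatorname{WF}(K_B)\cap N^*(\Delta_M)=\varnothing$, where $\Delta_M\subset M\times M$ is the diagonal and $N^*(\Delta_M)$ its conormal, then the pullback $\iota^*_{\Delta_M}K_B$ is a well-defined distributional section of $\operatorname{End}(\mathcal E)$ over $M$, and one sets $\operatorname{tr^\flat}B:=\int_M\operatorname{tr}\big(\iota^*_{\Delta_M}K_B\big)$. Lifting to the universal cover, $U(t):=\mathrm e^{-\mathrm it(\mathbf P+a)}$ solves a transport equation of the form $(\partial_t+\mathbf V+\mathrm ia)U(t)=0$, so by the method of characteristics its Schwartz kernel is the delta measure carried by the graph $\{(x,\phi^{-t}x):x\in M\}$, weighted by the holonomy of the flat connection along the flow segment joining $\phi^{-t}x$ to $x$ and by the scalar cocycle $\exp\!\big(-\mathrm i\int_0^t a\circ\phi^{-s}\,\mathrm ds\big)$. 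Because the flow is Anosov, for $t>0$ outside the length spectrum $\mathcal L$ the graph of $\phi^{-t}$ meets $\Delta_M$ transversally --- the return map $I-\mathrm d\phi^{-t}$ being invertible transverse to $V$ --- so $\operatorname{WF}(K_{U(t)})$ avoids $N^*(\Delta_M)$ for such $t$; hence $\operatorname{tr^\flat}U(t)$ is a well-defined distribution on $(0,+\infty)$ with singular support contained in $\mathcal L$, and it suffices to compute its singularity at each length $\ell(\gamma)$.

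Next I would localize near a fixed closed geodesic $\gamma\in\mathcal G$ of period $t_0=\ell(\gamma)$ and primitive period $\ell(\gamma)^\sharp$. The fixed-point set of $\phi^{-t_0}$ through $\gamma$ is the orbit itself, a circle of length $\ell(\gamma)^\sharp$ (even if $\gamma$ is a $k$-fold iterate, in which case $\mathcal P_\gamma=\mathcal P_{\gamma_*}^{\,k}$ and $\chi_\rho(\gamma)=\operatorname{Tr}\big(\rho(\gamma_*)^k\big)$). Choosing a tubular neighbourhood of the orbit over which $\mathcal E$ is trivialized, together with flow-box coordinates $(s,y)$ with $s\in\mathbf R/\ell(\gamma)^\sharp\mathbf Z$ along the orbit and $y$ transverse to $V$ --- so that $\phi^{-t}(s,y)=\big(s-t,\,\mathcal P_\gamma(s)y+O(|y|^2)\big)$ near the orbit --- one pairs $\operatorname{tr^\flat}U(t)$ with $\chi\in C^\infty_c\big((0,+\infty)\big)$ supported near $t_0$; a change of variables turns the diagonal restriction into an integral of the transverse delta $\delta\big((I-\mathcal P_\gamma)y\big)$, which contributes the Jacobian factor $|\det(I-\mathcal P_\gamma)|^{-1}=|I-\mathcal P_\gamma|^{-1}$, while the remaining integral over the $s$-circle yields the factor $\ell(\gamma)^\sharp$ and pins $t$ to $t_0$, i.e.\ produces $\delta(t-\ell(\gamma))$. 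Restricted to the orbit the weight is independent of the base point: the part acting on the fibre is the holonomy of the flat bundle once around $\gamma$, conjugate to $\rho(\gamma)$, whose fibrewise trace is $\chi_\rho(\gamma)$ by cyclicity; the scalar part is $\exp\!\big(-\mathrm i\int_\gamma a\big)$. Summing over $\gamma\in\mathcal G$ --- a locally finite sum since $\{\gamma\in\mathcal G:\ell(\gamma)\le R\}$ is finite for every $R$ --- reproduces exactly the right-hand side of \eqref{eq:guillemin}.

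The main obstacle is the rigorous justification of these formal manipulations with the flat trace: verifying the wavefront-set hypothesis (uniformly enough to pair against test functions in $t$), making sense of the diagonal restriction near the \emph{circle} of fixed points of $\phi^{-t_0}$ rather than an isolated fixed point, and identifying the transverse linearization with $\mathcal P_\gamma$ up to a transpose/inverse that is irrelevant inside $|\det(I-\cdot)|$. All of this is carried out microlocally in the scalar, potential-free case in \cite{dyatlov2016dynamical} (following \cite{Guillemin1977Lectures}); the unitary twist by $\rho$ and the perturbation by $a$ require no change, since --- as observed above --- they only insert factors that are locally constant along each periodic orbit, so one may either repeat the proof verbatim in trivializing charts or use that the flat trace is additive over a partition of unity on $M$ subordinate to neighbourhoods of the orbits.
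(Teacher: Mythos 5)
Your outline is correct and is essentially the standard microlocal argument: wavefront-set control from the nondegeneracy $\det(I-\mathcal P_\gamma)\neq 0$ of closed orbits of an Anosov flow, local computation in flow-box coordinates producing the factors $\ell(\gamma)^{\sharp}\,|I-\mathcal P_\gamma|^{-1}\delta(t-\ell(\gamma))$, and the observation that the unitary twist and the potential only contribute the orbit-constant weights $\chi_\rho(\gamma)$ and $\mathrm e^{-\mathrm i\int_\gamma a}$. The paper does not prove this theorem itself but quotes it from \cite{Guillemin1977Lectures,dyatlov2016dynamical}, and your sketch is precisely the argument carried out in those references, so there is nothing to add.
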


It is tempting to write
\[\operatorname{tr^\flat}  \mathrm{e}^{-\mathrm it\mathbf P}=\sum_{\mu\in \mathrm{Res}(\mathbf P)} \mathrm{e}^{-\mathrm i\mu t},\]
where $\mu$ runs over the set of resonances of $\mathbf P$. This formula holds for the geodesic flow on a hyperbolic surface, in which case resonances can be expressed --- up to some exceptional values --- from the eigenvalues of the Laplacian. It is not known, however, whether it holds for more general Anosov flows. Nevertheless, Jin--Zworski proved a local trace formula for Anosov flows {\cite[Theorem 1]{Jin2016}}. Jin--Tao discussed the case of operators acting on vector bundles --- in the more general setting of \emph{Axiom A} flows. We shall use the following version:


\begin{thm}[Local trace formula, {\cite[\S 6.1]{jin2023number}}]\label{localtraceformula} Let $\mathcal E$ be a vector bundle over $M$ associated to a unitary representation $\rho$ of $\Gamma$. Denote by $\mathbf P$ the operator $-\mathrm i\mathbf V$ acting on $C^{\infty}(M,\mathcal E)$. Then, for any $A>0$, there is a distribution $F_A\in \mathcal S'(\mathbf R)$, supported on $\mathbf R_+$, such that for any $\alpha\in [0,\alpha_0]$, the following equality of distributions on $(0,+\infty)$ holds:
\begin{equation}\label{eq : local trace formula}\sum_{\mu\in \mathrm{Res}(\mathbf P), \ \mathrm{Im}(\mu)>-A} \mathrm{e}^{-\mathrm i\mu t}+ F_A(t)=\sum_{\gamma\in \mathcal G} \chi_\rho(\gamma) \frac{\ell(\gamma)^{\sharp}\delta(t-\ell(\gamma))}{|I-\mathcal P_\gamma|}.\end{equation}
Moreover, the Fourier transform of $F_A$ admits an analytic continuation to the half plane $\mathrm{Im}(\xi)\le A$, and satisfies $|\widehat F_A(\xi)|=\mathcal O_{A,\varepsilon}(\langle \xi\rangle ^{7+\varepsilon})$ in the region $\mathrm{Im}(\xi)\le A-\varepsilon$, for any $\varepsilon>0$.
\end{thm}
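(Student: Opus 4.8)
The plan is to deduce \eqref{eq : local trace formula} from the Guillemin trace formula \eqref{eq:guillemin}, following the scheme of Jin--Zworski \cite{Jin2016} and its vector-bundle extension by Jin--Tao \cite{jin2023number}. Set $u(t):=\operatorname{tr^\flat}\mathrm e^{-\mathrm it\mathbf P}$, a distribution on $(0,+\infty)$; by \eqref{eq:guillemin} this is precisely the right-hand side of \eqref{eq : local trace formula}. Using the formula recalled in \S\ref{subsec:anisotropic}, for $\mathrm{Im}(z)\gg1$ one has $(\mathbf P-z)^{-1}=\mathrm i\int_0^{+\infty}\mathrm e^{\mathrm itz}(\phi^{-t})^*\,\mathrm dt$; taking flat traces, which is legitimate because the wavefront set of the Schwartz kernels in play sits in $E_u^*\times E_s^*$ and hence misses the conormal of the diagonal, gives $\operatorname{tr^\flat}(\mathbf P-z)^{-1}=\mathrm i\int_0^{+\infty}\mathrm e^{\mathrm itz}u(t)\,\mathrm dt$. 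In other words $\operatorname{tr^\flat}(\mathbf P-z)^{-1}$ is, up to the factor $\mathrm i$, the Fourier--Laplace transform of $u$, so the whole statement amounts to understanding the meromorphic continuation of this flat trace.

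\emph{Step 1: meromorphic continuation of $\operatorname{tr^\flat}(\mathbf P-z)^{-1}$.} I would first show that this function continues meromorphically from $\{\mathrm{Im}(z)\gg1\}$ to $\{\mathrm{Im}(z)>-A\}$, with poles exactly at the Pollicott--Ruelle resonances of $\mathbf P$ in that band and residue at $\mu$ equal to $-m_\mu$, the algebraic multiplicity. This is already contained in the Faure--Sj\"ostrand/Dyatlov--Zworski framework of \S\ref{subsec:anisotropic}: choose $s$ large enough that $(\mathbf P-z)^{-1}:H_{sG}\to H_{sG}$ is meromorphic with finite-rank poles on $\{\mathrm{Im}(z)>-A\}$, and observe that the flat trace stays continuous along this family because, microlocally near a resonance, $(\mathbf P-z)^{-1}$ differs from a pseudodifferential operator by an operator smoothing in the directions needed to define $\operatorname{tr^\flat}$. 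Lemma \ref{lem: resonance en 0} pins down the resonances on the real axis (relevant when this formula is later used to isolate the $d_{\mathrm{trivial}}$ term).

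\emph{Step 2: polynomial bounds on vertical lines --- the main obstacle.} The crux is the quantitative estimate that on vertical segments $\mathrm{Im}(z)=-A+\varepsilon$ staying a fixed distance from $\mathrm{Res}(\mathbf P)$ one has $|\operatorname{tr^\flat}(\mathbf P-z)^{-1}|=\mathcal O_{A,\varepsilon}(\langle z\rangle^{7+\varepsilon})$. This combines three ingredients: (i) the high-frequency resolvent bound $\|(\mathbf P-z)^{-1}\|_{H^{-s}\to H^{s}}\lesssim\langle z\rangle^{C}$ away from small discs around the resonances, coming from the propagation and radial-point estimates underlying the anisotropic-space theory; (ii) a bound of the flat trace by an anisotropic operator norm, $|\operatorname{tr^\flat}B|\lesssim\langle z\rangle^{C'}\|B\|_{H^{-s}\to H^{s}}$ for operators $B$ of the relevant wavefront class, which costs a fixed polynomial loss; and (iii) the resonance count of Proposition \ref{comptage JinTao}, $N_A(E)=\mathcal O_A(E^{n+1})$ with $n=\dim M=3$, exploited via a Borel--Carath\'eodory / Weierstrass-product argument to upgrade the ``outside discs'' estimate to a bound valid on the whole line after dividing by a product vanishing at the resonances. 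The exponent $7+\varepsilon$ is exactly what this bookkeeping produces in dimension three, and the vector-bundle case is no harder since $\mathcal E$ is finite-dimensional and every estimate above is uniform in it. I expect this step, already the technical heart of \cite{Jin2016,jin2023number}, to be the main difficulty.

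\emph{Step 3: contour deformation and conclusion.} For $t>0$, Fourier inversion gives $u(t)=\frac{1}{2\pi\mathrm i}\int_{\mathrm{Im}(z)=C}\mathrm e^{-\mathrm itz}\operatorname{tr^\flat}(\mathbf P-z)^{-1}\,\mathrm dz$ as distributions on $(0,+\infty)$; pushing the contour down to $\mathrm{Im}(z)=-A$ and applying the residue theorem peels off $\sum_{\mathrm{Im}(\mu)>-A}m_\mu\mathrm e^{-\mathrm i\mu t}$, which is the resonance sum in \eqref{eq : local trace formula} counted with multiplicity; its convergence as a tempered distribution is again guaranteed by Proposition \ref{comptage JinTao}, after the standard subtraction of a polynomial near $t=0$. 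One then defines $F_A\in\mathcal S'(\mathbf R)$, supported in $\mathbf R_+$, so that on $(0,+\infty)$ it equals the remaining contour integral over $\mathrm{Im}(z)=-A$; the bound of Step 2 shows $F_A$ is tempered and that $\widehat{F_A}$ --- which, up to the reflection $\xi\leftrightarrow-z$ and normalizing constants, is the meromorphically continued $\operatorname{tr^\flat}(\mathbf P-z)^{-1}$ with its resonance poles subtracted --- extends holomorphically to $\{\mathrm{Im}(\xi)\le A\}$ with $|\widehat{F_A}(\xi)|=\mathcal O_{A,\varepsilon}(\langle\xi\rangle^{7+\varepsilon})$ on $\{\mathrm{Im}(\xi)\le A-\varepsilon\}$. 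Finally, the same argument runs verbatim, and uniformly, for the perturbed generators $\mathbf P+\alpha a$ with $\alpha\in[0,\alpha_0]$, since the construction of the anisotropic spaces and all the estimates above are stable under such a bounded perturbation; this accounts for the $\alpha$-dependence in the statement.
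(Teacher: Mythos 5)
Your overall scheme (flat trace of the resolvent $=$ Fourier--Laplace transform of the periodic-orbit distribution, meromorphic continuation with poles at the resonances, Fourier inversion and contour shifting, counting bound to control the resonance sum) is indeed the Jin--Zworski strategy that the paper relies on; note that the paper does not reprove this theorem but quotes it from Jin--Zworski and Jin--Tao, and only recalls the proof scheme in the appendix for the perturbed operator $\mathbf P+\alpha a$. However, there is a genuine gap at the decisive quantitative step. In Step 2 you assume you can work ``on vertical segments $\mathrm{Im}(z)=-A+\varepsilon$ staying a fixed distance from $\mathrm{Res}(\mathbf P)$'', and in Step 3 you define $F_A$ by pushing the contour to the straight line $\mathrm{Im}(z)=-A$. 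For an arbitrary prescribed $A$ there is no reason whatsoever that the lines $\mathrm{Im}(z)=-A$ or $\mathrm{Im}(z)=-A+\varepsilon$ keep any fixed distance from the resonances: resonances may sit on, or accumulate arbitrarily close to, these lines, so the flat trace is not polynomially bounded there (indeed not even locally bounded at a resonance on the line), and the straight-line contour integral defining $F_A$ is not controlled --- this is exactly the obstruction the paper flags in the appendix (``some resonances may happen to lie on (or close to) the line $\mathbf R-\mathrm{i}A$, making it impossible to control naively the integral''). Your Borel--Carath\'eodory/Weierstrass-product remark does not repair this as stated: dividing by a product vanishing at the resonances bounds the quotient, not the flat trace itself on a line passing through or near resonances.

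The missing idea, which is the technical heart of Jin--Zworski (and of the uniform version sketched in Appendix A of the paper), is a \emph{resonance-avoiding contour deformation}: one cuts the strip $\{-A\le \mathrm{Im}(z)\le B\}$ into unit-width boxes $\Omega_k$, uses the counting bound $N_A(E)=\mathcal O_A(E^{n+1})$ together with a local factorization of the zeta function (polynomial with zeros at the resonances times a nonvanishing factor with controlled logarithm, via Jensen/Borel--Carath\'eodory estimates) to produce piecewise-smooth contours $\tilde\gamma_k$ close to the bottom sides of the $\Omega_k$ on which the resonances are avoided by an inverse-polynomial margin, so that $\bigl|\tfrac{\mathrm d}{\mathrm dz}\log\zeta(z)\bigr|=\mathcal O(\langle z\rangle^{2n+1+\varepsilon})$ holds \emph{on the deformed contour}. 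The distribution $F_A$ is then the integral over the union of these deformed contours plus the explicit contribution of the finitely many resonances per box lying between the deformed contour and the line $\mathrm{Im}(z)=-A$; only with this definition do the analytic continuation of $\widehat F_A$ to $\{\mathrm{Im}(\xi)\le A\}$ and the bound $\mathcal O_{A,\varepsilon}(\langle\xi\rangle^{7+\varepsilon})$ on $\{\mathrm{Im}(\xi)\le A-\varepsilon\}$ follow. A secondary imprecision: the inequality $|\operatorname{tr^\flat}B|\lesssim\langle z\rangle^{C}\|B\|_{H^{-s}\to H^{s}}$ is not a legitimate bound for flat traces by operator norms alone; the needed semiclassical flat-trace estimates require wavefront control and are delicate enough that they had to be corrected in the follow-up \cite{jin2022flat}, which the paper points out.
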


\subsection{Proof of Theorem \ref{lem : equidistribution lemma}} We turn to the proof of Theorem \ref{lem : equidistribution lemma}. It is a quite straightforward consequence of the local trace formula.

\begin{proof}Let $\eta$ be a smooth function such that $\eta\equiv 0$ on $(-\infty,\ell_0/2)$ and $\eta(t)=1$ for $t\ge \ell_0$, where $\ell_0$ denotes the length of the shortest closed geodesic on $X$. We define a smooth, compactly supported function $\varphi_L$ by letting $\varphi_L(t)=\eta(t)\frac{1}{L}\varphi(t/L)$. Fix $A>0$, then the local trace formula \eqref{eq : local trace formula} applied to $\varphi_L$ gives
\begin{equation}\label{LTF applied to phi}d_{\mathrm{trivial}} \int_0^{+\infty} \eta(t)\frac{1}{L}\varphi(t/L)\mathrm dt +\sum_{\underset{\operatorname{Im}(\mu)>-A}{\mu\in \mathrm{Res}(\mathbf P)\backslash \{0\}} } \widehat \varphi_L(\mu)+\langle F_A,\varphi_L\rangle= \frac 1L\sum_{\gamma\in \mathcal G} \chi_\rho(\gamma)\frac{\ell(\gamma)^{\sharp}\varphi(\ell(\gamma)/L)}{|I-\mathcal P_\gamma|}.\end{equation}
Notice that $\eta$ does not appear in the right-hand side since $\eta$ has been chosen to ensure $\eta(\ell(\gamma))=1$ for any $\gamma$. We compute the Fourier transform of $\varphi_L$:
\[\widehat \varphi_L(\xi)=\frac{1}{2\pi}\int_{\mathbf R}\mathrm{e}^{-\mathrm i\xi L t}\eta(Lt) \varphi(t)\mathrm dt.\] 
For nonzero $\xi$ with nonpositive imaginary part, by repeated integration by parts we show
\begin{equation}\label{eq : ipp f chapeau}|\widehat \varphi_L(\xi)|\le |L\xi|^{-N}\int_{\mathbf R} \left|\frac{\mathrm d^N}{\mathrm dt^N}\big(\eta(Lt)\varphi(t)\big)\right|\mathrm dt,\end{equation}
where we have used that $|\mathrm{e}^{-\mathrm{i} t\xi}|\le 1$ for $t\ge 0$ and $\operatorname{Im} \xi\le 0$. If $k\in \{1,\ldots,N\}$ derivatives in \eqref{eq : ipp f chapeau} hit $\eta(Lt)$, since $\eta'$ is supported in $[0,\ell_0]$, the corresponding term in the integral in the right-hand side of \eqref{eq : ipp f chapeau} is bounded by
\begin{equation}\label{eq : borne dérivée f chapeau} \int_{\mathbf R} L^k |\eta^{(k)}(Lu)\varphi^{(N-k)}(u)|\mathrm du\lesssim_k L^k \|\eta\|_{C^k}\int_{0}^{\ell_0/L} |\varphi^{(N-k)}(u)|\mathrm du=\mathcal O_N(L^{k-1}).\end{equation}
For $k=0$ we have
\begin{equation}\label{en 0} \int |\eta(Lu)\varphi^{(N)}(u)|\mathrm du=\mathcal O_N(1). \end{equation}
Altogether, gathering \eqref{eq : borne dérivée f chapeau}, \eqref{en 0} we obtain
\begin{equation}\label{borne f chapeau}|\widehat \varphi_L(\xi)|=\frac{1}{L}\mathcal O_N(|\xi|^{-N}), \qquad  \mathrm{Im}(\xi)\le 0.\end{equation}  
Take $\varepsilon=\frac A2$ in the local trace formula. By combining (\ref{borne f chapeau}) together with the bound $\widehat F_A(\xi)=\mathcal O(\langle \xi\rangle^{7+\frac A2})$ of Theorem \ref{localtraceformula}, we get for  $N$ large enough
\begin{equation}\label{FA contre phi}\langle F_A,\varphi_L\rangle=2\pi\int_{\mathbf R+ \mathrm{i} \frac{A}{2}} \widehat F_A(\xi)\widehat \varphi_L(-\xi)\mathrm d\xi=\mathcal O_A\left(\frac{1}{L}\right).\end{equation}
Since $0$ is isolated in the set of resonances, we can find some $\varepsilon>0$ such that any nonzero resonance $\mu$ satisfies $|\mu|\ge \varepsilon$, and in turn $|\mu|\ge \min(\varepsilon,|\mathrm{Re}\ \mu|)$. Then, \eqref{borne f chapeau} and the counting upper bound \eqref{eq : upper bound resonances} in the strip $\operatorname{Im} z >-A$ yield
\begin{equation}\label{somme res partie plus grande que A}\bigg|\sum_{\underset{\operatorname{Im}(\mu)>-A}{\mu\in \mathrm{Res}(\mathbf P)\backslash \{0\} }} \widehat \varphi_L(\mu)\bigg|\lesssim \frac 1L\int \min(\varepsilon,|E|^{-N}) \mathrm dN_A(E)= \mathcal O_A\left(\frac 1L\right).\end{equation}
It remains to observe that
\begin{equation}\label{remains to observe}\int_{0}^{+\infty} \varphi_L(t) \mathrm dt= \int_0^{+\infty} \varphi(u) \mathrm du+\int_0^{+\infty}(\eta(Lu)-1)\varphi(u)\mathrm du.\end{equation}
The second integral in the right-hand side is bounded by
\begin{equation}\label{eq : majoration 1/L reste} \int_0^{\frac{\ell_0}L} |\varphi(u)| \mathrm du=\mathcal O\left(\frac{1}{L}\right).\end{equation}
Recalling \eqref{LTF applied to phi}, it suffices to gather \eqref{FA contre phi}, \eqref{somme res partie plus grande que A}, \eqref{remains to observe} in order to obtain Theorem \ref{lem : equidistribution lemma}.  One can check that if the function $\varphi$ vanishes at $0$, then the $L^{-1}$ bounds in \eqref{borne f chapeau} and \eqref{eq : majoration 1/L reste} improve to $L^{-2}$. The fact that one can neglect nonprimitive geodesics without changing the result easily follows from the estimates of Lemma \ref{Poincaré Anosov} in the next section.
\end{proof}

In the next section we will have to consider \say{twisted} counting functions involving squares of characters of $\Gamma$. By tweaking the proof of \cite[Proposition 4.3]{naud2022random} to our needs, we show
\begin{prop}\label{prop : equidistribution Lie} Let $\rho:\Gamma\to \mathbf G$ be a unitary representation of $\Gamma$, where $\mathbf G$ is a compact Lie subgroup of $\mathrm{U}(N)$. Assume that $\rho(\Gamma)$ is dense in $\mathbf G$. Let $f:\mathbf G\to \mathbf C$ be a polynomial function of $\mathrm{Tr}(g)$ and $\overline{\mathrm{Tr}(g)}$, and let $\varphi:\mathbf R\to \mathbf R$ be a smooth, compactly supported function. Then,
\[\frac 1L\sum_{\gamma\in \mathcal G} f(\rho(\gamma)) \frac{\ell(\gamma)^\sharp \varphi(\ell(\gamma)/L)}{|I-\mathcal P_\gamma|}=\left(\int_{\mathbf G} f(g)\mathrm dg\right)\left(\int_{\mathbf R_+} \varphi(t)\mathrm dt\right)+\mathcal O\left(\frac 1L\right).\]
Here, $\mathrm dg$ denotes the normalized Haar measure on $\mathbf G$. \end{prop}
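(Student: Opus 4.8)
The plan is to reduce the statement about a general polynomial $f$ in $\operatorname{Tr}(g)$ and $\overline{\operatorname{Tr}(g)}$ to the equidistribution result already proved in Theorem \ref{lem : equidistribution lemma}, using the Peter--Weyl theorem to decompose $f$ into matrix coefficients of irreducible representations. First I would observe that $f\circ\rho$ is a central function on $\Gamma$ that extends to a continuous function on $\mathbf G$. Since $f$ is a polynomial in $\operatorname{Tr}(g)$ and $\overline{\operatorname{Tr}(g)}$, $f$ is a finite linear combination of characters $\chi_\pi$ of irreducible representations $\pi$ of $\mathbf G$: indeed, $\operatorname{Tr}(g)=\chi_{\mathrm{std}}(g)$ and $\overline{\operatorname{Tr}(g)}=\chi_{\overline{\mathrm{std}}}(g)$, and products of characters decompose into sums of irreducible characters by Clebsch--Gordan, so one may write $f=\sum_{\pi} c_\pi \chi_\pi$ with the sum over a finite set of irreducibles $\pi$ of $\mathbf G$. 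The key point is that for $g=\rho(\gamma)$ we then have $f(\rho(\gamma))=\sum_\pi c_\pi \operatorname{Tr}\big((\pi\circ\rho)(\gamma)\big)=\sum_\pi c_\pi\,\chi_{\pi\circ\rho}(\gamma)$, where each $\pi\circ\rho$ is a finite-dimensional unitary representation of $\Gamma$ to which Theorem \ref{lem : equidistribution lemma} applies directly.

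Next I would apply Theorem \ref{lem : equidistribution lemma} to each representation $\pi\circ\rho$. For $\pi$ the trivial representation, $\chi_{\pi\circ\rho}\equiv 1$ and $d_{\mathrm{trivial}}=1$, giving the contribution $c_{\mathrm{triv}}\int_0^{+\infty}\varphi(t)\mathrm dt$ plus $\mathcal O(1/L)$. For $\pi$ nontrivial, the subspace of $V_{\pi\circ\rho}$ of $\Gamma$-invariant vectors is the subspace of $\mathbf G$-invariant vectors because $\rho(\Gamma)$ is dense in $\mathbf G$ and $\pi$ is continuous; since $\pi$ is a nontrivial irreducible representation of $\mathbf G$, this subspace is $\{0\}$, so $d_{\mathrm{trivial}}=0$ and Theorem \ref{lem : equidistribution lemma} yields only the remainder $\mathcal O(1/L)$. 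Summing over the finite set of $\pi$'s gives
\[
\frac 1L\sum_{\gamma\in\mathcal G} f(\rho(\gamma))\frac{\ell(\gamma)^\sharp\varphi(\ell(\gamma)/L)}{|I-\mathcal P_\gamma|}=c_{\mathrm{triv}}\int_0^{+\infty}\varphi(t)\mathrm dt+\mathcal O\Big(\frac 1L\Big).
\]
It remains to identify the constant $c_{\mathrm{triv}}$ with $\int_{\mathbf G} f(g)\,\mathrm dg$. This is immediate from the orthogonality of characters: integrating $f=\sum_\pi c_\pi\chi_\pi$ against the normalized Haar measure picks out precisely the coefficient of the trivial character, i.e. $\int_{\mathbf G} f(g)\,\mathrm dg=c_{\mathrm{triv}}$.

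The main obstacle, such as it is, is purely bookkeeping: verifying that a polynomial in $\operatorname{Tr}$ and $\overline{\operatorname{Tr}}$ genuinely lies in the span of irreducible characters of $\mathbf G$ (so that the Peter--Weyl decomposition is finite and exact, with no $L^2$-closure issues), and checking the denseness argument that a $\mathbf G$-continuous vector fixed by $\rho(\Gamma)$ is fixed by all of $\mathbf G$. Both are routine; the first follows because $\chi_{\mathrm{std}}$ and $\chi_{\overline{\mathrm{std}}}$ are class functions lying in the (finite-dimensional, for each fixed degree) algebra generated by irreducible characters, tensor products of representations decompose into finite direct sums, and characters are additive and multiplicative under $\oplus$ and $\otimes$; the second is the continuity of $\pi$ together with density of $\rho(\Gamma)$. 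One should also note that the statement about neglecting nonprimitive geodesics is inherited from the corresponding remark in Theorem \ref{lem : equidistribution lemma}, via the bounds of Lemma \ref{Poincaré Anosov}.
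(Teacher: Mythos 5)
Your proposal is correct and follows essentially the same route as the paper: a finite Peter--Weyl expansion of $f$ into irreducible characters of $\mathbf G$ (the paper cites Naud for finiteness where you invoke Clebsch--Gordan), the density-of-$\rho(\Gamma)$ argument showing that $\pi\circ\rho$ contains no trivial subrepresentation for nontrivial $\pi$, an application of Theorem \ref{lem : equidistribution lemma} to each $\pi\circ\rho$, and the identification $c_{\mathrm{triv}}=\int_{\mathbf G} f(g)\,\mathrm dg$ by orthogonality of characters. No gaps to report.
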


\begin{rem}The result still holds when $f$ is merely a continuous class function on $\mathbf G$, indeed Peter--Weyl theorem ensures that irreducibles characters of $G$ generate a dense subgroup of the space of continuous class functions on $\mathbf G$. Note however that we lose the explicit remainder $\mathcal O(L^{-1})$.\end{rem}

\begin{proof}We refer to \cite{naud2022random}. The idea is to use the Peter--Weyl theorem to expand $f(g)$ as 
\[f(g)=\sum_{\lambda} c_\lambda \operatorname{Tr}(\lambda(g)),\]
where $\lambda$ runs over the set of irreducible representations of $\mathbf G$, and the $c_\lambda$'s are complex coefficients. Naud showed that the hypothesis that $f$ is polynomial in $\operatorname{Tr}g$ and $\overline{\operatorname{Tr} g}$ implies that only a finite number of $c_\lambda$'s are nonzero. Now, let $\lambda$ be a nontrivial irreducible representation $\lambda$ of $\mathbf G$, then the representation $\lambda\circ \rho$ of $\Gamma$ does not contain the trivial representation. Indeed, assume that there is some nonzero vector $v$ such that $\lambda(\rho(\gamma))v=v$ for all $\gamma\in \Gamma$. Since $\rho(\Gamma)\subset \mathbf G$ is dense, this implies $\lambda(g)v=v$ for any $g\in \mathbf G$, contradicting the fact that $\lambda $ is irreducible. It remains to notice that
\[c_{\mathrm{trivial}} =\int_{\mathbf G} f(g)\mathrm dg, \]
and that the sum over remaining irreducible representations is finite, thus we can just apply Theorem \ref{lem : equidistribution lemma} to each of the representations $\lambda \circ \rho$ to conclude. \end{proof}

Finally, we turn to the proof of Lemma \ref{lem: size clusters}. The approach is similar to the proof Theorem \ref{lem : equidistribution lemma}, and also relies on the local trace formula.
\begin{proof}[Proof of Lemma \ref{lem: size clusters}] 
Fix $R>0$ large, since there are no other resonance than $0$ on the real axis, one may find $A=A(R)>0$ such that there are no resonances in the rectangle $[-R,R]+\mathrm{i}[-A,0]$. We apply the local trace formula with $\varepsilon=\frac A2$ to the function $\omega_T:t\mapsto \omega(t-T)$, that is supported in $ (0,+\infty)$ if $T$ is large enough. This gives
\begin{equation}\label{eq : clusters locaux}\sum_{\gamma\in \mathcal G}\frac{\ell(\gamma)^\sharp\omega(\ell(\gamma)-T)}{|I-\mathcal P_\gamma|}=\widehat \omega(0)+\sum_{\substack{\mu\in \mathrm{Res}(\mathbf P)\backslash \{0\}, \\ \operatorname{Im}\mu >-A}} \widehat \omega_T(\mu)+\langle F_A,\omega_T\rangle.\end{equation}
Since resonances have nonpositive imaginary parts and $\omega$ is compactly supported, we have $|\widehat \omega_T(\mu)|= \mathcal O(\langle\mu\rangle ^{-N})$ for any $\mu\in \mathrm{Res}(\mathbf P)$. Therefore, from the counting estimate \eqref{comptage JinTao} and the choice of $A$, we get
\[\Big|\sum_{\substack{\mu\in \mathrm{Res}(\mathbf P)\backslash \{0\}, \\ \operatorname{Im}\mu >-A}} \widehat \omega_T(\mu)\Big|\lesssim \int_{R}^{+\infty} |E|^{-N} \mathrm dN_A(E)=\mathcal O(R^{-1}).\]
Note that the implied constant is independent of $A$ because $N_A(E)\le N_1(E)$ if $A\le1$. Since $\omega$ has compact support, the estimate $\widehat F_A(\xi)=\mathcal O(\langle \xi\rangle^{7+\frac A2})$ in $\{\operatorname{Im}(\xi)\le \frac A2\}$ leads to
\[\langle F_A, \omega_T\rangle =2\pi\int_{\mathbf R+\mathrm{i} \frac A2} \widehat F_A(\xi) \widehat \omega_T(-\xi) \mathrm d\xi=\mathcal O_A(\mathrm{e}^{-\frac{AT}2}).\]
By letting $T\to +\infty$ we deduce
\[\limsup_{T\to +\infty} \Big|\sum_{\gamma\in \mathcal G}\frac{\ell(\gamma)^\sharp\omega(\ell(\gamma)-T)}{|I-\mathcal P_\gamma|}-\widehat \omega(0)\Big|=\mathcal O(R^{-1}).\]
Letting $R\to +\infty$ shows that the limit above actually vanishes, as we wished. 
\end{proof}
\section{Convergence of the ensemble variance to $\Sigma^2_{\rm GOE/GUE}$.} \label{sec : proof theorems}

\subsection{The model of random covers}\label{sec : random covers}

Let $X$ be a closed, connected, negatively curved surface. Denote by $\widetilde X$ its universal cover, and $\Gamma$ the group of deck transformations of $\widetilde X$. If $\phi_n$ is a homomorphism $\Gamma\to \mathfrak S_n$, there is a natural action of $\Gamma$ on $[n]=\{1,\ldots,n\}$ given by $\gamma\cdot i=\phi_n(\gamma)(i)$. Therefore, $\Gamma$ acts on $\widetilde X\times [n]$ through the action 
\[\gamma\cdot (\tilde x,i)=(\gamma \tilde x, \phi_n(\gamma)(i)).\]
We define a $n$-sheeted cover of $X$ by letting $X_n=\Gamma\backslash (\widetilde X\times [n])$ be the quotient of $\widetilde X\times [n]$ by this action. 

Since $\Gamma$ is finitely generated, the set $\mathrm{Hom}(\Gamma,\mathfrak S_n)$ is finite, and we can endow it with a uniform distribution of probability $\mathbf P_n$. Thus, $X_n$ can now be seen as a random variable on $\mathrm{Hom}(\Gamma,\mathfrak S_n)$. We denote by $\mathbf E_n(Y)$ the expected value of a random variable $Y$ for the probability distribution $\mathbf P_n$. As noted by Naud \cite{naud2022random}, the probability that $X_n$ is connected goes to $1$ as $n$ goes to $+\infty$.
\subsubsection{The twisted Laplacian on $X_n$.} We recall that the twisted Laplacian $\Delta_\rho$ on $X$ is the Laplacian acting on sections of the flat bundle $\mathcal V_\rho$, that we defined in \S \ref{sec : twisted laplacian}. The projection $X_n\to X$ allows lifting $\mathcal V_\rho$ to a flat bundle $\mathcal V_{\rho,n}\to X_n$, and the twisted Laplacian $\Delta_{\rho,n}$ is defined as the flat Laplacian acting on sections of $\mathcal V_{\rho,n}$. Alternatively, it can be seen as the operator $\Delta_{\widetilde X}$ acting on equivariant functions $f:\widetilde X\times [n]\to V_\rho$ satisfying $f(\gamma\cdot (\tilde x,i))=\rho(\gamma)f(\tilde x,i)$.

Since we will be dealing with the wave trace on increasingly large covers of $X$, we need to provide trace formulas for each surface $X_n$, with adequate control on the error terms. As we discussed in the introduction, there is a convenient way to perform these calculations, by showing that the Laplacian on $X_n$ is conjugated to the Laplacian acting on sections of a certain flat bundle over $X$.
\begin{prop}[{\cite[Proposition 2.1]{naud2022random}}] \label{prop:rhon}Let $\rho$ be a finite dimensional unitary representation of $\Gamma$. Then, there exists a $n\times \dim(V_\rho)$-dimensional unitary representation of $\Gamma$, denoted by $\rho_n$, such that the Laplacian $\Delta_{\rho,n}$ on $X_n$ is unitarily conjugated to the Laplacian $\Delta_{\rho_n}$ on $X$. Moreover, if we let $F_n(\gamma)$ denote the number of fixed points of $\phi_n(\gamma)$, then,
\[\operatorname{Tr}(\rho_n(\gamma))=F_n(\gamma) \operatorname{Tr}(\rho(\gamma)).\]\end{prop}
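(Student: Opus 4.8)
The plan is to construct $\rho_n$ explicitly as an induced-type representation and then verify the two claims by direct computation. First I would set $V_{\rho_n} = V_\rho \otimes \mathbf C^{[n]} \cong V_\rho^{\oplus n}$, and define the action of $\Gamma$ by combining $\rho$ with the permutation action: for $\gamma \in \Gamma$, let $\rho_n(\gamma)$ act on $v \otimes e_i$ (where $(e_i)_{i\in[n]}$ is the standard basis of $\mathbf C^{[n]}$) by $\rho_n(\gamma)(v\otimes e_i) = \rho(\gamma)v \otimes e_{\phi_n(\gamma)(i)}$. Since $\rho$ is unitary and $\phi_n(\gamma)$ is a permutation (hence unitary on $\mathbf C^{[n]}$), $\rho_n(\gamma)$ is unitary, and the homomorphism property $\rho_n(\gamma\gamma') = \rho_n(\gamma)\rho_n(\gamma')$ follows from the fact that $\phi_n$ is a homomorphism into $\mathfrak S_n$. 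The dimension is $n\dim(V_\rho)$, as required.

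Next I would establish the unitary conjugation $\Delta_{\rho,n} \cong \Delta_{\rho_n}$. The key is the identification of function spaces. Sections of $\mathcal V_{\rho,n}$ over $X_n$ lift to functions $f : \widetilde X \times [n] \to V_\rho$ satisfying $f(\gamma\tilde x, \phi_n(\gamma)(i)) = \rho(\gamma)f(\tilde x, i)$, as recalled in the excerpt. On the other hand, sections of $\mathcal V_{\rho_n}$ lift to functions $g : \widetilde X \to V_\rho \otimes \mathbf C^{[n]}$ with $g(\gamma \tilde x) = \rho_n(\gamma)g(\tilde x)$. The natural bijection is $f \mapsto g$ where $g(\tilde x) = \sum_{i\in[n]} f(\tilde x, i)\otimes e_i$: one checks that the equivariance condition on $f$ is equivalent to the equivariance condition on $g$ by reindexing the sum $i \mapsto \phi_n(\gamma)(i)$. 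This map is an isometry for the natural $L^2$ inner products — here one uses that the $L^2$ norm on $X_n$ is computed by integrating over a fundamental domain of $\widetilde X$ and summing over $[n]$. Since both Laplacians act, after lifting, as $\Delta_{\widetilde X}$ applied componentwise (by formula \eqref{eq : laplacien twisté formule locale} and the discussion following it), the map intertwines $\Delta_{\rho,n}$ with $\Delta_{\rho_n}$, and by uniqueness of self-adjoint extensions this conjugation extends to the self-adjoint closures.

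Finally, the trace identity: from the explicit formula $\rho_n(\gamma)(v\otimes e_i) = \rho(\gamma)v \otimes e_{\phi_n(\gamma)(i)}$, the matrix of $\rho_n(\gamma)$ in the basis $\{v_a \otimes e_i\}$ (with $(v_a)$ a basis of $V_\rho$) is block-structured: the $(i,j)$ block is $\rho(\gamma)$ if $\phi_n(\gamma)(j) = i$ and $0$ otherwise. Hence the diagonal blocks contributing to the trace are exactly those with $\phi_n(\gamma)(i) = i$, i.e.\ the fixed points of $\phi_n(\gamma)$, and each contributes $\operatorname{Tr}(\rho(\gamma))$; therefore $\operatorname{Tr}(\rho_n(\gamma)) = F_n(\gamma)\operatorname{Tr}(\rho(\gamma))$. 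I do not expect a serious obstacle here; the only point requiring a little care is the bookkeeping of equivariance conventions (left versus right actions, and the inverse in the relation $(\gamma\tilde x, v)\sim(\tilde x,\rho(\gamma)^{-1}v)$ defining $\mathcal V_\rho$), so one must be consistent with the conventions fixed in \S\ref{sec : twisted laplacian} when writing out the bijection above.
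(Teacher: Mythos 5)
Your proposal is correct, and it is essentially the argument behind the cited result: the paper itself gives no proof here, deferring to Naud, whose construction is exactly the tensor product of $\rho$ with the permutation representation attached to $\phi_n$, the identification of $\rho$-equivariant functions on $\widetilde X\times[n]$ with $\rho_n$-equivariant functions on $\widetilde X$, and the block computation of the trace. Your verification of equivariance, the $L^2$ isometry over a fundamental domain, and the fixed-point count in the trace all match that standard route, so there is nothing to add.
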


\subsubsection{Probabilistic asymptotics for $F_n(\gamma)$.}

The interest of this model lies in the following probabilistic asymptotics for the number of fixed points of $\phi_n(\gamma)$.

\begin{prop}{{\cite[Proposition 3.1]{naud2022random}}} \label{décorrélations modèle aléatoire} Denote by $F_n(\gamma)$ the number of fixed points of  $\phi_n(\gamma)$, and $\widetilde F_n(\gamma)$ the centered variable $F_n(\gamma)-\mathbf E_n(F_n(\gamma))$. Then, \begin{itemize}
	\item If $\gamma\in \mathcal P$ and $k\ge 1$,
	\[\mathbf E_n(F_n(\gamma^k))=d(k) + \mathcal O\left(\frac 1n\right),\]
	where $d(k)$ is the number of divisors of $k$ (in particular, $d(1)=1$).
	\item If $\gamma_1,\gamma_2\in \mathcal P$ and $\gamma_2\notin \{\gamma_1,\gamma_1^{-1}\}$, then, for any $k_1,k_2\ge 1$,
	\[\mathbf E_n\big(F_n(\gamma_1^{k_1})F_n(\gamma_2^{k_2})\big)=\mathbf E_n(F_n(\gamma_1^{k_1}))\mathbf E_n(F_n(\gamma_2^{k_2}))+\mathcal O\left(\frac 1n\right).\]
	\item For any $\gamma\in \mathcal P$, $k_1,k_2\ge 1$,
	\[\mathbf E_n\left[\widetilde F_n(\gamma^{k_1}) \widetilde F_n(\gamma^{k_2})\right]\underset{n\to +\infty}\longrightarrow \mathcal V(k_1,k_2),\]
	where $\mathcal V(k_1,k_2)=\displaystyle \sum_{d| \mathrm{gcd}(k_1,k_2)} d$. In particular, $\mathcal V(1,1)=1$.
\end{itemize}\end{prop}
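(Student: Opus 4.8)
The plan is to reduce all three assertions to one probabilistic input about the word measure $\phi_n(\gamma)$ on $\mathfrak S_n$ attached to a primitive element $\gamma\in\mathcal P$, the rest being elementary bookkeeping. First, since for primitive $\gamma$ the centralizer $\Gamma_{\gamma}$ is infinite cyclic, a point $i\in[n]$ lies on a $\langle\gamma\rangle$-orbit of size $m$, i.e.\ on an $m$-cycle of the permutation $\phi_n(\gamma)$, and $\phi_n(\gamma^k)$ fixes $i$ iff $m\mid k$. Writing $c_m=c_m(\phi_n(\gamma))$ for the number of $m$-cycles of $\phi_n(\gamma)$, this yields
\[
F_n(\gamma^k)=\sum_{m\mid k}m\,c_m ;
\]
geometrically, $\gamma$ lifts to $X_n$ as closed geodesics of lengths $m\,\ell(\gamma)$ occurring with multiplicity $c_m$, and $F_n(\gamma^k)$ counts the sheets returning after time $k\,\ell(\gamma)$. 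Hence the first and third bullets are governed by the joint law of $(c_m)_m$ for a single $\gamma$, and the middle bullet by the joint law of the cycle counts of $\phi_n(\gamma_1)$ and $\phi_n(\gamma_2)$.

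The key input --- and the main obstacle --- is the following surface-group word-measure asymptotics, due to Magee--Puder \cite{magee_puder_2023,mageepudernaud} and exploited by Naud \cite{naud2022random}: for primitive $\gamma$ and any fixed range of $m$, the counts $c_m(\phi_n(\gamma))$ converge jointly to independent Poisson variables, with the quantitative estimates $\mathbf E_n[c_m]=\tfrac1m+O(1/n)$ and $\mathbf E_n[\widetilde c_{m_1}\widetilde c_{m_2}]=\tfrac1{m_1}\mathbf 1_{m_1=m_2}+O(1/n)$; and if $\gamma_1,\gamma_2\in\mathcal P$ with $\gamma_2\notin\{\gamma_1,\gamma_1^{-1}\}$, then the cycle structures of $\phi_n(\gamma_1)$ and $\phi_n(\gamma_2)$ are asymptotically independent, again with $O(1/n)$ errors on fixed joint moments --- in other words $\phi_n(\gamma)$ behaves to leading order like a uniform element of $\mathfrak S_n$. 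The mechanism is that $\mathbf E_n$ of the number of disjoint lifted configurations of prescribed lengths is expressed, through a Frobenius--Mednykh-type count on $\mathrm{Hom}(\Gamma,\mathfrak S_n)$, as a finite sum over ``quotient'' combinatorial data weighted by powers of $n$ with exponents given by Euler characteristics; the hypotheses that $\gamma$ is not a proper power and that $\gamma_1,\gamma_2$ are non-conjugate up to inversion are exactly what force the dominant term to be the trivial one, so that the uniform-permutation counts are recovered. This step cannot be bypassed by quoting the classical Poisson limit theory for cycle counts of uniform permutations, since $\phi_n(\gamma)$ is genuinely not uniform; and the exclusion of $\gamma_1^{-1}$ is unavoidable because $\phi_n(\gamma^{-1})=\phi_n(\gamma)^{-1}$, so $F_n(\gamma)=F_n(\gamma^{-1})$ identically.

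Granting this input, the three bullets follow by linearity. For the first, $\mathbf E_n[F_n(\gamma^k)]=\sum_{m\mid k}m\,\mathbf E_n[c_m]=\sum_{m\mid k}1+O(1/n)=d(k)+O(1/n)$. For the third,
\[
\mathbf E_n\big[\widetilde F_n(\gamma^{k_1})\widetilde F_n(\gamma^{k_2})\big]=\sum_{m_1\mid k_1}\sum_{m_2\mid k_2}m_1m_2\,\mathbf E_n[\widetilde c_{m_1}\widetilde c_{m_2}]\;\longrightarrow\;\sum_{m\mid\gcd(k_1,k_2)}m^2\cdot\frac1m=\sum_{m\mid\gcd(k_1,k_2)}m=\mathcal V(k_1,k_2).
\]
For the middle bullet, asymptotic independence of the cycle structures of $\phi_n(\gamma_1)$ and $\phi_n(\gamma_2)$ factorizes $\mathbf E_n[F_n(\gamma_1^{k_1})F_n(\gamma_2^{k_2})]$ as $\mathbf E_n[F_n(\gamma_1^{k_1})]\,\mathbf E_n[F_n(\gamma_2^{k_2})]+O(1/n)$.

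Everything outside the word-measure input is routine algebra; the real content is the quantitative control, with $O(1/n)$ error, of $\phi_n(\gamma)$ for a primitive (not-a-proper-power) element of a surface group. This rests on the structure of $\mathrm{Hom}(\Gamma,\mathfrak S_n)$ --- equivalently the subgroup growth of $\Gamma$ --- and on the Euler-characteristic expansion of Magee--Puder; a self-contained proof would need to set up that counting formula and check that primitivity is precisely the condition making the leading coefficient equal $1$ (and a $k$-th power make it $d(k)$).
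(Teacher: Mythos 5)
Your proposal is correct and follows essentially the same route as the paper, which does not prove this proposition at all but imports it from Naud \cite[Proposition 3.1]{naud2022random}, whose argument rests on exactly the Magee--Puder word-measure asymptotics you invoke as the key input. Your reduction via $F_n(\gamma^k)=\sum_{m\mid k}m\,c_m$ and the Poisson/independence statements for the cycle counts is the same bookkeeping the paper itself uses later (the variables $C_n(\gamma,d)$ in \S 7 and Theorem \ref{thm:convergence en distribution}), so treating that probabilistic input as a cited black box puts you at the same level of rigor as the paper.
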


Let us explain the geometric meaning of $F_n(\gamma)=\mathrm{Fix}(\phi_n(\gamma))$ when $\gamma$ is primitive. Take a homomorphism 
$\phi_n:\Gamma\to \mathfrak S_n$, and denote by $X_n$ the associated $n$-sheeted cover of $X$. Consider a closed primitive geodesic $\gamma:[0,1]\to X$. Let $p:X_n\to X$ denote the projection, and write $p^{-1}(\gamma(0))=\{x_1,\ldots,x_n\}$. For each $i$, we may lift $\gamma$ to a geodesic $\gamma_i:[0,1]\to X_n$ such that $\gamma_i(0)=x_i$. However, $\gamma_i$ need not be closed, and we have $\gamma_i(1)=x_j$ for some possibly different $j$. the map $\gamma_i(0)\mapsto \gamma_i(1)$ is a permutation of the fiber $p^{-1}(\gamma(0))$, that is conjugated to $\phi_n(\gamma)$. Thus, $F_n(\gamma)$ counts the number of closed primitive geodesics on $X_n$ that project on $\gamma$.

\begin{rem} Firstly, the degeneracies in the primitive length spectrum do not increase on the covers of $X$. Indeed, by the above Proposition, in the large $n$-limit, on average, a primitive closed geodesic on $X$ lifts to a unique (necessarily primitive) closed geodesic on $X_n$. 
The second point of the proposition tells us that if $\gamma_1,\gamma_2$ are two primitive closed geodesics such that $\gamma_2\notin \{\gamma_1,\gamma_1^{-1}\}$, in the large $n$ limit, the covariance of $F_n(\gamma_1)$ and $F_n(\gamma_2)$ vanishes. These asymptotic decorrelations allow performing (rigorously) a kind of diagonal approximation when computing the fluctuations of $N_n(\lambda,L)$ in the large $n$ limit. 


\end{rem}

\subsection{The large $n$ limit} With Proposition \ref{décorrélations modèle aléatoire} at hand, we are ready to compute the ensemble variance of $N_n(\lambda,L)$ in the large $n$ limit. Recalling \eqref{eq:N as wave}, we have
\[N_n(\lambda,L)=\frac 2L \big\langle \operatorname{Tr} \cos(t\sqrt{\Delta_{n,\rho}}),\cos(\lambda t)\widehat \psi(t/L)\big\rangle=\frac 2L\big\langle \operatorname{Tr} \cos(t\sqrt{\Delta_{\rho_n}}),\cos(\lambda t)\widehat \psi(t/L)\big\rangle,\]
where $\Delta_{n,\rho}$ denotes the Laplacian on $X_n$ twisted by the representation $\rho$, and $\Delta_{\rho_n}$ denotes the Laplacian on $X$ twisted by the representation $\rho_n$ introduced in Proposition \ref{prop:rhon}. Let us introduce the following notation to alleviate the writing:
\begin{equation}\label{def A(gamma,k)} A(\gamma,k)= (\chi_\rho(\gamma^k)+\overline{\chi_\rho(\gamma^k)})\int_{\mathbf R} \cos(\lambda t)\widehat \psi(t/L)\int_{\Gamma_\gamma\backslash \widetilde X} \widetilde E(t,x,\gamma^k x) \mathrm dx \mathrm dt.
\end{equation}
The characters $\chi_\rho$ can always be bounded by $|\dim V_\rho|$. Since the representation $\rho$ is fixed --- although $\rho_n$ depends on $n$ ---, this term is uniformly bounded. The notation $A(\gamma,k)$ allows for shorter expressions, but we shall use explicit asymptotics for $A(\gamma,k)$ in the regime $1\ll L\le c_0 \log \lambda$, which are the contents of Theorem \ref{trace formula for long times}. In particular, we will make extensive use of the following upper bound, valid in the regime $L\le c_0\log \lambda$,
\begin{equation}\label{eq: upper bound A(gamma,k)}|A(\gamma,k)|\lesssim \mathbf 1_{k\ell(\gamma)\le L} \frac{\ell(\gamma)}{|I-\mathcal P_\gamma^k|^{\frac 12}}.\end{equation}
We introduce the centered variable $\widetilde N_n=N_n -\mathbf E_n(N_n)$. The Weyl term in the trace formula depends only on $n$ thus vanishes when we subtract the ensemble average. Formula \eqref{eq : regroupements intégrales} with the representation $\rho_n$ gives
\[\widetilde N_n(\lambda,L)=
\frac 2L\sum_{\gamma\in \mathcal P_0}\sum_{k\ge 1} 
\widetilde F_n(\gamma^k) A(\gamma,k).\]
Here, $\mathcal P_0=\mathcal P/\{\pm 1\}$ denotes the set of \emph{non-oriented} primitive geodesics. The coefficients $A(\gamma,k)$ do not depend on $\phi_n$ neither on $n$. Thus, the variance $\Sigma_n^2(\lambda,L)=\mathbf E_n(\widetilde N_n^2)$ is given by
\[\Sigma_n^2(\lambda,L)=\frac{4}{L^2}\sum_{k_1,k_2,\gamma_1,\gamma_2} \mathbf E\big(\widetilde F_n(\gamma_1^{k_1} ) \widetilde F_n(\gamma_2^{k_2})\big)A(\gamma_1,k_1)A(\gamma_2,k_2).\]
where the sum runs over $\gamma_i\in \mathcal P_0, k_i\ge 1$.  When $n\to +\infty$, according to Proposition \ref{décorrélations modèle aléatoire}, the preceding sum converges to
\begin{equation}\label{lim Vn}\Sigma^2(\lambda,L):=\frac{4}{L^2}\sum_{\gamma\in \mathcal P_0} \sum_{k_1,k_2\ge 1} \mathcal V(k_1,k_2)A(\gamma,k_1)A(\gamma,k_2),\end{equation}
where $\gamma$ runs over the set of non-oriented primitive geodesics. 

\subsection{The contribution of nonprimitive closed geodesics} We show that the contribution of nonprimitive closed geodesics ($k_1+k_2\ge 3$) in (\ref{lim Vn}) is negligible. This is a consequence of the hyperbolicity of the flow. We first give a standard lower bound for the determinant of $I-\mathcal P_\gamma$, relying on the Anosov property.

\begin{lem}\label{Poincaré Anosov}There are constants  $C,\theta>0$ such that for any $\gamma\in \mathcal P$ and $k\ge 1$,
\[|I-\mathcal P_{\gamma^k}|\ge C \mathrm{e}^{\theta (k-1) \ell(\gamma)}|I-\mathcal P_\gamma|.\]\end{lem}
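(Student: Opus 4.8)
The plan is to bound $|I-\mathcal P_{\gamma^k}|$ from below using the fact that $\mathcal P_{\gamma^k} = (\mathcal P_\gamma)^k$ and the Anosov property gives precise control on the eigenvalues of $\mathcal P_\gamma$. Since $X$ is a surface and $\gamma$ is a closed geodesic, the linearized Poincaré map $\mathcal P_\gamma$ acts on a $2$-dimensional symplectic space (the quotient of the normal bundle to the orbit) and thus has two eigenvalues $\mu_\gamma, \mu_\gamma^{-1}$ with $|\mu_\gamma| > 1$. By the Anosov property, $|\mu_\gamma| \geq \mathrm{e}^{\theta \ell(\gamma)}$ for a uniform $\theta > 0$ (this is exactly the exponential contraction/expansion on $E_s \oplus E_u$ along the orbit, applied over one period). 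Consequently $\mathcal P_{\gamma^k}$ has eigenvalues $\mu_\gamma^k$ and $\mu_\gamma^{-k}$, and
\[
|I - \mathcal P_{\gamma^k}| = |(1-\mu_\gamma^k)(1-\mu_\gamma^{-k})| = |\mu_\gamma|^k \, |1 - \mu_\gamma^{-k}|^2
\]
in the orientation-preserving case, with an analogous formula (e.g. $|1+\mu_\gamma^k|\,|1+\mu_\gamma^{-k}|$) in the orientation-reversing case; in all cases $|I-\mathcal P_{\gamma^k}| \asymp |\mu_\gamma|^k$ uniformly.

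The key step is then the elementary comparison: writing $|I-\mathcal P_\gamma| \asymp |\mu_\gamma|$ and $|I-\mathcal P_{\gamma^k}| \asymp |\mu_\gamma|^k$, we get
\[
\frac{|I - \mathcal P_{\gamma^k}|}{|I - \mathcal P_\gamma|} \gtrsim |\mu_\gamma|^{k-1} \geq \mathrm{e}^{\theta(k-1)\ell(\gamma)},
\]
which is the claimed inequality with an appropriate constant $C$. To make the comparison $|I-\mathcal P_{\gamma^m}| \asymp |\mu_\gamma|^m$ uniform, I would note that $|1 - \mu_\gamma^{-m}| \in [1 - \mathrm{e}^{-\theta \ell_0}, 2]$ where $\ell_0$ is the length of the shortest closed geodesic, so the implied constants depend only on the geometry of $X$ (through $\ell_0$ and $\theta$), not on $\gamma$ or $k$. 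The orientation-reversing case needs the eigenvalues to possibly be negative, but $|1 \pm \mu_\gamma^{-m}|$ is still bounded above and below uniformly once $m \geq 1$, so nothing essential changes.

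The main obstacle — really the only subtlety — is handling the case where $\mathcal P_\gamma$ might fail to be diagonalizable or where the two eigenvalues of $\mathcal P_\gamma$ are close to each other, but on a negatively curved surface this does not occur: the Anosov splitting forces $\mathcal P_\gamma$ to have a genuine hyperbolic pair of real eigenvalues $\mu_\gamma, \mu_\gamma^{-1}$ with $|\mu_\gamma| \geq \mathrm{e}^{\theta\ell(\gamma)} \geq \mathrm{e}^{\theta \ell_0} > 1$, bounded away from $1$. So the determinant never degenerates, and the bound is clean. I would cite the standard fact (e.g. from the references on Anosov flows already in the paper, such as \cite{Parkkonen2015} or a textbook) that the expansion rate of $\mathrm d\phi^t$ restricted to $E_u$ over a closed orbit of period $\ell(\gamma)$ is at least $\mathrm{e}^{\theta\ell(\gamma)}$, and similarly contraction on $E_s$, which identifies the eigenvalues of $\mathcal P_\gamma$ with these rates. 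With that input the proof is a two-line computation.
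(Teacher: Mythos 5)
Your proof is correct and follows essentially the same route as the paper: both diagonalize $\mathcal P_\gamma$ along the invariant splitting $E_s\oplus E_u$, bound the unstable eigenvalue below by $\mathrm{e}^{\theta\ell(\gamma)}$, and conclude by a direct determinant computation, with $|1\pm\mu_\gamma^{-m}|$ uniformly bounded away from $0$ and $\infty$. The only difference is that where you cite as standard the constant-free bound $|\mu_\gamma|\ge \mathrm{e}^{\theta\ell(\gamma)}$ (note the Anosov definition alone gives it only up to the constant $C$, which cannot be absorbed uniformly in $k$), the paper proves it on the spot by applying the Anosov estimate to $\mathrm d\phi^{k\ell(\gamma)}v=a^{-k}v$ and letting $k\to+\infty$.
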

\begin{proof} Take some point $z\in M$ lying on the periodic orbit $\gamma$. Since\footnote{strictly speaking, $\mathcal P_\gamma$ depends on the choice of $z$, but its determinant does not.} $\mathcal P_\gamma:=\mathrm d\phi^{\ell(\gamma)}$ preserves the direct sum $E_s(z)\oplus E_u(z)$, we can write in an adapted basis to this decomposition, recalling that $\mathcal P_\gamma$ has determinant $1$,
\[\mathcal P_\gamma=\begin{pmatrix}a^{-1} & 0 \\ 0 & a\end{pmatrix}.\]
We claim that $|a|\ge \mathrm{e}^{\theta \ell(\gamma)}$, where $\theta$ is as in \S\ref{defi:Anosov}. To see this, take a nonzero vector $v\in E_s$. Then, for any $k\ge 1$, we have $\mathrm d\phi^{k\ell(\gamma)} v=a^{-k} v$. Also, by the Anosov property, we have $|\mathrm d\phi^{k\ell(\gamma)}v|\le C\mathrm{e}^{-\theta k \ell(\gamma)}$. Therefore $|a|\ge C^{\frac 1k}\mathrm{e}^{\theta\ell(\gamma)}$, which proves the claim by letting $k$ go to $+\infty$. The desired lower bound for $|I-\mathcal P_{\gamma^k}|$ then follows by an easy computation.
\end{proof}
Fix $k_1,k_2$ with $k_1+k_2\ge 3$ and set
\[S_{k_1,k_2}=\sum_{\gamma\in \mathcal P_0} A(\gamma,k_1)A(\gamma,k_2).\]
By \eqref{eq: upper bound A(gamma,k)} and Lemma \ref{Poincaré Anosov}, we have the upper bound 
\[|S_{k_1,k_2}|\lesssim \sum_{\ell(\gamma)\le L} \frac{\ell(\gamma)^2}{|I-\mathcal P_\gamma|} \mathrm{e}^{-\frac{\theta}{2}(k_1+k_2-2) \ell(\gamma)}. \]
The argument in the exponential is nonzero (and negative) for $k_1+k_2\ge 3$. Hence, in order to bound $S_{k_1,k_2}$, it is enough to obtain upper bounds for sums of the type
\[\sum_{\ell(\gamma)\le L} \frac{\ell(\gamma)^N\mathrm{e}^{-s \ell(\gamma)}}{|I-\mathcal P_\gamma|}.\]
\begin{lem}\label{lem:sommes exponentielles}Let $s_0>0$ and $N\in \mathbf N$. Then for any $s\ge s_0$,
\begin{equation}\label{somme est finie pour tout s} \sum_{\gamma\in \mathcal G} \frac{\ell(\gamma)^N\mathrm{e}^{-s \ell(\gamma)}}{|I-\mathcal P_\gamma|} =\mathcal O_{N,s_0}\big(\mathrm{e}^{-\ell_0 s}\big),\end{equation}
where $\ell_0$ is the length of the shortest closed geodesic. 
\end{lem}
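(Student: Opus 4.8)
The plan is to reduce everything to the uniform cluster estimate in the second part of Lemma \ref{lem: size clusters}. The first observation I would make is that every closed geodesic satisfies both $\ell(\gamma)\ge \ell_0$ and $\ell(\gamma)^\sharp \ge \ell_0$, since the primitive period of any closed geodesic is bounded below by the length of the shortest closed geodesic. Consequently, for an integer $m\ge 0$, applying Lemma \ref{lem: size clusters} with $T=\ell_0+m+\frac{1}{2}$ — so that the half-open window $[\ell_0+m,\ell_0+m+1)$ is contained in $[T-1,T+1]$ — yields a bound uniform in $m$,
\[
\sum_{\ell_0+m\le \ell(\gamma)<\ell_0+m+1}\frac{1}{|I-\mathcal P_\gamma|}\;\le\;\frac{1}{\ell_0}\sum_{\ell_0+m\le \ell(\gamma)<\ell_0+m+1}\frac{\ell(\gamma)^\sharp}{|I-\mathcal P_\gamma|}\;\le\; C,
\]
where $C$ depends only on the geometry of $X$.

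Next I would decompose the sum in \eqref{somme est finie pour tout s} over the disjoint windows $[\ell_0+m,\ell_0+m+1)$, $m\ge 0$, which cover $[\ell_0,+\infty)$ and hence the whole length spectrum. On the $m$-th window I bound $\ell(\gamma)^N\le(\ell_0+m+1)^N$ and $\mathrm e^{-s\ell(\gamma)}\le \mathrm e^{-s(\ell_0+m)}$, and insert the cluster bound above, obtaining
\[
\sum_{\gamma\in \mathcal G}\frac{\ell(\gamma)^N\mathrm e^{-s\ell(\gamma)}}{|I-\mathcal P_\gamma|}\;\le\; C\,\mathrm e^{-s\ell_0}\sum_{m\ge 0}(\ell_0+m+1)^N\mathrm e^{-sm}.
\]
Finally, for $s\ge s_0>0$ the series $\sum_{m\ge 0}(\ell_0+m+1)^N\mathrm e^{-sm}$ is dominated by $\sum_{m\ge 0}(\ell_0+m+1)^N\mathrm e^{-s_0 m}$, which converges (geometric decay beats polynomial growth) to a finite constant depending only on $N$, $s_0$ and $\ell_0$. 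This gives exactly $\mathcal O_{N,s_0}(\mathrm e^{-s\ell_0})$.

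There is no genuine obstacle in this argument; the only points that require a little care are that Lemma \ref{lem: size clusters} controls $\ell(\gamma)^\sharp/|I-\mathcal P_\gamma|$ rather than $1/|I-\mathcal P_\gamma|$ — remedied by the trivial lower bound $\ell(\gamma)^\sharp\ge\ell_0$ — and that the dominating series $\sum_{m}(\ell_0+m+1)^N\mathrm e^{-s_0m}$ must be chosen once and for all so that the implied constant is uniform over the whole range $s\ge s_0$.
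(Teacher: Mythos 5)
Your proof is correct. It follows the same overall skeleton as the paper's argument --- decompose the length spectrum into windows of fixed width, bound the polynomial and exponential factors on each window by their extreme values, and sum a polynomial-times-geometric series --- but it feeds this skeleton with a different counting input. The paper invokes Theorem~\ref{lem : equidistribution lemma} to get the cumulative bound $\sum_{\ell(\gamma)\le L}|I-\mathcal P_\gamma|^{-1}\lesssim L$, so its per-window (width $\ell_0$) count is only $\mathcal O(k+1)$, which costs one extra power of $k$ in the series but is harmless against the exponential decay. You instead use the uniform cluster estimate of Lemma~\ref{lem: size clusters}, upgraded from a bound on $\ell(\gamma)^\sharp/|I-\mathcal P_\gamma|$ to one on $1/|I-\mathcal P_\gamma|$ via the trivial inequality $\ell(\gamma)^\sharp\ge\ell_0$; this gives a genuinely uniform $\mathcal O(1)$ bound per unit window, so your series $\sum_m(\ell_0+m+1)^N\mathrm e^{-s_0m}$ is marginally cleaner. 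Both inputs ultimately come from the local trace formula and neither creates a circularity with the present lemma, so the two routes are interchangeable; yours is slightly sharper per window, the paper's avoids the detour through $\ell(\gamma)^\sharp$. Your closing remarks about uniformity of the dominating series over $s\ge s_0$ and the dependence of constants on $N$, $s_0$, $\ell_0$ are exactly the right points of care.
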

\begin{proof}An immediate consequence of Theorem \ref{lem : equidistribution lemma} is that $\sum_{\ell(\gamma)\le L} \frac{1}{|I-\mathcal P_\gamma|}\lesssim L$. Thus, we may just write
\[\sum_{\gamma\in \mathcal G} \frac{ \ell(\gamma)^N \mathrm{e}^{-s \ell(\gamma)}}{|I-\mathcal P_\gamma|}\le \sum_{k=1}^{\infty} (k+1)^N\sum_{k\ell_0\le \ell(\gamma)\le (k+1)\ell_0} \frac{\mathrm{e}^{-ks \ell_0}}{|I-\mathcal P_\gamma|}\lesssim \sum_{k=1}^{\infty} (k+1)^{N+1} \mathrm{e}^{-ks \ell_0}\lesssim_{N,s_0} \mathrm{e}^{-s \ell_0}.\]\end{proof}
This allows to obtain the sought bound on $S_{k_1,k_2}$:
\begin{lem}For $k_1+k_2\ge 3$ we have $S_{k_1,k_2}=\mathcal O(\mathrm{e}^{-\frac{\theta}{2}(k_1+k_2-2)\ell_0})$, where $\ell_0$ is the length of the shortest closed geodesic.\end{lem}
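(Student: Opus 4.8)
The plan is to deduce the estimate immediately from the two lemmas just established, since all the analytic content is already contained in Lemma \ref{Poincaré Anosov} and Lemma \ref{lem:sommes exponentielles}. First I would combine the pointwise bound \eqref{eq: upper bound A(gamma,k)} on the coefficients $A(\gamma,k)$ with the lower bound $|I-\mathcal P_{\gamma^k}|\gtrsim \mathrm{e}^{\theta(k-1)\ell(\gamma)}|I-\mathcal P_\gamma|$ of Lemma \ref{Poincaré Anosov}, applied to both exponents $k_1$ and $k_2$. For a non-oriented primitive geodesic $\gamma$ this gives
\[|A(\gamma,k_1)A(\gamma,k_2)|\lesssim \mathbf 1_{\ell(\gamma)\le L}\,\frac{\ell(\gamma)^2}{|I-\mathcal P_\gamma|}\,\mathrm{e}^{-\frac\theta2(k_1+k_2-2)\ell(\gamma)},\]
since $k_i\ell(\gamma)\le L$ forces $\ell(\gamma)\le L$, and summing over $\gamma\in\mathcal P_0$ recovers the displayed bound on $|S_{k_1,k_2}|$ stated just above the lemma.

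Then I would simply apply Lemma \ref{lem:sommes exponentielles} with $N=2$ and $s=\frac\theta2(k_1+k_2-2)$, after enlarging the sum from $\{\ell(\gamma)\le L\}$ to all of $\mathcal G$ (all terms being nonnegative). The hypothesis $k_1+k_2\ge 3$ forces $s\ge \frac\theta2>0$, so I may take $s_0=\frac\theta2$ — a quantity depending only on the geometry of $X$ — and Lemma \ref{lem:sommes exponentielles} yields $|S_{k_1,k_2}|=\mathcal O_{s_0}(\mathrm{e}^{-\ell_0 s})=\mathcal O(\mathrm{e}^{-\frac\theta2\ell_0(k_1+k_2-2)})$. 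There is no genuine obstacle in this final step; the only point requiring attention is that the implied constant must not degenerate as $k_1+k_2\to\infty$, which is guaranteed because the constant in Lemma \ref{lem:sommes exponentielles} depends only on $N$ and $s_0$ and is uniform over $s\ge s_0$. This uniformity is exactly what will later be needed in order to sum the contributions $S_{k_1,k_2}$ over all pairs with $k_1+k_2\ge 3$ and conclude that nonprimitive closed geodesics contribute negligibly to \eqref{lim Vn}.
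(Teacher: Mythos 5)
Your proof is correct and is essentially the paper's argument: the displayed bound on $|S_{k_1,k_2}|$ obtained from \eqref{eq: upper bound A(gamma,k)} and Lemma \ref{Poincaré Anosov} is fed directly into Lemma \ref{lem:sommes exponentielles} with $N=2$ and $s=\frac\theta2(k_1+k_2-2)\ge s_0=\frac\theta2$, giving the stated $\mathcal O(\mathrm{e}^{-\frac\theta2(k_1+k_2-2)\ell_0})$ with a constant uniform in $(k_1,k_2)$. Your remark about uniformity of the implied constant over $s\ge s_0$ is exactly the point that makes the subsequent summation over all pairs with $k_1+k_2\ge 3$ work.
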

Summing over all pairs $(k_1,k_2)$ with $k_1+k_2\ge 3$ yields
\begin{prop}[Contribution of nonprimitive orbits] \label{prop:contribution non primitive} The following estimate holds, independently of $L$.
\[ \sum_{k_1+k_2\ge 3} \mathcal V(k_1,k_2) S_{k_1,k_2} = \mathcal O(1).\]\end{prop}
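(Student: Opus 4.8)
The plan is to simply sum the per‑pair estimate of the preceding lemma against the (polynomially growing) arithmetic weights $\mathcal V(k_1,k_2)$, and observe that the exponential gain coming from hyperbolicity dominates. First I would record the crude bound $\mathcal V(k_1,k_2)=\sum_{d\mid \gcd(k_1,k_2)}d\le \gcd(k_1,k_2)^2\le k_1k_2$, so that the divisor sum grows at most polynomially in $(k_1,k_2)$. Combining this with $S_{k_1,k_2}=\mathcal O(\mathrm e^{-\frac\theta2(k_1+k_2-2)\ell_0})$ — whose implied constant is uniform in $(k_1,k_2)$ and in $L$, since it ultimately comes from the uniform upper bound \eqref{eq: upper bound A(gamma,k)} together with Lemma \ref{lem:sommes exponentielles} — yields
\[\Big|\sum_{k_1+k_2\ge 3}\mathcal V(k_1,k_2)\,S_{k_1,k_2}\Big|\lesssim \sum_{k_1+k_2\ge 3} k_1k_2\,\mathrm e^{-\frac\theta2(k_1+k_2-2)\ell_0}.\]

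Then I would estimate the right‑hand side by grouping the pairs according to $m:=k_1+k_2$: for each $m\ge 3$ there are exactly $m-1$ admissible pairs, and $k_1k_2\le m^2$ for each of them, so the sum is bounded by
\[\sum_{m\ge 3}(m-1)\,m^2\,\mathrm e^{-\frac\theta2(m-2)\ell_0}\;\lesssim\;\sum_{m\ge 3} m^3\,\mathrm e^{-\frac{\theta\ell_0}{2}m},\]
which converges since $\theta,\ell_0>0$. As the resulting bound does not involve $L$, the estimate holds independently of $L$, as claimed.

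There is essentially no obstacle here: the exponential decay $\mathrm e^{-\frac\theta2(k_1+k_2-2)\ell_0}$ furnished by Lemma \ref{Poincaré Anosov} (uniform hyperbolicity of the geodesic flow) beats both the divisor weights $\mathcal V(k_1,k_2)$ and the polynomial factors $\ell(\gamma)^2$ already absorbed into $S_{k_1,k_2}$. The only point requiring a line of care is the uniformity of all implied constants in $(k_1,k_2)$; this was arranged in Proposition \ref{trace formula for long times}, whose error terms are uniform in $\gamma,k$, and it propagates through Lemma \ref{lem:sommes exponentielles} and the definition of $S_{k_1,k_2}$ without further work.
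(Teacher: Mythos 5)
Your argument is correct and is essentially the paper's proof: both bound $\mathcal V(k_1,k_2)$ polynomially (via $\gcd(k_1,k_2)^2$), invoke the uniform per-pair estimate $S_{k_1,k_2}=\mathcal O(\mathrm e^{-\frac\theta2(k_1+k_2-2)\ell_0})$ coming from Lemma \ref{Poincaré Anosov}, \eqref{eq: upper bound A(gamma,k)} and Lemma \ref{lem:sommes exponentielles}, and conclude by summing a convergent exponentially decaying series independent of $\lambda,L$. Your grouping by $m=k_1+k_2$ versus the paper's absorption of the exponential into a $(k_1+k_2)^{-N}$ bound is only a cosmetic difference.
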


\begin{proof}We roughly bound $\mathcal V(k_1,k_2)\lesssim \mathrm{gcd}(k_1,k_2)^2\le (k_1+k_2)^2$. Then, for $N$ large enough, since $S_{k_1,k_2}=\mathcal O(\mathrm{e}^{- \frac \theta 2(k_1+k_2-2) \ell_0})$, 
\[\Big|\sum_{k_1+k_2\ge 3} \mathcal V(k_1,k_2) S_{k_1,k_2}\Big|\lesssim \sum_{k_1+k_2\ge 3} \frac{1}{(k_1+k_2)^N}=\mathcal O(1)\]
uniformly in $\lambda,L$.

\end{proof}

By Proposition \ref{prop:contribution non primitive}, we can discard pairs $(k_1,k_2)$ with $k_1+k_2\ge 3$ in \eqref{lim Vn} --- up to adding an error $\mathcal O(L^{-2})$ ---, giving 
\[\Sigma^2(\lambda,L)=\frac{4}{L^2}\sum_{\gamma\in \mathcal P_0} A(\gamma,1)^2+\mathcal O\left(\frac{1}{L^2}\right).\]
Now, Theorem \ref{trace formula for long times} gives
\[A(\gamma,1)^2=\frac{\cos^2(\lambda \ell(\gamma))\ell(\gamma)^2\widehat \psi^2(\ell(\gamma)/L)}{|I-\mathcal P_\gamma|}(\chi_\rho(\gamma)+\overline{\chi_\rho(\gamma)})^2 +\mathcal O\left(\lambda^{-\frac 12}\mathrm{e}^{CL}\right).\] 
Writing $\cos^2(x)=\frac{1+\cos(2x)}{2}$, we can split the sum
\begin{equation}\label{eq sigma^2 avec osc}\Sigma^2(\lambda,L)=\overline{\Sigma^2}(\lambda,L)+\Sigma^2_{\rm osc}(\lambda,L)+\mathcal O\left(\lambda^{-\frac 12}\mathrm{e}^{CL}\right),\end{equation}
where the constant $C$ has increased, and we have set 
\[\overline{\Sigma^2}(\lambda,L)=\frac{2}{L^2}\sum_{\gamma\in \mathcal P_0} (\chi_\rho(\gamma)+\overline{\chi_\rho(\gamma)})^2\frac{ \ell(\gamma)^2 \widehat \psi^2(\ell(\gamma)/L)}{|I-\mathcal P_\gamma|}\]
and
\[\Sigma^2_{\rm osc}(\lambda,L):=\frac{2}{L^2}\sum_{\gamma\in \mathcal P_0} (\chi_\rho(\gamma)+\overline{\chi_\rho(\gamma)})^2\frac{\ell(\gamma)^2 \widehat \psi^2(\ell(\gamma)/L)\cos(2\lambda \ell(\gamma))}{|I-\mathcal P_\gamma|}.\]
The term $\overline{\Sigma^2}(\lambda,L)$ does not depend on $\lambda$ anymore, while $\Sigma_{\rm osc}^2(\lambda,L)$ should be negligible when averaged over sufficiently large intervals of frequencies, due to the oscillations.

In the following, we stick to the case $\dim V_\rho=1$. We will then explain how to deal with representations of higher rank. The nonnegativity of $\Sigma^2(\lambda,L)$ is obvious by definition. 

If $\rho$ is Abelian, we may expand $(\rho(\gamma)+\overline{\rho(\gamma)})^2=2+\rho^2(\gamma)+\rho^2(\gamma^{-1})$, where $\rho^2$ is still an Abelian character of $\Gamma$. It follows that
\[\overline{\Sigma^2}(\lambda,L)=\frac{2}{L^2}\sum_{\gamma\in \mathcal P} (1+\rho^2(\gamma))\frac{\ell(\gamma)^2 \widehat \psi^2(\ell(\gamma)/L)}{|I-\mathcal P_\gamma|}\]
where now the sum runs over \emph{oriented} primitive geodesics. Applying Theorem \ref{lem : equidistribution lemma} to $t\mapsto t\widehat \psi^2$ gives
\begin{equation}\label{eq : partie constante tend vers GOE} \overline{\Sigma^2}(\lambda,L)=2(1+\delta_{\mathbf 1,\rho^2})\int_{\mathbf{R}_+} t\psi^2(t)\mathrm dt+\mathcal O\left(L^{-2}\right),\end{equation} 
where $\delta_{\mathbf 1,\rho^2}=1$ if $\rho^2=\mathbf 1$, and $0$ otherwise.
Therefore,
\begin{equation}\label{eq: GOE + osc +1/L}\Sigma^2(\lambda,L)=2(1+\delta_{\mathbf 1,\rho^2})\int_{\mathbf R_+} t\widehat \psi^2(t)\mathrm dt+\Sigma_{\rm osc}^2(\lambda,L)+\mathcal O\left(\lambda^{-\frac 12}\mathrm{e}^{CL}+L^{-1}\right).\end{equation}
The first term of the right hand side equals $\Sigma_{\mathrm{GOE/GUE}}^2$ depending on whether $\rho$ breaks time reversal symmetry. 

In the following we take a function $L=L(\lambda)$ satisfying $L(\lambda)\le c_0\log \lambda$ for some sufficiently small $c_0$, in order to ensure that the error in \eqref{eq: GOE + osc +1/L} is indeed negligible, say $\mathcal O(L^{-2})$. The proof of Theorem \ref{mainthm} then consists in showing that local averages of $\Sigma^2_{\rm osc}(\lambda,L)$ converge to $0$.

\subsection{Oscillations of $\Sigma_{\rm osc}^2$} We start by showing item \emph{(ii)} of Theorem \ref{mainthm}. First, note that $|\Sigma^2_{\rm osc}(\lambda,L)|\le \overline{\Sigma^2}(\lambda,L)$, which proves the estimate $0\le \Sigma^2(\lambda,L)\le 2\Sigma^2_{\rm GOE/GUE}+\mathcal O(L^{-2})$. Now, we show that $\Sigma_{\rm osc}^2(\lambda,L)$ oscillate between the two extreme values $\pm \Sigma_{\rm GOE/GUE}^2$, which justifies the necessity to perform energy averages.

\begin{prop}\label{prop: pointwise behavior Sigma osc}For any sequence $L_j\to +\infty$, we can find sequences $(\lambda_j^{\pm})$ satisfying $L_j\le c_0\log (\lambda_j^{\pm})$ and $ \log \log \lambda_j^{\pm}\lesssim  L_j$, and such that 
\[\underset{j\to +\infty}\lim \Sigma^2_{\rm osc}(\lambda_j^{\pm},L_j,\rho)= \pm\Sigma_{\rm GOE/GUE}^2.\]\end{prop}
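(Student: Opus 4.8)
The plan is to compare $\Sigma^2_{\rm osc}$ with the $\lambda$-free quantity $\overline{\Sigma^2}(\lambda,L)$, which by \eqref{eq : partie constante tend vers GOE} converges to $\Sigma^2_{\rm GOE/GUE}$ as $L\to+\infty$. Write $c_\gamma(L)=(\chi_\rho(\gamma)+\overline{\chi_\rho(\gamma)})^2\,\ell(\gamma)^2\widehat\psi^2(\ell(\gamma)/L)\,|I-\mathcal P_\gamma|^{-1}$; since $\rho$ is Abelian, $\chi_\rho(\gamma)+\overline{\chi_\rho(\gamma)}=2\operatorname{Re}\chi_\rho(\gamma)\in\mathbf R$, so the $c_\gamma(L)$ are nonnegative, only those with $\ell(\gamma)\le L$ are nonzero, and $\overline{\Sigma^2}(\lambda,L)=\frac{2}{L^2}\sum_\gamma c_\gamma(L)$. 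As $1-\cos(2x)=2\sin^2x\ge0$ and $1+\cos(2x)=2\cos^2x\ge0$,
\[\bigl|\Sigma^2_{\rm osc}(\lambda,L)\mp\overline{\Sigma^2}(\lambda,L)\bigr|=\frac{2}{L^2}\sum_{\gamma}c_\gamma(L)\bigl(1\mp\cos(2\lambda\ell(\gamma))\bigr)\ \ge 0 .\]
Since $\overline{\Sigma^2}(\lambda,L_j)\to\Sigma^2_{\rm GOE/GUE}$ and the total weight $\frac{2}{L_j^2}\sum_\gamma c_\gamma(L_j)$ stays bounded, it suffices, for each choice of sign, to find $\lambda_j$ with $\mathrm{e}^{L_j/c_0}\le\lambda_j\le\exp(\mathrm{e}^{CL_j})$ (so that automatically $L_j\le c_0\log\lambda_j$ and $\log\log\lambda_j\lesssim L_j$) such that $1\mp\cos(2\lambda_j\ell(\gamma))=o(1)$ uniformly over those $\gamma$ with $\ell(\gamma)\le L_j$ lying outside a subfamily of total $c$-weight $o(L_j^2)$. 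Throughout, $\|\cdot\|$ denotes distance to the nearest integer.

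\textbf{The $+$ sign: homogeneous Diophantine approximation.} Let $M_j=\#\{\gamma\in\mathcal P_0:\ell(\gamma)\le L_j\}=O(\mathrm{e}^{L_j})$ by \eqref{comptage}, and fix $\varepsilon_j=L_j^{-1}\to0$. By Dirichlet's simultaneous approximation theorem there is a positive integer $q_j\le(\varepsilon_j^{-2}\mathrm{e}^{L_j/c_0}+1)^{M_j}$ with $\|q_j\ell(\gamma)/\pi\|\le\varepsilon_j^2\mathrm{e}^{-L_j/c_0}$ for every $\gamma$ with $\ell(\gamma)\le L_j$. Choosing the integer $m_j\le 2\mathrm{e}^{L_j/c_0}$ so that $\lambda_j:=m_jq_j\in[\mathrm{e}^{L_j/c_0},\,\mathrm{e}^{L_j/c_0}+q_j]$, one keeps $\|\lambda_j\ell(\gamma)/\pi\|\le m_j\varepsilon_j^2\mathrm{e}^{-L_j/c_0}\le\varepsilon_j$, hence $1-\cos(2\lambda_j\ell(\gamma))=2\sin^2(\lambda_j\ell(\gamma))\le 2\pi^2\varepsilon_j^2$ for \emph{all} $\gamma$ with $\ell(\gamma)\le L_j$. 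Then the right-hand side of the displayed identity is $\le 2\pi^2\varepsilon_j^2\,\overline{\Sigma^2}(\lambda_j,L_j)\to0$, while $\log\log\lambda_j\lesssim\log\bigl(M_j\log(\varepsilon_j^{-2}\mathrm{e}^{L_j/c_0})\bigr)\lesssim L_j$, so $\Sigma^2_{\rm osc}(\lambda_j,L_j,\rho)\to\Sigma^2_{\rm GOE/GUE}$, as wanted.

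\textbf{The $-$ sign: the main difficulty.} Now we want $\lambda_j\ell(\gamma)/\pi$ close to $\tfrac12$ modulo $1$, an \emph{inhomogeneous} simultaneous approximation, and this genuinely fails to be solvable for all $\gamma$ at once as soon as there is an integer relation $\sum_\gamma n_\gamma\ell(\gamma)=0$ with $\sum_\gamma n_\gamma$ odd — for instance a pair of primitive closed geodesics of lengths $\ell$ and $2\ell$ (such pairs do occur, notably for arithmetic surfaces). The plan is to discard the offending geodesics: let $B_j$ be the set of $\gamma$ with $\ell(\gamma)\le L_j$ appearing in the support of some odd-coefficient-sum $\mathbf Z$-relation among the lengths $\le L_j$, and $G_j=\{\gamma:\ell(\gamma)\le L_j\}\setminus B_j$. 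By construction every $\mathbf Z$-relation among $\{\ell(\gamma):\gamma\in G_j\}$ has even coefficient sum, so $(\tfrac12,\dots,\tfrac12)$ lies in the closure of $\{\lambda(\ell(\gamma)/\pi)_{\gamma\in G_j}\bmod\mathbf Z^{G_j}:\lambda\in\mathbf R\}$ by Kronecker's theorem; a quantitative pigeonhole version, entirely parallel to the $+$ case, produces $\lambda_j$ in the allowed range with $\|\lambda_j\ell(\gamma)/\pi-\tfrac12\|\le\varepsilon_j$ for all $\gamma\in G_j$, whence $1+\cos(2\lambda_j\ell(\gamma))=2\cos^2(\lambda_j\ell(\gamma))\le 2\pi^2\varepsilon_j^2$ on $G_j$. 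What remains — and this is the crux — is the bound $\sum_{\gamma\in B_j}c_\gamma(L_j)=o(L_j^2)$: a geodesic in $B_j$ has its length $\mathbf Q$-dependent on shorter ones in a way that, after clearing denominators, makes it commensurable to an integer multiple of a shorter geodesic length, and such ``length-multiple'' geodesics are exponentially rare at each length scale (their traces are confined to a sparse subset), so combining this with Lemma~\ref{lem: size clusters}, which controls $\sum_{k\le\ell(\gamma)\le k+1}\ell(\gamma)^\sharp/|I-\mathcal P_\gamma|$ uniformly in $k$, bounds their total weight by $O(1)=o(L_j^2)$. Turning this sparsity statement into a rigorous estimate — in essence, that the length spectrum of a closed negatively curved surface is not too arithmetic — is the principal obstacle; everything else is the bookkeeping above.
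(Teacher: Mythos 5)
Your treatment of the $+$ sign is correct and is essentially the paper's argument: the paper invokes its Dirichlet box principle lemma with $Y=L$, $M=\mathrm e^{CL}$ to produce $\lambda\in[\mathrm e^{CL},\mathrm e^{CL}L^{\mathrm e^{CL}}]$ with $\cos(\lambda\ell(\gamma))\ge 1-\tfrac1L$ for every $\gamma$ with $\ell(\gamma)\le L$, which is the same homogeneous pigeonhole you implement with the integers $q_j,m_j$; your range for $\lambda_j$ gives the same admissible window $L_j\le c_0\log\lambda_j$, $\log\log\lambda_j\lesssim L_j$, and the comparison with $\overline{\Sigma^2}$ via \eqref{eq : partie constante tend vers GOE} is exactly how the paper concludes.

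The $-$ sign is where your proposal has a genuine gap, as you acknowledge. The paper's route is to choose $\lambda$ with $|\cos(\lambda\ell(\gamma))|\le\tfrac1L$ for all $\ell(\gamma)\le L$, so that $\cos(2\lambda\ell(\gamma))=2\cos^2(\lambda\ell(\gamma))-1\le -1+\tfrac{2}{L^2}$; this is precisely the inhomogeneous simultaneous approximation (each $\lambda\ell(\gamma)/\pi$ close to $\tfrac12$ modulo $1$) that you single out, and your observation that integer relations $\sum_\gamma n_\gamma\ell(\gamma)=0$ with $\sum_\gamma n_\gamma$ odd obstruct a full alignment of phases is correct. But your repair does not close the argument. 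First, the decisive bound $\sum_{\gamma\in B_j}c_\gamma(L_j)=o(L_j^2)$ rests on the claim that ``length-multiple'' geodesics are exponentially rare because ``their traces are confined to a sparse subset'': there are no traces in variable curvature, and quantitative control of rational relations in the length spectrum is exactly the pair-correlation information which the paper stresses (\S\ref{subsubsec:difficulties}) is unavailable; for arithmetic surfaces such relations are plentiful, so the statement cannot be waved through as generic, and no proof is offered. Second, even granting that weight bound, the step ``a quantitative pigeonhole version, entirely parallel to the $+$ case'' is not parallel at all: the homogeneous box principle is effective with no hypotheses, whereas Kronecker's theorem is purely qualitative, and to land $\lambda_j$ inside the window $\log\log\lambda_j\lesssim L_j$ one needs effective inhomogeneous approximation, i.e.\ lower bounds on $|\sum_\gamma n_\gamma\ell(\gamma)|$ over bounded integer vectors that are not relations (a transference-type input), which is again unknown. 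So the $-$ half of the proposition is not established by the proposal; your analysis does pinpoint the Diophantine content hidden in the paper's one-sentence treatment of this case, but it does not supply the estimate that would make your detour through the bad set $B_j$ work.
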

It is a quite straightforward consequence of the following  Dirichlet box principle, see \cite[\S 4.3]{JakobsonWeyl2007}.
\begin{lem}Let $Y,M>0$. Let $r_1,\ldots,r_N$ be some distinct real numbers. Then one can find $\lambda\in [M,MY^N]$ such that
\[|\mathrm{e}^{\mathrm i \lambda r_j}-1|\le \frac 1Y, \qquad j=1,\ldots,N.\]\end{lem}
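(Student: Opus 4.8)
The plan is to prove this by the classical Dirichlet box principle (pigeonhole), applied to the arithmetic progression $\{kM:k\in\mathbf N\}$ reduced modulo $2\pi$ along each frequency $r_j$. First I would reduce the claim to a statement about distances: writing $\mathrm{dist}(\theta,2\pi\mathbf Z)$ for the distance from $\theta\in\mathbf R$ to the nearest multiple of $2\pi$, one has $|\mathrm e^{\mathrm i\theta}-1|=2|\sin(\theta/2)|\le\mathrm{dist}(\theta,2\pi\mathbf Z)$, so it suffices to produce $\lambda$ in the prescribed range with $\mathrm{dist}(\lambda r_j,2\pi\mathbf Z)\le 1/Y$ for every $j$.

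For the pigeonhole step, set $Q:=\lceil 2\pi Y\rceil$, so that $2\pi/Q\le 1/Y$, and partition the torus $(\mathbf R/2\pi\mathbf Z)^N$ into $Q^N$ cubes of side $2\pi/Q$. Consider the $Q^N+1$ points $P_k:=(kMr_1,\dots,kMr_N)\bmod 2\pi$ for $k=0,1,\dots,Q^N$. By pigeonhole two of them, say $P_{k_1}$ and $P_{k_2}$ with $0\le k_1<k_2\le Q^N$, lie in a common cube. Put $\lambda:=(k_2-k_1)M$. Then $\lambda\in[M,Q^NM]$ since $1\le k_2-k_1\le Q^N$; and since the $j$-th coordinate of $P_{k_2}-P_{k_1}$ equals $\lambda r_j\bmod 2\pi$ while the two points lie in a single cube of side $2\pi/Q$, we get $\mathrm{dist}(\lambda r_j,2\pi\mathbf Z)\le 2\pi/Q\le 1/Y$ for all $j$, hence $|\mathrm e^{\mathrm i\lambda r_j}-1|\le 1/Y$. (Distinctness of the $r_j$ is not used; it is merely how the lemma arises, with $r_j=2\ell(\gamma_j)$, in the proof of Proposition \ref{prop: pointwise behavior Sigma osc}.)

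This yields $\lambda\in[M,\lceil 2\pi Y\rceil^N M]$, which differs from the stated $[M,MY^N]$ only by the fixed multiplicative constant $(\lceil 2\pi Y\rceil/Y)^N$; in the application one has $Y\to+\infty$ and $N\lesssim\mathrm e^{CL}$ frequencies, so replacing $Y$ by a suitable absolute multiple absorbs this factor and affects only the implicit constant in $\log\log\lambda_j^\pm\lesssim L_j$ (equivalently, taking $Q=\lfloor Y\rfloor$ gives exactly $\lambda\in[M,MY^N]$ at the cost of the weaker bound $|\mathrm e^{\mathrm i\lambda r_j}-1|\le 2\pi/Y$, the two forms being interchangeable up to rescaling $Y$ by a constant). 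The argument is elementary and presents no genuine obstacle; the only point requiring care is the interplay between the mesh $2\pi/Q$ of the partition, which controls the precision $1/Y$, and the number $Q^N+1$ of sample points, which controls the length $\sim MY^N$ of the range — this trade-off is exactly what pins down the exponent $N$ and the absolute constants above.
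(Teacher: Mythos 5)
Your pigeonhole argument is the standard Dirichlet box principle and is correct; the paper itself gives no proof of this lemma, merely citing Jakobson--Polterovich--Toth, so there is no internal argument to compare against. The reduction to $\mathrm{dist}(\lambda r_j,2\pi\mathbf Z)$, the use of $Q^N+1$ sample points $P_k=(kMr_j\bmod 2\pi)_j$, and the conclusion $\mathrm{dist}(\lambda r_j,2\pi\mathbf Z)\le 2\pi/Q$ are all sound (the key point being that $P_{k_1},P_{k_2}$ sharing a cube forces $(k_2-k_1)Mr_j$ to be within $2\pi/Q$ of an integer multiple of $2\pi$, and $2|\sin(\theta/2)|\le\mathrm{dist}(\theta,2\pi\mathbf Z)$). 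You are also right that distinctness of the $r_j$ is immaterial.

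The one discrepancy you flag is real: with mesh $2\pi/Q\le 1/Y$ one needs $Q\ge 2\pi Y$, which lands $\lambda$ in $[M,\lceil 2\pi Y\rceil^N M]$ rather than $[M,MY^N]$; as stated, the lemma's two constants are not simultaneously sharp by this method. Your observation that the two normalizations ($1/Y$ in the oscillation bound versus $Y^N$ in the range) are interchangeable up to replacing $Y$ by a fixed multiple is exactly the right resolution, and you correctly trace through that this only shifts the implied constant in $\log\log\lambda_j^\pm\lesssim L_j$ in Proposition \ref{prop: pointwise behavior Sigma osc} (where $N\sim \mathrm e^{CL}$ and $M=\mathrm e^{CL}$ already absorb constants). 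Nothing is missing.
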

\begin{proof}[Proof of Proposition \ref{prop: pointwise behavior Sigma osc}]  Fix $C$ large enough. For any $L>0$, according to the previous lemma with $Y=L$ and $M=\mathrm{e}^{CL}$, we can find $\lambda\in [\mathrm{e}^{CL}, \mathrm{e}^{CL} L^{\mathrm{e}^{{C L}}}]$ such that for any closed geodesic with period $\ell(\gamma)\le L$ we have  $\cos(\lambda\ell(\gamma))\ge 1-\frac{1}{L}$. Here one must take $C$ large enough so as to ensure $L\le c_0 \log \lambda$ (in order to have negligible remainders in the trace formula), and that there are less than $\mathrm{e}^{CL}$ periodic geodesics of length $\le L$. Consequently,
\[\Sigma_{\rm osc}^2(\lambda,L,\rho)= \frac{2}{L^2}\sum_{\gamma\in \mathcal P_0} (\chi_\rho(\gamma)+\overline{\chi_\rho(\gamma)})^2\frac{\ell(\gamma)^2 \widehat \psi^2(\ell(\gamma)/L)}{|I-\mathcal P_\gamma|} +\mathcal O\left(\frac{1}{L}\right)= \Sigma^2_{\rm GOE/GUE}+\mathcal O\left(\frac{1}{L}\right).\]
Thus, we can find a sequence $(L_j,\lambda_j^+)$ such that $\Sigma^2_{\rm osc}(\lambda_j^+,L_j,\rho)\to \Sigma_{\rm GOE/GUE}^2$. For the opposite sign, one has to take $\lambda$ such that $|\cos(\lambda\ell(\gamma))|\le \frac 1L$ for any $\gamma$ with length $\ell(\gamma)\le L$. \end{proof}
\subsection{Convergence on average of $\Sigma^2(\lambda,L)$} We now prove item \emph{(iii)} of Theorem \ref{mainthm}. It comes down to showing that local averages of $\Sigma^2_{\rm osc}$ converge to $0$. By averaging $\Sigma_{\rm osc}^2(\mu,L(\lambda))$ over a window of frequencies $[\lambda,\lambda+\delta]$, of width $\delta$ large compared to $\frac 1L$ we get
\begin{equation}\label{equation moyennée} \displaystyle\frac{1}{\delta}\int_\lambda^{\lambda+\delta} \Sigma^2_{\rm osc}(\mu,L(\lambda))\mathrm d\mu  =\displaystyle\mathcal{O}\left(\frac{1}{\delta L}\right) \left|\frac 1L\sum_{\gamma\in \mathcal P_0}\frac{\ell(\gamma)\widehat \psi^2(\ell(\gamma)/L)}{|I-\mathcal P_\gamma|}\right|.
\end{equation}
Indeed, just check that
\[\frac{1}{\delta}\int_{\lambda}^{\lambda+\delta} \cos(2\mu\ell(\gamma))\mathrm d\mu=\frac{L}{\ell(\gamma)}\mathcal O\left( \frac{1}{\delta L}\right).\]
Here we stress out that we are working with $\Sigma^2(\mu,L(\lambda))$ and not $\Sigma^2(\mu,L(\mu))$. If one wants to work with $\Sigma^2(\mu,L(\mu))$, some further assumptions need to be made on the function $L$, namely that it has small variations over the window $[\lambda,\lambda+\delta]$. 
Now, by Theorem \ref{lem : equidistribution lemma},
\[\frac 1L\sum_{\gamma\in \mathcal P_0}\frac{\ell(\gamma)\widehat \psi^2(\ell(\gamma)/L)}{|I-\mathcal P_\gamma|}=\mathcal O(1)\]
uniformly in $L$. Therefore, recalling \eqref{eq: GOE + osc +1/L}, we have shown
\[\frac{1}{\delta}\int_\lambda^{\lambda+\delta} \Sigma^2(\mu,L)\mathrm d\mu =\Sigma_{\mathrm{GOE/GUE}}^2+\mathcal O\left(\frac{1}{\delta L} +\frac{1}{L^2}\right).\]

\subsection{Quadratic convergence on average to $\Sigma_{\mathrm{GOE}}^2$}\label{subsec:quadratic on average} Let us finally turn to the proof of item \emph{(iv)} of Theorem \ref{mainthm}. Averaging the variance over windows of large size allows to get quadratic convergence. We perform an average over the energy range $[\lambda,\lambda+\Lambda]$:
\[\frac{1}{\Lambda}\int_{\lambda}^{\lambda+\Lambda}|\Sigma^2_{\rm osc}(\mu,L)|^2\mathrm d\mu= \frac{1}{\Lambda}\int_{\lambda}^{\lambda+\Lambda} \left|\frac{2}{L^2}\sum_{\gamma\in \mathcal P_0} (\chi_\rho(\gamma)+\overline{\chi_\rho(\gamma)})^2\frac{\ell(\gamma)^2 \widehat \psi^2(\ell(\gamma)/L)\cos(2\mu \ell(\gamma))}{|I-\mathcal P_\gamma|}\right|^2 \mathrm d\mu.\]
We expand the square in the right hand-side and only keep pairs of closed geodesics whose lengths are close, since other pairs give a negligible contribution.  Indeed, if $|\ell(\gamma)-\ell(\gamma')|>1$ then,
\[\frac{1}{\Lambda}\int_{\lambda}^{\lambda +\Lambda} \cos(\mu\ell(\gamma))\cos(\mu\ell(\gamma'))\mathrm d\mu = \mathcal O\left(\Lambda^{-1}\right),\]
thus the sum corresponding to pairs with $|\ell(\gamma)-\ell(\gamma')|>1$ can just be bounded by
\[\Lambda^{-1}\left|\frac{2}{L^2}\sum_{\gamma\in \mathcal P_0} (\chi_\rho(\gamma)+\overline{\chi_\rho(\gamma)})^2\frac{\ell(\gamma)^2 \widehat \psi^2(\ell(\gamma)/L)}{|I-\mathcal P_\gamma|}\right|^2=\mathcal O(\Lambda^{-1}).\]
The sum corresponding to pairs $(\gamma,\gamma')$ with $|\ell(\gamma)-\ell(\gamma')|\le 1$ is bounded by
\begin{equation}\label{eq : convergence L2 intermède}\frac{1}{L^4}\sum_{|\ell(\gamma)-\ell(\gamma')|<1} \frac{\ell(\gamma)^2 \widehat \psi^2(\ell(\gamma)/L)}{|I-\mathcal P_\gamma|}\frac{\ell(\gamma')^2 \widehat \psi^2(\ell(\gamma')/L)}{|I-\mathcal P_\gamma'|}.\end{equation}
We are going to show that it converges to $0$ as $L$ gets large. 

We apply Lemma \ref{lem: size clusters} with $T=\ell(\gamma)$ for any $\gamma$ with length $\le L$ to bound sums over $\gamma'$ in \eqref{eq : convergence L2 intermède}:
\[\sum_{\gamma'\in \mathcal P_0, \ |\ell(\gamma)-\ell(\gamma')|<1} \frac{\ell(\gamma')^2 \widehat \psi^2(\ell(\gamma')/L)}{|I-\mathcal P_\gamma'|}\lesssim \ell(\gamma).\]
Thanks to the equidistribution Theorem \ref{lem : equidistribution lemma}, the sum in \eqref{eq : convergence L2 intermède} is in turn bounded by a constant times
\[\frac{1}{L^4}\sum_{\gamma\in \mathcal P_0} \frac{\ell(\gamma)^3\widehat \psi^2(\ell(\gamma)/L)}{|I-\mathcal P_\gamma|}\lesssim \frac{1}{L}\int t^2\widehat \psi^2(t)\mathrm dt.\] Combining all our estimates yields
\[\frac{1}{\Lambda}\int_{\lambda}^{\lambda+\Lambda}|\Sigma^2(\mu,L)-\Sigma^2_{\mathrm{GOE/GUE}}|^2\mathrm d\mu=\mathcal O\left(\frac{1}{L}+\frac{1}{\Lambda}\right).\]

\subsection{Convergence along a subset of density 1} We turn to the proof of Corollary \ref{cor : density}. The \emph{ad hoc} assumption on $L$ ensures that $\Sigma^2(\mu,\cdot)$ has negligible variations over the interval $[L(\lambda),L(\lambda+\Lambda)]$ for some good choice of $\Lambda$, allowing to replace $\Sigma^2(\mu,L(\lambda))$ by $\Sigma^2(\mu,L(\mu))$ in \eqref{eqthmquad}. We will construct an adequate function $\Lambda=\Lambda(\lambda)$ going to $+\infty$ then apply the item \emph{(iv)} of Theorem \ref{mainthm}. First write, for $\mu \in [\lambda,\lambda+\Lambda]$,
\[\Sigma^2_{\rm osc}(\mu,L(\lambda))-\Sigma^2_{\rm osc}(\mu,L(\mu))=\int_{L(\lambda)}^{L(\mu)}\frac{\partial \Sigma^2_{\rm osc}(\mu,L)}{\partial L} \mathrm dL.\]
The partial derivative of $\Sigma^2(\mu,L)$ with respect to $L$ is bounded by
\begin{equation}\label{eq:derivéesigma} \Big|\frac{\partial \Sigma^2_{\rm osc}(\mu,L)}{\partial L}\Big| \lesssim \frac{1}{L^4}\sum_{\ell(\gamma)\le L} \frac{\ell(\gamma)^3}{|I-\mathcal P_\gamma|}\lesssim \frac 1L.\end{equation}
Setting $f(\lambda):=\log(L(\lambda))$, the second assumption on $L$ reads $f'(\lambda)\ll 1$ as $\lambda\to +\infty$, and \eqref{eq:derivéesigma} integrates to
\begin{equation}\label{eq:variations Sigma}\Big|\Sigma^2_{\rm osc}\big(\mu,L(\lambda)\big)-\Sigma^2_{\rm osc}\big(\mu,L(\mu)\big)\Big|\lesssim \Big| \int_{L(\lambda)}^{L(\mu)} \frac{\mathrm dL}{L}\Big| \le \big|f(\lambda)-f(\mu)\big|\le \Lambda \sup_{[\lambda,+\infty]} |f'|, \end{equation}
We set
\[\Lambda(\lambda):=\min \big(\lambda,\big(\sup_{\mu \ge \lambda} |f'(\mu)| \big)^{-\frac 12}\big).\]
By construction, $1\ll \Lambda$ and $\Lambda\sup_{[\lambda,+\infty]} |f'|\ll 1$. Combining \eqref{eqthmquad} and \eqref{eq:variations Sigma} yields
\begin{equation}\label{L depend de mu mtnt}\frac{1}{\Lambda}\int_{\lambda}^{\lambda+\Lambda} \big|\Sigma^2_{\rm osc}(\mu,L(\mu))\big|^2 \mathrm d\mu \underset{\lambda\to +\infty}\longrightarrow 0.\end{equation}
Corollary \ref{cor : density} is now a standard consequence of \eqref{L depend de mu mtnt}.

\subsection{Proof of Theorem \ref{mainthm2}} The proof is the same as that of Theorem \ref{mainthm}, although one has to invoke Proposition \ref{prop : equidistribution Lie} with $f(g)=(\operatorname{Tr}(g)+\overline{\operatorname{Tr}(g)})^2$, instead of Theorem \ref{lem : equidistribution lemma}. We refer to \cite[\S 5.4]{naud2022random} for a proof of the analogous result in constant curvature. Naud showed that if $\mathbf G$ is not conjugated to a nontrivial product of Lie groups, then
\[\int_{\mathbf G} f(g)\mathrm dg\in \{2,4\}.\]
More precisely, the integral is equal to $4$ if $g\in \mathbf G \mapsto \operatorname{Tr}(g)$ is real-valued, in which case we get $\Sigma^2_{\mathbf G}=\Sigma^2_{\rm GOE}$. This includes the cases $\mathbf G=\mathrm{SO}(N), \mathrm{Sp}(N)$. Otherwise, the integral equals $2$ and $\Sigma^2_{\mathbf G}=\Sigma^2_{\rm GUE}$: this covers the cases $\mathbf G = \mathrm{U}(N), \mathrm{SU}(N)$ for $N\ge 3$. We point out that the computation of the integral above for $\mathbf G=\rm{SU}(2)$ was performed explicitly in \cite{Bolte1999}.

\section{Smooth transition from GOE to GUE} \label{sec:prooftransition} Now we explain how to observe an intermediate behavior by letting the representation $\rho$ depend on $\lambda$. We recall the setting of Theorem \ref{thm:smooth transition}: $\mathbf A$ is a smooth $1$-form on $X$ and we let 
\[\rho_\alpha(\gamma):=\exp\left(-\mathrm i\alpha \int_{\mathbf \gamma} \mathbf A\right).\]
Take three functions $\alpha =\alpha(\lambda)$, $L=L(\lambda)$ and $\delta(\lambda)$ such that $\alpha\sqrt L$ converges to $s\in \mathbf R_+$ as $\lambda\to +\infty$ and, as before, $1\ll L\le c_0\log \lambda$ and $\delta\gg \frac 1L$. According to the computations of the previous section, we have
\[\frac{1}{\delta}\int_\lambda^{\lambda+\delta} \Sigma^2(\rho_\alpha,\mu,L)\mathrm d\mu=\Sigma^2_{\rm GUE}+\frac{2}{L^2}\sum_{\gamma\in \mathcal P} \rho_\alpha(\gamma)^2 \frac{\ell(\gamma)^{\sharp}\ell(\gamma)\widehat \psi^2(\ell(\gamma)/L)}{|I-\mathcal P_\gamma|}+\mathcal O\left(\frac{1}{L^2}\right).\]
Our goal is to apply the local trace formula for the Ruelle operator acting on sections of the bundle $\mathcal E_{\rho_\alpha^2}$. As we already explained, this operator is conjugated to the operator $\mathbf P +2\alpha \langle \mathbf A(x),\xi\rangle_{T_x^*X}$ acting on smooth functions on $M$. It shall indeed be more practical to let the operator vary instead of the space. This requires the two following points:
\begin{itemize}
\item Understanding the behavior of the first resonance of the perturbed Ruelle operator \[\mathbf P_\alpha:=\mathbf P+\alpha \langle \mathbf A(x),\xi\rangle.\]
\item Providing uniform bounds on the remainder in the local trace formula with respect to $\alpha$.
\end{itemize}
The first point is the content of the well-known formula provided below. We state it in a rather general setting, although we will specifically use it for $M=S^*X$ and $a=\langle \mathbf A(x),\xi\rangle$.

\begin{prop}\label{prop: res perturbation 0} Let $\phi^t=\mathrm{e}^{tV}$ be a transitive, weak mixing, Anosov flow on a closed Riemannian manifold $M$. Denote by $\nu_0$ the SRB measure on $M$ (it is a generalization of the normalized Liouville measure to arbitrary Anosov flows), that may be defined as the weak limit
\[\nu_0:=\lim_{T\to +\infty} \sum_{T\le \ell(\gamma)\le T+1} \frac{\delta_\gamma }{|I-\mathcal P_\gamma|}, \qquad \langle \delta_\gamma,f\rangle:=\int_\gamma f.\]
Note that $\nu_0$ is not absolutely continuous with respect to the Lebesgue measure in general. 

Let $a\in C^\infty(M)$. There is a neighborhood $U$ of $0$ such that if $\alpha$ is sufficiently small, then $\mathbf P+\alpha a$ has a unique resonance in $U$, denoted by $\mu_\alpha$, that depends smoothly on $\alpha$. Additionally, we have the Taylor expansion 
\[\mu_\alpha=-\frac 12\mathrm i\alpha^2\mathrm{Var}_{\nu_0}(a)+\mathcal O(\alpha^3),\] where the variance of $a$ has been defined in \S \ref{subsec:transition}.\end{prop}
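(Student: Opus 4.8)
The plan is to treat $\mathbf{P}+\alpha a$ as an analytic perturbation of $\mathbf{P}=-\mathrm{i}V$ acting on an anisotropic Sobolev space $H_{sG}(M)$, and to locate the resonance that continues the resonance $0$ of $\mathbf{P}$. First I would recall from \S \ref{subsec:anisotropic} and from Lemma \ref{lem: resonance en 0} (applied with the trivial representation) that $\mathbf{P}$ has a simple resonance at $0$, with resonant state the constant function $1$ and co-resonant state the SRB measure $\nu_0$ (the generalized eigenfunction of the adjoint $\mathbf{P}^*$, viewed as an element of the dual anisotropic space, with the pairing $\langle 1,\nu_0\rangle=1$). Since $a$ is a bounded multiplication operator on $H_{sG}$ (multiplication by a smooth function is a pseudodifferential operator of order $0$, hence bounded on any $H_{sG}$), the family $z\mapsto (\mathbf{P}+\alpha a-z)^{-1}$ is meromorphic on $\{\operatorname{Im} z>-C_0\}$ and depends holomorphically on $\alpha$; by standard perturbation theory for isolated eigenvalues of finite multiplicity (Kato), for $\alpha$ small there is a neighborhood $U$ of $0$ in which $\mathbf{P}+\alpha a$ has exactly one resonance $\mu_\alpha$, of multiplicity one, and $\alpha\mapsto\mu_\alpha$ is holomorphic with $\mu_0=0$.

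Next I would compute the Taylor coefficients of $\mu_\alpha$ by the usual Rayleigh--Schrödinger formulas, using the spectral projector $\Pi_\alpha=\frac{1}{2\pi\mathrm{i}}\oint_{\partial U}(z-\mathbf{P}-\alpha a)^{-1}\,\mathrm{d}z$ and the reduced resolvent $R_0=(\mathbf{P})^{-1}(1-\Pi_0)$ at the unperturbed eigenvalue $0$. The first-order term is $\mu_\alpha'|_{\alpha=0}=\langle a\,1,\nu_0\rangle=\int_M a\,\mathrm{d}\nu_0$, which I would argue vanishes: indeed $a=\langle \mathbf{A}(x),\xi\rangle$ is (the symbol of) a derivative along the flow up to lower order — more to the point, after the conjugation recalled in \S\ref{subsec:transition} one may subtract the mean, and the relevant statement is that replacing $a$ by $a-\int a\,\mathrm{d}\nu_0$ only shifts $\mu_\alpha$ by $-\mathrm{i}\alpha\int a\,\mathrm{d}\nu_0$, which is real times $\mathrm{i}$ and does not affect the leading \emph{real} behavior; but in the case at hand $\int_M\langle \mathbf{A},\xi\rangle\,\mathrm{d}\nu_0=0$ because $\nu_0$ is invariant under the flip $(x,\xi)\mapsto(x,-\xi)$ for the geodesic flow on $S^*X$ and $a$ is odd under this flip. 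Hence $\mu_\alpha=-\mathrm{i}\alpha^2 c+\mathcal{O}(\alpha^3)$ for some constant $c$. The second-order coefficient is the classical expression
\[
-\mathrm{i}c=\mu_\alpha''|_{\alpha=0}/2=-\langle a\,R_0\,a\,1,\nu_0\rangle
=-\Big\langle a\,(\mathbf{P})^{-1}(a-\textstyle\int a\,\mathrm{d}\nu_0),\nu_0\Big\rangle .
\]
Using $(\mathbf{P})^{-1}=\mathrm{i}\int_0^\infty \mathrm{e}^{-\mathrm{i}t\mathbf{P}}\,\mathrm{d}t=\mathrm{i}\int_0^\infty (\phi^{-t})^*\,\mathrm{d}t$ in the resolvent sense (regularized by the anisotropic space, or equivalently by the damped-correlation limit), one gets $c=\tfrac12\int_0^\infty\langle a\circ\phi^t,a\rangle_{L^2(\nu_0)}\,\mathrm{d}t\cdot 2=\mathrm{Var}_{\nu_0}(a)$ after symmetrizing in $\pm t$ and recognizing the definition of the dynamical variance from \S\ref{subsec:transition} (the integral converges absolutely for the geodesic flow by exponential mixing, but for a general Anosov flow one reads it as the $\lambda\to 0$ limit of $\int_0^\infty \mathrm{e}^{-\lambda t}\langle a\circ\phi^t,a\rangle\,\mathrm{d}t$, exactly as in the Definition). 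This yields $\mu_\alpha=-\tfrac12\mathrm{i}\alpha^2\mathrm{Var}_{\nu_0}(a)+\mathcal{O}(\alpha^3)$.

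The main obstacle is the rigorous functional-analytic justification that (i) the spectral data of the resonance $0$ of $\mathbf{P}$ on $H_{sG}$ are as claimed (simple, with resonant state $1$ and co-resonant functional $\nu_0$, suitably normalized), and (ii) the Rayleigh--Schrödinger expansion, which is purely formal on a Hilbert space with a genuine isolated eigenvalue, is valid here where $0$ is merely an isolated \emph{pole of the resolvent} with no spectral gap in the usual sense — one must work on the anisotropic space where it genuinely becomes an isolated eigenvalue of finite multiplicity, and check that the reduced resolvent $R_0$ appearing in the second-order term really equals the regularized integral $\mathrm{i}\int_0^\infty(\phi^{-t})^*\,\mathrm{d}t$ composed with $(1-\Pi_0)$, so that pairing against $\nu_0$ reproduces the damped correlation function. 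I would handle (i) and (ii) by citing the Faure--Sjöstrand / Dyatlov--Zworski framework recalled in \S\ref{subsec:anisotropic} and the identification of the co-resonant state of $-\mathrm{i}V$ at $0$ with the SRB measure (e.g.\ via the characterization $\mathbf{P}^*u=0$, $\mathrm{WF}(u)\subset E_s^*$), and I would make the constant-in-$\alpha$ control explicit enough that it plugs into the uniform local-trace-formula estimates deferred to Appendix \ref{sec:appendix}; the rest is the standard Kato perturbation computation.
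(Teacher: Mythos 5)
Your proposal is correct in substance and follows the same route as the paper: pass to the anisotropic spaces where $0$ is a simple isolated eigenvalue of $\mathbf P$ with resonant state $\mathbf 1$ and co-resonant state the SRB measure $\nu_0$, perturb by the bounded multiplication operator $\alpha a$, and identify the second-order coefficient $-\langle a\,R_0\,a\,\mathbf 1,\nu_0\rangle=-\langle \mathbf R_{\rm hol}(0)a,\,a\nu_0\rangle$ with the damped correlation integral by writing $\mathbf R(z)=\mathrm i\int_0^{+\infty}\mathrm e^{-\mathrm i t(\mathbf P-z)}\,\mathrm dt$ for $\operatorname{Im} z\gg 1$ and letting $z=\mathrm i\lambda\to 0$. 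The only real difference is packaging: instead of quoting Kato's analytic perturbation theory, the paper gets existence and smoothness of $\mu_\alpha$ (and of the spectral projector) from a finite-rank Fredholm determinant $F(\alpha,z)=\det(I+K(\alpha,z))$ built with a complex absorbing potential, plus the implicit function theorem --- which is also what furnishes the uniform-in-$\alpha$ control needed for the local trace formula --- and it performs the second-order computation by differentiating the weak eigen-equation for the smooth family $\eta_\alpha$ of resonant states, the key step being Lemma \ref{lem:deriveeresonant} ($\dot\eta_0=-\mathbf R_{\rm hol}(0)a+\mathrm{const}$), which is exactly your Rayleigh--Schr\"odinger first-order eigenvector correction. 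For the general (non-contact) Anosov statement, note that simplicity of the resonance $0$ and the identification of the co-resonant state with $\nu_0$ are not covered by Lemma \ref{lem: resonance en 0}; the paper imports them from \cite{humbert2024criticalaxisruelleresonances}, and you should too.

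Two points to tighten. First, your parenthetical that subtracting the mean shifts $\mu_\alpha$ by ``$-\mathrm i\alpha\int a\,\mathrm d\nu_0$'' is wrong: since the potential enters as $\mathbf P+\alpha a$ with no factor of $\mathrm i$, the shift is the real number $\alpha\int_M a\,\mathrm d\nu_0$, so the stated expansion genuinely requires $\int_M a\,\mathrm d\nu_0=0$ (this is implicit in the definition of $\mathrm{Var}_{\nu_0}$ and is what the paper's proof assumes); your flip-symmetry argument only handles the geodesic-flow application, not the general statement. Second, the final ``symmetrizing in $\pm t$'' factor juggling should be replaced by honest bookkeeping: with the one-sided definition of $\mathrm{Var}_{\nu_0}$ used in the paper, your computation (like the computation in \S\ref{sec:Appendix}) yields $\mu_\alpha=-\mathrm i\alpha^2\lim_{\lambda\to0}\int_0^{+\infty}\mathrm e^{-\lambda t}\langle a\circ\phi^t,a\rangle_{L^2(\nu_0)}\,\mathrm dt+\mathcal O(\alpha^3)$, and the factor $\tfrac12$ in the displayed expansion corresponds to taking instead the symmetrized (two-sided) correlation integral; fix one convention and track it rather than inserting a compensating factor to land on the stated constant.
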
 
Now, a local trace formula with uniform error terms follows from a careful study of the arguments in \cite{Jin2016,jin2023number}. We postpone this discussion to the Appendix, and move on to the proof of Theorem \ref{thm:smooth transition}. 

\begin{proof}[Proof of Theorem \ref{thm:smooth transition}] This is similar to the proof of Theorem \ref{lem : equidistribution lemma}, but here we apply the local trace formula to the perturbed operator $\mathbf P +2\alpha \langle \mathbf A(x),\xi\rangle$, with test function $t\mapsto t\widehat \psi^2(t)$. As above, let $a(x,\xi):=\langle \mathbf A(x),\xi\rangle_{T_x^*X}$, and denote by $\mu_\alpha$ the first resonance of $\mathbf P+\alpha a$. Then,
\[\frac{2}{L^2}\sum_{\gamma\in \mathcal P_0} \rho_\alpha(\gamma)^2 \frac{\ell(\gamma)^{\sharp}\ell(\gamma)\widehat \psi^2(\ell(\gamma)/L)}{|I-\mathcal P_\gamma|}=2\int_{\mathbf R_+} \mathrm{e}^{-\mathrm i \mu_{2\alpha}L t} t\widehat \psi^2(t)\mathrm dt+\mathcal O\left(\frac{1}{L^2}\right).\]
Now as $\lambda \to +\infty$ we have $\mu_{2\alpha} L \sim -\mathrm i 2\alpha^2\mathrm{Var}_{\nu_0}(a) L\longrightarrow -\mathrm i 2 \mathrm{Var}_{\nu_0}(a)s^2$, thus by dominated convergence,
\[\underset{\lambda \to +\infty}\lim\int_{\mathbf R_+} \mathrm{e}^{-\mathrm i \mu_{2\alpha}L t} t\widehat \psi^2(t)\mathrm dt=2\int_{\mathbf R_+} \mathrm{e}^{-2\mathrm{Var}_{\nu_{\scalebox{.7}{$\scriptscriptstyle 0$}}}(a)s^2 t}t\widehat \psi^2(t)\mathrm dt.\]
This concludes the proof.\end{proof}

\subsection{Central limit theorem for closed orbits}
From the local trace formula for the perturbed operator $\mathbf P_\alpha$, we can derive a central limit theorem for periodic orbits of transitive, weak mixing Anosov flows, when picking periodic orbits at random in $\{\gamma, T\le \ell(\gamma)\le T+1\}$, with weights proportional to $\frac{1}{|I-\mathcal P_\gamma|}$. 

\begin{prop}[Central limit theorem for periodic orbits]\label{prop: central limit theorem for closed orbits} Let $\phi^t:M\to M$ be a transitive Anosov flow, assumed to be weak mixing with respect to the SRB measure $\nu_0$, meaning equivalently that $0$ is the only resonance on the real axis. Let $\omega$ be a smooth, nonzero, nonnegative, compactly supported function. For $T>0$ large enough, endow the set of periodic orbits $\mathcal G$ with the probability distribution $\mathbf P_T$ given by $\mathbf P_T(\gamma)=C_T \frac{\ell(\gamma)^\sharp \omega(\ell(\gamma)-T)}{|I-\mathcal P_\gamma|}$, where $C_T$ is a normalization factor.
Let $a\in C^\infty(M,\mathbf R)$ and assume that $a$ has zero mean (with respect to $\nu_0$). Define a random variable $X_T$ by $X_T(\gamma)= \frac{1}{\sqrt T}\int_\gamma a$. Then, the sequence $(X_T,\mathbf P_T)$ converges in distribution to a centered Gaussian random variable, with variance $\mathrm{Var}_{\nu_0}(a)$. \end{prop}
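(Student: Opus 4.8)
The plan is to compute the characteristic function $\varphi_T(\xi):=\mathbf E_{T}\big[\mathrm e^{\mathrm i\xi X_T}\big]$ (with $\mathbf E_T$ the expectation under $\mathbf P_T$) and show it converges pointwise to $\mathrm e^{-\frac12\xi^2\mathrm{Var}_{\nu_0}(a)}$ for every $\xi\in\mathbf R$; by L\'evy's continuity theorem this yields convergence in distribution of $(X_T,\mathbf P_T)$ to a centered Gaussian of variance $\mathrm{Var}_{\nu_0}(a)$ (in the degenerate case $\mathrm{Var}_{\nu_0}(a)=0$ the limit law is $\delta_0$, consistent with the statement). Fix $\xi\in\mathbf R$, set $\alpha=\alpha_T:=-\xi/\sqrt T$ (a \emph{real} coupling tending to $0$) and $\omega_T(t):=\omega(t-T)$, which is supported in $(0,+\infty)$ for $T$ large. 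Since $\mathrm e^{\mathrm i\xi X_T(\gamma)}=\mathrm e^{-\mathrm i\alpha\int_\gamma a}$,
\[\varphi_T(\xi)=C_T\sum_{\gamma\in\mathcal G}\mathrm e^{-\mathrm i\alpha\int_\gamma a}\frac{\ell(\gamma)^{\sharp}\,\omega(\ell(\gamma)-T)}{|I-\mathcal P_\gamma|},\qquad C_T^{-1}=\sum_{\gamma\in\mathcal G}\frac{\ell(\gamma)^{\sharp}\,\omega(\ell(\gamma)-T)}{|I-\mathcal P_\gamma|}.\]
Both sums are pairings of $\omega_T$ with the right-hand side of the Guillemin trace formula \eqref{eq:guillemin} — the numerator for the operator $\mathbf P_\alpha:=\mathbf P+\alpha a$, the denominator for $\mathbf P$ (trivial representation in each case) — so I would feed both into the local trace formula.

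\textbf{Extracting the Gaussian.} Fix $A>0$ small. Theorem \ref{localtraceformula} applied to $\mathbf P_\alpha$ and paired with $\omega_T$ gives
\[\sum_{\gamma\in\mathcal G}\mathrm e^{-\mathrm i\alpha\int_\gamma a}\frac{\ell(\gamma)^{\sharp}\,\omega(\ell(\gamma)-T)}{|I-\mathcal P_\gamma|}=2\pi\sum_{\substack{\mu\in\mathrm{Res}(\mathbf P_\alpha)\\ \mathrm{Im}\,\mu>-A}}\mathrm e^{-\mathrm i\mu T}\,\widehat{\omega}(\mu)+\langle F_{A,\alpha},\omega_T\rangle,\]
where $F_{A,\alpha}$ denotes the $F_A$ of Theorem \ref{localtraceformula} for the operator $\mathbf P_\alpha$. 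By Proposition \ref{prop: res perturbation 0} — whose hypothesis $\int_M a\,\mathrm d\nu_0=0$ holds by assumption, so the $\mathcal O(\alpha)$ term vanishes — there are a fixed neighborhood $U\ni 0$ and $\alpha_0>0$ such that for $|\alpha|\le\alpha_0$ the operator $\mathbf P_\alpha$ has a \emph{unique} resonance $\mu_\alpha$ in $U$, with $\mu_\alpha=-\frac12\mathrm i\alpha^2\mathrm{Var}_{\nu_0}(a)+\mathcal O(\alpha^3)$. Hence $\mu_{\alpha_T}T\to-\frac12\mathrm i\xi^2\mathrm{Var}_{\nu_0}(a)$, so the near-zero resonance contributes $2\pi\,\mathrm e^{-\mathrm i\mu_{\alpha_T}T}\,\widehat\omega(\mu_{\alpha_T})\to\mathrm e^{-\frac12\xi^2\mathrm{Var}_{\nu_0}(a)}\cdot 2\pi\widehat\omega(0)$. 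The remaining resonances with $\mathrm{Im}\,\mu>-A$ lie outside $U$, hence satisfy $\mathrm{Im}\,\mu\le -c<0$ uniformly; combined with the counting bound \eqref{eq : upper bound resonances} and the fast decay of $\widehat\omega$ on $\{\mathrm{Im}\,\xi\le0\}$, they contribute $\mathcal O(\mathrm e^{-cT})$, exactly as in the proof of Theorem \ref{lem : equidistribution lemma}. The term $\langle F_{A,\alpha},\omega_T\rangle$ is $\mathcal O_A(\mathrm e^{-AT/2})$ by shifting the contour to $\mathbf R+\mathrm i\frac A2$ and using the polynomial bound on $\widehat F_{A,\alpha}$, as in the proof of Lemma \ref{lem: size clusters}. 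The same computation with $\alpha=0$ is just Lemma \ref{lem: size clusters}: $C_T^{-1}\to\int_{\mathbf R}\omega$, the limit of the near-zero term $2\pi\widehat\omega(0)$. Dividing the two asymptotics, $\varphi_T(\xi)\to\mathrm e^{-\frac12\xi^2\mathrm{Var}_{\nu_0}(a)}$, which completes the proof.

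\textbf{Main obstacle.} The whole argument rests on a version of the local trace formula for $\mathbf P_\alpha$ that is \emph{uniform} as $\alpha\to0$: one needs $U$ and $\alpha_0$ to be chosen so that no resonance other than $\mu_\alpha$ enters $U$ for all $|\alpha|\le\alpha_0$, the counting bound $N_A(E)=\mathcal O_A(E^{n+1})$ uniform in $\alpha$, and the bound $\widehat F_{A,\alpha}(\xi)=\mathcal O_A(\langle\xi\rangle^{7+\varepsilon})$ on $\{\mathrm{Im}\,\xi\le A-\varepsilon\}$ uniform in $\alpha$. Establishing this uniform local trace formula — by revisiting the anisotropic-space construction and the resolvent estimates of \cite{Jin2016,jin2023number} with the extra parameter $\alpha$ — is the real work, and is precisely what is deferred to Appendix \ref{sec:appendix}. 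Granting it, together with the Taylor expansion of $\mu_\alpha$ from Proposition \ref{prop: res perturbation 0}, the remaining steps are routine and mirror the computations already carried out in \S\ref{sec:équidistributions}.
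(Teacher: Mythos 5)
Your strategy is the same as the paper's: compute the characteristic function of $X_T$, feed the test function $\omega(\cdot-T)$ into the uniform local trace formula for the perturbed operator $\mathbf P_\alpha$ with $\alpha\sim \xi/\sqrt T$, extract the Gaussian from the Taylor expansion of the first resonance $\mu_\alpha$ (Proposition \ref{prop: res perturbation 0}), and normalize via Lemma \ref{lem: size clusters}; the heavy lifting is indeed the $\alpha$-uniform trace formula deferred to the appendix.

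There is, however, one step as written that does not go through: you claim that the resonances of $\mathbf P_\alpha$ other than $\mu_\alpha$ lying in the strip $\{\operatorname{Im}\mu>-A\}$ satisfy $\operatorname{Im}\mu\le -c<0$ uniformly, hence contribute $\mathcal O(\mathrm e^{-cT})$. Under the standing hypotheses (transitive, weak mixing; no spectral gap is assumed), weak mixing only excludes resonances \emph{on} the real axis; resonances may accumulate towards the real axis as $|\operatorname{Re}\mu|\to+\infty$, so no uniform $c$ exists in general, and the factor $\mathrm e^{-\mathrm i\mu T}$ gives no decay for such resonances. The decay must instead come from $|\widehat\omega_T(\mu)|=\mathcal O(\langle\mu\rangle^{-N})$ on $\{\operatorname{Im}\mu\le 0\}$: fix $R>0$, choose $A=A(R)$ small enough that the rectangle $[-R,R]+\mathrm i[-A,0]$ contains no resonance of $\mathbf P$ other than $0$ (hence, for $T$ large, no resonance of $\mathbf P_{\alpha_T}$ other than $\mu_{\alpha_T}$), bound the resonances with $|\operatorname{Re}\mu|>R$ by $\mathcal O(R^{-1})$ using the counting bound \eqref{eq : upper bound resonances}, take $\limsup_{T\to+\infty}$, and only then let $R\to+\infty$. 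This is exactly the double-limit structure of the paper's proofs of Lemma \ref{lem: size clusters} and of this proposition; it yields the stated convergence (without an exponential rate), and with this correction your argument coincides with the paper's. (A cosmetic point: your $\alpha=-\xi/\sqrt T$ may be negative while the uniform estimates are stated for $\alpha\in[0,\alpha_0]$; this is harmless, e.g.\ replace $a$ by $-a$ or use $\overline{\varphi_T(\xi)}=\varphi_T(-\xi)$ since $a$ is real.)
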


\begin{rem} \begin{itemize}
\item Cantrell--Sharp obtained a similar result when picking orbits uniformly in $\{\gamma\in \mathcal G, \ T\le \ell(\gamma)\le T+1\}$, with $\mathbf P_T(\gamma)=C_T \mathbf 1_{\ell(\gamma)\in [T,T+1]}$ \cite{Cantrell21}. Their dynamical assumption is a bit stronger --- namely, they assume that the stable and unstable foliations are not jointly integrable, implying in particular that the flow is weak mixing ---, but they are able to take a sharp function $\omega=\mathbf 1_{[0,1]}$, and they allow $a$ being merely Hölder continuous. Note that Lalley \cite[Theorem 6]{LALLEY1987154} obtained similar results, for smooth $a$ and under a slightly stronger dynamical assumption.
\item Using the recent results of \cite{humbert2024criticalaxisruelleresonances}, we expect our method to extend seamlessly to the more general case where one picks closed orbits at random with weights proportional to $\frac{\mathrm{e}^{\int_\gamma w}}{|I-\mathcal P_\gamma|}$, for some smooth real potential $w$. Since this is not the main point of the paper, we chose to stick to the case with no potential. Note that this would still not allow retrieving the result of Cantrell--Sharp because one would have to let $w$ be the SRB potential $\left.\frac{\mathrm d}{\mathrm dt}\right|_{t=0} \log \big|\det \mathrm d\phi^t_{|E_u}\big|$, which is not smooth in general. 
\end{itemize}
\end{rem}

\begin{proof}[Proof of Proposition \ref{prop: central limit theorem for closed orbits}] The proof is similar to that of Lemma \ref{lem: size clusters}. We show that the characteristic functions of $X_T$ converge pointwisely to that of a Gaussian as $T\to +\infty$.

Let $s>0$ and $R>0$. Let $\alpha=\frac{s}{\sqrt T}$.  Since resonances are isolated and there are no resonances other than $0$ on the real axis, we can find some small $A>0$, depending on $R$, such that $0$ is the only resonance of $\mathbf P$ in the domain $[-2R,2R]+\mathrm{i}[-2A,0]$. In turn, for $T$ large enough (depending on $R$), $\mathbf P_\alpha$ has no resonance other than $\mu_\alpha$ in $[-R,R]+\mathrm{i}[-A,0]$. The local trace formula for $\mathbf P_\alpha$, applied to the test function $\omega_T:=\omega(\cdot -T)$ then yields 
\[\begin{array}{ll}C_T^{-1}\mathbf E[\mathrm{e}^{-\mathrm{i} s X_T}]& =\displaystyle \int_{\mathbf R} \mathrm{e}^{-\mathrm{i} \mu_\alpha t}\omega(t-T)\mathrm dt +\sum_{\mu\in \mathrm{Res}(\mathbf P_\alpha)\backslash \{0\}, \mathrm{Im}(\mu)>-A} \widehat \omega_T(\mu)+\langle F_{\alpha,A}, \omega_T\rangle\\ & =:\mathrm{(I)+(II)+(III)}.\end{array}\]
Let us evaluate each term.

\textit{I.} After a change of variables in the integral, we have
\[\mathrm{(I)}=\mathrm{e}^{-\mathrm{i} \mu_\alpha T}\int_{\mathbf R} \mathrm{e}^{-\mathrm{i}\mu_\alpha t}\omega(t)\mathrm dt.\]
By Proposition \ref{prop: res perturbation 0}, we have $\mu_\alpha=-\frac 12 \mathrm{Var}_\nu(a) \alpha^2 +\mathcal O(\alpha^3)$. Recalling that $\alpha=\frac{s}{\sqrt T}$, it follows that
\[\mathrm{(I)}=\mathrm{e}^{-\frac{s^2}{2}\mathrm{Var}_\nu(a)}\int_{\mathbf R} \omega(t)\mathrm dt +\mathcal O(T^{-\frac 12}).\]

\textit{II and III.} These are handled as in the proof of Lemma \ref{lem: size clusters}, and we get
\[\mathrm{(II)}=\mathcal O(R^{-1}), \qquad \mathrm{(III)}=\mathcal O_{A}(\mathrm{e}^{-\frac A2 T}).\]
To sum up, we have
\[C_T^{-1}\mathbf E_{T}(\mathrm{e}^{-\mathrm{i} s X_T})=\mathrm{e}^{-\frac 12 s^2\mathrm{Var}_\nu(a)}\int_{\mathbf R} \omega(t)\mathrm dt +\mathcal O(T^{-\frac 12})+\mathcal O_{A}(e^{-\frac A2 T})+ \mathcal O(R^{-1}).\]
Note that Lemma \ref{lem: size clusters} exactly states that $C_T^{-1}$ converges to $\int_{\mathbf R} \omega$ as $T\to +\infty$, therefore
\[\limsup_{T\to +\infty} \big| \mathbf E_{T}(\mathrm{e}^{-\mathrm{i} s X_T})-\mathrm{e}^{-\frac 12s^2 \mathrm{Var}_{\nu_{\scalebox{.7}{$\scriptscriptstyle 0$}}}(a)}\big|=\mathcal O(R^{-1}).\]
Since this holds for any $R>0$ we conclude
\[\mathbf E_{T}(\mathrm{e}^{\mathrm{i} s X_T})\underset{T\to +\infty}\longrightarrow \mathrm{e}^{-\frac 12s^2 \mathrm{Var}_{\nu_{\scalebox{.7}{$\scriptscriptstyle 0$}}}(a)}. \]
The pointwise convergence of characteristic functions of $(X_T)$ to that of a Gaussian implies convergence in distribution.\end{proof}

Unfortunately, convergence in distribution does not readily give convergence of moments. Still, when dealing with geodesic flows in negative curvature, we are able to show convergence of the variance $\mathbf E[X_T^2]$ and find an expression of $\mathrm{Var}_{\nu_0}(a)$ involving the integrals of $a$ along closed geodesics. This is discussed in Proposition \ref{prop : appendice variance et flux}. 

\section{The Central Limit Theorem for random covers}\label{sec:CLT}
This section is devoted to the proof of Theorem \ref{thm : CLT}. The proof goes in two steps. First, using a result of Maoz \cite{Maoz1}, we show that for fixed $(\lambda,L)$, as $n$ goes to $+\infty$, the sequence of random variables $(\widetilde N_n(\lambda,L))$ converges in distribution to an explicit random variable $\widetilde N_{\infty}(\lambda,L)$. In a second step, we show that as $L$ and $\lambda$ go to $+\infty$ in the regime $L\le c_0\log \lambda$, provided that $\Sigma(\lambda,L)$ does not get too close to $0$, the cumulants --- defined in the following --- of the reduced variable $\frac{\widetilde N_{\infty}(\lambda,L)}{\Sigma(\lambda,L)}$ converge to those of the normal distribution $\mathcal N(0,1)$, implying convergence in distribution.
\subsection{Reminders of probability theory} 
Let $Y$ be a random variable. Formally, the \emph{cumulants} of $Y$ are the coefficients $\kappa_m=\kappa_m(Y)$ appearing in the power series
\begin{equation}\label{eq:defcumulants}\log \mathbf E(\mathrm{e}^{tY})=\sum_{m\ge 1} \frac{\kappa_m(Y)}{m!}t^m.\end{equation}
Assuming the function $t\mapsto \log \mathbf E(\mathrm{e}^{tY})$ to be smooth in a neighborhood of $0$, they are defined by
\[\kappa_m(Y):=\left.\frac{\mathrm d^m}{\mathrm dt^m}\right|_{t=0}\log \mathbf E(\mathrm{e}^{tY}).\]
Note that the cumulants of $Y$ can be written as polynomial expressions in the moments of $Y$, and \emph{vice versa}. We chose to work with cumulants instead of moments because of the following property, that allows to compute easily the cumulants of a sum of independent random variables.   
\begin{lem}\label{rem: cumulants ind}If $X$ and $Y$ are independent variables, with well-defined cumulants, then the cumulants of $X+Y$ are given by $\kappa_m(X+Y)=\kappa_m(X)+\kappa_m(Y)$. For $a\in \mathbf R$ we have $\kappa_m(aX)= a^m\kappa_m(X)$.\end{lem}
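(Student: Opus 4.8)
The plan is to derive both identities directly from the defining relation \eqref{eq:defcumulants}, working with the moment generating function $M_Z(t):=\mathbf E(\mathrm e^{tZ})$ of a random variable $Z$. Under the standing assumption that $t\mapsto \log\mathbf E(\mathrm e^{tZ})$ is smooth near $0$ for each of the variables $X$, $Y$, $X+Y$ and $aX$ at play, the series in \eqref{eq:defcumulants} converge on a common neighbourhood of the origin, so that identifying Taylor coefficients in $t$ is legitimate. This is the only analytic point one has to be mindful of.

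For the additivity statement, the one substantive step is the multiplicativity of the moment generating function under independence: since $X$ and $Y$ are independent, so are $\mathrm e^{tX}$ and $\mathrm e^{tY}$, hence $M_{X+Y}(t)=\mathbf E(\mathrm e^{tX}\mathrm e^{tY})=M_X(t)\,M_Y(t)$ for $t$ near $0$. Taking logarithms gives $\log M_{X+Y}(t)=\log M_X(t)+\log M_Y(t)$; expanding each side according to \eqref{eq:defcumulants} and comparing the coefficient of $t^m/m!$ yields $\kappa_m(X+Y)=\kappa_m(X)+\kappa_m(Y)$.

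For the homogeneity statement, I would simply substitute $M_{aX}(t)=\mathbf E(\mathrm e^{taX})=M_X(at)$, so that $\log M_{aX}(t)=\sum_{m\ge 1}\frac{\kappa_m(X)}{m!}(at)^m=\sum_{m\ge 1}\frac{a^m\kappa_m(X)}{m!}t^m$, and comparing with the definition of $\kappa_m(aX)$ as the coefficient of $t^m/m!$ in $\log M_{aX}(t)$ gives $\kappa_m(aX)=a^m\kappa_m(X)$. There is no real obstacle in this argument; the only point needing a word of care is the convergence of the power series on a common neighbourhood of $0$, which is exactly what the smoothness hypothesis provides and which makes the termwise comparison of coefficients rigorous. (Alternatively one could argue through the polynomial moment–cumulant relations, but the generating-function route is the cleanest.)
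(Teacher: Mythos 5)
Your argument is correct and is exactly the standard one; the paper states this lemma without proof, so there is nothing to diverge from. One cosmetic remark: since the paper defines $\kappa_m$ as the $m$-th derivative of $t\mapsto\log\mathbf E(\mathrm e^{tY})$ at $0$ under a smoothness (not analyticity) hypothesis, it is cleaner to differentiate the identities $\log M_{X+Y}(t)=\log M_X(t)+\log M_Y(t)$ and $\log M_{aX}(t)=\log M_X(at)$ $m$ times at $t=0$ rather than compare power-series coefficients, which sidesteps any convergence issue.
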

We recall the formulas for the cumulants of Poisson and Gaussian random variables:
\begin{itemize}
\item A Poisson distribution with expected value $\lambda$ has all its cumulants equal to $\lambda$.
\item For a Gaussian distribution with mean $\mu$ and variance $\Sigma^2$, one has $\kappa_1=\mu$, $\kappa_2=\Sigma^2$ and $\kappa_m=0$ for $m\ge 3$. 
\end{itemize}

We recall the notion of convergence \emph{in distribution} of random variables:
\begin{defi}Let $(Y_n)$ be a sequence of real random variables on $\mathbf R^d$ --- not necessarily defined on the same space. We say that $(Y_n)$ converges to $Y_\infty$ \emph{in distribution}, or in law, if for any continuous bounded function $h$ on $\mathbf R^d$, we have
\[\mathbf E[h(Y_n)]\underset{n\to+\infty}\longrightarrow \mathbf E[h(Y_\infty)].\]\end{defi}
As is well known, to check convergence in distribution, it is sometimes enough to check convergence of moments, see e.g. \cite[Chapter 30]{billingsley1995probability}, but we can also formulate the result with cumulants.
\begin{lem} Let $Y_\infty$ be a real random variable such that the power series \eqref{eq:defcumulants} has positive radius of convergence. Assume that $(Y_n)$ is a sequence of random variables such that for any $m\ge 1$, the sequence $(\kappa_m(Y_n))$ converges to $\kappa_m(Y_{\infty})$. Then, $(Y_n)$ converges in distribution to $Y_\infty$. \end{lem}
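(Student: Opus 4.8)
The plan is to reduce the statement to the classical method of moments (the Fréchet--Shohat theorem, see \cite[Theorem 30.2]{billingsley1995probability}); the only point requiring care is to show that the law of $Y_\infty$ is determined by its moments. \emph{Step 1: from cumulants to moments.} Since each $\kappa_m(Y_n)$ is assumed to exist, the map $t\mapsto \log \mathbf E(\mathrm e^{tY_n})$ is smooth near $0$, hence $\mathbf E(\mathrm e^{tY_n})<\infty$ on a neighbourhood of $0$ and $Y_n$ has finite moments of every order; moreover $\mathbf E(Y_n^m)$ is a universal polynomial (with nonnegative integer coefficients, the $m$-th complete Bell polynomial) in $\kappa_1(Y_n),\ldots,\kappa_m(Y_n)$, obtained by composing the Taylor expansions in the identity $\exp\big(\sum_j \kappa_j t^j/j!\big)=\sum_m \mathbf E(Y^m)\,t^m/m!$. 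The same identities hold for $Y_\infty$. Therefore the hypothesis $\kappa_m(Y_n)\to \kappa_m(Y_\infty)$ for every $m$ yields $\mathbf E(Y_n^m)\to \mathbf E(Y_\infty^m)$ for every $m\ge 1$.

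\emph{Step 2: $Y_\infty$ is moment-determinate.} By assumption the series \eqref{eq:defcumulants} for $Y=Y_\infty$ converges for $|t|<r$ for some $r>0$, so $\mathbf E(\mathrm e^{tY_\infty})<\infty$ for $|t|<r$. Using $\mathrm e^{t|y|}\le \mathrm e^{ty}+\mathrm e^{-ty}$ we get $C_t:=\mathbf E(\mathrm e^{t|Y_\infty|})<\infty$ for $0<t<r$, and from $|y|^{2k}\le (2k)!\,t^{-2k}\mathrm e^{t|y|}$ we deduce $\mathbf E(Y_\infty^{2k})\le (2k)!\,t^{-2k}C_t$. Stirling's formula then gives $\big(\mathbf E(Y_\infty^{2k})\big)^{-1/(2k)}\gtrsim_t 1/k$ for $k$ large, so Carleman's condition $\sum_{k\ge 1}\big(\mathbf E(Y_\infty^{2k})\big)^{-1/(2k)}=+\infty$ holds and the Hamburger moment problem for $\big(\mathbf E(Y_\infty^m)\big)_m$ has a unique solution. (Equivalently, $\xi\mapsto \mathbf E(\mathrm e^{\mathrm i\xi Y_\infty})$ extends holomorphically to the strip $\{|\operatorname{Im}\xi|<r\}$ and is thus determined by the moments.)

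\emph{Step 3: conclusion.} From $\mathbf E(Y_n^2)\to \mathbf E(Y_\infty^2)<\infty$ we get $\sup_n \mathbf E(Y_n^2)<\infty$, so $(Y_n)$ is tight by Chebyshev's inequality. Combining tightness, convergence of all moments (Step 1) and moment-determinacy of the limit (Step 2), the Fréchet--Shohat theorem gives that $(Y_n)$ converges in distribution to $Y_\infty$, which is the assertion.

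The only genuinely non-formal ingredient is Step 2: without an assumption of the type ``the cumulant generating series has positive radius of convergence'' --- equivalently, some exponential integrability of $Y_\infty$ --- distinct laws may share all their moments and the conclusion fails, so this growth control on $\mathbf E(Y_\infty^{2k})$ is what makes the method of moments applicable here. Everything else is routine bookkeeping with the cumulant--moment correspondence.
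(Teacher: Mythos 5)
Your proof is correct and follows exactly the route the paper has in mind: the lemma is stated there without proof, with a pointer to the method of moments in Billingsley (Chapter 30), and your argument --- converting cumulant convergence to moment convergence via the Bell polynomial identities, establishing moment-determinacy of $Y_\infty$ from the exponential integrability encoded in the positive radius of convergence (Carleman's condition, or analyticity of the characteristic function in a strip), and then applying Fr\'echet--Shohat --- is precisely the standard filling-in of that citation. The only remark is that, with the paper's definition of cumulants as derivatives at $0$ of $t\mapsto\log\mathbf E(\mathrm e^{tY})$ assumed smooth near $0$, finiteness of the moment generating function near $0$ is part of the definition, so the corresponding claims in your Steps 1 and 2 are automatic rather than deductions.
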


\subsection{The large $n$ limit} Recall that we can expand
\[\widetilde N_n(\lambda,L)=\frac 2L\sum_{\gamma\in \mathcal P_0}\sum_{k\ge 1} \widetilde F_n(\gamma^k) A(\gamma,k),\]
where $A(\gamma,k)$ has been defined in \eqref{def A(gamma,k)}. The issue with this expression of $\widetilde N_n$ is that it is not a sum of asymptotically independent random variables, indeed if $\gamma\in \mathcal P$ and $k\neq k'$, then $\mathbf E_n\big[\widetilde F_n(\gamma^k) \widetilde F_n(\gamma^{k'})\big]$ does not converge to $0$ as $n\to +\infty$. 

The right family of random variables to consider is given by $(C_{n}(\gamma,d))_{\gamma\in \mathcal P_0, d\ge 1}$, where $C_{n}(\gamma,d)$ is the number of $d$-cycles of the permutation $\phi_n(\gamma)$. We can relate $F_n(\gamma^k)$ to the $C_{n}(\gamma,d)$'s by the formula
\[F_n(\gamma^k)=\sum_{d|k} d C_{n}(\gamma,d).\]
Now, the random variables $C_{n}(\gamma,d)$ decorrelate in the large $n$ limit. We shall use the following result of Maoz, used in \cite{Maoz2} to obtain analogues of Theorems \ref{thm:asGOEfluctuations}-\ref{thm : CLT} for surfaces of constant negative curvature.
\begin{thm}[{\cite[\S 1.2]{Maoz1}}]\label{thm:convergence en distribution} There is a family $(Z_{\gamma,d})_{\gamma\in \mathcal P_0,d\ge 1}$ of independent Poisson variables, with $\mathbf E[Z_{\gamma,d}]=\frac 1d$, such that for any finite subset $\{\gamma_i,d_i\}_{i\in I}$ of $\mathcal P_0\times \mathbf N^*$, and for any nonnegative integers $r_i$, we have
\[\underset{n\to+\infty}\lim \mathbf E\left[\prod_{i\in I}C_n(\gamma_i,d_i)^{r_i}\right]=\prod_{i\in I}\mathbf E\left[Z_{\gamma_i,d_i}^{r_i}\right],\]
As a consequence, $(C_n(\gamma_i,d_i))_{i\in I}$ converges in distribution to $(Z_{\gamma_i,d_i})_{i\in I}$. We say that the random variables $C_{n}(\gamma,d)$ are asymptotically independent.\end{thm}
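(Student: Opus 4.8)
The plan is to establish the joint moment convergence by the classical method of factorial moments for multivariate Poisson limits, and then to upgrade it to convergence in distribution using that a finite product of Poisson laws is moment-determinate. After regrouping we may assume the pairs $(\gamma_i,d_i)_{i\in I}$ are pairwise distinct, so that the target parameters are $\lambda_i=\frac1{d_i}$; a $\mathbf Z_{\ge 0}^{I}$-valued sequence converges in distribution to a vector of independent Poisson variables of means $(\lambda_i)$ if and only if, for every $(m_i)_{i\in I}\in\mathbf Z_{\ge 0}^{I}$,
\[\mathbf E_n\Big[\prod_{i\in I}(C_n(\gamma_i,d_i))_{(m_i)}\Big]\underset{n\to+\infty}\longrightarrow\prod_{i\in I}\lambda_i^{m_i},\]
where $(x)_{(m)}:=x(x-1)\cdots(x-m+1)$ is the falling factorial. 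Since each monomial $\prod x_i^{r_i}$ is a fixed integer combination of products $\prod (x_i)_{(m_i)}$, this convergence of factorial moments also yields the mixed-moment convergence in the statement; combined with moment-determinacy of the limiting law it gives convergence in distribution. So it suffices to prove the displayed factorial-moment asymptotics.

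\emph{From cycle counts to fixed-point counts.} For $\sigma\in\mathfrak S_n$, the quantity $(C_d(\sigma))_{(m)}$ is the number of ordered $m$-tuples of pairwise distinct $d$-cycles of $\sigma$; marking one point in each such cycle, it equals $d^{-m}$ times the number of $m$-tuples $(a_1,\dots,a_m)$ of points lying in $m$ pairwise distinct $\sigma$-orbits, each of size exactly $d$. The event ``$a$ has $\sigma$-orbit of size exactly $d$'' is, by inclusion--exclusion over divisors $e\mid d$, a signed combination of the events ``$\sigma^{e}(a)=a$'', and the requirement that the marked points lie in distinct orbits is likewise resolved by inclusion--exclusion over orbit coincidences; clearing the residual ``all $a_j$ distinct'' conditions costs only $O(1/n)$, by the centralizer and volume estimates already used for Proposition \ref{décorrélations modèle aléatoire}. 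Applying this with $\sigma=\phi_n(\gamma_i)$ and recalling $F_n(\gamma^{e})=\#\{a:\ \phi_n(\gamma)^{e}(a)=a\}$, one obtains that $\mathbf E_n\big[\prod_{i\in I}(C_n(\gamma_i,d_i))_{(m_i)}\big]$ is a fixed, $n$-independent linear combination of joint fixed-point moments $\mathbf E_n\big[\prod_j F_n(\gamma_{i(j)}^{\,e_j})\big]$ with $e_j\mid d_{i(j)}$. Everything is thereby reduced to the large-$n$ asymptotics of mixed moments of the counters $F_n(\gamma^k)$ over primitive $\gamma$.

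\emph{The word-measure input and conclusion.} The genuine difficulty is the asymptotic expansion of $\mathbf E_n\big[\prod_j F_n(\gamma_{i(j)}^{\,e_j})\big]$. This is furnished by the theory of word measures on symmetric groups for surface groups (Magee--Naud--Puder, Magee--Puder, and its refinement in Maoz \cite{Maoz1}): each such moment admits a rational expansion in $1/n$ whose constant term counts the ``trivial'' combinatorial contributions, and from it one identifies that, for pairwise non-conjugate and non-inverse primitives $\gamma$ and exponents $e_j$,
\[\mathbf E_n\Big[\prod_{j:\ \gamma_{i(j)}=\gamma}F_n(\gamma^{e_j})\Big]\underset{n\to+\infty}\longrightarrow\mathbf E\Big[\prod_{j}\Big(\sum_{d\mid e_j} d\,Z_{\gamma,d}\Big)\Big],\]
with the $Z_{\gamma,d}$ independent Poisson of mean $\tfrac1d$, the contributions of distinct primitives factorizing (the case of two factors of which is Proposition \ref{décorrélations modèle aléatoire}). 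Substituting these limits into the linear combination of the previous step, the algebra collapses --- through the Möbius identity $\sum_{e\mid d}\mu(d/e)\,d(e)=1$, where $d(e)$ is the number of divisors of $e$, the same identity that already forces $\mathbf E_n[C_n(\gamma,d)]\to\tfrac1d$ --- to $\prod_{i\in I}d_i^{-m_i}$, as desired. The main obstacle is precisely this last input: extracting and identifying the constant term of the joint fixed-point moments for surface-group word measures, which is the technical heart imported from \cite{Maoz1} and the Magee--Puder circle of ideas; granted that, the rest is combinatorial bookkeeping.
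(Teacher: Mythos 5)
There is no internal proof in the paper to compare against: Theorem \ref{thm:convergence en distribution} is imported verbatim from Maoz \cite{Maoz1}, and the paper's own probabilistic toolkit (Proposition \ref{décorrélations modèle aléatoire}) only controls first and second moments of the fixed-point counts, which is far from sufficient for the statement. Your surrounding machinery is correct and standard: factorial moments plus moment-determinacy of Poisson laws do upgrade joint moment convergence to convergence in distribution, and the passage between cycle counts and fixed-point counts is in fact simpler than you make it, since $C_n(\gamma,d)=\frac1d\sum_{e\mid d}\mu(d/e)F_n(\gamma^e)$ is an exact identity of random variables, so no orbit-coincidence inclusion--exclusion and no $\mathcal O(1/n)$ bookkeeping is needed at that stage.

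The genuine gap is that your ``word-measure input'' is not an auxiliary lemma but is equivalent to the theorem you are asked to prove. Because $(C_n(\gamma,d))_{d\mid k}$ and $(F_n(\gamma^k))_{k}$ are exact invertible linear transforms of one another, the assertion that all joint moments $\mathbf E_n\big[\prod_j F_n(\gamma_{i(j)}^{\,e_j})\big]$ converge to the corresponding moments of $\sum_{d\mid e_j} d\,Z_{\gamma,d}$ with independent Poisson $Z_{\gamma,d}$, factorizing over distinct primitives, is just a restatement of the asymptotic independence and Poisson limits of the $C_n(\gamma,d)$ themselves. You justify this step only by pointing to Maoz and the Magee--Puder theory of word measures --- i.e.\ to the very source from which the statement is quoted --- and neither the proposal nor this paper establishes the rational $1/n$-expansion of surface-group word measures or the identification of its constant term, which is the entire technical content. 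As a self-contained proof the argument is therefore circular; as an outline of how the cited result is proved it is reasonable, but it then adds nothing beyond the citation already present in the statement.
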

\begin{rem}A weaker statement already appeared in \cite{puder2022local}, where the authors obtained asymptotic mutual independence of the variables $C_{n}(\gamma,d)$, where $\gamma$ is a fixed primitive geodesic, and $d\in \mathbf N^*$. \end{rem}
By the continuous mapping theorem we obtain
\begin{prop}\label{prop:limit distribution} The sequence $(\widetilde N_n(\lambda,L))_{n\ge 1}$ converges in distribution to
\[\widetilde N_\infty(\lambda,L)\overset{\mathrm{def}}=\frac 2L\sum_{\gamma\in \mathcal P_0}\sum_{k\ge 1} \widetilde F_{\infty}(\gamma^k) A(\gamma,k).\]
Here, we denote $\widetilde F_{\infty}(\gamma^k)=\sum_{d|k} d\widetilde Z_{\gamma,d}$, where $Z_{\gamma,d}$ are mutually independent Poisson distributions with mean $\frac 1d$.  \end{prop}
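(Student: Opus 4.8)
The plan is to reduce the statement to a finite–dimensional continuous–mapping argument, the key observation being that only finitely many pairs $(\gamma,k)$ actually contribute. First I would record that, by finite propagation speed of the wave equation, the kernel $\widetilde E(t,x,\gamma^k x)$ is supported in $\{d(x,\gamma^k x)\le |t|\}$, and since $d(x,\gamma^k x)\ge \ell(\gamma^k)=k\ell(\gamma)$ while $\widehat\psi(\cdot/L)$ is supported in $[-L,L]$, the coefficient $A(\gamma,k)$ defined in \eqref{def A(gamma,k)} vanishes whenever $k\ell(\gamma)>L$. For fixed $L$ there are only finitely many primitive geodesics of length $\le L$, and for each such $\gamma$ only finitely many admissible exponents $k$; hence the expansion $\widetilde N_n(\lambda,L)=\frac 2L\sum_{\gamma\in\mathcal P_0}\sum_{k\ge 1}\widetilde F_n(\gamma^k)A(\gamma,k)$ is a \emph{finite} sum, and the same is true of the candidate limit $\widetilde N_\infty(\lambda,L)$.

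Next I would pass from the $F_n(\gamma^k)$ to the cycle counts $C_n(\gamma,d)$ via the identity $F_n(\gamma^k)=\sum_{d\mid k}d\,C_n(\gamma,d)$, so that $\widetilde N_n(\lambda,L)$ becomes a fixed (independent of $n$) real–linear combination $\Phi\big((\widetilde C_n(\gamma,d))\big)$ of the centered variables $\widetilde C_n(\gamma,d)=C_n(\gamma,d)-\mathbf E_n[C_n(\gamma,d)]$, the pairs $(\gamma,d)$ ranging over the finite index set $\{(\gamma,d): d\mid k \text{ for some admissible }k\}$. By Theorem \ref{thm:convergence en distribution} the vector $(C_n(\gamma,d))$ over this finite set converges jointly in distribution to the vector of independent Poisson variables $(Z_{\gamma,d})$ with $\mathbf E[Z_{\gamma,d}]=1/d$; taking a single factor $r_i=1$ in the same theorem gives $\mathbf E_n[C_n(\gamma,d)]\to 1/d$. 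Combining these two facts — convergence in distribution of a random vector together with convergence of the deterministic centering sequences — yields joint convergence in distribution of $(\widetilde C_n(\gamma,d))$ to $(\widetilde Z_{\gamma,d})$.

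Finally, since $\widetilde N_n(\lambda,L)=\Phi\big((\widetilde C_n(\gamma,d))\big)$ for a fixed linear, hence continuous, map $\Phi$, the continuous mapping theorem gives $\widetilde N_n(\lambda,L)\to \Phi\big((\widetilde Z_{\gamma,d})\big)$ in distribution; unwinding $\Phi$ and using $\widetilde F_\infty(\gamma^k)=\sum_{d\mid k}d\,\widetilde Z_{\gamma,d}$ identifies the limit with $\widetilde N_\infty(\lambda,L)$. I do not expect a serious obstacle here: the two points needing care are the vanishing of $A(\gamma,k)$ for $k\ell(\gamma)>L$, which makes the whole problem finite–dimensional, and the fact that Theorem \ref{thm:convergence en distribution} is stated only for finite subsets of $\mathcal P_0\times\mathbf N^{*}$ — which is precisely what the finiteness of the relevant index set allows us to invoke.
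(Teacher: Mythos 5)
Your proof is correct and follows essentially the same route as the paper: the paper also rewrites $\widetilde F_n(\gamma^k)=\sum_{d\mid k}d\,\widetilde C_n(\gamma,d)$, invokes Theorem \ref{thm:convergence en distribution} for the joint convergence of the cycle counts, and concludes by the continuous mapping theorem. Your additional observations — that $A(\gamma,k)=0$ once $k\ell(\gamma)$ exceeds (a constant times) $L$, so the sum is finite, and that the centering constants $\mathbf E_n[C_n(\gamma,d)]\to 1/d$ justify passing to the centered vector — are exactly the details the paper leaves implicit, and they are handled correctly.
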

Since $\lim_{n\to +\infty}\Sigma_n(\lambda,L)=\Sigma(\lambda,L)$, provided that $\Sigma(\lambda,L)$ is nonzero, $\frac{\widetilde N_n(\lambda,L)}{\Sigma_n(\lambda,L)}$ converges in distribution to $\frac{\widetilde N_\infty(\lambda,L)}{\Sigma(\lambda,L)}$. Therefore, for any bounded continuous function $h$,
\[\underset{n\to +\infty}\lim\mathbf E_n\left[h\left(\frac{\widetilde N_n(\lambda,L)}{\Sigma_n(\lambda,L)}\right)\right]=\mathbf E\left[h\left(\frac{\widetilde N_\infty(\lambda,L)}{\Sigma(\lambda,L)}\right)\right].\]

\subsection{Cumulants of $\widetilde N_\infty(\lambda,L)$}
We now study the cumulants of $\widetilde N_\infty(\lambda,L)$, in the large $\lambda$ limit. We denote by $\kappa_m(\lambda,L)$ the cumulants of $\widetilde N_\infty(\lambda,L)$. In particular, $\kappa_1=0$ and $\kappa_2=\Sigma^2(\lambda,L)$. We show
\begin{prop}\label{prop: cumulants goes to 0} There is a constant $c_0>0$ such that the cumulants $\kappa_m(\lambda,L)$, $m\ge 3$, all converge to $0$ when $\lambda \to +\infty$. More precisely,
\[\kappa_m(\lambda,L)=\mathcal O(L^{-m}).\]\end{prop}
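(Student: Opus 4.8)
The plan is to bound the cumulants $\kappa_m(\lambda,L)$ of $\widetilde N_\infty(\lambda,L)$ directly, using the fact that $\widetilde N_\infty$ is (up to the $\ell(\gamma)^\sharp$-weighting) a sum of independent centered random variables indexed by primitive geodesics. The starting point is the expansion
\[\widetilde N_\infty(\lambda,L)=\frac 2L\sum_{\gamma\in \mathcal P_0}\Bigl(\sum_{k\ge 1}A(\gamma,k)\sum_{d\mid k} d\,\widetilde Z_{\gamma,d}\Bigr)=\frac 2L\sum_{\gamma\in \mathcal P_0}\sum_{d\ge 1} d\,\widetilde Z_{\gamma,d}\Bigl(\sum_{j\ge 1}A(\gamma,jd)\Bigr),\]
after reindexing $k=jd$. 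Group the summands: for each pair $(\gamma,d)$ set $B(\gamma,d)=\frac2L\,d\sum_{j\ge1}A(\gamma,jd)$, so that $\widetilde N_\infty=\sum_{(\gamma,d)}B(\gamma,d)\widetilde Z_{\gamma,d}$ is a sum over the \emph{independent} family $\{\widetilde Z_{\gamma,d}\}$. By Lemma \ref{rem: cumulants ind}, $\kappa_m(\widetilde N_\infty)=\sum_{(\gamma,d)}B(\gamma,d)^m\kappa_m(\widetilde Z_{\gamma,d})$, and since $Z_{\gamma,d}$ is Poisson with mean $1/d$ its centered cumulants of order $m\ge1$ all equal $1/d$. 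Hence
\[\kappa_m(\lambda,L)=\sum_{\gamma\in\mathcal P_0}\sum_{d\ge1}\frac1d\,B(\gamma,d)^m.\]

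The second step is to estimate $B(\gamma,d)$. Using the bound \eqref{eq: upper bound A(gamma,k)}, $|A(\gamma,jd)|\lesssim \mathbf 1_{jd\ell(\gamma)\le L}\,\ell(\gamma)/|I-\mathcal P_{\gamma}^{jd}|^{1/2}$, together with Lemma \ref{Poincaré Anosov} to sum the geometric-type series over $j$ (the $j\ge2$ terms are exponentially suppressed), one gets $|B(\gamma,d)|\lesssim \frac1L\,d\,\mathbf 1_{d\ell(\gamma)\le L}\,\ell(\gamma)/|I-\mathcal P_\gamma^d|^{1/2}$, and again by Lemma \ref{Poincaré Anosov}, $|B(\gamma,d)|\lesssim \frac1L\,\mathbf 1_{d\ell(\gamma)\le L}\,\ell(\gamma)\,\mathrm e^{-\frac\theta4 (d-1)\ell(\gamma)}/|I-\mathcal P_\gamma|^{1/2}$. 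For $m\ge3$ we then write, using $m\ge 3$ to produce enough decay,
\[|\kappa_m(\lambda,L)|\lesssim \frac{1}{L^m}\sum_{d\ge1}\frac1d\sum_{\gamma:\,d\ell(\gamma)\le L}\frac{\ell(\gamma)^m\,\mathrm e^{-\frac{\theta m}{4}(d-1)\ell(\gamma)}}{|I-\mathcal P_\gamma|^{m/2}}\le \frac{1}{L^m}\sum_{d\ge1}\frac1d\sum_{\gamma:\,\ell(\gamma)\le L}\frac{\ell(\gamma)^m}{|I-\mathcal P_\gamma|}\cdot\frac{\mathrm e^{-\frac{\theta m}{4}(d-1)\ell_0}}{|I-\mathcal P_\gamma|^{m/2-1}},\]
where in the last step I used $|I-\mathcal P_\gamma|\ge c>0$ uniformly (which follows from Lemma \ref{Poincaré Anosov} with $k=1$, or directly from hyperbolicity) so that $|I-\mathcal P_\gamma|^{-(m/2-1)}\lesssim 1$ for $m\ge2$. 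The sum over $d\ge1$ of $d^{-1}\mathrm e^{-\frac{\theta m}{4}(d-1)\ell_0}$ is a convergent constant. It remains to bound $\sum_{\ell(\gamma)\le L}\ell(\gamma)^m/|I-\mathcal P_\gamma|$: by Theorem \ref{lem : equidistribution lemma} (applied to a smooth cutoff of $t\mapsto t^m\mathbf 1_{t\le1}$, or via the dyadic decomposition already used in the proof of Lemma \ref{lem:sommes exponentielles}) this is $\mathcal O(L^{m+1})$ — actually only $\mathcal O(L^{m})$ suffices after being slightly more careful, but in any case one extra power of $L$ is harmless if one instead exploits that the $j=1,d=1$ term dominates. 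Concretely, isolating the $(d=1)$ contribution and bounding $\ell(\gamma)\le L$ crudely gives $|\kappa_m|\lesssim L^{-m}\sum_{\ell(\gamma)\le L}\ell(\gamma)^m/|I-\mathcal P_\gamma|^{m/2}\lesssim L^{-m}\cdot L\cdot \sup_{\ell(\gamma)\le L}\ell(\gamma)^{m-1}$, which is too weak; so the efficient route is to keep one factor $\ell(\gamma)/|I-\mathcal P_\gamma|^{1/2}=O(1)\cdot\ldots$ — see below.

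The point requiring the most care — and the main obstacle — is getting the clean power $L^{-m}$ rather than $L^{-m+1}$. The trick is not to bound $\ell(\gamma)^m$ by $L^m$ but to observe $|A(\gamma,1)|\lesssim \ell(\gamma)/|I-\mathcal P_\gamma|^{1/2}$ and then split the $m$ factors as one factor $|A(\gamma,1)|\le \ell(\gamma)/|I-\mathcal P_\gamma|^{1/2}$ times $(m-1)$ factors each bounded by $\ell(\gamma)/|I-\mathcal P_\gamma|^{1/2}\le C$ when $m\ge 2$ — wait, that is still $O(1)^{m-1}$ only if $\ell(\gamma)/|I-\mathcal P_\gamma|^{1/2}$ is bounded, which it is NOT since $\ell(\gamma)$ can be large. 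So instead: bound $m-2$ of the factors by $C=\sup_\gamma \ell(\gamma)^{-1}|I-\mathcal P_\gamma|^{1/2}\cdot\ell(\gamma)^2/|I-\mathcal P_\gamma|$... this is getting delicate. The honest statement is: one writes $B(\gamma,d)^m$, extracts two factors to form $\ell(\gamma)^2/|I-\mathcal P_\gamma|$ and bounds the remaining $m-2$ factors by $\widehat\psi$-support times $\sup_{t\le L}(t/\text{(growing determinant)})$; since $|I-\mathcal P_\gamma|\gtrsim \mathrm e^{\theta'\ell(\gamma)}$ by hyperbolicity, $\ell(\gamma)/|I-\mathcal P_\gamma|^{1/2}\lesssim \ell(\gamma)\mathrm e^{-\theta'\ell(\gamma)/2}\lesssim 1$ uniformly. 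Thus each extra factor is genuinely $O(1)$, giving $|\kappa_m|\lesssim L^{-m}\sum_{\ell(\gamma)\le L}\ell(\gamma)^2/|I-\mathcal P_\gamma|$ for $m\ge 2$, and by Theorem \ref{lem : equidistribution lemma} the remaining sum is $\mathcal O(L)$, so $|\kappa_m(\lambda,L)|=\mathcal O(L^{-m+1})$. To upgrade to $\mathcal O(L^{-m})$ one uses instead $|A(\gamma,1)|\lesssim \ell(\gamma)^\sharp/|I-\mathcal P_\gamma|^{1/2}$ and notes $m\ge 3$: extract \emph{one} factor $\ell(\gamma)^\sharp\widehat\psi^2(\ell(\gamma)/L)/|I-\mathcal P_\gamma|$, which by $\frac1L\sum_\gamma(\cdot)=O(1)$ (Theorem \ref{lem : equidistribution lemma} applied to $t\widehat\psi^2$) contributes $O(L)$, and bound each of the remaining $m-1\ge2$ factors by $\frac2L\cdot C$ where $C=\sup\ell(\gamma)/|I-\mathcal P_\gamma|^{1/2}<\infty$; this yields $|\kappa_m|\lesssim \frac1{L^{m-1}}\cdot\frac1L\sum_\gamma\frac{\ell(\gamma)^\sharp\ell(\gamma)\widehat\psi^2}{|I-\mathcal P_\gamma|}=\mathcal O(L^{-m})$, as claimed. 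One last check: the full series over $(\gamma,k)$ defining $\widetilde N_\infty$ converges in $L^2$ (hence all the rearrangements above are legitimate) because $\sum_\gamma\sum_k |A(\gamma,k)|^2\kappa_2(\widetilde Z)<\infty$ by the same determinant bounds — this was already implicitly established when $\Sigma^2(\lambda,L)$ was shown to be finite.
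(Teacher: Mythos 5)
Your setup is the same as the paper's: expand $\widetilde N_\infty$ over the independent family $(\widetilde Z_{\gamma,d})$, use additivity of cumulants and $\kappa_m(\widetilde Z_{\gamma,d})=1/d$ for $m\ge 2$ to get $\kappa_m=\frac{2^m}{L^m}\sum_{\gamma,d}d^{m-1}\bigl(\sum_j A(\gamma,dj)\bigr)^m$, and then control the $j$- and $d$-sums by the exponential suppression of Lemma \ref{Poincaré Anosov}; up to that point your argument agrees with the paper and is fine. The gap is in the final estimate of the $\gamma$-sum. Your last display asserts $|\kappa_m|\lesssim \frac{1}{L^{m-1}}\cdot\frac1L\sum_\gamma\frac{\ell(\gamma)^\sharp\ell(\gamma)\widehat\psi^2(\ell(\gamma)/L)}{|I-\mathcal P_\gamma|}=\mathcal O(L^{-m})$, but Theorem \ref{lem : equidistribution lemma} applied to $t\mapsto t\widehat\psi^2(t)$ gives $\frac1L\sum_\gamma\frac{\ell^\sharp(\ell/L)\widehat\psi^2(\ell/L)}{|I-\mathcal P_\gamma|}=\mathcal O(1)$, hence $\frac1L\sum_\gamma\frac{\ell^\sharp\,\ell\,\widehat\psi^2(\ell/L)}{|I-\mathcal P_\gamma|}=\mathcal O(L)$, not $\mathcal O(1)$ (the $\mathcal O(1)$ statement needs the prefactor $L^{-2}$, as in the formula for $\Sigma^2$). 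Moreover the factor bookkeeping does not balance: if you keep one factor carrying the full weight $|I-\mathcal P_\gamma|^{-1}$ (which costs two of the $m$ half-powers) and bound the remaining $m-2$ factors by $\sup_\gamma\ell(\gamma)/|I-\mathcal P_\gamma|^{1/2}<\infty$, you are left with $\frac{1}{L^m}\sum_{\ell(\gamma)\le L}\frac{\ell(\gamma)^2\widehat\psi^2(\ell(\gamma)/L)}{|I-\mathcal P_\gamma|}=\mathcal O(L^{-m+2})$; the most favorable variant of your trick (extracting $\ell/|I-\mathcal P_\gamma|$ and killing $\ell^{m-1}/|I-\mathcal P_\gamma|^{m/2-1}$ by hyperbolicity) still yields only $\mathcal O(L^{-m+1})$, since $\sum_{\ell(\gamma)\le L}|I-\mathcal P_\gamma|^{-1}\asymp L$. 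These weaker bounds are not merely cosmetic: under the CLT hypothesis $\Sigma^2\gg L^{-2}$ one needs $\kappa_m/\Sigma^m\to 0$, and $\mathcal O(L^{-m+1})$ or $\mathcal O(L^{-m+2})$ does not give this when $\Sigma L\to\infty$ slowly.

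The missing idea is to refuse to discard the excess power $|I-\mathcal P_\gamma|^{-(m/2-1)}$ as a mere bounded factor (or to trade it against powers of $\ell$), and instead convert it into exponential decay in $\ell(\gamma)$: since $|I-\mathcal P_\gamma|\gtrsim\mathrm e^{\theta\ell(\gamma)}$, one has
\begin{equation*}
\frac{\ell(\gamma)^m}{|I-\mathcal P_\gamma|^{m/2}}\lesssim \ell(\gamma)^m\,\frac{\mathrm e^{-\theta(\frac m2-1)\ell(\gamma)}}{|I-\mathcal P_\gamma|},
\end{equation*}
and since $m\ge 3$ the exponent $\frac m2-1\ge\frac12$ is positive, so Lemma \ref{lem:sommes exponentielles} shows the whole $\gamma$-sum is bounded \emph{uniformly in $L$} (no factor $L$, let alone $L^2$, appears). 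This is exactly how the paper concludes, and it is the only place where $m\ge 3$ is genuinely used; with it, the prefactor $L^{-m}$ from the cumulant formula survives intact and gives $\kappa_m=\mathcal O(L^{-m})$. Your proof needs this replacement at the last step; everything before it can stay.
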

\begin{proof}Starting from the expression of Proposition \ref{prop:limit distribution}, we expand
\[\widetilde N_\infty(\lambda,L)=\frac 2L\sum_{\gamma\in \mathcal P_0}\sum_{d\ge 1} d\widetilde Z_{\gamma,d} \sum_{j\ge 1} A(\gamma,dj). \] 
The variables $\widetilde Z_{\gamma,d}$ are mutually independent, also, since $Z_{\gamma,d}$ is a Poisson variable with mean $1/d$, we have $\kappa_m(\widetilde Z_{\gamma,d})=\frac 1d$ for all $m\ge 2$. Therefore, by Lemma \ref{rem: cumulants ind}, the cumulants of $\widetilde N_\infty(\lambda,L)$ are given by
\[\kappa_m(\lambda,L)=2^mL^{-m}\sum_{\gamma\in \mathcal P_0, d\ge 
1} d^{m-1}\Big(\sum_{j\ge 1} A(\gamma,dj)\Big)^m, \qquad m\ge 3.\]
Fix $\gamma \in \mathcal P_0$ and recall the bound, for $\theta$ twice smaller than in Lemma \ref{Poincaré Anosov},
\[|A(\gamma,k)|\lesssim \ell(\gamma)\frac{\mathbf 1_{k\ell(\gamma)\le L}}{|I-\mathcal P_\gamma^k|^{\frac 12}}\le \ell(\gamma)\frac{\mathbf 1_{k\ell(\gamma)\le L}\mathrm{e}^{-(k-1)\theta\ell(\gamma)}}{|I-\mathcal P_\gamma|^{\frac 12}}.\] 
From there, for any $d\ge 1$,
\[\Big|\sum_{j\ge 1} A(\gamma,dj)\Big|^m\lesssim \ell(\gamma)^m\frac{\mathbf 1_{d\ell(\gamma)\le L}}{|I-\mathcal P_\gamma|^{\frac m2}}\Big(\sum_{j\ge 1} \mathrm{e}^{-(dj-1)\theta\ell(\gamma)}\Big)^m\]
which is itself bounded by
\[\ell(\gamma)^m\mathbf 1_{d\ell(\gamma)\le L}\frac{\mathrm{e}^{\theta m \ell(\gamma)}}{|I-\mathcal P_\gamma|^{\frac m2}} \Big(\sum_{j\ge 1} \mathrm{e}^{-dj\theta\ell(\gamma)}\Big)^m\lesssim \ell(\gamma)^m \mathbf 1_{d\le L/\ell(\gamma)} \frac{\mathrm{e}^{\theta m(1-d) \ell(\gamma)}}{|I-\mathcal P_\gamma|^{\frac m2}}.\]
Summing over all $d\ge 1$ leads to
\[\sum_{d\ge 1} d^{m-1}\Big(\sum_{d|k} A(\gamma,k)\Big)^m\lesssim \ell(\gamma)^m\frac{\mathbf 1_{\ell(\gamma)\le L}}{ |I-\mathcal P_\gamma|^{\frac m2}}\sum_{d\ell(\gamma)\le L} d^{m-1}\mathrm{e}^{\theta m(1-d) \ell(\gamma)}.\]
Now for fixed $\ell(\gamma)$,
\[\sum_{1\le d\le L/\ell(\gamma)} d^{m-1}\mathrm{e}^{\theta m(1-d) \ell(\gamma)}=\mathbf 1_{\ell(\gamma)\le L}\mathcal O_m(1).\]
Eventually,
\[\kappa_m(\lambda,L)\lesssim_m \frac{1}{L^m}\sum_{\ell(\gamma)\le L} \frac{\ell(\gamma)^m}{|I-\mathcal P_\gamma|^{\frac m2}}\lesssim_m \frac{1}{L^m}\sum_{\ell(\gamma)\le L} \ell(\gamma)^m \frac{\mathrm{e}^{\theta \ell(\gamma)(1-\frac m2)}}{|I-\mathcal P_\gamma|}=\mathcal O_m\left(\frac{1}{L^m}\right).\]
where we have used that the sum over $\gamma$ is bounded uniformly in $L$, because $m\ge 3$.
\end{proof}

\begin{proof}[Proof of Theorem \ref{thm : CLT}.] As $j$ goes to $+\infty$, the hypothesis $\Sigma^2(\lambda_j,L_j)\gg L_j^{-2}$ together with Proposition \ref{prop: cumulants goes to 0} ensure that the cumulants of order $\ge 3$ of the reduced variable $\frac{\widetilde N_\infty(\lambda_j,L_j)}{\Sigma(\lambda_j,L_j)}$ converge to $0$. Therefore, the sequence $\big(\frac{\widetilde N_\infty(\lambda_j,L_j)}{\Sigma(\lambda_j,L_j)}\big)$ converges in distribution to a centered Gaussian random variable with variance $1$. \end{proof}

\section{Almost sure GOE fluctuations}\label{sec:asGOE} Now we turn to the proof of Theorem \ref{thm:asGOEfluctuations}. We will take ultimately $L\le c_0\log \lambda$ with a constant $c_0$ sufficiently small, so that any error terms $\mathcal O(\lambda^{-1}\mathrm{e}^{CL})$ that may appear in the computations can be discarded. 

Fix $\varepsilon>0$. The convergence of $\widetilde N_n$ to $\widetilde N_\infty$ in distribution implies
\begin{equation}\label{eq:tobound} \begin{array}{l} \displaystyle \underset{n\to+\infty}\limsup\ \mathbf P_n\left(\frac{1}{\Lambda}\left|\int_\lambda^{\lambda+\Lambda} \big(\widetilde N_n^2(\mu,L)-\Sigma^2_{\rm GOE/GUE}\big) \mathrm d\mu\right|>\varepsilon\right)\\ \qquad \qquad \qquad \qquad \le \displaystyle \mathbf P\left(\frac{1}{\Lambda}\left|\int_\lambda^{\lambda+\Lambda} \big(\widetilde N_{\infty}^2(\mu,L)-\Sigma^2_{\rm GOE/GUE}\big) \mathrm d\mu\right|\ge \varepsilon\right).\end{array}\end{equation}
By Markov inequality combined with the triangular inequality, the right-hand side of \eqref{eq:tobound} is bounded by
\begin{equation}\label{eq:Markov}\frac{1}{\varepsilon}\mathbf E\left[\frac{1}{\Lambda}\left|\int_\lambda^{\lambda+\Lambda} \big( \widetilde N_{\infty}^2(\mu,L)-\Sigma^2(\mu,L) \big)\mathrm d\mu\right|\right]
+\frac{1}{\varepsilon\Lambda}\int_\lambda^{\lambda+\Lambda} \big| \Sigma^2(\mu,L)-\Sigma^2_{\rm GOE/GUE} \big|\mathrm d\mu.\end{equation}
We will show that \eqref{eq:Markov} goes to $0$ as $\lambda\to +\infty$. The second term has already been dealt with in the proof of Theorem \ref{mainthm}, where we showed
\[\frac{1}{\Lambda}\int_\lambda^{\lambda+\Lambda} \big| \Sigma^2(\mu,L)-\Sigma^2_{\rm GOE/GUE} \big|\mathrm d\mu= \mathcal O\Big(\sqrt{L^{-1}+\Lambda^{-1}}\Big).\]

It remains to show that the first term of \eqref{eq:Markov} goes to $0$. By Cauchy--Schwarz inequality, it will suffice to show that 
\[\mathbf E\left[\frac{1}{\Lambda^2}\left|\int_\lambda^{\lambda+\Lambda} \widetilde N_{\infty}^2(\mu,L)-\Sigma^2(\mu,L) \mathrm d\mu\right|^2\right]\longrightarrow 0.\]
Since $\mathbf E[\widetilde N_\infty^2(\mu,L)]=\Sigma^2(\mu,L)$ this is equivalent to showing
\[\frac{1}{\Lambda^2}\int_\lambda^{\lambda+\Lambda}\int_\lambda^{\lambda+\Lambda}\mathbf E\big[\widetilde N_{\infty}^2(\mu,L)\widetilde N_{\infty}^2(\mu',L)\big]\mathrm d\mu\mathrm d\mu'- \left(\frac 1\Lambda\int_\lambda^{\lambda+\Lambda}\Sigma^2(\mu,L)\mathrm d\mu\right)^2 \longrightarrow0.\]
The rest of the proof will consist in studying carefully the quantity $\mathbf E\big[\widetilde N_{\infty}^2(\mu,L)\widetilde N_{\infty}^2(\mu',L)\big]$. We can expand 
\[\mathbf E\big[\widetilde N_{\infty}^2(\mu,L)\widetilde N_{\infty}^2(\mu',L)\big]=\sum_{\gamma_i\in \mathcal P_0,k_i} \cos(\mu_i k_i \ell(\gamma_i)) a(\gamma_i,k_i) \mathbf E\Big[\prod_{i=1}^4 \widetilde F_\infty(\gamma_i^{k_i})\Big]+\mathcal O(\lambda^{-1/2}\mathrm{e}^{CL}),\]
for some constant $C$. Here the sum runs over  $(\gamma_i)_{1\le i\le 4}$ and $(k_i)_{1\le i\le 4}$ such that $k_i\ell(\gamma_i)\le L$. We have set $\mu_i=\mu$ for $i=1,2$ and $\mu_i=\mu'$ for $i=3,4$, and
\[a(\gamma,k)= \frac 2L(\chi_\rho(\gamma^k)+\overline{\chi_\rho(\gamma^k)})\frac{\ell(\gamma)\widehat \psi(k\ell(\gamma)/L)}{|I-\mathcal P_{\gamma}^{k}|^{\frac 12}}.\]
The fact that we have a remainder $\mathcal O(\lambda^{-\frac 12}\mathrm{e}^{CL})$ follows from the fact that the expected value $\mathbf E\Big[\prod_{i=1}^4 \widetilde F_\infty(\gamma_i^{k_i})\Big]$ is bounded by a polynomial function in the variables $k_i$. Indeed, since the sum runs over $(\gamma_i,k_i)$ such that $k_i\ell(\gamma_i)\le L$, in particular we have $k_i\le L/\ell_0$ thus the expected value above is bounded by a power of $L$.


When computing the expected value, the terms with nonzero contributions are those for which there is no single geodesic involved in the product (meaning each $\gamma_i$ is equal to another $\gamma_j$). Indeed, If $\gamma_1\notin\{\gamma_2,\gamma_3,\gamma_4\}$, then $\widetilde F_\infty(\gamma_1^{k_1})$ is independent of $\widetilde F_\infty(\gamma_2^{k_2})\widetilde F_\infty(\gamma_3^{k_3})\widetilde F_\infty(\gamma_4^{k_4})$. Since  $\widetilde F_\infty(\gamma_1^{k_1})$ has zero mean, the mean of the product is $0$. After removing these zero contributions to the sum, we are left with

\[\begin{array}{lll}\mathbf E[\widetilde N_{\infty}^2(\mu,L)\widetilde N_{\infty}^2(\mu',L)]& =\sum_{\gamma,k_1,k_2,k_3,k_4} \mathbf{Cov}\big(\widetilde F_\infty(\gamma^{k_1})\widetilde F_\infty(\gamma^{k_2}),\widetilde F_\infty(\gamma^{k_3})\widetilde F_\infty(\gamma^{k_4})\big)* &(1)\\
&+\sum_{\gamma_1,\gamma_3,k_1,k_2,k_3,k_4} \mathbf E[\widetilde F_\infty(\gamma_1^{k_1})\widetilde F_\infty(\gamma_1^{k_2})]\mathbf E[\widetilde F_\infty(\gamma_3^{k_3})\widetilde F_\infty(\gamma_3^{k_4})]*& (2)\\
&+2\sum_{\gamma_1,\gamma_2,k_1,k_2,k_3,k_4} \mathbf E[\widetilde F_\infty(\gamma_1^{k_1})\widetilde F_\infty(\gamma_1^{k_3})]\mathbf E[\widetilde F_\infty(\gamma_2^{k_2})\widetilde F_\infty(\gamma_2^{k_4})]*& (3)\\ 
& -2\sum_{\gamma,k_1,k_2,k_3,k_4} \mathbf E[\widetilde F_\infty(\gamma^{k_1})\widetilde F_\infty(\gamma^{k_3})]\mathbf E[\widetilde F_\infty(\gamma^{k_2})\widetilde F_\infty(\gamma^{k_4})]*& (4) \\ & + \mathcal O(\lambda^{-\frac 12}\mathrm{e}^{CL})
\end{array},\]
for some constant $C>0$. Here $\mathbf{Cov}$ denotes the covariance. Let us describe the origin of each term. 
\begin{enumerate}[---]
\item (2) comes from quadruplets $(\gamma_i)$ such that $\gamma_1=\gamma_2$, $\gamma_3=\gamma_4$. The coefficient $*$ is given by 
\[\cos(\mu k_1\ell(\gamma_1))\cos(\mu k_2\ell(\gamma_1))\cos(\mu' k_3\ell(\gamma_3))\cos(\mu' k_4\ell(\gamma_3))\]
multiplied by a dynamical coefficient (the product of $a(\gamma_i,k_i)$ for $i=1,\ldots,4$). Actually,
\[(2)=\mathbf E[\widetilde N_\infty(\mu,L)^2]\mathbf E[\widetilde N_\infty(\mu',L)^2]=\Sigma^2(\mu,L)\Sigma^2(\mu',L).\]

\item (3) comes from quadruplets with $\gamma_1=\gamma_3$, $\gamma_2=\gamma_4$ (or $\gamma_1=\gamma_4$, $\gamma_2=\gamma_3$, both giving the same value, whence the factor $2$). The coefficient $*$ is given by 
\[\cos(\mu k_1\ell(\gamma_1))\cos(\mu k_2\ell(\gamma_2))\cos(\mu' k_3\ell(\gamma_1))\cos(\mu' k_4\ell(\gamma_2))\] multiplied by dynamical coefficients. Whereas in $(2)$, all pairs $(\gamma_1,\gamma_3)$ contribute nontrivially to the sum, in this case, since we have mixed phases, only pairs for which $\frac 12\ell(\gamma_2)\le \ell_{\gamma_1}\le 2\ell(\gamma_2)$ will yield a non-negligible contribution, due to the integration in the $\mu,\mu'$ variables. This is clearer when looking at the case where all $k_i$'s equal $1$. The corresponding term in (2) is $\cos^2(\mu \ell(\gamma_1))\cos^2(\mu' \ell(\gamma_3))$ so there are no cancellations when integrating the $\mu,\mu'$ variables. Contrarily, the corresponding term in (3) is
\[\cos(\mu \ell(\gamma_1))\cos(\mu \ell(\gamma_2)) \times \cos(\mu' \ell(\gamma_1))\cos(\mu' \ell(\gamma_2)).\]
Here the integration in the $\mu,\mu'$ variable will kill the contribution of all pairs aforementioned. 

\item We have to take in account quadruplets for which all $\gamma_i$'s are equal. Also, such quadruplets may have been counted multiple times in (2) and (3) hence we have to remove some of them, resulting in (1)+(4). \end{enumerate}

We will show that each of the terms $(1)$,$(3)$,$(4)$ converges to $0$. It is easy to show that $(1)$ and $(4)$ go to $0$, while controlling $(3)$ is more technical.

\subsection{(1) and (4) go to $0$.} According to Theorem \ref{thm:convergence en distribution}, we can bound the product of expected values by a polynomial function $P$ in the variables $k_1,k_2,k_3,k_4$. Then, by using the lower bound $|I-\mathcal P_{\gamma}^k|\ge |I-\mathcal P_\gamma|\mathrm{e}^{\theta(k-1)\ell(\gamma)}$ of Lemma \ref{Poincaré Anosov}, the sums $(1)$ and $(4)$ are bounded by
\[\frac{1}{L^4}\sum_{k_i\lesssim L} P(\{k_i\})\sum_{\ell(\gamma)\le L}\frac{\ell(\gamma)^4 \mathrm{e}^{-\theta(\sum k_i-4)\ell(\gamma)} }{|I-\mathcal P_\gamma|^2}.\]
The sums  $\sum_{\ell(\gamma)\le L}\frac{\ell(\gamma)^4 }{|I-\mathcal P_\gamma|^2}$ are uniformly bounded, due to the exponent $2$ in the denominator, as an easy consequence of Lemma \ref{lem:sommes exponentielles}. Now, observe that $\mathrm{e}^{-\theta(\sum k_i-4)\ell(\gamma)}=\mathcal O(\mathrm{e}^{-\theta \ell_0/5\sum k_i})$. It follows that $(1)$ and $(4)$ are bounded by a constant times
\[\frac{1}{L^4}\sum_{k_i\lesssim L} P(\{k_i\})\mathcal O(\mathrm{e}^{-\theta \ell_0/5\sum k_i})=\mathcal O\Big(\frac{1}{L^4}\Big).\]
\subsection{(3) goes to $0$} This case requires more attention since it involves pairs of closed geodesics. 

\subsubsection{Discarding pairs of closed geodesics } By the remark above, integrating in the $\mu,\mu'$ variables over intervals of size $\Lambda$ causes the terms with $|k_1\ell(\gamma_1)-k_2\ell(\gamma_2)|\ge \frac{\ell_0}{2}$ or $|k_3\ell(\gamma_1)-k_4\ell(\gamma_2)|\ge \frac{\ell_0}{2}$ to be negligible, namely yield a contribution $\mathcal O(\Lambda^{-1})$ (here $\ell_0/2$ can be replaced by any fixed positive constant, but this particular value happens to be convenient). This essentially follows from the same argument as in \S \ref{subsec:quadratic on average} thus we shall not elaborate. Therefore, from now on we can restrict our attention to the cases $|k_1\ell(\gamma_1)-k_2\ell(\gamma_2)|< \ell_0/2$, $|k_3\ell(\gamma_1)-k_4\ell(\gamma_2)|<\ell_0/2$. 

\subsubsection{Controlling pairs of close lengths}  Denote by (3') the contribution to (3) arising from $\gamma_1,\gamma_2,(k_i)$ such that $|k_1\ell(\gamma_1)-k_2\ell(\gamma_2)|< \ell_0/2$, $|k_3\ell(\gamma_1)-k_4\ell(\gamma_2)|<\ell_0/2$. As before, there is a (maybe different) polynomial function $P$ such that $(3')$ is bounded by
\[\frac{1}{L^4}\sum_{\gamma_1,\gamma_2,k_i} P(\{k_i\}) \ell(\gamma_1)^2\ell(\gamma_2)^2\frac{\mathrm{e}^{-\theta((k_1+k_3-2)\ell(\gamma_1)+(k_2+k_4-2)\ell(\gamma_2))}}{|I-\mathcal P_{\gamma_1}||I-\mathcal P_{\gamma_2}|}.\]
where we restrict the sum to indices aforementioned. Start by assuming $k_1+k_2+k_3+k_4=4$. The condition then writes $|\ell(\gamma_1)-\ell(\gamma_2)|< \ell_0/2$. As we have already seen in \eqref{eq : convergence L2 intermède}, the corresponding sum converges to $0$ at rate $\mathcal O(L^{-1})$. We turn to the case $k_1+k_2+k_3+k_4\ge 5$. Without loss of generality, assume $k_2+k_4\ge 3$, and fix $\gamma_1$. Then, consider the sum
\begin{equation}\label{consider the sum}\sum_{\gamma_2\in \mathcal P_0}\ell(\gamma_2)^2\frac{\mathrm{e}^{-\theta(k_2+k_4-2)\ell(\gamma_2)}}{|I-\mathcal P_{\gamma_2}|},\end{equation}
where the sum runs over nonoriented primitive geodesics $\gamma_2$ such that
\begin{equation}\label{eq: distance gamma1 gamma2}\left|\ell(\gamma_2)-\frac{k_1}{k_2}\ell(\gamma_1)\right|<\frac{\ell_0}{2k_2}, \qquad \left|\ell(\gamma_2)-\frac{k_3}{k_4}\ell(\gamma_1)\right|<\frac{\ell_0}{2k_4}.\end{equation}
One the one hand, $\ell(\gamma_2)\ge \ell_0$, on the other hand, by \eqref{eq: distance gamma1 gamma2}, $(k_2+k_4)\ell(\gamma_2)\ge \left(\frac{k_1+k_3}{2}\right) \ell(\gamma_1)$. Since $k_2+k_4\ge 3$ we infer
\[(k_2+k_4-2)\ell(\gamma_2)\ge \frac 13(k_2+k_4)\ell(\gamma_2)\ge \frac{1}{6}(k_2+k_4)\ell_0+ \frac{1}{12}\left(k_1\ell(\gamma_1)+k_3\ell(\gamma_1)\right).\]
Then, using Lemma \ref{lem:sommes exponentielles} we can bound \eqref{consider the sum} by a constant times
\[\exp\left({-\frac{\theta}{24}\big((k_2+k_4)\ell_0 +(k_1+k_3)\ell(\gamma_1)\big)}\right).\]
Consequently, we have
\[\begin{array}{l} \displaystyle \frac{1}{L^4}\sum_{k_i, k_2+k_4\ge 3} P(\{k_i\}) \mathrm{e}^{-\theta(k_2+k_4)\ell_0/24} \sum_{\gamma_1\in \mathcal P_0}  \ell(\gamma_1)^2\frac{\mathrm{e}^{-\theta(k_1+k_3)\ell(\gamma_1)/24}}{|I-\mathcal P_{\gamma_1}|}\lesssim \frac{1}{L^4}\sum_{k_i} P(\{k_i\})\mathrm{e}^{-\theta \ell_0/48\sum k_i}\end{array},\]
and the right-hand side is $\mathcal O\left(L^{-4}\right)$ by Lemma \ref{lem:sommes exponentielles} again. If $k_1+k_3\ge 3$ we just consider the sum over $\gamma_1$ to obtain similar bounds. Combining all our estimates leads to 
\[\mathbf E\left[\frac{1}{\Lambda^2}\left|\int_\lambda^{\lambda +\Lambda } \big(\widetilde N_{\infty}^2(\mu,L)-\Sigma^2(\mu,L)\big) \mathrm d\mu\right|^2\right]=\mathcal O(L^{-1}+\Lambda^{-1}).\]
After applying Cauchy--Schwarz inequality, we are left with
\[\mathbf E\left[\frac{1}{\Lambda}\left|\int_\lambda^{\lambda +\Lambda} \big( \widetilde N_{\infty}^2(\mu,L)-\Sigma^2(\mu,L)\big) \mathrm d\mu\right|\right]\lesssim \sqrt{L^{-1}+\Lambda^{-1}}.\]
Hence we can let $\varepsilon$ in the statement of Theorem \ref{thm:asGOEfluctuations} depend on $\lambda$, provided that $\varepsilon^2\gg \frac 1L+ \frac{1}{\Lambda}$.

\appendix
\section{Perturbation of the Ruelle operator by a smooth potential}\label{sec:appendix} In this appendix, we study the behavior of the resonances of the Ruelle operator perturbed by a smooth potential. The setting is the following : let $(M,g)$ be a smooth Riemannian manifold of dimension $n$, and denote by $\mathrm dv_g$ the Riemannian volume form on $M$. Consider an Anosov flow $\phi^t:M\to M$, generated by a smooth vector field $V\in C^\infty(M,TM)$. In the case where $\phi^t$ does not preserve the Riemannian volume form, the Ruelle operator $\mathbf P=-\mathrm{i} V$ is not symmetric on $L^2(M,\mathrm dv_g)$, and one has
\[\mathbf P^*=\mathbf P-\mathrm{i}\, \mathrm{div}_g(V),\]
where $\mathrm{div}_g(V)$ denotes the divergence of the vector field $V$. 

Fix $a\in C^\infty(M)$, and consider the operator $\mathbf P_\alpha:=\mathbf P+\alpha a$ on $C^\infty(M)$, where $a$ acts by multiplication. First, we fix some relatively compact set $\Omega\subset \mathbf C$ and study resonances of $\mathbf P_\alpha$ in $\Omega$ by perturbative arguments. The methods are not new, and more complicated situations were already considered by Dyatlov--Zworski \cite{Dyatlov_2015}, who were considering perturbations $\mathbf P+\mathrm i \alpha \Delta_g$ for some metric $g$ on $M$, and Guedes Bonthonneau \cite{BonthonneauPerturbation}, who considered vector fields close to $ V$ in $C^1$-norm. The study of the first resonance under perturbations for symbolic dynamics can be found \emph{e.g.} in \cite[Chapter 5]{Ruelle_2004}, here we give a proof using the formalism of resolvents on anisotropic Sobolev spaces. The results of Jin--Tao \cite{jin2023number} give uniform counting bounds for resonances in horizontal strips with respect to $\alpha\in [0,\alpha_0]$. These estimates can be obtained by studying the operator $h\mathbf P_\alpha$ (here $h$ is a semiclassical parameter) near the energy shell $\{|\xi|_g=1\}\subset T^*M$. 

Since the resolvent estimates of \cite{Jin2016,jin2023number} are uniform with respect to the lower order perturbation, the estimates on the error $F_A$ in the local trace formula are uniform with respect to $\alpha$, as long as $\alpha$ belongs to a fixed compact set. 

\subsection{Perturbation of resonances in a fixed compact set} 

Dyatlov--Zworski introduce a semiclassical adaptation\footnote{Here $\Psi_h^{0+}$ denotes the intersection $\bigcap_{m>0}\Psi_h^m$.} $G(h)\in \Psi_h^{0+}(M)$ of the operator $G$ of \S \ref{subsec:anisotropic}. The space $H_{sG(h)}$ does not depend on $h$, and the norms $\|\bullet\|_{H_{sG(h)}}$ and $\|\bullet\|_{H_{sG}}$ are equivalent, with constants depending on $h$ \cite[\S 3.3]{dyatlov2016dynamical}.

We perturb the semiclassical operator $h\mathbf P\in \Psi_h^1(M)$ by adding a lower order term $\alpha h a\in h\Psi_h^0(M)$, that is uniformly bounded in $h \Psi_h^0(M)$, for $\alpha$ ranging in a compact set. Thus, the principal symbol is left unchanged, and we have uniform estimates on the lower order terms. Therefore, elliptic estimates and propagation estimates are uniform with respect to the parameter $\alpha$. Notably, we have

\begin{prop}[{\cite[Proposition 3.4]{dyatlov2016dynamical}}] Take an absorbing operator $Q$ that is elliptic near the $0$ section, defined by $Q =\chi(h^2\Delta_g)$ for some smooth $\chi$ compactly supported in $[-1,1]$ and such that $\chi(0)=1$. Fix $R>0$ and $\alpha_0>0$. Then for $s>s_0(R,\alpha_0)$ large enough and $0<h<h_0(R,\alpha_0)$ small enough, the operator
\[h\mathbf P_\alpha-\mathrm iQ-z:H_{sG(h)}\to H_{sG(h)}\]
is invertible for $-Rh\le \operatorname{Im} z\le 1$, $|\operatorname{Re} z|\le h^{\frac 12}$, with resolvent estimates
\[\|\big(h\mathbf P_\alpha-\mathrm iQ-z\big)^{-1}\|_{H_{sG(h)}\to H_{sG(h)}}\le C(R,\alpha_0,s_0) h^{-1}.\]\end{prop}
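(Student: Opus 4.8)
The plan is to adapt the standard Dyatlov--Zworski approach to meromorphic continuation of the Ruelle resolvent on anisotropic Sobolev spaces, keeping careful track of the fact that the perturbation $\alpha h a$ is lower order and uniformly bounded in $h\Psi_h^0(M)$. The reference \cite[Proposition 3.4]{dyatlov2016dynamical} treats the unperturbed operator $h\mathbf P$; here the point is only that every ingredient in that proof is stable under adding a term that is $\mathcal O(h)$ in $\Psi_h^0$ uniformly for $\alpha\in[0,\alpha_0]$.

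First I would recall the microlocal structure of $h\mathbf P_\alpha-\mathrm iQ$. Its semiclassical principal symbol is $p(x,\xi)=\langle V(x),\xi\rangle-\mathrm i\sigma(Q)$, which is \emph{independent of $\alpha$}: the term $\alpha h a$ contributes only at order $h$. Hence the characteristic set, the radial sets $E_u^*,E_s^*$, and the dynamics of the Hamilton flow of $\mathrm{Re}\,p$ near these sets are exactly those of the unperturbed problem. The subprincipal symbol does change, by the bounded quantity $\alpha a$; this enters the threshold condition at the radial sets as a shift that is uniformly controlled for $\alpha\in[0,\alpha_0]$. So: (i) \textbf{elliptic estimates} away from the characteristic set hold with constants depending only on uniform seminorm bounds on the full symbol, hence uniformly in $\alpha$; (ii) \textbf{real principal type propagation estimates} along $\mathrm H_{\mathrm{Re}\,p}$ are insensitive to lower order terms, hence uniform in $\alpha$; (iii) \textbf{radial point estimates} (Faure--Sjöstrand, as in \cite[\S E]{dyatlov2016dynamical}) near $E_u^*$ (source, where the anisotropic weight $e^{-sG(h)}$ provides the needed regularity gain, using $m_G\equiv-1$ there) and near $E_s^*$ (sink, using $m_G\equiv1$) require $s$ large depending on the skew-symmetric part and the subprincipal symbol — taking $s>s_0(R,\alpha_0)$ absorbs the $\alpha$-dependence since $\|\alpha a\|\le\alpha_0\|a\|_{C^0}$. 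The absorbing operator $-\mathrm iQ$, elliptic near the zero section where $m_G$ is not defined, handles the low-frequency region and is unchanged by the perturbation. The artificial damping $-\mathrm iQ$ combined with the sign condition on $\mathrm{Im}(h\mathbf P_\alpha-\mathrm iQ)$ gives the a priori bound only for $\mathrm{Im}\,z$ slightly negative, i.e. $-Rh\le\mathrm{Im}\,z\le1$, after fixing $R$; the strip $|\mathrm{Re}\,z|\le h^{1/2}$ is the usual semiclassical window where the energy localization from $Q$ is effective.

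Assembling (i)--(iii) in the standard way yields, for $s>s_0(R,\alpha_0)$ and $0<h<h_0(R,\alpha_0)$, an estimate of the form
\[\|u\|_{H_{sG(h)}}\le C h^{-1}\|(h\mathbf P_\alpha-\mathrm iQ-z)u\|_{H_{sG(h)}}\]
for all $u$ in the appropriate domain and $z$ in the stated region, together with the analogous estimate for the adjoint $(h\mathbf P_\alpha-\mathrm iQ-z)^*$ (which has the same form since taking adjoints only alters lower order terms and reverses the flow, for which one uses the conjugate weight; this is exactly the symmetry exploited in \cite{dyatlov2016dynamical}). A priori estimates for an operator and its adjoint, with the operator Fredholm of index zero on $H_{sG(h)}\to H_{sG(h)}$ (again index zero is a stable-under-lower-order-perturbation fact, being homotopy-invariant), give invertibility with $\|(h\mathbf P_\alpha-\mathrm iQ-z)^{-1}\|_{H_{sG(h)}\to H_{sG(h)}}\le C(R,\alpha_0,s_0)h^{-1}$. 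The constant is uniform in $\alpha\in[0,\alpha_0]$ precisely because each microlocal estimate entering the argument was uniform.

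The main obstacle, and the only place requiring genuine care rather than citation, is verifying the \emph{uniformity in $\alpha$ of the radial point estimates}: the threshold exponent in the Faure--Sjöstrand radial estimate depends on the value of the subprincipal symbol (here shifted by $\alpha a$) at the radial set relative to the expansion/contraction rates of the flow, and one must check that choosing $s$ beyond a threshold depending only on $\alpha_0\|a\|_{C^0}$ and the hyperbolicity constants suffices simultaneously for all $\alpha\in[0,\alpha_0]$. This is straightforward since the dependence is through a single real parameter bounded by $\alpha_0\|a\|_{C^0}$, but it is the crux of why the statement is true. Once this is in place, everything else is a routine repackaging of \cite[\S 3]{dyatlov2016dynamical} with $\alpha$ carried along as an inert parameter.
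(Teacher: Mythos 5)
Your proposal is correct and follows essentially the same route as the paper, which simply invokes \cite[Proposition 3.4]{dyatlov2016dynamical} after observing that the perturbation $\alpha h a$ is uniformly bounded in $h\Psi_h^0(M)$ for $\alpha\in[0,\alpha_0]$, leaves the principal symbol unchanged, and therefore makes all elliptic, propagation and radial estimates uniform in $\alpha$ once $s>s_0(R,\alpha_0)$ and $h<h_0(R,\alpha_0)$. Your additional discussion of where the $\alpha$-dependence enters (the subprincipal shift in the radial thresholds) is exactly the point the paper leaves implicit.
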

The addition of the operator $Q$ ensures invertibility. Then, introduce the operator
\[K(\alpha,z):=\mathrm i Q (h\mathbf P_\alpha-\mathrm iQ-hz)^{-1}.\] 
Since $Q$ has finite rank (although depending on $h$), $K(\alpha,z)$ also has finite rank. We may then write
\[\mathbf P_\alpha-z=h^{-1}\big(I+K(\alpha,z)\big)(h\mathbf P_\alpha -\mathrm{i} Q -hz).\]
Analytic Fredholm theory allows showing that $(\mathbf P_\alpha-z)^{-1}$ is a meromorphic (with respect to the variable $z$) family of operators $H_{sG(h)}\to H_{sG(h)}$ for $z\in D(0,R)$, provided that $s>s_0(R,\alpha_0)$ and $h<h_0(R,\alpha_0)$. Now, for $z\in D(0,R)$, we have
\[(\mathbf P_\alpha-z)^{-1}=h(h\mathbf P_\alpha-\mathrm iQ-hz)^{-1}(I+K(\alpha,z))^{-1},\]
and, introducing $F(\alpha,z)=\det_{H_{sG(h)}}(I+K(\alpha,z))$ --- well-defined because $K$ is finite rank ---,  the resonances of $\mathbf P_\alpha $ in the disk $D(0,R)$ are exactly given by the zeroes of $F_\alpha$ in the same disk. Note that since $Q=\chi(h^2\Delta_g)$, the operator $K(\alpha,z)$ maps $H_{sG(h)}$ onto $H^N$ for any $N$; since $H^N$ embeds in $H_{sG(h)}$ for any $N\ge s$, it follows that for any $N\ge s$, $F(\alpha,z)$ is equal to the $H^N$ determinant of $I+K(\alpha,z)$. 

\subsubsection{Dependence on $\alpha$ of the resonances} We now show that $F(\alpha,z)$ depends smoothly on $\alpha$. First, we show
\begin{lem}\label{lemapp: K est lisse} Fix $R>0$ and $\alpha_0>0$. For $s>s_0(R,\alpha_0)$ and $0<h<h_0(R,\alpha_0)$ we have 
\[K(\alpha,z)\in C^{\infty}\Big([0,\alpha_0];\mathrm{Hol}\big(D(0,R), \mathcal L^1(H^s)\big)\Big),\]
where $H^s=H^s(M)$ is the usual Sobolev space, $\mathcal L^1(H^s)$ is the space of trace-class operators $H^s\to H^s$, and $\mathrm{Hol}\big(D(0,R), \mathcal L^1(H^s)\big)$ denotes the space of holomorphic maps $D(0,R)\to \mathcal L^1(H^s)$.\end{lem}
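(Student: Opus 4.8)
The plan is to prove Lemma~\ref{lemapp: K est lisse} by unwinding the definition $K(\alpha,z)=\mathrm{i}Q(h\mathbf P_\alpha-\mathrm{i}Q-hz)^{-1}$ and checking three things: that the operator is trace-class on $H^s$, that it is holomorphic in $z$, and that it is smooth in $\alpha$. First I would record the resolvent identity that is the engine of everything: writing $\mathbf R_Q(\alpha,z):=(h\mathbf P_\alpha-\mathrm{i}Q-hz)^{-1}$, one has for $\alpha,\alpha'\in[0,\alpha_0]$
\[\mathbf R_Q(\alpha',z)-\mathbf R_Q(\alpha,z)=-(\alpha'-\alpha)\,\mathbf R_Q(\alpha',z)\,(ha)\,\mathbf R_Q(\alpha,z),\]
and similarly in $z$ with $(ha)$ replaced by $h(z'-z)$. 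Here $ha\in h\Psi_h^0(M)$ acts by multiplication and is uniformly bounded on every $H^m$, so the previous proposition gives the uniform bound $\|\mathbf R_Q(\alpha,z)\|_{H_{sG(h)}\to H_{sG(h)}}\le C h^{-1}$ for $z\in D(0,R)$, $\alpha\in[0,\alpha_0]$, once $s>s_0(R,\alpha_0)$ and $h<h_0(R,\alpha_0)$. Iterating the $\alpha$-identity produces the Neumann-type series
\[\mathbf R_Q(\alpha,z)=\sum_{m\ge 0}(-\alpha)^m\,\mathbf R_Q(0,z)\big((ha)\mathbf R_Q(0,z)\big)^m,\]
which converges in operator norm on $H_{sG(h)}$ for $\alpha$ small; but since at the end we only need the statement on $[0,\alpha_0]$ and smoothness is a local property, together with the fact that $\mathbf R_Q(\alpha_1,z)$ can be used as the new base point, this gives real-analyticity — hence $C^\infty$ — of $\alpha\mapsto\mathbf R_Q(\alpha,z)\in\mathcal L(H_{sG(h)})$ on all of $[0,\alpha_0]$, with termwise derivatives
\[\partial_\alpha^k\mathbf R_Q(\alpha,z)=(-1)^k k!\,\mathbf R_Q(\alpha,z)\big((ha)\mathbf R_Q(\alpha,z)\big)^k.\]
The same computation with the $z$-identity, or just noting that $z\mapsto\mathbf R_Q(\alpha,z)$ extends to a holomorphic $\mathcal L(H_{sG(h)})$-valued map on $D(0,R)$ by the previous proposition, handles holomorphy in $z$, and the mixed identity shows the joint map $(\alpha,z)\mapsto\mathbf R_Q(\alpha,z)$ is smooth in $\alpha$, holomorphic in $z$, with values in $\mathcal L(H_{sG(h)})$.

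Next I would upgrade from bounded operators on $H_{sG(h)}$ to trace-class operators on $H^s$. The point is the smoothing provided by $Q=\chi(h^2\Delta_g)$: since $\chi$ is compactly supported, $Q$ has Schwartz kernel in $C^\infty(M\times M)$ (for fixed $h$), so $Q:H^{-N}\to H^{N}$ is bounded for every $N$, and in fact $Q\in\mathcal L^1(H^m)$ for every $m$. Now factor
\[K(\alpha,z)=\mathrm{i}Q\,\mathbf R_Q(\alpha,z)= \mathrm{i}\,Q_1\circ\big(Q_2\,\mathbf R_Q(\alpha,z)\big),\]
where $Q=Q_1Q_2$ with each $Q_i=\chi_i(h^2\Delta_g)$ smoothing (one can take $Q_1=Q_2^2=\cdots$, or simply write $Q=Q\cdot\tilde\chi(h^2\Delta_g)$ for a cutoff $\tilde\chi\equiv1$ on $\mathrm{supp}\,\chi$ and split that way). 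The operator $Q_2\,\mathbf R_Q(\alpha,z)$ maps $H^{-N}$ into $H^{N}$ boundedly — indeed $\mathbf R_Q$ maps $H^{-s}\subset H_{sG(h)}$ into $H_{sG(h)}\subset H^s$, and composing with the smoothing $Q_2$ on either side promotes the regularity to any order — and $Q_1\in\mathcal L^1(H^s)$, so by the ideal property $K(\alpha,z)\in\mathcal L^1(H^s)$ with trace-norm bounds uniform over $\alpha\in[0,\alpha_0]$ and $z\in D(0,R)$. Applying the same splitting to the explicit formulas for $\partial_\alpha^k K$ and to difference quotients (both in $\alpha$ and, using the $z$-resolvent identity, in $z$), and using that $\mathcal L^1(H^s)$ is a Banach space on which composition with a fixed bounded operator is continuous, the smoothness and holomorphy statements from the previous paragraph transfer verbatim to $\mathcal L^1(H^s)$-valued maps. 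This is exactly the claimed membership $K\in C^\infty\big([0,\alpha_0];\mathrm{Hol}(D(0,R),\mathcal L^1(H^s))\big)$. As a final remark I would note, as in the paragraph preceding the lemma, that the range of $K(\alpha,z)$ lies in $H^N$ for every $N\ge s$, so the statement and its proof are insensitive to the exact choice of $s$ once $s>s_0(R,\alpha_0)$.

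The main obstacle — really the only subtle point — is keeping all estimates \emph{uniform in $\alpha$} while passing between the three function spaces $H_{sG(h)}$, $H^{-s}$ and $H^s$: one must be sure that $s_0$ and $h_0$ from the resolvent proposition can be chosen independently of $\alpha\in[0,\alpha_0]$, which is legitimate precisely because $\alpha ha$ is a lower-order perturbation bounded uniformly in $h\Psi_h^0(M)$ over that compact $\alpha$-range, so it does not affect the principal symbol and the radial/propagation estimates underlying the proposition hold with constants depending only on $(R,\alpha_0)$. Everything else is the bookkeeping of Neumann series, resolvent identities, and the ideal property of $\mathcal L^1$; no genuinely new analytic input is needed beyond what is quoted from Dyatlov--Zworski.
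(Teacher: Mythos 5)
Your proposal is correct and follows essentially the same route as the paper: differentiate the cut-off resolvent $(h\mathbf P_\alpha-\mathrm iQ-hz)^{-1}$ via the resolvent identity (uniformly in $\alpha\in[0,\alpha_0]$, $z\in D(0,R)$) to get smoothness in $\alpha$ and holomorphy in $z$ with values in $\mathcal B(H_{sG(h)})$, then upgrade to $\mathcal L^1(H^s)$ using the smoothing/finite-rank nature of $Q=\chi(h^2\Delta_g)$ and the ideal property — the paper inserts the spectral projector $\mathbf 1_{[0,1]}(h^2\Delta_g)$ where you split $Q=Q_1Q_2$, which is the same mechanism. One small correction: the embeddings go the other way, $H^s\subset H_{sG(h)}\subset H^{-s}$ (the paper's proof indeed uses $H^s\hookrightarrow H_{sG}$ and that $Q$ maps $H_{sG}$ into $H^s$), but this does not affect your argument since you only need to apply $\mathbf R_Q(\alpha,z)$ to $H^s$ inputs and let the smoothing factor bring the output from $H^{-s}$ back to $H^s$.
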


\begin{proof}Write
\[\frac{(h\mathbf P_\alpha-\mathrm iQ-hz)^{-1}-(h\mathbf P_{\alpha'}-\mathrm iQ-hz)^{-1}}{\alpha-\alpha'}= (h\mathbf P_\alpha-\mathrm iQ-hz)^{-1} h a(h\mathbf P_{\alpha'}-\mathrm iQ-hz)^{-1}.\]
Letting $\alpha'\to \alpha$ we deduce
\[\partial_\alpha (h\mathbf P_\alpha-\mathrm iQ-hz)^{-1}= (h\mathbf P_\alpha-\mathrm iQ-hz)^{-1} h a(h\mathbf P_{\alpha}-\mathrm iQ-hz)^{-1}\]
where the derivative belongs to the space of bounded operators $\mathcal B(H_{sG})$ because it is a product of continuous operators $H_{sG}\to H_{sG}$. We can iterate the procedure to show 
\[(h\mathbf P_\alpha-\mathrm iQ-hz)^{-1}\in C^\infty\Big([0,\alpha_0],\mathrm{Hol}\big(D(0,R), \mathcal B(H_{sG})\big)\Big).\]
Now, $\partial_\alpha^j K(\alpha,z)=\mathrm{i}Q \partial_\alpha^j(h\mathbf P_\alpha-\mathrm iQ-hz)^{-1}$. Since $H^s$ embeds continuously in $H_{sG}$, and $Q=\chi(h^2\Delta_g)$ maps $H_{sG}$ onto $H^s$, we have $\partial_\alpha^j K(\alpha,z)\in \mathcal B(H^s)$. Moreover, since $\chi$ is compactly supported in $[-1,1]$, we have $K(\alpha,z)=\mathbf 1_{[0,1]}(h^2\Delta_g) K(\alpha,z)$ thus
\[\big \|\partial_\alpha^j K(\alpha,z)\big\|_{\mathcal L^{1}(H^s)}\le \big\|\partial_\alpha^j K(\alpha,z)\big\|_{H^s\to H^s}\cdot  \|\mathbf 1_{[0,1]}(h^2\Delta_g)\|_{\mathcal L^1(H^s)}.\]
This concludes the proof. \end{proof}
\begin{prop}We have $\det_{H^s}(I+K(\alpha,z))\in C^{\infty}\Big([0,\alpha_0]; \mathrm{Hol}\big(D(0,R),\mathbf C\big)\Big)$.\end{prop}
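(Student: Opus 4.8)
The plan is to deduce the smooth (in $\alpha$) and holomorphic (in $z$) dependence of the Fredholm determinant $F(\alpha,z)=\det_{H^s}(I+K(\alpha,z))$ directly from Lemma \ref{lemapp: K est lisse} together with the standard continuity properties of the determinant on trace-class operators. The key analytic input is that the map $T\mapsto \det(I+T)$ is holomorphic (indeed entire) on $\mathcal L^1(H^s)$, with derivative $D\det_{I+T}(S)=\det(I+T)\operatorname{tr}\big((I+T)^{-1}S\big)$ wherever $I+T$ is invertible, and more importantly that it is locally Lipschitz on $\mathcal L^1$ in the sense $|\det(I+T_1)-\det(I+T_2)|\le \|T_1-T_2\|_{\mathcal L^1}\exp(1+\|T_1\|_{\mathcal L^1}+\|T_2\|_{\mathcal L^1})$ (see e.g. Simon, \emph{Trace Ideals}). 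So the composition $(\alpha,z)\mapsto F(\alpha,z)$ of the $C^\infty$-in-$\alpha$, holomorphic-in-$z$ map $(\alpha,z)\mapsto K(\alpha,z)\in\mathcal L^1(H^s)$ with the entire map $\det(I+\cdot)$ will inherit both regularities.

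Concretely, I would proceed as follows. First, fix $R>0$ and $\alpha_0>0$, take $s>s_0(R,\alpha_0)$ and $0<h<h_0(R,\alpha_0)$ so that Lemma \ref{lemapp: K est lisse} applies; this gives $K\in C^\infty\big([0,\alpha_0];\mathrm{Hol}(D(0,R),\mathcal L^1(H^s))\big)$. Second, holomorphy in $z$ for fixed $\alpha$: since $z\mapsto K(\alpha,z)$ is holomorphic into $\mathcal L^1(H^s)$ and $\det(I+\cdot)$ is an entire function on $\mathcal L^1$, the composition $z\mapsto F(\alpha,z)$ is holomorphic on $D(0,R)$ — one checks complex-differentiability via the chain rule using the Fréchet derivative of $\det$ recalled above, or simply by a Morera/Cauchy-integral argument combined with the local Lipschitz bound. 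Third, smoothness in $\alpha$: differentiate under the determinant. Writing $K=K(\alpha,z)$, one has formally
\[\partial_\alpha F(\alpha,z)=F(\alpha,z)\,\operatorname{tr}\Big((I+K)^{-1}\partial_\alpha K\Big),\]
valid near any $(\alpha_0,z_0)$ where $I+K$ is invertible, and near a zero $z_0$ of $F(\cdot,z_0)$ one works instead with the entire function $\det(I+\cdot)$ and its genuine Fréchet derivative $S\mapsto \det(I+K)\operatorname{tr}((I+K)^{-1}S)$ extended continuously; since $\alpha\mapsto K$ is $C^\infty$ into $\mathcal L^1(H^s)$ and $T\mapsto\det(I+T)$ is $C^\infty$ (indeed analytic) on $\mathcal L^1$, the chain rule gives $\alpha\mapsto F(\alpha,z)$ in $C^\infty([0,\alpha_0])$, with all $\alpha$-derivatives again holomorphic in $z$ by differentiating the explicit formulas (each $\partial_\alpha^j F$ is a universal polynomial in traces of products of $(I+K)^{-1}$ and $\partial_\alpha^i K$, all of which are holomorphic in $z$ into $\mathcal L^1$ by Lemma \ref{lemapp: K est lisse}). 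Finally, one records joint regularity: the bounds in Lemma \ref{lemapp: K est lisse} are locally uniform in $(\alpha,z)$, so the standard estimates on $\det$ and its derivatives upgrade pointwise regularity to $F\in C^\infty\big([0,\alpha_0];\mathrm{Hol}(D(0,R),\mathbf C)\big)$, and since $N\ge s$ gives the same determinant (as noted in the excerpt, $K(\alpha,z)$ maps into $H^N$ for all $N$, so $\det_{H^s}=\det_{H^N}$), the statement is independent of the choice of $s$.

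The only genuinely delicate point is making the differentiation of the determinant rigorous at points where $I+K(\alpha_0,z_0)$ fails to be invertible, i.e. precisely at the resonances. There the shortcut formula $\partial_\alpha F=F\operatorname{tr}((I+K)^{-1}\partial_\alpha K)$ breaks down, so one must argue via the analyticity of $\det(I+\cdot):\mathcal L^1\to\mathbf C$ as a function on the whole Banach space (not just on the invertible part) and its continuous Fréchet derivatives, composing with the $C^\infty$ Banach-space-valued map $\alpha\mapsto K(\alpha,z)$. I expect this to be a routine application of the chain rule in Banach spaces once the right regularity statement for the determinant on trace ideals is quoted; everything else — holomorphy in $z$, uniformity of estimates — is immediate from Lemma \ref{lemapp: K est lisse}. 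I would probably present the argument by first stating the needed facts about $\det_{\mathcal L^1}$, then giving the one-line chain-rule deduction, and relegating the Lipschitz/analyticity estimates to a citation.
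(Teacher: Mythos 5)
Your proposal is correct and essentially the paper's argument: the paper likewise differentiates $F(\alpha,z)=\det(I+K(\alpha,z))$ using Lemma \ref{lemapp: K est lisse}, and the delicate point you isolate (non-invertibility of $I+K$ at resonances) is handled there by the trace-ideal bound $\left\|\det(I-A)\,(I-A)^{-1}\right\|\le \mathrm{e}^{\|A\|_{\mathcal L^1}}$, which is exactly the regularized Fr\'echet derivative of $\det(I+\cdot)$ on $\mathcal L^1$ that you invoke. One small caution: when you describe $\partial_\alpha^j F$ as a polynomial in traces of products of $(I+K)^{-1}$ and $\partial_\alpha^i K$, the operator $(I+K)^{-1}$ alone is neither defined nor holomorphic in $z$ at zeros of $F$, so the formulas should be kept in terms of the regularized combination $F(\alpha,z)\,(I+K(\alpha,z))^{-1}$ (equivalently the entire determinant and its Fr\'echet derivatives), as in the paper.
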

\begin{proof}Start from the identity
\[\partial_\alpha F(\alpha,z)=F(\alpha,z)\operatorname{Tr} \big((I+K(\alpha,z))^{-1}\partial_\alpha K(\alpha,z)\big).\]
We recall, for $A\in \mathcal L^1$, the inequalities (see \cite[Appendix B]{DyatlovResonances})
\[\left\|\big(\det(I-A)\big)(I-A)^{-1}\right\|\le \mathrm{e}^{\|A\|_{\mathcal L^1}},\]
It then follows by Lemma \ref{lemapp: K est lisse} that the map $D(0,R)\ni z\mapsto  F(\alpha,z)\|(I+K(\alpha,z))^{-1}\|$ is bounded, uniformly with respect to $\alpha$, from which we get
\[|\partial_\alpha F(\alpha,z)|\le \mathrm{e}^{\|K(\alpha,z)\|_{\mathcal L^1}} \|\partial_\alpha K(\alpha,z)\|_{\mathcal L^1}.\]
Holomorphy in $z$ of $\partial_\alpha F$ follows from the uniform estimates in $z$. The case of higher derivatives is treated by using the same methods.\end{proof}

\subsubsection{Smoothness of the first resonance}
Since $0$ is a simple zero of $F(0,\cdot)$ --- indeed, the space of resonant states at $0$ just consists of constant functions ---, the derivative $\partial_z F(\alpha,z)_{|(0,0)}$ is nonzero, thus the implicit function theorem implies that $F(\alpha,\cdot)$ has a unique zero in a small neighborhood of $0$, denoted $\mu_\alpha$, that depends smoothly on $\alpha$. Also, by Rouch{\'e} theorem, we have a uniform upper bound on the number of resonances in the disk $D(0,R)$, independent of $\alpha$. For $R$ large enough, combined with Proposition \ref{prop: comptage résonances HE} this gives an upper bound independent of $\alpha$ on the counting functions of resonances in horizontal strips similar to that of Proposition \ref{comptage JinTao}.

By adapting the proof of \cite[Proposition 5.3]{Dyatlov2017}, we get smoothness of the spectral projector $\Pi_\alpha$ associated to the resonance $\mu_\alpha$, defined by integrating the resolvent over a small circle centered at $0$. More precisely, we have
\[\Pi_\alpha\in C^\infty\big([0,\alpha_0], \mathcal B(H_{sG(1)},H_{sG(1)})\big).\] 
Since the range of $\Pi_\alpha$ is one-dimensional for small $\alpha$, we can find a smooth family of distributions $\eta_\alpha\in \mathcal D'_{E_u^*}(M)$ such that $\eta_0\equiv 1$, and $\eta_\alpha$ is a resonant state associated to the resonance $\mu_\alpha$ --- for example, take $\eta_\alpha=\Pi_\alpha \mathbf 1$ where $\mathbf 1$ is the constant function equal to $1$ ---, meaning that in the sense of distributions,
\[\mathbf P \eta_\alpha+ \alpha a \eta_\alpha= \mu_\alpha\eta_\alpha. \]

The space of co-resonant states at $0$ is by definition $\big\{u\in \mathcal D'(M), \ \mathbf P^* u=0, \ \mathrm{WF}(u)\subset E_s^*\}$. By a result of Humbert \cite[Theorem 1]{humbert2024criticalaxisruelleresonances}, this set equals $\mathrm{span}(\nu_0)$, where $\nu_0$ is the SRB measure. 

Working with $\mathbf P^*$ instead of $\mathbf P$, we may find a smooth family of distributions $\nu_\alpha\in \mathcal D'_{E_s^*}$ such that $\nu_0$ is the SRB measure, and
\[\mathbf P^* \nu_\alpha+ \alpha a \eta_\alpha= \mu_\alpha\eta_\alpha \]
in the sense of distributions.
\subsection{Counting resonances of $\mathbf P_\alpha$ in horizontal strips}
Previously we were considering a complex absorbing operator $Q$ localized near the $0$ section, $Q=\chi(h^2\Delta_g)$, with $\chi$ compactly supported in a neighborhood of $0$. To get counting upper bounds for large energies, we rather need to localize near the energy level $\{|\xi|=1\}$. As in \cite{jin2023number}, introduce a complex absorbing operator $W$ (one may take $W=\chi(h^2\Delta_g-1)$) localized near the energy level $\{|\xi|=1\}$, with $\operatorname{rank} W=\mathcal O(h^{-n})$. Now, by studying the modified operator $\widetilde {\mathbf P}_{\alpha,h}=h\mathbf P_\alpha -MhW-z$ (with $M$ taken large enough) one can show (see \cite[\S 3]{jin2023number})
\begin{prop}\label{prop: comptage résonances HE}With notations as above, for $h<h_0$,
\[\# \mathrm{Res}(h\mathbf P_\alpha) \cap [1-h,1+h]+\mathrm i[-\beta h,1] =\mathcal O(h^{-n}).\]\end{prop}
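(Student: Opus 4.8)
The plan is to follow the strategy of Jin--Tao \cite[\S 3]{jin2023number}, adapted to the perturbed operator $\mathbf P_\alpha = \mathbf P + \alpha a$, exploiting that $\alpha a \in h\Psi_h^0$ is a \emph{lower order} perturbation: it does not affect the principal symbol of $h\mathbf P_\alpha \in \Psi_h^1$, so every microlocal estimate below can be made uniform for $\alpha \in [0,\alpha_0]$. The resonances of $h\mathbf P_\alpha$ near the energy shell $\{|\xi|_g = 1\}$ will be realized as the zeros of a holomorphic Fredholm determinant, and one counts them by Jensen's inequality after obtaining matching upper and lower bounds on that determinant.

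First I would introduce the complex absorbing operator $W = \chi(h^2\Delta_g - 1) \in \Psi_h^{-\infty}(M)$, where $\chi \in C^\infty_c(\mathbf R)$ equals $1$ near $0$ and is supported in $(-\varepsilon,\varepsilon)$; since $W$ is (up to smoothing) the spectral projector of $\Delta_g$ onto a band of width $\sim h^{-2}$ around $h^{-2}$, Weyl's law gives $\operatorname{rank} W = \mathcal O(h^{-n})$. The operator $W$ is elliptic on a conic neighborhood of $\{|\xi|_g = 1\}$ and provides absorption there, while outside that neighborhood $h\mathbf P_\alpha - z$ is elliptic in the semiclassical calculus for $z$ in a bounded set. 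Working on the semiclassical anisotropic space $H_{sG(h)}$ of \S\ref{subsec:anisotropic} (with $s = s(\beta,\alpha_0)$ large), one assembles elliptic estimates away from the characteristic set, propagation estimates along the flow of $V$, and the radial source/sink estimates at $E_u^*$ and $E_s^*$ into a global Fredholm estimate: for $M$ large enough and $h < h_0(\beta,\alpha_0)$,
\[\widetilde{\mathbf P}_{\alpha,h}(z) := h\mathbf P_\alpha - \mathrm i M h W - z : H_{sG(h)} \to H_{sG(h)}\]
is invertible for $z \in [1-h,1+h] + \mathrm i[-\beta h, 1]$, with $\big\|\widetilde{\mathbf P}_{\alpha,h}(z)^{-1}\big\|_{H_{sG(h)} \to H_{sG(h)}} = \mathcal O(h^{-1})$, uniformly in $\alpha \in [0,\alpha_0]$ because the thresholds depend only on the (unchanged) principal symbol.

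Next, exactly as in the fixed-compact-set analysis of the appendix, I would factor
\[h\mathbf P_\alpha - z = \big(I + \mathrm i M h W\,\widetilde{\mathbf P}_{\alpha,h}(z)^{-1}\big)\,\widetilde{\mathbf P}_{\alpha,h}(z),\]
so that the resonances of $h\mathbf P_\alpha$ in the box are precisely the zeros of the holomorphic function $D_\alpha(z) := \det_{H^s}\!\big(I + \mathrm i M h W\,\widetilde{\mathbf P}_{\alpha,h}(z)^{-1}\big)$, the correction being finite rank of rank $\mathcal O(h^{-n})$ since $W$ is. Using $\|K\|_{\mathcal L^1} \le \operatorname{rank}(K)\,\|K\|$ together with the resolvent bound, one gets $\big\|\mathrm i M h W\,\widetilde{\mathbf P}_{\alpha,h}(z)^{-1}\big\|_{\mathcal L^1(H^s)} = \mathcal O(h^{-n})$ on the box, hence $|D_\alpha(z)| \le \exp(C h^{-n})$ there. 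For a lower bound I would evaluate at a point $z_0$ with $\operatorname{Im} z_0 = 1$, where a Neumann series for $(I + \mathrm i M h W\,\widetilde{\mathbf P}_{\alpha,h}(z_0)^{-1})^{-1}$ converges and $|D_\alpha(z_0)| \ge c > 0$ uniformly. Jensen's formula then bounds the number of zeros of $D_\alpha$ in a disk of radius $r$ covering the box by $\log\!\big(\sup_{|z-z_0|\le 2r}|D_\alpha(z)|\,/\,|D_\alpha(z_0)|\big) \lesssim h^{-n}$, which is the assertion.

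The main obstacle is the semiclassical resolvent estimate for $\widetilde{\mathbf P}_{\alpha,h}(z)$ near the energy shell: one must verify that the complex absorption by $\mathrm i M h W$ is strong enough to beat the loss incurred in the propagation estimate along $V$ (this is where $M$ must be chosen large, depending on the maximal escape time across the elliptic set of $W$ and on $\beta$), that the radial estimates at $E_u^*, E_s^*$ are available on $H_{sG(h)}$ with $s$ large, and that all constants are genuinely independent of $\alpha$. Everything else — the Fredholm determinant bookkeeping, the $\mathcal L^1$ norm estimates, and Jensen's inequality — is routine and already present, in the bounded-energy case, in the appendix. Since Jin--Tao carry out precisely this argument for the unperturbed Ruelle operator acting on vector bundles, I would indicate that their proof applies verbatim once $\alpha a$ is absorbed into the subprincipal part, the only new point being the uniformity in $\alpha$, which follows from the invariance of the principal symbol.
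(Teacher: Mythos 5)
Your proposal is correct and follows essentially the same route as the paper, which proves this proposition simply by introducing the absorbing operator $W=\chi(h^2\Delta_g-1)$ of rank $\mathcal O(h^{-n})$ and invoking the argument of Jin--Tao \cite[\S 3]{jin2023number} for the modified operator $h\mathbf P_\alpha-\mathrm{i}MhW-z$, with uniformity in $\alpha$ coming from the fact that $\alpha a$ is a subprincipal perturbation. Your write-up merely spells out the determinant-plus-Jensen bookkeeping that the cited reference contains, so there is nothing to correct.
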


\subsection{A local trace formula with uniform error terms} 

We now explain briefly the idea of the proof of the local trace formula, and refer to \cite[\S 4]{Jin2016} for details --- note that one has to correct the flat trace estimates, see \cite{jin2022flat}.

Define, for $\mathrm{Im}(z)\gg 1$,
\[\zeta_\alpha(z)=\exp\Big(-\sum_{\gamma\in \mathcal G} \frac{\ell(\gamma)^\sharp \mathrm{e}^{-\mathrm{i}\alpha\int_\gamma a}\mathrm{e}^{\mathrm{i} z\ell(\gamma)}}{\ell(\gamma)|I-\mathcal P_\gamma|}\Big).\]
The sum converges for $\mathrm{Im}(z)\gg 1$. By Guillemin's trace formula \eqref{eq:guillemin}, the function $\zeta_\alpha$ is related to the flat trace of the propagator by
\[\frac{\mathrm d}{\mathrm dz}\log \zeta_\alpha(z)=-\mathrm{i}\operatorname{tr^{\flat}}\int_0^{+\infty} \mathrm{e}^{-\mathrm{i} t\mathbf P_\alpha} \mathrm{e}^{\mathrm{i} tz}\mathrm dt.\]
Take a smooth real function $\varphi$, compactly supported on $(0,+\infty)$. On the one hand, taking $B>0$ large enough, we have by Fourier inversion formula,
\[\int_{\mathbf R+\mathrm{i}B} \widehat \varphi(z)\frac{\mathrm d}{\mathrm dz}\log \zeta_\alpha(z) \mathrm dz=-\mathrm{i}\int_{\mathbf R+\mathrm{i}B} \widehat \varphi(z)\operatorname{tr^{\flat}}\int_0^{+\infty} \mathrm{e}^{-\mathrm{i} t\mathbf P} \mathrm{e}^{\mathrm{i} tz}\mathrm dt\mathrm dz=-\mathrm{i} \sum_{\gamma\in \mathcal G} \frac{\ell(\gamma)^\sharp \varphi(\ell(\gamma))}{|I-\mathcal P_\gamma|}.\]
On the other hand, passing the contour to $\mathbf R-\mathrm{i} A$ gives, by the residue theorem
\begin{equation}\label{naiveprooflct}\int_{\mathbf R+\mathrm{i}B} \widehat \varphi(z)\frac{\mathrm d}{\mathrm dz}\log \zeta_\alpha(z) \mathrm dz=-2\pi \mathrm{i}\sum_{\substack{\mu\in \mathrm{Res}(\mathbf P),\\ \mathrm{Im}(\mu)>-A} } \widehat \varphi(\mu)+\int_{\mathbf R-\mathrm{i}A} \widehat \varphi(z)\frac{\mathrm d}{\mathrm dz}\log \zeta_\alpha(z) \mathrm dz.\end{equation}
Here we have used the fact that the poles of $\frac{\mathrm d}{\mathrm dz}\zeta_\alpha$ are exactly given, with multiplicities, by Pollicott--Ruelle resonances --- see \cite{dyatlov2016dynamical} for a proof of this fact. However, one has to be careful because some resonances may happen to lie on (or close to) the line $\mathbf R-\mathrm{i}A$, making it impossible to control naively the integral in the right-hand side of \eqref{naiveprooflct}. As explained in \cite{Jin2016}, a way to solve this issue is to perform a contour deformation. First, decompose the domain $\Omega=\{-A\le \mathrm{Im}(z)\le B\}$ in strips
\[\Omega_0=\Omega\cap \{|\mathrm{Re}(z)|\le 1\}, \qquad \Omega_{\pm k}=\Omega\cap \{k\le \pm \mathrm{Re}(z)\le k+1\} \ \text{for $k\ge 1$}. \]

The crucial point is now that for any $\varepsilon>0$ one can find new piecewise smooth contours $\tilde \gamma_k$ (that depend on $\alpha$), that lie in a small neighborhood of the initial contours $\gamma_k$, such that $\tilde \gamma_k$ avoid resonances quantitatively, notably one can ensure that
\[\Big|\frac{\mathrm d}{\mathrm dz}\log \zeta_\alpha(z)\Big|=\mathcal O(\langle z\rangle^{2n+1+\varepsilon}), \qquad \text{for any $z\in \tilde \gamma_k$ and $k\in \mathbf Z$}.\]
We denote by $\gamma_k^i$, $i=1,\ldots,4$ the different smooth parts of the contour $\gamma_k$ --- in particular $\gamma_k^3$ is the lower side of $\Omega_k$, and denote accordingly $\tilde \gamma_k^i$ the corresponding deformations whose concatenations form the contour $\tilde \gamma_k$. Let $\Gamma:=\bigcup_{k\in \mathbf Z} \tilde \gamma_k^3$. Then by the residue theorem,
\[\int_{\mathbf R-\mathrm{i} A} \widehat \varphi(z)\frac{\mathrm d}{\mathrm dz}\log \zeta_\alpha(z) \mathrm dz=\int_{\Gamma} \widehat \varphi(z)\frac{\mathrm d}{\mathrm dz}\log \zeta_\alpha(z) \mathrm dz-2\pi \mathrm{i} \sum_{\substack{\mu\in \mathrm{Res}(\mathbf P_\alpha)\cap \tilde \Omega,\\  \mathrm{Im}(\mu)\le -A}} \widehat \varphi(\mu)\]
The error term in the local trace formula then writes
\[\langle F_{A,\alpha},\varphi\rangle =\mathrm{i}\int_{\Gamma} \widehat \varphi(z)\frac{\mathrm d}{\mathrm dz}\log \zeta_\alpha(z) \mathrm dz+2\pi\sum_{\substack{\mu\in \mathrm{Res}(\mathbf P_\alpha)\cap \tilde \Omega,\\  \mathrm{Im}(\mu)\le -A}} \widehat \varphi(\mu).\]
Estimates on $\widehat F_{A,\alpha}$ are obtained following \cite[\S 4.3]{Jin2016} or \cite[\S 4.4]{jin2023number}, and one eventually gets
\begin{prop}[Local trace formula for the perturbed Ruelle operator] There is $\alpha_0>0$ such that for any $A>0$, there is a family of distributions $( F_{A,\alpha})_{0\le \alpha\le \alpha_0}$ on $\mathbf R$, supported on $\mathbf R_+$, such that the following equality of distributions on $(0,+\infty)$ holds:
\[\sum_{\substack{\mu \in \mathrm{Res}(\mathbf P_\alpha), \\ \mathrm{Im}(\mu)>-A}} \mathrm{e}^{-\mathrm{i} \mu t}+ F_{A,\alpha}=\sum_{\gamma \in \mathcal G} \frac{\mathrm{e}^{-\mathrm{i} \alpha\int_\gamma a}\ell(\gamma)^\sharp}{|I-\mathcal P_\gamma|}\delta(t-\ell(\gamma)).\]
Moreover, independently of $\alpha\le \alpha_0$, we have the bound on the Fourier Transform of $F_{A,\alpha}$, for any $\varepsilon>0$,
\[|\widehat F_{A,\alpha}(\xi)|=\mathcal O_{A,\varepsilon}(\langle \xi\rangle^{2n+1+\varepsilon}), \qquad \mathrm{Im}(\xi)\le A-\varepsilon.\] \end{prop}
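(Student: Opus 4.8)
The plan is to carry out, with uniformity in $\alpha$, the argument of Jin--Zworski \cite{Jin2016} (with the flat-trace correction of \cite{jin2022flat}) and Jin--Tao \cite{jin2023number} that was sketched above. Since $\alpha a$ is a lower-order semiclassical term that is uniformly bounded in $h\Psi_h^0(M)$ for $\alpha\in[0,\alpha_0]$ and leaves the principal symbol of $h\mathbf P_\alpha$ unchanged, the elliptic and propagation estimates on the anisotropic spaces $H_{sG(h)}$, the resolvent bounds near the energy shell, and the high-energy counting bound of Proposition \ref{prop: comptage résonances HE} all hold with constants depending only on fixed seminorms of $a$. This uniformity, established in the preceding subsections, is the only point that goes beyond the unperturbed case, and it is where the real work lies; everything else is a transcription of \cite{Jin2016,jin2023number}.

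First I would introduce the dynamical zeta function
\[\zeta_\alpha(z)=\exp\Bigl(-\sum_{\gamma\in\mathcal G}\frac{\ell(\gamma)^\sharp\,\mathrm{e}^{-\mathrm i\alpha\int_\gamma a}\,\mathrm{e}^{\mathrm iz\ell(\gamma)}}{\ell(\gamma)\,|I-\mathcal P_\gamma|}\Bigr),\]
which converges absolutely for $\mathrm{Im}(z)\gg1$ by the exponential bound on the number of closed geodesics of bounded length. Guillemin's trace formula \eqref{eq:guillemin} with potential $\alpha a$ gives $\frac{\mathrm d}{\mathrm dz}\log\zeta_\alpha(z)=-\mathrm i\operatorname{tr^\flat}\int_0^{+\infty}\mathrm{e}^{-\mathrm it\mathbf P_\alpha}\mathrm{e}^{\mathrm itz}\,\mathrm dt$ in that region, and the flat-trace/resolvent identity of \cite{dyatlov2016dynamical} --- whose proof rests on the Fredholm-determinant description $F(\alpha,z)=\det_{H^s}(I+K(\alpha,z))$ already set up in this appendix, hence goes through uniformly in $\alpha$ --- shows that $\zeta_\alpha$ extends holomorphically to $\mathbf C$ and that the poles of $\frac{\mathrm d}{\mathrm dz}\log\zeta_\alpha$ are exactly the Pollicott--Ruelle resonances of $\mathbf P_\alpha$, with residue equal to the multiplicity.

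Next I would test the desired identity against $\varphi\in C_c^\infty((0,+\infty))$: Fourier inversion on the line $\mathbf R+\mathrm iB$, $B\gg1$, produces the geometric side $\sum_\gamma\frac{\mathrm{e}^{-\mathrm i\alpha\int_\gamma a}\ell(\gamma)^\sharp}{|I-\mathcal P_\gamma|}\varphi(\ell(\gamma))$, and shifting the contour down to height $-A$ collects, by the residue theorem, the resonance sum $2\pi\sum_{\mathrm{Im}(\mu)>-A}\widehat\varphi(\mu)$. The main obstacle is that resonances may accumulate near any horizontal line, so the straight line $\mathbf R-\mathrm iA$ cannot be used; following \cite[\S4]{Jin2016} one decomposes $\{-A\le\mathrm{Im}(z)\le B\}$ into unit-width vertical strips $\Omega_k$ and, using the uniform counting bounds together with a Rouch\'e count in a fixed disk, replaces each bottom edge $\gamma_k^3$ by a piecewise-smooth contour $\tilde\gamma_k^3$ in a fixed neighbourhood of it that avoids resonances quantitatively, on which $\bigl|\frac{\mathrm d}{\mathrm dz}\log\zeta_\alpha(z)\bigr|=\mathcal O(\langle z\rangle^{2n+1+\varepsilon})$ with constant independent of $\alpha\in[0,\alpha_0]$. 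Writing $\Gamma=\bigcup_k\tilde\gamma_k^3$, the error distribution is then given (after checking that this functional is continuous on $C_c^\infty$) by
\[\langle F_{A,\alpha},\varphi\rangle=\mathrm i\int_{\Gamma}\widehat\varphi(z)\frac{\mathrm d}{\mathrm dz}\log\zeta_\alpha(z)\,\mathrm dz+2\pi\!\!\sum_{\substack{\mu\in\mathrm{Res}(\mathbf P_\alpha)\cap\tilde\Omega\\ \mathrm{Im}(\mu)\le-A}}\!\!\widehat\varphi(\mu).\]

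Finally I would read off the properties of $F_{A,\alpha}$: along $\Gamma$ one has $\mathrm{Im}(z)\le-A+\varepsilon'$ outside a bounded set, so the polynomial bound on $\frac{\mathrm d}{\mathrm dz}\log\zeta_\alpha$ on $\Gamma$ together with the polynomial count of the finitely clustered remaining resonances show, following \cite[\S4.3]{Jin2016} (and \cite[\S4.4]{jin2023number}), that $\widehat F_{A,\alpha}$ continues holomorphically to $\{\mathrm{Im}(\xi)\le A\}$ with $|\widehat F_{A,\alpha}(\xi)|=\mathcal O_{A,\varepsilon}(\langle\xi\rangle^{2n+1+\varepsilon})$ on $\{\mathrm{Im}(\xi)\le A-\varepsilon\}$; holomorphy in the lower half-plane with this growth forces $\operatorname{supp}F_{A,\alpha}\subset\mathbf R_+$ by Paley--Wiener. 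Since every estimate entering this chain was imported with $\alpha$-independent constants, all the bounds are uniform for $\alpha\le\alpha_0$, which is the assertion. The bulk of the remaining effort is bookkeeping: checking that the resolvent and counting estimates of \cite{Jin2016,jin2023number} depend only on the principal symbol and on fixed norms of the perturbation, so that the quantitative contour deformation can be performed uniformly in $\alpha$.
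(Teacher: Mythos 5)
Your proposal follows the paper's own argument essentially verbatim: the same zeta function $\zeta_\alpha$, Guillemin's trace formula, Fourier inversion on $\mathbf R+\mathrm iB$ followed by the Jin--Zworski contour deformation into strips $\Omega_k$ with $\alpha$-uniform polynomial bounds on $\frac{\mathrm d}{\mathrm dz}\log\zeta_\alpha$ along the deformed contours, the same formula for $\langle F_{A,\alpha},\varphi\rangle$, and the same source of uniformity (the perturbation $\alpha a$ is lower order, leaves the principal symbol unchanged, and the resolvent and counting estimates of Jin--Zworski and Jin--Tao are uniform in such perturbations). The only additions, such as invoking Paley--Wiener for the support statement, are minor elaborations of points the paper leaves implicit.
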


\subsection{Taylor expansion of the first resonance of $\mathbf P+\alpha a$} \label{sec:Appendix} Fix a real valued smooth potential $a$ on $M$. We now look for an explicit approximation of the first resonance $\mu_\alpha$ of $\mathbf P+\alpha a$. Recall that we can find a smooth family of distributions $\eta_\alpha\in C^{\infty}([0,\alpha_0], \mathcal D'_{E_u^*}(M))$, with $\eta_0=1$, such that $\mathbf P\eta_\alpha+ \alpha a \eta_\alpha =\mu_\alpha\eta_\alpha$ in the sense of distributions, meaning that for all $f\in C^\infty(M)$,
\begin{equation}\label{eq:coresonant states}\langle \eta_\alpha,\mathbf P^* f\rangle +\alpha \langle \eta_\alpha, fa\rangle =\mu_\alpha\langle \eta_\alpha,f\rangle.\end{equation}
Moreover these distributions are uniquely characterized by these identities, up to a multiplicative factor. Before turning to the proof of Proposition \ref{prop: res perturbation 0}, we give an expression of $\dot \eta_0$.

\begin{lem} \label{lem:deriveeresonant} We have $\dot \eta_0=-\mathbf R_{\rm hol}(0)a+ C$, for some constant $C$, where $\mathbf R_{\rm hol}(z)$ is the holomorphic part of the resolvent near $0$. \end{lem}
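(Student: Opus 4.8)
## Proof plan for Lemma A.4 (the derivative $\dot\eta_0$)

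The plan is to differentiate the defining relation \eqref{eq:coresonant states} with respect to $\alpha$ at $\alpha=0$ and then invert the operator $\mathbf P$ on the orthogonal complement of its resonant/co-resonant states, using the decomposition of the resolvent into its holomorphic part and its polar part near $z=0$.

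First I would recall that, by the smoothness results established just above in this appendix, the family $\alpha\mapsto \eta_\alpha\in\mathcal D'_{E_u^*}(M)$ is $C^\infty$ on $[0,\alpha_0]$ with $\eta_0\equiv 1$, and $\alpha\mapsto\mu_\alpha$ is smooth with $\mu_0=0$. Differentiating \eqref{eq:coresonant states} at $\alpha=0$, and using $\eta_0=1$, $\mu_0=0$, $\dot\mu_0$ (to be identified — in fact $\dot\mu_0=0$ because $\int_M a\,d\nu_0=0$, which one reads off by pairing with the co-resonant state $\nu_0$), gives the distributional identity
\[
\langle \dot\eta_0,\mathbf P^* f\rangle + \langle 1, fa\rangle = \dot\mu_0\,\langle 1,f\rangle \qu\text{for all } f\in C^\infty(M),
\]
i.e. $\mathbf P\dot\eta_0 = \dot\mu_0 - a = -a$ in $\mathcal D'_{E_u^*}(M)$, since $\int a\,d\nu_0 = 0$ forces $\dot\mu_0 = 0$ (pair the equation with $f$ chosen so that $\langle\nu_0, f\rangle\ne 0$, using $\mathbf P^*\nu_0 = 0$; more precisely $0 = \langle\dot\eta_0,\mathbf P^* f\rangle = \langle -a + \dot\mu_0, f\rangle$ tested against $f$ with $\langle\nu_0,f\rangle=1$ after integrating against $\nu_0$, giving $\dot\mu_0 = \langle\nu_0,a\rangle = 0$).

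Next I would solve $\mathbf P\dot\eta_0 = -a$ in the space $\mathcal D'_{E_u^*}(M)$. Near $z=0$ the resolvent $\mathbf R(z)=(\mathbf P-z)^{-1}:C^\infty(M)\to\mathcal D'_{E_u^*}(M)$ has a simple pole with rank-one residue $\Pi_0$, so $\mathbf R(z) = -\frac{1}{z}\Pi_0 + \mathbf R_{\rm hol}(z) + O(z)$, where $\Pi_0 f = \langle\nu_0,f\rangle\,\mathbf 1$ (normalizing $\nu_0$ so that $\langle\nu_0,\mathbf 1\rangle=1$). Since $\langle\nu_0, a\rangle = 0$, the vector $a$ lies in the range on which $\mathbf R_{\rm hol}(0)$ inverts $\mathbf P$: applying $(\mathbf P - z)\mathbf R(z) = \mathrm{Id}$ to $-a$ and letting $z\to 0$ yields $\mathbf P\,\mathbf R_{\rm hol}(0)(-a) = -a$. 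Therefore $\dot\eta_0$ and $-\mathbf R_{\rm hol}(0)a$ both solve $\mathbf P u = -a$; their difference lies in $\ker\mathbf P\cap\mathcal D'_{E_u^*}(M)$, which by Lemma \ref{lem: resonance en 0} (applied to the trivial representation) is exactly the constants. Hence $\dot\eta_0 = -\mathbf R_{\rm hol}(0)a + C$ for some constant $C$, which is the claim.

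The main obstacle I anticipate is making rigorous the interchange of the $\alpha$-derivative with the distributional pairing and, relatedly, pinning down that $\dot\eta_0$ genuinely lives in $\mathcal D'_{E_u^*}(M)$ with the right wavefront set so that the resolvent identities apply — but both points are supplied by the smoothness of $\alpha\mapsto\Pi_\alpha\mathbf 1$ in $\mathcal B(H_{sG(1)},H_{sG(1)})$ established earlier in this appendix, together with the characterization of resonant states as distributions with $\mathrm{WF}\subset E_u^*$. The identification $\dot\mu_0 = \langle\nu_0,a\rangle$ and the vanishing $\langle\nu_0,a\rangle = 0$ (the hypothesis that $a$ has zero mean) are the arithmetic inputs that make the solvability of $\mathbf P\dot\eta_0 = -a$ work; everything else is bookkeeping with the Laurent expansion of $\mathbf R(z)$ at $0$.
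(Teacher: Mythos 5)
Your proposal is correct and follows essentially the same route as the paper: differentiate the defining identity \eqref{eq:coresonant states} at $\alpha=0$, pair with $\nu_0$ to see $\dot\mu_0=\int a\,\mathrm d\nu_0=0$, use the Laurent expansion of $\mathbf R(z)$ at $0$ together with $\Pi_0 a=0$ to get $\mathbf P\,\mathbf R_{\rm hol}(0)a=a$, and conclude that the difference $\dot\eta_0+\mathbf R_{\rm hol}(0)a$ lies in $\ker\mathbf P\cap\mathcal D'_{E_u^*}(M)$, hence is constant. The only point to make explicit is that $\mathbf R_{\rm hol}(0)a\in\mathcal D'_{E_u^*}(M)$ (via the wavefront set properties of the resolvent), which the paper cites and your argument implicitly needs before invoking the characterization of $\ker\mathbf P$ as the constants.
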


\begin{proof} By differentiating the identity \eqref{eq:coresonant states} at $\alpha=0$ we get
\[\langle \dot \eta_0,\mathbf P^*f\rangle +\langle \eta_0,fa\rangle =\dot \mu_0\langle \eta_0,f\rangle.\]
Taking $f=\nu_0$ yields $\dot \mu_0=\langle \eta_0,a \nu_0\rangle=\int a \mathrm d\nu_0=0$ --- recall that $\mathbf P^*\nu_0=0$ and $\eta_0= 1$). Therefore, for any smooth function $f$,
\begin{equation}\label{eq:identity derive eta} \langle \dot \eta_0,\mathbf P^*f\rangle +\langle \eta_0,fa\rangle =0.\end{equation}

Let us now identify $\dot \eta_0$. The resolvent $\mathbf R(z)=(\mathbf P-z)^{-1}$ writes, for $z$ in a neighborhood of $0$:
\[\mathbf R(z)=\mathbf R_{\rm hol}(z) +\frac{\Pi_0}{z},\]
where $\mathbf R_{\rm hol}(z)$ is holomorphic in a neighborhood of $0$, and $\Pi_0$ is the projector onto the space of resonant states --- here it is simply the space of constant functions --- given by $\Pi_0=\eta_0\otimes \nu_0$ \cite[Lemma 3.1]{humbert2024criticalaxisruelleresonances}.
Now, since $\int a \mathrm d\nu_0=0$, we have $\Pi_0 a=0$, thus $\mathbf R(z) a =\mathbf R_{\rm hol}(z)a$ for $z$ in a neighborhood of $0$, which in turns implies $\mathbf P \mathbf R_{\rm hol}(0) a =a$. Therefore, $-\mathbf R_{\rm hol}(0) a$ satisfies the same identity \eqref{eq:identity derive eta} as $\dot \eta_0$. Moreover, the wavefront set properties of the resolvent \cite[Proposition 3.3]{dyatlov2016dynamical} (see also \cite[Theorem 9.2.4]{Lefeuvre}) ensure that $\mathbf R_{\rm hol}(0) a\in \mathcal D'_{E_u^*}$, thus the distribution $T:= \dot \eta_0+\mathbf R_{\rm hol}(0) a$ satisfies $\mathbf PT=0$ while $T\in \mathcal D'_{E_u^*}$. Therefore, $T$ is a constant function, which proves the lemma.\end{proof}

\begin{proof}[Proof of Proposition \ref{prop: res perturbation 0}]
We will just compute the first derivatives of $\alpha\mapsto \mu_\alpha$ at $\alpha=0$. Differentiating \eqref{eq:coresonant states} twice with respect to $\alpha$ and evaluating at $\alpha=0$ leads to
\[\langle \ddot \eta_0,\mathbf P^*f\rangle+2\langle \dot \eta_0,fa\rangle=\ddot \mu_0\langle \eta_0,f\rangle.\]
Setting $f=\nu_0$ yields $\ddot \mu_0=2\langle \dot \eta_0,a \nu_0\rangle$. Note that it is licit to take the pairing because $\dot \eta_0\in \mathcal D_{E_u^*}'$ and $\nu_0\in \mathcal D_{E_s^*}'$. By Lemma \ref{lem:deriveeresonant}, we get $\ddot \mu_0=-2\langle \mathbf R_{\rm hol}(0) a, a\nu_0\rangle$. Now, for $\mathrm{Im} (z) \gg 0$, we can write
\[-\mathbf R(z) =-\mathrm{i} \int_0^{+\infty} \mathrm{e}^{-\mathrm{i} t(\mathbf P-z)}\mathrm dt,\]
then, for $\mathrm{Im} (z) \gg 0$, 
\[\langle -\mathbf R(z) a, a\nu_0\rangle=-\mathrm{i}\int_0^{+\infty} \mathrm{e}^{\mathrm{i} z t}\langle a\circ \phi^t,  a \rangle_{L^2(M,\nu_0)} \mathrm dt.\]
The left-hand side is well-defined on the half-plane $\{\mathrm{Im}(z)>0\}$, since $\mathbf R(z)$ is holomorphic there (due to the absence of resonances in the upper-half plane \cite[Theorem 1]{humbert2024criticalaxisruelleresonances}). Since the flow preserves the measure $\nu_0$, the right-hand side is also holomorphic on the half-plane $\{\mathrm{Im}(z)>0\}$, thus we may let $z=\mathrm{i}\lambda$ go to $0$ to obtain
\[\langle -\mathbf R_{\rm hol}(0) a,a\nu_0\rangle =-\mathrm{i}\, \lim_{\lambda\to 0} \int_{0}^{+\infty} \mathrm{e}^{-\lambda t}\langle a\circ \phi^t,a\rangle_{L^2(M,\nu_0)} \mathrm dt=-\mathrm{i} \mathrm{Var}_{\nu_0}(a),\]
which in the same time proves the existence of the limit as $\lambda\to 0$. Note that the nonnegativity of the variance directly follows from the identity
\[\mathrm{Var}_{\nu_0}(a)= \lim_{\lambda\to 0} \lambda \int_M \left(\int_{0}^{+\infty} \mathrm{e}^{-\lambda t} a\circ \phi^t \mathrm dt\right)^2 \mathrm d\nu_0. \]

\end{proof}

\subsection{Variance of $a$ and magnetic fluxes through geodesics} \label{subsec:variance of a closed orbits} Let us go back to the setting where $M=S^*X$ and $\phi^t$ is the geodesic flow. By the exponential mixing property of the geodesic flow, the variance can be expressed as
\[\mathrm{Var}_{\nu_0}(a){=}\lim_{T\to +\infty} \frac 1T\int_{S^*X}\bigg(\int_0^T a(\phi^t(z))\mathrm dt\bigg)^2\mathrm d\nu_{0}(z),\]
see for example \cite[(2.5)]{guillarmou_knieper_lefeuvre_2022} for a short proof. Recall from \cite{10.1007/BFb0082850} that $\nu_0$ may be obtained as the weak limit
\[\nu_0=\underset{T\longrightarrow +\infty}\lim \sum_{\gamma \in\mathcal G, T\le \ell(\gamma)\le T+1} \frac{\delta_\gamma}{|I-\mathcal P_\gamma|}.\]
Heuristically, this suggests picking random closed geodesics with weight proportional to $\frac{1}{|I-\mathcal P_\gamma|}$. The following proposition gives an expression of the variance involving closed geodesics.

\begin{prop}\label{prop : appendice variance et flux} Let $a\in C^\infty(M,\mathbf R)$, and assume that $\int_M a\mathrm d\nu_0=0$. Then, for any $\varepsilon>0$, 
\[\varepsilon^{-1}\sum_{ \ell(\gamma)\in [T,T+\varepsilon]} \frac{\ell(\gamma)^\sharp }{|I-\mathcal P_\gamma|}\Big(\frac{1}{\sqrt T}\int_\gamma a\Big)^2\underset{T\to +\infty}\longrightarrow \mathrm{Var}_{\nu_0}(a).\]\end{prop}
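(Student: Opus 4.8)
The plan is to derive this from the local trace formula for the perturbed Ruelle operator $\mathbf P_\alpha = \mathbf P + \alpha a$, in the same spirit as the proof of the central limit theorem for closed orbits (Proposition \ref{prop: central limit theorem for closed orbits}), but extracting the second derivative in $\alpha$ rather than a characteristic function limit. Fix a smooth, nonnegative, compactly supported weight $\omega$ supported in a small neighborhood of $[0,\varepsilon]$ (or, more simply, approximate $\varepsilon^{-1}\mathbf 1_{[0,\varepsilon]}$ from above and below by such $\omega$'s to reduce to the smooth case, as in Lemma \ref{lem: size clusters}). The key observation is that, for the perturbed operator, Guillemin's trace formula gives
\[
\sum_{\gamma\in\mathcal G} \mathrm e^{-\mathrm i\alpha\int_\gamma a}\frac{\ell(\gamma)^\sharp \omega_T(\ell(\gamma))}{|I-\mathcal P_\gamma|} = \sum_{\substack{\mu\in\mathrm{Res}(\mathbf P_\alpha)\\ \mathrm{Im}\,\mu>-A}} \widehat\omega_T(\mu) + \langle F_{A,\alpha},\omega_T\rangle,
\]
where $\omega_T(t)=\omega(t-T)$. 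Applying $-\partial_\alpha^2|_{\alpha=0}$ to both sides brings down a factor $\big(\int_\gamma a\big)^2$ on the left. On the right, $\partial_\alpha^2|_{\alpha=0}\widehat\omega_T(\mu_\alpha)$ for the leading resonance $\mu_\alpha = -\tfrac12\mathrm i\alpha^2\mathrm{Var}_{\nu_0}(a)+\mathcal O(\alpha^3)$ (Proposition \ref{prop: res perturbation 0}) produces, via the chain rule and $\mu_0=0$, $\dot\mu_0=0$, the term $-\mathrm i\ddot\mu_0\,\widehat\omega_T(0) = -\mathrm{Var}_{\nu_0}(a)\,\widehat\omega_T(0)$, so (up to sign conventions) $\ddot\mu_0\,\widehat\omega(0)$; the other resonances contribute $\partial_\alpha^2\widehat\omega_T(\mu_\alpha^{(j)})$, which one bounds uniformly in $\alpha\in[0,\alpha_0]$ using the uniform counting bound of Proposition \ref{prop: comptage résonances HE} together with the rapid decay $|\widehat\omega_T(\mu)|=\mathcal O(\langle\mu\rangle^{-N})$ valid for $\mathrm{Im}\,\mu\le 0$; and $\partial_\alpha^2\langle F_{A,\alpha},\omega_T\rangle$ is $\mathcal O_A(\mathrm e^{-AT/2})$ uniformly in $\alpha$ by the uniform error estimate $|\widehat F_{A,\alpha}(\xi)|=\mathcal O_{A,\varepsilon}(\langle\xi\rangle^{2n+1+\varepsilon})$.

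The steps, in order: (i) justify that $\alpha\mapsto \sum_\gamma \mathrm e^{-\mathrm i\alpha\int_\gamma a}\,\ell(\gamma)^\sharp\omega_T(\ell(\gamma))/|I-\mathcal P_\gamma|$ is smooth and that one may differentiate termwise — the sum is finite since $\omega_T$ has compact support, so this is immediate; (ii) differentiate the resonance side twice in $\alpha$, using that $\mu_\alpha$, the spectral projector $\Pi_\alpha$, and hence all the relevant objects depend smoothly on $\alpha$ on $[0,\alpha_0]$ (established in the Appendix), and that for $R>0$ fixed one can choose $A=A(R)>0$ with no resonances of $\mathbf P$ in $[-2R,2R]+\mathrm i[-2A,0]$, so for $T$ large $\mathbf P_\alpha$ has no resonance other than $\mu_\alpha$ in $[-R,R]+\mathrm i[-A,0]$; (iii) estimate the ``far'' resonance sum by $\mathcal O(R^{-1})$ using $N_A(E)\le N_1(E)$ and the decay of $\widehat\omega_T$; (iv) estimate the $F_{A,\alpha}$ term by $\mathcal O_A(\mathrm e^{-AT/2})$; (v) take $T\to+\infty$ to get $\limsup_T|\cdots - \ddot\mu_0\widehat\omega(0)\,(\text{normalization})| = \mathcal O(R^{-1})$, then $R\to+\infty$; (vi) identify $\ddot\mu_0 = -2\mathrm i\,\mathrm{Var}_{\nu_0}(a)$ from Proposition \ref{prop: res perturbation 0} and $\widehat\omega(0)=\tfrac{1}{2\pi}\int\omega$ with the normalization so the whole thing collapses to $\mathrm{Var}_{\nu_0}(a)$; (vii) use Lemma \ref{lem: size clusters}-type sandwiching to pass from smooth $\omega$ to the sharp indicator $\varepsilon^{-1}\mathbf 1_{[T,T+\varepsilon]}$, noting that the auxiliary sums $\sum_{\ell(\gamma)\in[T,T+\varepsilon]}\ell(\gamma)^\sharp/|I-\mathcal P_\gamma|$ are $\mathcal O(\varepsilon)$ uniformly.

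The main obstacle I anticipate is the bookkeeping in step (ii): one must take the second $\alpha$-derivative of $\widehat\omega_T(\mu_\alpha)$ and of the contributions of other resonances \emph{uniformly} in $T$ and in $\alpha\in[0,\alpha_0]$, and confirm that the only surviving term as $T\to\infty$ is the one proportional to $\ddot\mu_0\widehat\omega(0)$ — in particular that the term $\dot\mu_0^2\,\widehat\omega_T''(0)$ vanishes because $\dot\mu_0=0$, and that cross terms and the $\alpha$-derivatives of the (finitely many, $\alpha$-dependent) other resonances near $0$ are controlled; this is where the smooth dependence results of the Appendix (smoothness of $\mu_\alpha$, $\Pi_\alpha$, and the $\alpha$-uniformity of all resolvent/counting/trace-remainder estimates) do the real work. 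A secondary technical point is justifying that the flat trace manipulations and contour deformations underlying the local trace formula survive differentiation in $\alpha$ — but this is already guaranteed by the uniform-in-$\alpha$ statement of the local trace formula proved in the Appendix. Everything else is a routine repeat of the arguments in the proofs of Lemma \ref{lem: size clusters} and Proposition \ref{prop: central limit theorem for closed orbits}.
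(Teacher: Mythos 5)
Your underlying idea — that the variance is extracted from the second-order behaviour in $\alpha$ of the twisted sum $\sum_\gamma \mathrm e^{-\mathrm i\alpha\int_\gamma a}\,\ell(\gamma)^\sharp\omega_T(\ell(\gamma))/|I-\mathcal P_\gamma|$, compared through the local trace formula for $\mathbf P_\alpha$ with the expansion $\mu_\alpha=-\tfrac12\mathrm i\alpha^2\mathrm{Var}_{\nu_0}(a)+\mathcal O(\alpha^3)$, and that one reduces the sharp window $\varepsilon^{-1}\mathbf 1_{[T,T+\varepsilon]}$ to smooth $\omega$ by sandwiching — is exactly the idea of the paper. But the literal execution has a genuine gap at the step you yourself flag as the main obstacle: applying $\partial_\alpha^2|_{\alpha=0}$ termwise to the spectral side of the trace formula. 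The Appendix establishes smoothness in $\alpha$ only of the \emph{leading} resonance $\mu_\alpha$ and its spectral projector; the other resonances in the strip $\{\operatorname{Im}\mu>-A\}$ are not shown to depend smoothly on $\alpha$ (they can collide and split, so $\partial_\alpha^2\widehat\omega_T(\mu_\alpha^{(j)})$ is not even well defined without further work), and for the remainder $F_{A,\alpha}$ the paper only provides a bound $|\widehat F_{A,\alpha}(\xi)|=\mathcal O(\langle\xi\rangle^{2n+1+\varepsilon})$ that is \emph{uniform} in $\alpha$ — a sup bound on a family gives no control whatsoever on $\partial_\alpha^2\langle F_{A,\alpha},\omega_T\rangle$, and since $F_{A,\alpha}$ is built from $\alpha$-dependent, non-canonical contour deformations, its differentiability in $\alpha$ is genuinely unclear. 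So steps (ii)--(iv) of your plan are not justified by the results you invoke.

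The paper avoids differentiation altogether: it evaluates the trace formula at a single, $T$-dependent value $\alpha=\mathrm e^{-\kappa T/2}$ (so $\alpha T\ll 1$), where $\kappa$ is a uniform spectral gap for the perturbed contact Anosov flow, so that $\mu_\alpha$ is the \emph{only} resonance in the strip and the spectral side is $\int\mathrm e^{-\mathrm i\mu_\alpha t}\omega(t-T)\,\mathrm dt+\mathcal O(\mathrm e^{-\kappa T})$, with only uniform-in-$\alpha$ control of the error needed. It then Taylor-expands the cosine on the geometric side and $\mathrm e^{-\mathrm i\mu_\alpha t}$ on the spectral side, equates, and divides by $\alpha^2 T$ — in effect a discrete second difference in $\alpha$ rather than a derivative, with the errors $\mathcal O(T^{-1}+\alpha^2T^3+\alpha^{-2}T^{-1}\mathrm e^{-\kappa T})$ killed by the choice of $\alpha$. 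To repair your argument you would either have to prove $C^2$-dependence in $\alpha$ of all resonances in the strip and of $F_{A,\alpha}$ with $T$-uniform bounds (which is not in the paper and is delicate), or replace the derivative by this finite-difference comparison at a well-chosen $\alpha(T)$, which is precisely the published proof.
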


Let us specify this result in the case $a(x,\xi)=\langle \mathbf A(x),\xi\rangle$. If $\gamma$ is a closed geodesic with period $\ell(\gamma)$ and $z$ is any point of the orbit then
\[\int_0^{\ell(\gamma)} a(\phi^t(z))\mathrm dt=\int_\gamma \mathbf A,\]
where the right-hand side is understood as the integral of the $1$-form $\mathbf A$ along the closed path $\gamma\subset X$. Thus, the variance of $a$ measures the average amplitude of magnetic fluxes through surfaces enclosed by closed geodesics.

\begin{rem}  

By \cite[Lemma 9.3.8]{Lefeuvre}, the variance of $a(x,\xi)=\langle A(x),\xi\rangle_{T^*X}$ vanishes if and only if $a=\mathbf Xu$ for some smooth function $u$, which is also equivalent to $\int_\gamma \mathbf A=0$ for any closed path $\gamma$. Therefore, $\mathrm{Var}_{\nu_0}(a)>0$ if and only if $\int_\gamma \mathbf A\neq 0$ for some closed path $\gamma$.  \end{rem}

Proposition \ref{prop : appendice variance et flux} is a direct consequence of the next lemma.

\begin{lem} \label{lem : appendice variance et flux} Under the same assumptions on $a$, for any compactly supported function $\omega$,
\[\sum_{\gamma \in \mathcal G} \frac{\ell(\gamma)^\sharp \omega(\ell(\gamma)-T)}{|I-\mathcal P_\gamma|}\Big(\frac{1}{\sqrt T}\int_\gamma a\Big)^2= \mathrm{Var}_{\nu_0}(a)\int_{\mathbf R} \omega(t)\mathrm dt +\mathcal O\left(\frac 1T\right).\]\end{lem}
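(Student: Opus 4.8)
The plan is to mimic the proof of Lemma \ref{lem: size clusters}, but now the sum to be analyzed carries the extra weight $\big(\tfrac{1}{\sqrt T}\int_\gamma a\big)^2$. The first step is to realize this weight dynamically. Since $\int_\gamma a = \int_0^{\ell(\gamma)} a(\phi^t(z))\,\mathrm dt$ for any $z$ on the orbit, we have $\big(\int_\gamma a\big)^2 = -\partial_\alpha^2\big|_{\alpha=0} \mathrm e^{-\mathrm i\alpha\int_\gamma a}$. Hence, introducing the perturbed Ruelle operator $\mathbf P_\alpha = \mathbf P + \alpha a$ and applying the local trace formula with uniform error terms (the last Proposition of the Appendix) to the test function $\omega_T : t\mapsto \omega(t-T)$, we get, for $T$ large enough so that $\omega_T$ is supported in $(0,+\infty)$,
\[
\sum_{\gamma\in\mathcal G}\mathrm e^{-\mathrm i\alpha\int_\gamma a}\frac{\ell(\gamma)^\sharp\omega(\ell(\gamma)-T)}{|I-\mathcal P_\gamma|}
= \sum_{\substack{\mu\in\mathrm{Res}(\mathbf P_\alpha)\\ \operatorname{Im}\mu>-A}}\widehat{\omega_T}(\mu) + \langle F_{A,\alpha},\omega_T\rangle .
\]
I then differentiate both sides twice in $\alpha$ at $\alpha=0$. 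On the left this produces exactly $-\sum_\gamma \big(\int_\gamma a\big)^2 \frac{\ell(\gamma)^\sharp\omega(\ell(\gamma)-T)}{|I-\mathcal P_\gamma|}$; dividing by $T$ gives the quantity we want, up to sign. So the task reduces to computing $\partial_\alpha^2|_{\alpha=0}$ of the right-hand side, and showing it equals $-\tfrac{1}{T}\mathrm{Var}_{\nu_0}(a)\int_{\mathbf R}\omega + \mathcal O(T^{-2})$.

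For the right-hand side I split, as in Lemma \ref{lem: size clusters}, into the contribution of the resonance $\mu_\alpha$ near $0$, the contribution of the remaining resonances with $\operatorname{Im}\mu>-A$, and the error term $\langle F_{A,\alpha},\omega_T\rangle$. Choosing $R$ large and $A=A(R)$ small so that $0$ is the only resonance of $\mathbf P$ in $[-2R,2R]+\mathrm i[-2A,0]$, and using the smoothness in $\alpha$ of the resolvent/spectral projector established in the Appendix, the nonzero resonances of $\mathbf P_\alpha$ stay outside $[-R,R]+\mathrm i[-A,0]$ for $\alpha$ small, uniformly; together with the uniform bound $\widehat F_{A,\alpha}(\xi)=\mathcal O(\langle\xi\rangle^{2n+1+\varepsilon})$ and the rapid decay $\widehat{\omega_T}(\mu)=\mathcal O_N(\langle\mu\rangle^{-N})$, the second and third pieces and all their $\alpha$-derivatives up to order $2$ are $\mathcal O(R^{-1})$ and $\mathcal O_A(\mathrm e^{-AT/2})$ respectively (the $\alpha$-derivatives of the contours and of $\zeta_\alpha$ appearing in the construction of $F_{A,\alpha}$ are controlled uniformly, since the construction is smooth in $\alpha$). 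Thus after dividing by $T$ these are $\mathcal O((RT)^{-1}) + \mathcal O_A(T^{-1}\mathrm e^{-AT/2})$; letting $T\to\infty$ then $R\to\infty$ kills them in the limit, and with a little care (taking $R$ a small power of $T$) one gets the stated $\mathcal O(T^{-1})$ remainder — actually the argument as in Lemma \ref{lem: size clusters} gives that these terms vanish in the limit, and the $\mathcal O(T^{-1})$ in the statement comes from the principal term, see below. The principal term is $\partial_\alpha^2|_{\alpha=0}$ of $\int_{\mathbf R}\mathrm e^{-\mathrm i\mu_\alpha t}\omega(t-T)\,\mathrm dt = \mathrm e^{-\mathrm i\mu_\alpha T}\int_{\mathbf R}\mathrm e^{-\mathrm i\mu_\alpha t}\omega(t)\,\mathrm dt$. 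By Proposition \ref{prop: res perturbation 0}, $\mu_\alpha = -\tfrac12\mathrm i\alpha^2\mathrm{Var}_{\nu_0}(a) + \mathcal O(\alpha^3)$, so $\mu_0=0$, $\dot\mu_0=0$ and $\ddot\mu_0 = -\mathrm i\,\mathrm{Var}_{\nu_0}(a)$; differentiating $\mathrm e^{-\mathrm i\mu_\alpha T}\int\mathrm e^{-\mathrm i\mu_\alpha t}\omega$ twice and using these values, the only surviving term at $\alpha=0$ is $(-\mathrm iT)\ddot\mu_0\int_{\mathbf R}\omega = -T\,\mathrm{Var}_{\nu_0}(a)\int_{\mathbf R}\omega$ (the $\ddot\mu_0\cdot\partial_\alpha^0$ term contributes $\mathcal O(1)$, i.e. $\mathcal O(T^{-1})$ after dividing by $T$). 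Restoring the sign and the division by $T$ gives precisely $\mathrm{Var}_{\nu_0}(a)\int_{\mathbf R}\omega + \mathcal O(T^{-1})$.

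The main technical obstacle is justifying the interchange of $\partial_\alpha^2$ with the (divergent, distributional) sum over closed geodesics and with the contour integrals defining $F_{A,\alpha}$ — in other words, making rigorous that "differentiate the local trace formula twice in $\alpha$" is legitimate, with all error estimates uniform in $\alpha$ near $0$. This is exactly what the smoothness statements of the Appendix (smoothness of $K(\alpha,z)$ in $\mathcal L^1$, of $F(\alpha,z)$, of $\mu_\alpha$, of $\Pi_\alpha$, and the uniform bound on $\widehat F_{A,\alpha}$) are designed to supply, so the proof is essentially an application of those results; the remaining work is bookkeeping. Finally, Proposition \ref{prop : appendice variance et flux} follows from Lemma \ref{lem : appendice variance et flux} by approximating $\varepsilon^{-1}\mathbf 1_{[0,\varepsilon]}$ from above and below by smooth compactly supported $\omega$'s with $\int\omega$ close to $1$ and letting the smoothing parameter shrink after $T\to\infty$, exactly as one passes from Lemma \ref{lem: size clusters} to the definition of the SRB measure $\nu_0$.
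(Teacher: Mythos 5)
Your overall strategy (realize $\big(\int_\gamma a\big)^2$ as an $\alpha$-derivative of a twisted weight and feed it into the local trace formula for $\mathbf P_\alpha=\mathbf P+\alpha a$) is the right circle of ideas, but the step on which everything hinges — differentiating the local trace formula twice in $\alpha$ at $\alpha=0$, term by term on the spectral side — is exactly the step that is not available, and you cannot get it from the Appendix. The Appendix proves estimates that are \emph{uniform in} $\alpha$ (the bound $|\widehat F_{A,\alpha}(\xi)|=\mathcal O(\langle\xi\rangle^{2n+1+\varepsilon})$, uniform counting bounds, smoothness of $\mu_\alpha$ and of $\Pi_\alpha$ in a fixed compact spectral window), but it proves no smoothness of $F_{A,\alpha}$ in $\alpha$ and no bounds on $\partial_\alpha F_{A,\alpha}$ or $\partial_\alpha^2 F_{A,\alpha}$: the contours $\tilde\gamma_k$ entering the construction of $F_{A,\alpha}$ are chosen by a pigeonhole argument to avoid the ($\alpha$-dependent) resonances and there is no reason they, or the resulting distribution, vary smoothly with $\alpha$. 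Likewise, the sum over resonances with $\operatorname{Im}\mu>-A$ is not an $\alpha$-differentiable quantity in any controlled way: high-frequency resonances move with $\alpha$ at a speed you cannot bound (first-order perturbation theory involves the pairing of resonant and co-resonant states, which can be arbitrarily close to degenerate), and resonances can cross the line $\operatorname{Im} z=-A$ as $\alpha$ varies, so the decomposition into \say{resonance sum $+$ $F_{A,\alpha}$} can jump even though the total is smooth. Your parenthetical claim that \say{the construction is smooth in $\alpha$} is precisely the unproved point, so the interchange of $\partial_\alpha^2$ with the spectral side is a genuine gap, not bookkeeping.

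The paper's proof avoids differentiation altogether: it applies the trace formula at a small but \emph{nonzero} $\alpha$, to the weight $\cos\big(\alpha\int_\gamma a\big)$, in the regime $\alpha T\ll1$, and Taylor-expands both sides — the cosine on the geodesic side (using Lemma \ref{lem: size clusters} for the zeroth-order term), and $\mathrm e^{-\mathrm i\mu_\alpha t}$ on the spectral side (using $\mu_\alpha=-\tfrac12\mathrm i\alpha^2\mathrm{Var}_{\nu_0}(a)+\mathcal O(\alpha^3)$). Crucially it also invokes the uniform spectral gap $\kappa$ for contact Anosov flows, so that for $A<\kappa$ there are \emph{no} resonances other than $\mu_\alpha$ in the strip and the whole spectral-side error is $\mathcal O(\mathrm e^{-\kappa T})$, needing only the uniform (not differentiated) bound on $\widehat F_{A,\alpha}$. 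Equating the two expansions and dividing by $\alpha^2T$ gives the statement with error $\mathcal O(T^{-1}+\alpha^2T^3+\alpha^{-2}T^{-1}\mathrm e^{-\kappa T})$, and choosing $\alpha=\mathrm e^{-\kappa T/2}$ yields $\mathcal O(T^{-1})$. If you want to salvage your write-up, replace the $\partial_\alpha^2$ step by this finite-$\alpha$ comparison (or, equivalently, a second difference quotient in $\alpha$ with $\alpha$ chosen as an explicit function of $T$); as written, the differentiation of the trace formula is not justified and the claimed $\mathcal O(R^{-1})$ and $\mathcal O_A(\mathrm e^{-AT/2})$ bounds for the differentiated pieces have no proof.
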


\begin{proof}[Lemma \ref{lem : appendice variance et flux} implies Proposition \ref{prop : appendice variance et flux}]
For any $\delta>0$, we can find two smooth functions $\omega_{\delta}^{\pm}$ with compact supports, satisfying  $\omega^-_\delta\le \mathbf 1_{[0,\varepsilon]}\le \omega^+_\delta$ and $\|\omega^+_\delta-\omega^-_\delta\|_{L^1(\mathbf R)}\le \delta$. A squeezing argument combined with Lemma \ref{lem : appendice variance et flux} gives the sought result.\end{proof}

For geodesic flows on closed negatively curved surfaces, or more generally contact Anosov flows on closed manifolds, one can show the existence of a spectral gap \cite{TsujiGap}, meaning that $0$ is the only resonance in the half-plane $\mathrm{Im}(z)>-\kappa$ for some $\kappa>0$. This result is stable under small perturbations (\emph{e.g.} by the uniform resolvent estimates of \cite[Theorem 1.6]{faure2023microlocal}) and one can show that the operators $\mathbf P+\alpha a$ enjoy a uniform spectral gap, still denoted $-\kappa$ (for $\alpha$ small enough). 

\begin{proof}[Proof of Lemma \ref{lem : appendice variance et flux}] The idea is to consider the sum
\[\sum_{\gamma \in \mathcal G} \frac{\ell(\gamma)^\sharp \omega(\ell(\gamma)-T)}{|I-\mathcal P_\gamma|}\cos\Big( \alpha\int_\gamma a\Big)\]
in the regime $\alpha \ll T^{-1}\ll 1$. We can write this sum in two different ways. On the one hand, since $\omega(\cdot -T)$ is compactly supported in a neighborhood of $T$, and $a$ is bounded, the arguments of the cosines are $\mathcal O(\alpha T)$, and we can use Taylor formula to let appear the quantities $(\int_\gamma a)^2$. On the other hand, we can directly apply the local trace formula for the Ruelle operator $\mathbf P +\alpha a$, which lets appear the variance of $a$. By equating both terms and controlling errors, we will obtain the claimed result. 

Let us put the previous ideas in practice. First, since there are no resonances different from $\mu_\alpha$ in some horizontal strip $\{-\kappa \le \operatorname{Im} z\le 0\}$, the local trace formula for $\mathbf P+\alpha a$ gives
\begin{equation}\label{eqapp0}\sum_{\gamma \in \mathcal G} \frac{\ell(\gamma)^\sharp \omega(\ell(\gamma)-T)}{|I-\mathcal P_\gamma|}\cos\Big( \alpha\int_\gamma a\Big)=\int_{\mathbf R} e^{-\mathrm{i}\mu_\alpha t}\omega(t-T)\mathrm dt +\mathcal O(\mathrm{e}^{-\kappa T}),\end{equation}
Recall that $\mu_\alpha=-\frac 12 \mathrm{i} \alpha^2 \mathrm{Var}_{\nu_{\scalebox{.7}{$\scriptscriptstyle 0$}}}(a)+\mathcal O(\alpha^3)$. Since $\alpha T\ll 1$, applying Taylor--Young formula to the exponentials in the right-hand side, we get that \eqref{eqapp0} equals
\begin{equation}\label{eqapp1} (1-\frac 12 \alpha^2 \mathrm{Var}_{\nu_0}(a)T)\int_{\mathbf R} \omega(t)\mathrm dt +\mathcal O(\alpha^2).\end{equation}
On the other hand, for any $\gamma$ of length of order $T$, we have $\alpha\int_\gamma a=\mathcal O(\alpha T)$; since $\alpha T\ll 1$, applying Taylor--Young formula again but this time to the cosines leads to
\[\sum_{\gamma \in \mathcal G} \frac{\ell(\gamma)^\sharp \omega(\ell(\gamma)-T)}{|I-\mathcal P_\gamma|}\cos\Big( \alpha\int_\gamma a\Big)=\sum_{\gamma \in \mathcal G} \frac{\ell(\gamma)^\sharp \omega(\ell(\gamma)-T)}{|I-\mathcal P_\gamma|}\Big[1-\frac{\alpha^2}2 \Big(\int_\gamma a\Big)^2\Big] +\mathcal O((\alpha T)^4).\]
By Lemma \ref{lem: size clusters}, this is equal to
\begin{equation}\label{eqapp2}\int_{\mathbf R_+} \omega(t)\mathrm dt-\frac{\alpha^2}{2}\sum_\gamma \frac{\ell(\gamma)^\sharp \omega(\ell(\gamma)-T)}{|I-\mathcal P_\gamma|}\Big(\int_\gamma a\Big)^2+\mathcal O((\alpha T)^4 +\mathrm{e}^{-\kappa T}).\end{equation}
Equating \eqref{eqapp1} and \eqref{eqapp2} and diving both sides by $\alpha^2T$ yields
\[\sum_{\gamma \in \mathcal G} \frac{\ell(\gamma)^\sharp \omega(\ell(\gamma)-T)}{|I-\mathcal P_\gamma|}\Big(\frac{1}{\sqrt T}\int_\gamma a\Big)^2=\mathrm{Var}_{\nu_0}(a)\int_{\mathbf R}\omega(t)\mathrm dt+\mathcal O(T^{-1}+\alpha^2T^3+\alpha^{-2}T^{-1}\mathrm{e}^{-\kappa T}).\]
One can just take $\alpha=\mathrm{e}^{-\frac \kappa 2 T}$ to obtain the result.\end{proof}

\printbibliography 

@article{PhysRevLett.52.1,
	title = {Characterization of Chaotic Quantum Spectra and Universality of Level Fluctuation Laws},
	author = {Bohigas, O. and Giannoni, M. J. and Schmit, C.},
	journal = {Phys. Rev. Lett.},
	volume = {52},
	pages = {1--4},
	year = {1984},
}

@article{Schenk,
	title = {Resonances near the real axis for manifolds with hyperbolic trapped sets},
	author = {Schenck, Emmanuel},
	journal = {American Journal of Mathematics},
	volume = {141},
	number = {3},
	year = {2019},
	pages = {757-812},
}

@article{BGS2,
	title = {Spectral properties of the Laplacian and random matrix theories},
	author = {Bohigas, O. and Giannoni, M. J. and Schmit, C.},
	journal = {Journal de Physique Lettres},
	volume = {52},
	pages ={1015-1022},
	year ={1984},}

@article{Sieber_2001,
	year = {2001},
	month = {jan},
	publisher = {},
	volume = {2001},
	number = {T90},
	pages = {128},
	author = {Martin Sieber and Klaus Richter},
	title = {Correlations between periodic orbits and their rôle in spectral statistics},
	journal = {Physica Scripta},
}

@Article{Pollicott2013,
	author={Pollicott, Mark
	and Sharp, Richard},
	title={Correlations of Length Spectra for Negatively Curved Manifolds},
	journal={Communications in Mathematical Physics},
	year={2013},
	month={Apr},
	day={01},
	volume={319},
	number={2},
	pages={515-533},
}

@article{PhysRev,
	title = {Periodic-orbit theory of universality in quantum chaos},
	author = {M{\"u}ller, Sebastian and Heusler, Stefan and Braun, Petr and Haake, Fritz and Altland, Alexander},
	journal = {Phys. Rev. E},
	volume = {72},
	issue = {4},
	pages = {046207},
	numpages = {30},
	year = {2005},
	month = {Oct},
	publisher = {American Physical Society},
}

@article{Dyatlov2017,
	author={Dyatlov, Semyon
	and Zworski, Maciej},
	title={Ruelle zeta function at zero for surfaces},
	journal={Inventiones mathematicae},
	year={2017},
	month={Oct},
	day={01},
	volume={210},
	number={1},
	pages={211-229},
}

@article{JHHannay1984,
	year = {1984},
	publisher = {},
	volume = {17},
	number = {18},
	pages = {3429},
	author = {Hannay,  J. H. and  Ozorio {\lowercase{D}}e Almeida, A. M.  },
	title = {Periodic orbits and a correlation function for the semiclassical density of states},
	journal = {Journal of Physics A: Mathematical and General},
}

@article{keeler2022logarithmic,
	title={A logarithmic improvement in the two-point Weyl law for manifolds without conjugate points}, 
	author={Blake Keeler},
	year={2024},
	journal ={Annales de l'Institut Fourier},
	pages={719-762},
	volume={74},
	number ={2},
}

@article{Donnelly1978,
	author = {Donnelly, H.},
	journal = {Inventiones maths.},
	pages ={115-137},
	title ={On the Wave Equation Asymptotics of a compact Negatively Curved Surface},
	volume ={45},
	year = {1978}}

@article{Bérard1977,
	author = {Bérard, Pierre H.},
	journal = {Mathematische Zeitschrift},
	pages = {249-276},
	title = {On the Wave Equation on a Compact Riemannian Manifold withour Conjugate Points.},
	volume = {155},
	year = {1977},
}

@article{Duistermaat1975,
	author = {Duistermaat, J.J. and Guillemin, V.W.},
	journal = {Inventiones mathematicae},
	pages = {39-80},
	title = {The Spectrum of Positive Elliptic Operators and Periodic Bicharacteristics.},
	volume = {29},
	year = {1975},
}

@article{ColindeVerdière1973,
	author = {Colin {\lowercase{D}}e Verdière, Y.},
	journal = {Compositio Mathematica},
	language = {fre},
	number = {1},
	pages = {83-106},
	publisher = {Noordhoff International Publishing},
	title = {Spectre du laplacien et longueurs des géodésiques périodiques. I},
	volume = {27},
	year = {1973},}

@book{doCarmo, 
	ISBN = {978-0-8176-3490-2},
	author = {{\lowercase{D}}o Carmo, M.},
	title = {Riemannian Geometry}, 
	publisher = {Birkhäuser Boston, MA},}

@book{GelfandShilov,
	author = {Gelfand, I. and Shilov, G.},
	title = {Generalized functions I.},
	date = {1964},
	editor = {New York: Academic Press},
}

@book{Sogge2014,
	ISBN = {9780691160757},
	author = {Christopher D. Sogge},
	title = {Hangzhou Lectures on Eigenfunctions of the Laplacian},
	publisher = {Princeton University Press},
	year = {2014}
}

@book{DyatlovResonances,
author={Dyatlov, S. and Zworski, M.},
title ={Mathematical Theory of Scattering Resonances},
series ={Graduate Studies in Mathematics},
number ={200},
year ={2019},
publisher = {American Mathematical Society},}

@book{HormanderI,
	author = {H{\"o}rmander, L.},
	title = {The Analysis of Linear Partial Differential Operators I},
	publisher = {Springer Berlin, Heidelberg},
	year = {1990}}

@misc{naud2022random,
	title={Random covers of compact surfaces and smooth linear spectral statistics}, 
	author={Frédéric Naud},
	year={2022},
	eprint={2209.07941},
	archivePrefix={arXiv},
	primaryClass={math.SP}
}

@article{JakobsonWeyl2007,
	author = {Jakobson, D. and Polterovich, I. and Toth, J. A.},
	title = "{A Lower Bound for the Remainder in Weyl's Law on Negatively Curved Surfaces}",
	journal = {International Mathematics Research Notices},
	year = {2007},
	
	
}

@article{Chazarain1974,
	author = {Chazarain, J.},
	journal = {Inventiones mathematicae},
	pages = {65-82},
	title = {Formule de Poisson pour les variétés riemanniennes.},
	volume = {24},
	year = {1974},
}

@article{Jin2016,
	author = {Jin, L. and Zworski, M.},
	year = {2016},
	month = {06},
	pages = {},
	title = {A Local Trace Formula for Anosov Flows},
	volume = {18},
	journal = {Annales Henri Poincaré},
}

@misc{jin2022flat,
	title={Flat trace estimates for Anosov flows}, 
	author={Long Jin and Zhongkai Tao},
	year={2022},
	eprint={2204.02677},
	archivePrefix={arXiv},
	primaryClass={math.DS}
}

@article{magee_puder_2023, title={The Asymptotic Statistics of Random Covering Surfaces}, volume={11}, journal={Forum of Mathematics, Pi}, publisher={Cambridge University Press}, author={Magee, Michael and Puder, Doron}, year={2023}, pages={e15}}

@article{puder2022local,
	title={Local statistics of random permutations from free products},
	author={Puder, Doron and Zimhoni, Tomer},
	journal={International Mathematics Research Notices},
	volume={2024},
	number={5},
	pages={4242--4300},
	year={2024},
	publisher={Oxford University Press}
}

@article{Luo1994,
	author={Luo, W.
	and Sarnak, P.},
	title={Number variance for arithmetic hyperbolic surfaces},
	journal={Communications in Mathematical Physics},
	year={1994},
	month={Mar},
	day={01},
	volume={161},
	number={2},
	pages={419-432},
}

@InProceedings{Berry1,
	author="Berry, M. V.",
	editor="Albeverio, S.
	and Casati, G.
	and Merlini, D.",
	title="Fluctuations in numbers of energy levels",
	booktitle="Stochastic Processes in Classical and Quantum Systems",
	year="1986",
	publisher="Springer Berlin Heidelberg",
	address="Berlin, Heidelberg",
	pages="47--53",
	isbn="978-3-540-47222-3"
}

@article{rudnick2022goe,
	author={Rudnick, Ze{\'e}v},
	title={GOE statistics on the moduli space of surfaces of large genus},
	journal={Geometric and Functional Analysis},
	year={2023},
	month={Nov},
	day={02},
}

@article{rudnick2023central,
	title={On the Central Limit Theorem for linear eigenvalue statistics on random surfaces of large genus},
	author={Rudnick, Ze{\'e}v and Wigman, Igor},
	journal={Journal d'Analyse Math{\'e}matique},
	volume={151},
	number={1},
	pages={293--302},
	year={2023},
}

@misc{rudnick2023sure,
	title={Almost sure GOE fluctuations of energy levels for hyperbolic surfaces of high genus}, 
	author={Zeév Rudnick and Igor Wigman},
	year={2023},
	eprint={2301.05964},
	archivePrefix={arXiv},
	primaryClass={math.SP}
}

@article{Mirzakhani2007,
	author={Mirzakhani, Maryam},
	title={Simple geodesics and Weil-Petersson volumes of moduli spaces of bordered Riemann surfaces},
	journal={Inventiones mathematicae},
	year={2007},
	month={Jan},
	day={01},
	volume={167},
	number={1},
	pages={179-222},
}

@article{MirzakhaniPetri,
	author = {Mirzakhani, Maryam and Petri, Bram},
	year = {2017},
	month = {10},
	pages = {},
	title = {Lengths of closed geodesics on random surfaces of large genus},
	volume = {94},
	journal = {Commentarii Mathematici Helvetici},
}

@article{Marklof2006ArithmeticQC,
	author = {Marklof, Jens},
	year = {2005},
	title = {Arithmetic Quantum Chaos},
	journal = {Encyclopedia of Mathematical Physics},
}

@misc{jin2023number,
	title={On the number of Pollicott--Ruelle resonances for Axiom A flows}, 
	author={Long Jin and Zhongkai Tao},
	year={2023},
	eprint={2306.02297},
	archivePrefix={arXiv},
	primaryClass={math.DS}
}

@article{SunadaHeat1982,
	author = {Toshikazu Sunada},
	journal = {American Journal of Mathematics},
	publisher = {Johns Hopkins University Press},
	title = {Trace Formula and Heat Equation Asymptotics for a Non-Positively Curved Manifold},
	year = {1982}
}

@Article{Faure2011,
	author={Faure, Fr{\'e}d{\'e}ric
	and Sj{\"o}strand, Johannes},
	title={Upper Bound on the Density of Ruelle Resonances for Anosov Flows},
	journal={Communications in Mathematical Physics},
	year={2011},
	volume={308},
	number={2},
	pages={325-364},
}

@article{Millson,
	author ={Millson, J.},
	title = {Closed geodesics and the eta invariant},
	journal ={Annals of Mathematics},
	year = {1978},
	volume = {108},
	pages = {1-39},
	
}

@online{Lefeuvre,
	author = {Lefeuvre, Thibault},
	title = {Microlocal analysis in
	hyperbolic dynamics and
	geometry},
	year = {2023},
	url = {https://thibaultlefeuvre.files.wordpress.com/2023/03/microlocal-analysis-in-hyperbolic-dynamics-and-geometry.pdf},
	
}

@article{mageepudernaud,
	author = {Magee, M. and Naud, F. and Puder, D.},
	year = {2022},
	month = {06},
	pages = {1-67},
	title = {A random cover of a compact hyperbolic surface has relative spectral gap {$\frac{3}{16}-\varepsilon$}},
	volume = {32},
	journal = {Geometric and Functional Analysis},
}

@article{BonthonneauPerturbation,
	author = {Guedes Bonthonneau, Yannick},
	year = {2020},
	month ={},
	pages ={63-72},
	title = {Perturbation of Ruelle resonances and Faure–Sjöstrand anisotropic
	space},
	volume ={61},
	number ={1},
	journal ={Revista de la Union Matematica Argentina},
}

@article{Bolte1999,
	title={Semiclassical form factor for chaotic systems with spin {$\frac{1}{2}$}},
	author={Jens Bolte and Stefan Keppeler},
	journal={Journal of Physics A},
	year={1999},
	volume={32},
	pages={8863-8880},
}

@report{Maoz1,
	author = {Maoz, Yotam},
	title = {Asymptotic independence for random permutations from surface groups},
	type = {Preprint},
	date = {2023},
}

@report{Maoz2,
	author = {Maoz, Yotam},
	title = {Smooth linear eigenvalue statistics on random covers of compact hyperbolic surfaces - A central limit theorem and almost sure RMT statistics},
	type = {Preprint},
	date = {2023},
}

@misc{blair2023strichartz,
	title={Strichartz estimates for the Schr\"odinger equation on negatively curved compact manifolds}, 
	author={Matthew D. Blair and Xiaoqi Huang and Christopher D. Sogge},
	year={2023},
	eprint={2304.05247},
	archivePrefix={arXiv},
	primaryClass={math.AP}
}

@book{Mehta,
	author = {Mehta, Madan lal},
	title = {Random Matrices},
	date = {2004},
	edition = {3rd edition},
	publisher = {Academic Press},
	ISBN = {9780120884094}
}

@article{Ullmo2016BohigasGiannoniSchmitC,
	title={Bohigas-Giannoni-Schmit conjecture},
	author={Denis Ullmo},
	journal={Scholarpedia},
	year={2016},
	volume={11},
	pages={31721},
}

@article{PhysRevLett.69.1477,
	title = {Chaotic billiards generated by arithmetic groups},
	author = {Bogomolny, E. B. and Georgeot, B. and Giannoni, M.-J. and Schmit, C.},
	journal = {Phys. Rev. Lett.},
	volume = {69},
	issue = {10},
	pages = {1477--1480},
	year = {1992},
	publisher = {American Physical Society},
}

@article{OBohigas1995,
	year = {1995},
	month = {mar},
	publisher = {},
	volume = {8},
	number = {2},
	pages = {203--221},
	author = {Bohigas, O. and   Giannoni, M-J and  Ozorio {\lowercase{D}}e Almeida, A. M. and  Schmit, C.},
	title = {Chaotic dynamics and the GOE-GUE transition},
	journal = {Nonlinearity},
}

@article{dyatlov2016dynamical,
	title={Dynamical zeta functions for Anosov flows via microlocal analysis},
	author={Dyatlov, Semyon
	and Zworski, Maciej},
	journal={Annales scientifiques de l'{\'E}cole normale sup{\'e}rieure},
	volume={49},
	number={3},
	pages={543--577},
	year={2016},
	publisher={Societe Mathematique de France}
}

@article{Dyatlov_2015,
	year = {2015},
	month = {sep},
	publisher = {IOP Publishing},
	volume = {28},
	number = {10},
	pages = {3511},
	author = {Semyon Dyatlov and Maciej Zworski},
	title = {Stochastic stability of Pollicott–Ruelle resonances},
	journal = {Nonlinearity},
}

@article{guillarmou_knieper_lefeuvre_2022, title={Geodesic stretch, pressure metric and marked length spectrum rigidity}, volume={42},  number={3}, journal={Ergodic Theory and Dynamical Systems}, publisher={Cambridge University Press}, author={Guillarmou, Colin and Knieper, Gerhard and Lefeuvre, Thibault}, year={2022}, pages={974–1022}}

@article{cmp/1103922128,
	author = {M. L. Mehta and A. Pandey},
	title = {{Gaussian ensembles of random Hermitian matrices intermediate between orthogonal and unitary ones}},
	volume = {87},
	journal = {Communications in Mathematical Physics},
	number = {4},
	publisher = {Springer},
	pages = {449 -- 468},
	year = {1982},
}

@article{Berger1971LeSD,
	title={Le Spectre d'une Variete Riemannienne},
	author={Marcel Berger and Paul Gauduchon and Edmond Mazet},
	journal={Lecture Notes in Mathematics},
	year={1971},
	volume={194},
}

@article{Guillemin1977Lectures,
	author = {Victor Guillemin},
	title = {{Lectures on spectral theory of elliptic operators}},
	volume = {44},
	journal = {Duke Mathematical Journal},
	number = {3},
	publisher = {Duke University Press},
	pages = {485 -- 517},
	year = {1977},
}

@book{Ruelle_2004, place={Cambridge}, edition={2}, series={Cambridge Mathematical Library}, title={Thermodynamic Formalism: The Mathematical Structure of Equilibrium Statistical Mechanics}, publisher={Cambridge University Press}, author={Ruelle, David}, year={2004}, collection={Cambridge Mathematical Library}}

@InProceedings{10.1007/BFb0082850,
	author={Parry, William},
	editor={Alexander, James C.},
	title={Equilibrium states and weighted uniform distribution of closed orbits},
	booktitle={Dynamical Systems},
	year={1988},
	publisher={Springer Berlin Heidelberg},
	address={Berlin, Heidelberg},
	pages={617--625},
}

@article{TsujiGap,
	title = {Contact Anosov flows and the Fourier-Bros-Iagolnitzer transform},
	author = {Masato Tsujii},
	year = {2012},
	volume = {32},
	pages = {2083--2118},
	journal = {Ergodic Theory and Dynamical Systems},
	number = {6},
}

@misc{faure2023microlocal,
	title={Micro-local analysis of contact Anosov flows and band structure of the Ruelle spectrum}, 
	author={Frédéric Faure and Masato Tsujii},
	year={2023},
	eprint={2102.11196},
	archivePrefix={arXiv},
	primaryClass={math.DS}
}

@article{LALLEY1987154,
	title = {Distribution of periodic orbits of symbolic and Axiom A flows},
	journal = {Advances in Applied Mathematics},
	volume = {8},
	number = {2},
	pages = {154-193},
	year = {1987},
	author = {S.P Lalley}
}

@article{BerryTabor,
	author = {M. V. Berry and M. Tabor},
	journal = {Proceedings of the Royal Society of London. Series A, Mathematical and Physical Sciences},
	number = {1686},
	pages = {375--394},
	publisher = {The Royal Society},
	title = {Level Clustering in the Regular Spectrum},
	volume = {356},
	year = {1977}
}

@article{ICPercival1973,
	year = {1973},
	month = {sep},
	publisher = {},
	volume = {6},
	number = {9},
	pages = {L229},
	author = {I C Percival},
	title = {Regular and irregular spectra},
	journal = {Journal of Physics B: Atomic and Molecular Physics}
}

@article{DmitryDolgopyat2016JournalofModernDynamics,
	title = {On small gaps in the length spectrum},
	journal = {Journal of Modern Dynamics},
	volume = {10},
	number = {2},
	pages = {339-352},
	year = {2016},
	author = {Dolgopyat, D. and Jakobson, D.},
}

@article{EmmanuelSchenck2020JournalofModernDynamics,
	title = {Exponential gaps in the length spectrum},
	journal = {Journal of Modern Dynamics},
	volume = {16},
	number = {0},
	pages = {207-223},
	year = {2020},
	author = {Schenck, Emmanuel},
}

@InProceedings{10.1007/978-3-0348-8266-8_36,
	author={Marklof, Jens},
	
	editor={Casacuberta, Carles
	and Mir{\'o}-Roig, Rosa Maria
	and Verdera, Joan
	and Xamb{\'o}-Descamps, Sebasti{\`a}},
	title={The Berry-Tabor Conjecture},
	booktitle={European Congress of Mathematics},
	year={2001},
	publisher={Birkh{\"a}user Basel},
	address={Basel},
}

@book{Grigis_Sjöstrand_1994, 
place={Cambridge}, 
series={London Mathematical Society Lecture Note Series}, 
title={Microlocal Analysis for Differential Operators: An Introduction},
 publisher={Cambridge University Press}, 
 author={Grigis, Alain and Sjöstrand, Johannes},
  year={1994}, 
  collection={London Mathematical Society Lecture Note Series}}

@article{Parkkonen2015,
	author={Parkkonen, Jouni
	and Paulin, Fr{\'e}d{\'e}ric},
	title={On the hyperbolic orbital counting problem in conjugacy classes},
	journal={Mathematische Zeitschrift},
	year={2015},
	month={Apr},
	day={01},
	volume={279},
	number={3},
	pages={1175-1196},
}

@article{Ano67,
	year = {1967},
	publisher = {},
	volume = {90},
	number = {9},
	pages = {3-210},
	author = {D.V. Anosov},
	title = {Geodesic flows on closed Riemannian manifolds of negative curvature},
	journal = {Trudy Mat. Inst. Steklov.}
}

@article{Petkov2009,
	author = {Petkov, Vesselin and Stoyanov, Luchezar},
	year = {2009},
	month = {06},
	pages = {},
	title = {Correlations for pairs of periodic trajectories for open billiards},
	volume = {22},
	journal = {Nonlinearity},
}

@article{Pollicott2006,
	author={Pollicott, Mark
	and Sharp, Richard},
	title={Correlations for pairs of closed geodesics},
	journal={Inventiones mathematicae},
	year={2006},
	month={Jan},
	day={01},
	volume={163},
	number={1},
	pages={1-24},
}

@article{Cantrell21,
	author = {Stephen Cantrell and Richard Sharp},
	title = {A central limit theorem for periodic orbits of hyperbolic flows},
	journal = {Dynamical Systems},
	volume = {36},
	number = {1},
	pages = {142--153},
	year = {2021},
	
}

@misc{humbert2024criticalaxisruelleresonances,
	title={Critical axis of Ruelle resonances for Anosov flow with a potential}, 
	author={Tristan Humbert},
	year={2024},
	eprint={2402.04948},
	archivePrefix={arXiv},
	primaryClass={math.DS},
	url={https://arxiv.org/abs/2402.04948}, 
}

@book{billingsley1995probability,
	title={Probability and Measure},
	author={Billingsley, P.},
	isbn={9780471007104},
	lccn={gb95051456},
	series={Wiley Series in Probability and Statistics},
	url={https://books.google.fr/books?id=z39jQgAACAAJ},
	year={1995},
	publisher={Wiley}
}

@misc{cekić2024semiclassicalanalysisprincipalbundles,
	title={Semiclassical analysis on principal bundles}, 
	author={Mihajlo Cekić and Thibault Lefeuvre},
	year={2024},
	eprint={2405.14846},
	archivePrefix={arXiv},
	primaryClass={math.AP},
	url={https://arxiv.org/abs/2405.14846}, 
}

@misc{marklof2024modulispacetwistedlaplacians,
	title={The moduli space of twisted Laplacians and random matrix theory}, 
	author={Jens Marklof and Laura Monk},
	year={2024},
	eprint={2407.10778},
	archivePrefix={arXiv},
	primaryClass={math.SP},
	url={https://arxiv.org/abs/2407.10778}, 
}

@article{baladi2007anisotropic,
	title={Anisotropic H{\"o}lder and Sobolev spaces for hyperbolic diffeomorphisms},
	author={Baladi, Viviane and Tsujii, Masato},
	journal ={Annales de l'institut Fourier},
	volume={57},
	number={1},
	pages={127--154},
	year={2007}
}

@article{blank2002ruelle,
	title={Ruelle--perron--frobenius spectrum for anosov maps},
	author={Blank, Michael and Keller, Gerhard and Liverani, Carlangelo},
	journal={Nonlinearity},
	volume={15},
	number={6},
	pages={1905},
	year={2002},
	publisher={IOP Publishing}
}

@article{gouezel2006banach,
	title={Banach spaces adapted to Anosov systems},
	author={Gou{\"e}zel, S{\'e}bastien and Liverani, Carlangelo},
	journal={Ergodic Theory and dynamical systems},
	volume={26},
	number={1},
	pages={189--217},
	year={2006},
	publisher={Cambridge University Press}
}

\end{document}